    \setlist{nosep}
\newcommand{\dist}{\text{dist}}
\numberwithin{equation}{section}
\renewcommand{\S}{\mathcal{S}}
\renewcommand{\l}{\left}
\renewcommand{\r}{\right}
\numberwithin{equation}{section}
	\newtheorem{theorem}{Theorem}[section]
	\newtheorem{problem}[theorem]{Problem}
        \newtheorem{ques}[theorem]{Question}
	\newtheorem{coro}[theorem]{Corollary}
	\newtheorem{conjecture}[theorem]{Conjecture}
	\newtheorem{claim}[theorem]{Claim}
	\newtheorem{proposition}[theorem]{Proposition}
	\newtheorem{lemma}[theorem]{Lemma}
	\newtheorem{fact}[theorem]{Fact}
        \theoremstyle{definition}
	\newtheorem{defn}[theorem]{Definition}
\newenvironment{proofclaim}[1][Proof of claim]{\begin{proof}[#1]}{\end{proof}}
\setlist{nolistsep}
\title{Embedding loose trees in~$k$-uniform hypergraphs}
\author{Yaobin Chen\thanks{Shanghai Center for Mathematical Sciences,~Fudan University,~Shanghai,~200438,~China.~{\tt ybchen21@m.fudan.edu.cn}.
Supported in part by National Natural Science Foundation of China grant 123B2012 and China Scholarship Council (CSC) No. 202306100218.}
\and Allan Lo\thanks{School of Mathematics, University of Birmingham, Birmingham, B15 2TT, UK. {\tt s.a.lo@bham.ac.uk}. The research leading to these results was supported by EPSRC, grant no. EP/V002279/1 and EP/V048287/1. There are no additional data beyond that contained within the main manuscript.}
}
\begin{document}

\maketitle
\begin{abstract}
    
A classical result of Koml{\'o}s, S{\'a}rk{\"o}zy and Szemer{\'e}di shows that every large $n$-vertex graph with minimum degree at least $(1/2+\gamma)n$ contains all spanning trees of bounded degree. We generalised this result to loose spanning hypertrees in~$k$-uniform hypergraphs, that is, linear hypergraphs obtained by subsequently adding edges sharing a single vertex with a previous edge.

    We give a general sufficient condition for embedding loose trees with bounded degree. In particular, we show that for all $k\ge 4$, every $n$-vertex $k$-uniform hypergraph with~$n\ge n_0(k,\gamma, \Delta)$ and minimum $(k-2)$-degree at least $(1/2+\gamma)\binom{n}{k-2}$ contains every spanning loose tree with maximum vertex degree at most $\Delta$. This bound is asymptotically tight. This generalises a result of Pehova and Petrova, who proved the case when $k=3$ and of Pavez-Signé, Sanhueza-Matamala and Stein, who considered the codegree threshold for bounded degree tight trees.

\end{abstract}

\section{Introduction}\label{intro}
  The study of spanning trees of bounded degree in graphs and random graphs has been an active area of research for many years. A classical result of Koml{\'o}s, S{\'a}rk{\"o}zy and Szemer{\'e}di~\cite{Tree} proved that every large $n$-vertex graph $G$ with minimum degree at least $(1/2+\gamma)n$ contains all spanning trees of bounded degree. The result is clearly the best possible, as exemplified by two disconnect complete graphs on~$n/2$ vertices. Csaba, Levitt, Nagy-Gy{\"o}rgy and Szemer{\'e}di~\cite{csaba2010tight} showed that $\gamma n$ can be replaced by~$C\Delta \log n$ improving a previous result of~\cite{komlos2001spanning}. This result has also been generalized to random graphs, pseudorandom graphs and directed graphs, see~\cite{krivelevich2010embedding,krivelevich2017bounded,han2022spanning,kathapurkar2022spanning,mycroft2020trees,Im2024}. 

  In this paper, we consider an analogue of this problem for~$k$-uniform hypergraphs. Since there is no single way to define a hypertree. We introduce the definition about hypertrees due to Kalai~\cite{kalai1983enumeration}.  A \emph{$k$-uniform $s$-hypertree} is defined iteratively as follows: a single $k$-uniform edge is a $k$-uniform $s$-hypertree; any $k$-graph can be obtained from a $k$-uniform~$s$-hypertree $T$ by adding a new edge~$e$ such that there exists $e' \in E(T)$ with~$| e \cap e' | = s = | e \cap V(T)|$. Note that a $n$-vertex $k$-uniform $s$-hypertree satisfies $n\equiv s \mod k-s$, which we will always assume.

  Throughout this paper, we refer to the $k$-uniform $1$-tree as \emph{$k$-loose tree} (also known in the literature as a \emph{linear tree}). Observe that $2$-loose tree is the usual tree in graphs, since a tree can be defined as adding leaves iteratively. Also, a loose path in~$k$-uniform graph is a $k$-loose tree. The \emph{minimum~$\ell$-degree} $\delta_\ell(H)$ of a $k$-uniform hypergraph $H$ is the minimum number of edges containing any given set of~$\ell$ vertices. Moreover, the \emph{minimum relative} $\ell$\emph{-degree}~$\overline{\delta}_\ell(G)$ is $\delta_\ell(G)/\binom{n}{k-\ell}$. The \emph{maximum} (\emph{relative}) \emph{$\ell$-degree}, denoted by~$\Delta_\ell(G)$ (and $\overline \Delta_{\ell}(G)$, respectively) is defined analogously.

  Extremal problem about hypertrees has a long history. In 1995 Kalai~\cite[Conjecture 3.6]{frankl1987exact}, conjectured that every $k$-graph with more than $\frac{t-1}{k}\binom{n}{k-1}$ edges contains every $k$-loose tree with~$t$~edges. In general, Kalai's conjecture is still open but there are many partial results. Regarding spanning $k$-loose trees, Georgakopoulos, Haslegrave, Montgomery and Narayanan~\cite{georgakopoulos2022spanning} proved that every large $n$-vertex $3$-graph with minimum $2$-degree at least $n/3+o(n)$ have a spanning triangulation of a $2$-sphere, which in particular contains some spanning $3$-loose tree. Pavez-Signé, Sanhueza-Matamala and Stein~\cite{pavez2024dirac} proved that the minimum $(k-1)$-degree threshold for the existence of any bounded vertex degree spanning $(k-1)$-hypertree.
  \begin{theorem}[Pavez-Signé, Sanhueza-Matamala and Stein~\cite{pavez2024dirac}]\label{Thm1}
      For all $k\ge 2, \gamma>0$ and $\Delta\in \mathbb{N}$, there exists an $n_0$ such that any $k$-graph~$G$ on~$n\ge n_0$ vertices with~$\overline{\delta}_{k-1}(G)\ge 1/2+\gamma$ contains every $n$-vertex $(k-1)$-hypertree $T$ with~$\Delta_1(T)\le \Delta$ and the bound is asymptotic tight. In particular, $G$ contains all $n$-vertex $k$-loose trees $T$ with~$\Delta_1(T)\le \Delta$.
  \end{theorem}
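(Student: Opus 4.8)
\medskip
\noindent\emph{Proof idea.} The plan is to run the absorption method. Fix the $n$-vertex $(k-1)$-hypertree $T$ with $\Delta_1(T)\le\Delta$; its iterative construction yields a canonical ordering $v_1,\dots,v_n$ of $V(T)$ in which $\{v_1,\dots,v_k\}$ is an edge and each $v_i$ with $i>k$ comes with a \emph{parent set} $P_i\subseteq\{v_1,\dots,v_{i-1}\}$, a $(k-1)$-set contained in an earlier edge, with $P_i\cup\{v_i\}\in E(T)$. Two consequences of the hypothesis $\overline\delta_{k-1}(G)\ge 1/2+\gamma$ drive everything: (a) every $(k-1)$-set $S$ has link $N(S):=\{v: S\cup\{v\}\in E(G)\}$ of size at least $(1/2+\gamma)n-k$, so any bounded number of links have a common intersection of linear size and any two links meet in at least $(2\gamma-o(1))n$ vertices; and (b) a \emph{tight-connection} statement --- any two ordered $(k-1)$-tuples of vertices can be joined by a bounded-length tight path whose interior vertices lie in any prescribed linear-sized set --- proved by a short greedy argument from (a). Together these let us build small flexible gadgets and stitch partial embeddings together.

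I would first establish a structural dichotomy for $T$, the hypergraph analogue of the tree-splitting lemma behind the Koml\'os--S\'ark\"ozy--Szemer\'edi theorem (cf.\ Krivelevich): for suitable $\ell=\ell(k,\Delta)$ and $\beta=\beta(\Delta)>0$, either $T$ has at least $\beta n$ leaves, or $T$ contains $\beta n$ vertex-disjoint \emph{bare segments}, a bare segment being a sub-tight-path with $\ell$ edges none of whose interior vertices meets an edge of $T$ outside the segment. In either case this produces a linear-sized family of pairwise-disjoint constant-sized sub-hypertrees of $T$ that are \emph{flexible}: each one has a tight-path-type embedding in $G$ covering either exactly its intended target set, or that set together with one extra vertex --- the second option being what lets it ``absorb''.

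The main step, and the one I expect to be the real obstacle, is building the absorber. Fix small $\eta>0$, reserve inside $T$ a connected sub-hypertree $T_{\mathrm{abs}}$ built from $\eta n$ of the flexible pieces joined through $T$'s own structure, and reserve in $G$ a vertex set $A$ with $|A|=|V(T_{\mathrm{abs}})|-\eta n$ together with a designated \emph{flexibility reservoir}, arranged so that for \emph{every} $\eta n$-subset $W$ of the reservoir, $T_{\mathrm{abs}}$ embeds onto exactly $A\cup W$, compatibly with a fixed embedding of $T\setminus T_{\mathrm{abs}}$. For this, one uses (a) to place around each flexible piece of $T_{\mathrm{abs}}$ an \emph{absorbing gadget} in $G$ --- a constant-sized vertex set hosting that piece either exactly, or with any prescribed reservoir vertex inserted --- and then makes the choice of which $\eta n$ gadgets actually swallow the vertices of $W$ robust by threading the gadgets onto an auxiliary bipartite template in the style of Montgomery's distributive absorption. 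The delicate point is that each gadget must exist \emph{inside $G$} for essentially every candidate leftover vertex, while simultaneously corresponding to a genuine local modification of the \emph{fixed} tree $T$; the ``many leaves'' branch of the dichotomy is the awkward case, since there the flexible piece is a short broom rather than a bare segment and the gadget must stretch by one vertex while still realising a fixed constant-sized piece of $T$.

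The remainder is routine. Set aside a further reservoir $R\subseteq V(G)$ with $|R|=o(n)$ for connections, cut $T\setminus T_{\mathrm{abs}}$ along $o(n)$ edges into $o(n)$ bounded-size sub-hypertrees each attached to the rest through a single $(k-1)$-set (possible since $T$ has bounded degree), and embed these almost-perfectly into $V(G)\setminus(A\cup R)$, covering all but $\eta n$ vertices, using the codegree hypothesis for a near-perfect tiling of the host by constant-sized tight structures in the spirit of the matching and tight-path arguments of R\"odl--Ruci\'nski--Szemer\'edi; the deleted edges are restored along the way, routing through $R$ via (b). The $\eta n$ uncovered vertices form a set $W$ which, since the embedding was arranged so that only reservoir vertices remain uncovered, $T_{\mathrm{abs}}$ now absorbs, completing a copy of $T$ in $G$. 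Asymptotic tightness is witnessed by a partition construction: with $V=A\cup B$ and $|A|/n$ kept away from the finitely many values $0,2/k,4/k,\dots$, set $E(G)=\{e:|e\cap A|\ \text{even}\}$; then $\overline\delta_{k-1}(G)$ can be made arbitrarily close to $1/2$, yet a spanning tight Hamilton path --- a bounded-degree spanning $(k-1)$-hypertree --- would force the periodicity $v_j\in A\iff v_{j+k}\in A$ and hence $|A|/n$ into one of those forbidden values. Finally, the statement for loose trees follows because every $n$-vertex $k$-loose tree of maximum degree at most $\Delta$ is a spanning sub-hypergraph of some $n$-vertex $(k-1)$-hypertree whose maximum degree is bounded in terms of $k$ and $\Delta$ (thicken each loose edge to a short tight path), so it embeds whenever the latter does.
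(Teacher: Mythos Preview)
This theorem is not proved in the present paper; it is quoted from Pavez-Sign\'e, Sanhueza-Matamala and Stein as background, so there is no proof here to compare against. The only thing the paper says about it (in the sketch of Section~\ref{sketch}) is that, in the language of its robust framework, the case $(k,\ell)=(k,k-1)$ works because the codegree condition exceeds the tight Hamilton cycle threshold: the reduced graph then contains a spanning tight Hamilton cycle, which trivially carries a perfect fractional matching and is reachable and rotatable by winding around the cycle. That remark would yield a regularity-based reproof via Theorem~\ref{generaltheorem}, structurally quite different from what you propose.

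Your outline---direct absorption without regularity, driven by the leaves/bare-segments dichotomy and Montgomery-type distributive templates---is in fact the shape of the argument in the cited source, and your identification of the absorbing-gadget construction (especially in the many-leaves branch) as the crux is accurate. I see no genuine gap in the plan. Two small remarks. First, in the tightness construction with $V=A\cup B$ and even-intersection edges, you need $|A|$ close to $n/2$ to keep $\overline\delta_{k-1}$ near $1/2$; when $k$ is even this collides with the forbidden value $n/2=(k/2)\cdot n/k$, so one perturbs $|A|$ slightly off $n/2$ rather than ``away from the finitely many values'' in general. Second, for the final ``in particular'': your phrase ``thicken each loose edge to a short tight path'' is right in spirit but make explicit that no vertices are added---each loose edge $\{v,u_2,\dots,u_k\}$ attached at $v$ becomes a tight path that successively swaps the remaining vertices of the parent edge for $u_2,\dots,u_k$, producing a $(k-1)$-hypertree on the \emph{same} $n$ vertices with maximum degree $O_k(\Delta)$ that contains the loose tree.
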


 For general $\ell$ and $k$, what is the minimum (relative) $\ell$-degree threshold which forces the existence of any bounded vertex degree spanning $k$-loose trees? 
    We say a $k$-graph~$G$ on~$n$ vertices is \emph{$\Delta$-loose-tree-universal}, if $G$ contains every spanning $k$-loose tree~$T$ with~$\Delta_1(T)\le \Delta$.
    Moreover, we define the \emph{loose tree embeddable $(k,\ell)$-threshold}, denoted by~$\delta_{k,\ell}^{T}$, to be the infimum $\delta$ such that for any $\gamma>0$ and $\Delta\in \mathbb{N}$, there exists an integer $n_0$ such that any $k$-graph~$G$ on~$n\ge n_0$ vertices with~$n \equiv 1 \mod k-1$ and $\overline{\delta}_{\ell}(G)>\delta+\gamma$ is $\Delta$-loose-tree-universal. 
    Thus Theorem~\ref{Thm1} implies that $\delta_{k,k-1}^{T} = 1/2$.

  The following example shows that $\delta_{k,\ell}^{T}$ should also guarantee a perfect matching in~$k$-graphs. We define the \emph{binary $k$-loose tree} $T_{k,r}$ with depth~$r$ as follows. Let $T_{k,0}$ be a loose tree with one edge and root it at an arbitrary vertex. We say a vertex is a \emph{leaf vertex} if it is not the root vertex and in only one edge. So $T_{k,0}$ has $k-1$ leaf vertices. Then for any $r\ge 1$, let $T_{k,r}$ be the loose tree by adding one edge on each leaf vertex of~$T_{k,r-1}$. If $r$ is even, then $T_{k,r}$ contains a perfect matching and the maximum vertex degree of~$T_{k,r}$ is~$2$, see Figure~\ref{fig:enter-label1}.

  \begin{figure}
      \centering
      \includegraphics[scale=0.4]{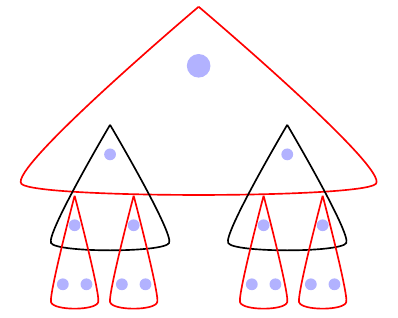}
      \caption{The binary $3$-loose tree~$T_{3,2}$.}
      \label{fig:enter-label1}
  \end{figure}

   We denote the \emph{perfect matching $(k,\ell)$-threshold} $\delta_{k,\ell}^{PM}$ to be the infimum $\delta$ such that for any $\gamma>0$, there exists an integer $n_0$ such that any $k$-graph $G$ on~$n\ge n_0$ vertices with~$k|n$ and $\overline{\delta}_\ell(G)>\delta+\gamma$ contains a perfect matching. 
   Ruci\'nski and Szemer\'edi~\cite{Rodlalmost} showed that $\delta_{k,k-1}^{PM}=1/2$ and H\'an, Person and Schacht~\cite{han2009perfect} showed that $\delta_{3,1}^{PM}=5/9$. 
   See \cite{Yisurvey} for a survey.

   Recently, Pehova and Petrova~\cite{pehova2024embedding} showed that every $3$-graph $G$ with~$\overline{\delta}_1(G)> 5/9$ contains all bounded degree $3$-loose trees.
   \begin{theorem}[Pehova and Petrova~\cite{pehova2024embedding}]\label{Thm2}
    We have $\delta_{3,1}^{T} = 5/9$.
    In particular,  for all $\gamma>0$ and $\Delta\in \mathbb{N}$, there exists an $n_0$ such that any $3$-graph~$G$ on~$n\ge n_0$ vertices with~$n$ odd and $\overline{\delta}_{1}(G)\ge 5/9+\gamma$ contains every spanning $3$-loose tree~$T$ with~$\Delta_1(T)\le \Delta$.
  \end{theorem}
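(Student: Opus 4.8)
The plan is to prove the two inequalities $\delta_{3,1}^T\ge 5/9$ and $\delta_{3,1}^T\le 5/9$ separately; the second, the embedding statement, is where all the work lies. For the lower bound, fix $\gamma>0$ and put $\Delta=2$. For infinitely many $n$ --- take $n=n(T_{3,r})=2^{r+2}-1$ with $r$ even, which is odd and divisible by $3$ --- the binary loose tree $T_{3,r}$ is a spanning $3$-loose tree with $\Delta_1(T_{3,r})=2$ that contains a perfect matching. Since $\delta_{3,1}^{PM}=5/9$ by H\'an, Person and Schacht~\cite{han2009perfect}, for each such $n$ there is an $n$-vertex $3$-graph $G$ with $\overline\delta_1(G)$ as close to $5/9$ from below as we wish and with no perfect matching; such a $G$ cannot contain a spanning copy of $T_{3,r}$. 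Hence $\delta_{3,1}^T\ge 5/9$.

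For the embedding statement, fix $\gamma>0$ and $\Delta$, let $G$ be an $n$-vertex $3$-graph with $n$ odd and large and $\overline\delta_1(G)\ge 5/9+\gamma$, and let $T$ be a spanning $3$-loose tree with $\Delta_1(T)\le\Delta$. I would use absorption, the organising principle being that the final completion step is essentially a perfect-matching problem, which is precisely why $5/9$ is both necessary and sufficient. First decompose $T$ via a straightforward hypergraph analogue of the usual tree-trimming lemma: $T$ contains either $\Omega(n)$ pendant edges (each consisting of a vertex of positive co-degree in the rest of $T$, called a ``host'', together with two leaf vertices) or $\Omega(n/\Delta)$ vertex-disjoint bare loose subpaths. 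The bare-path regime is the easier one --- embed the few branch vertices by a connecting argument and tile the long loose paths into the remaining vertices, for which the relevant minimum $1$-degree threshold is well below $5/9$ --- so the substantive case is the pendant-edge one. There, select $\le\beta n$ pendant edges, at most one per host (with $\beta=\beta(\gamma)$ a small constant), and write $T=T_0\cup\{\{x,a_x,b_x\}:x\in X\}$, where $X$ is the set of selected hosts, $|V(T_0)|=n-2|X|$, $\Delta_1(T_0)\le\Delta$, and the vertices $a_x,b_x$ ($x\in X$) are distinct and lie outside $V(T_0)$. It then suffices to embed $T_0$ and, for each $x\in X$, find two still-unused vertices forming an edge of $G$ with the image of $x$.

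Before embedding $T_0$, reserve an absorbing set $A\subseteq V(G)$ of size $\Theta(\beta n)$ and a subfamily $X_A\subseteq X$: for each $x\in X_A$, a small switchable gadget inside $A$ that can either accommodate the pendant edge at $x$ using two vertices of $A$ or be reconfigured to release those two vertices; a supersaturation-plus-random-selection argument builds this whenever $\overline\delta_1(G)$ exceeds a fixed positive constant. Next embed $T_0$ into $G-A$, the sizes arranged so that $R:=V(G)\setminus(A\cup\mathrm{im}(T_0))$ has exactly $2|X\setminus X_A|$ vertices; since $T_0$ spans all but a $\Theta(\beta)$-fraction of the vertices, choosing $\beta$ small brings this comfortably within reach of $\overline\delta_1(G)\ge 5/9+\gamma$, and it can be done cluster by cluster via the weak regularity lemma for $3$-graphs together with a connected near-perfect matching in the reduced hypergraph (following Koml\'os--S\'ark\"ozy--Szemer\'edi), while keeping $R$ quasirandom with respect to $G$ and $\mathrm{im}(X)$ well spread. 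Finally complete the pendant edges at hosts in $X\setminus X_A$ by finding a perfect matching of the auxiliary $3$-graph on $\mathrm{im}(X\setminus X_A)\cup R$ with edge set $\{\{\mathrm{im}(x),a,b\}: a,b\in R \text{ and } \{\mathrm{im}(x),a,b\}\in E(G)\}$; by the quasirandomness of $R$ and the spread of $\mathrm{im}(X)$ this graph has minimum relative $1$-degree above $5/9$, so it has a perfect matching by the appropriate variant of the H\'an--Person--Schacht theorem. The gadgets then absorb whatever remains of $A$, completing the embedding of $T$.

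The crux is to make this final stage go through at the threshold $5/9$ exactly: the embedding of $T_0$ must terminate with $R$ of a precisely prescribed size and genuinely quasirandom relative to $G$, and with $\mathrm{im}(X)$ spread out, so that the auxiliary $3$-graph truly inherits minimum relative $1$-degree above $5/9$ --- any slippage destroys the reduction to the sharp perfect-matching result --- and, since the host set $X$ is dictated by $T$ rather than chosen by us, the breadth-first order in which $T_0$ is built must be interleaved with the control on $R$ and $\mathrm{im}(X)$. Equivalently, one needs a stability argument excluding every H\'an--Person--Schacht perfect-matching barrier from the leftover configuration. Coupling the rigidly prescribed tree structure with the extremal geometry of perfect matchings --- and dovetailing this with the easier bare-path case --- is the real difficulty.
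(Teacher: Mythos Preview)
The paper does not give its own proof of Theorem~\ref{Thm2}; it is quoted as a result of Pehova and Petrova~\cite{pehova2024embedding}. What the paper does do is explain (Section~\ref{sketch}) how that case fits into its general robust-framework machinery: since $\delta^{THC}_{3,1}=5/9$ by~\cite{reiher2019minimum}, the reduced graph contains a spanning tight Hamilton cycle~$C$; this $C$ carries a perfect fractional matching, is reachable by winding around the cycle, and is rotatable by~\cite[Lemma~3.1]{pehova2024embedding}, hence $\eta$-robust. Theorem~\ref{generaltheorem} then gives $\delta_{3,1}^T\le\delta_{3,1}^R\le 5/9$, and the lower bound comes from $\delta_{3,1}^{PM}=5/9$ exactly as you argue with the binary tree $T_{3,r}$.

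Your proposal takes a genuinely different route, in the spirit of the original Koml\'os--S\'ark\"ozy--Szemer\'edi dichotomy (many leaves versus many bare paths) rather than the homomorphism-to-reduced-graph approach. That is a reasonable strategy in principle, but the final step in your pendant-edge case has a real gap. Your auxiliary $3$-graph on $\mathrm{im}(X\setminus X_A)\cup R$ has, by construction, only edges with exactly one vertex in $\mathrm{im}(X\setminus X_A)$ and two in~$R$; this is a highly structured (essentially ``$1$-partite-plus-$2$'') hypergraph, not a general $3$-graph, and the H\'an--Person--Schacht theorem does not apply to it as stated. In particular, vertices of $R$ have zero codegree with one another inside this auxiliary graph unless paired with a host, so the phrase ``minimum relative $1$-degree above $5/9$'' cannot hold in the usual sense, and ``the appropriate variant'' is doing unjustified work. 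You would need either a bespoke matching result for this bipartite-type $3$-graph, or a different absorber design that avoids the reduction altogether. The paper's approach sidesteps this entirely: the perfect-matching threshold is used only to guarantee a perfect \emph{fractional} matching in the reduced graph (property~\ref{robust1}), and the final completion is handled by the absorbing tuples of Section~\ref{sec:absorptionlemma}, which require only $\overline\delta_1(G)\ge 1/2+\gamma$.
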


 Pehova and Petrova~\cite{pehova2024embedding} further conjectured that $\delta_{k,\ell}^{T} = \delta_{k,\ell}^{PM}$ in general.

 \begin{conjecture}[Pehova and Petrova~\cite{pehova2024embedding}]\label{Conjecture1}
 For all $1 \le \ell < k$, $\delta_{k,\ell}^{T} = \delta_{k,\ell}^{PM}$.
  \end{conjecture}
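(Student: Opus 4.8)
The plan is to prove the conjecture in the range where it is currently accessible --- which is also the content of the paper's main theorem. Conjecture~\ref{Conjecture1} is out of reach in general because the perfect matching threshold $\delta_{k,\ell}^{PM}$ is unknown for $\ell<k/2$ (apart from $(k,\ell)=(3,1)$, where $\delta_{3,1}^{PM}=5/9$~\cite{han2009perfect}); it becomes accessible precisely when $\delta_{k,\ell}^{PM}$ is the clean value $1/2$, in particular for $\ell=k-2$ with $k\ge4$ (note $k-2\ge k/2\iff k\ge4$; see~\cite{Yisurvey}), so I would aim to prove $\delta_{k,k-2}^{T}=1/2$ for $k\ge4$. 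The lower bound $\delta_{k,k-2}^{T}\ge\delta_{k,k-2}^{PM}=1/2$ is the easy direction already implicit in the text: by the binary-tree example $T_{k,r}$ with $r$ even, a $\Delta$-loose-tree-universal $k$-graph with $\Delta\ge2$ must contain a perfect matching, so the matching-free extremal construction is not universal. All the work is in the upper bound, which I would obtain by isolating a general sufficient condition and then verifying it under the codegree hypothesis.

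\emph{A general sufficient condition.} I would prove a lemma of the form: there are $\eta=\eta(k,\Delta)>0$ and $n_0$ such that every $n$-vertex $k$-graph $G$ with $n\ge n_0$ and $n\equiv1\pmod{k-1}$ is $\Delta$-loose-tree-universal provided it is (a) \emph{locally dense}, say $\delta_{k-2}(G)\ge\eta\binom{n}{2}$, so that after deleting any $o(n)$ vertices every vertex still lies in $\Omega(n^{k-1})$ edges; (b) \emph{robustly matchable}, meaning $G-W$ has a perfect matching for every $W$ of size $\le\eta n$ of the correct residue mod $k$, and moreover the partite auxiliary hypergraphs built in the absorption step below also have near-perfect matchings; and (c) \emph{boundedly loose-connected}, i.e.\ any two vertices are joined by $\Omega(n^{k-1})$ internally disjoint loose paths of constant length. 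The deduction is then routine: $\overline{\delta}_{k-2}(G)\ge1/2+\gamma$ gives (a) at once, gives (c) by a short greedy argument (a $(k-2)$-set together with any fixed extra vertex has codegree $\Omega(n^2)$, and one iterates), and gives (b) because deleting $o(n)$ vertices perturbs the relative $(k-2)$-degree only by $o(1)$, so $G-W$ still has relative $(k-2)$-degree above $1/2+\gamma/2$ and hence --- using $k\ge4$, so that $k-2\ge k/2$ --- still has a perfect matching. This is the only place the hypothesis $k\ge4$ enters; for $k=3$ one genuinely needs $5/9$, which is Theorem~\ref{Thm2}.

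\emph{Proving the sufficient condition by absorption.} Fix $T$ with $\Delta_1(T)\le\Delta$. Using bounded degree, $T$ satisfies the usual dichotomy: it has either $\ge\eta'n$ vertex-disjoint \emph{leaf edges} (edges containing $k-1$ leaf vertices) or $\ge\eta'n$ vertex-disjoint constant-length \emph{bare loose sub-paths} (internal vertices of degree $2$ in $T$). Reserve a sub-collection $\mathcal{F}$ of $\Theta(\eta n)$ of these as absorbers and set $T_{\mathrm{bulk}}=T\setminus\mathcal{F}$ (removing leaf edges leaves a subtree; for bare paths one removes interiors, keeps the two anchors, and works with a forest to be reconnected at the end). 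Take a random reservoir $R\subseteq V(G)$ with $|R|=\Theta(\eta n)$: with high probability $R$ inherits relative $(k-2)$-degree above $1/2+\gamma/2$, so properties (a)--(c) also hold inside $R$. Now (i) embed a constant-size stem of $T$ arbitrarily; (ii) greedily extend along $T_{\mathrm{bulk}}$ in DFS order, one edge at a time, always choosing a host edge disjoint from the used set --- possible by (a) --- while steering the \emph{attachment vertices} of the pieces of $\mathcal{F}$ into well-spread positions in $R$; (iii) once $V(G)\setminus R$ is exhausted, keep embedding into $R$ using its inherited codegree until only a set $U\subseteq R$ of size $\Theta(\eta n)$ and of the correct residue remains uncovered; (iv) finally embed $\mathcal{F}$ onto $U$: the leaf edges of $\mathcal{F}$ together with their already-placed attachment vertices and $U$ form a partite $k$-graph whose near-perfect matching, guaranteed by (b), is exactly a simultaneous embedding of all of $\mathcal{F}$ covering $U$; in the bare-path case one re-routes each held-back bare path through a constant number of leftover vertices, again a matching/connecting problem handled by (b)--(c). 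This embeds $T$ onto all of $V(G)$.

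\emph{The main obstacle.} The crux is Step (iv) together with condition (b): the absorbing family $\mathcal{F}$ must be a genuine sub-forest of the \emph{particular} tree $T$ we are handed --- we choose only which copy to reserve, not its shape --- yet it must still be guaranteed in linear quantity with its attachment vertices freely placeable in the reservoir, and it must be collectively flexible enough to swallow an arbitrary leftover $U$ of the right size and residue; and the auxiliary partite matching problems this produces must be shown solvable using only $\overline{\delta}_{k-2}(G)>1/2$. Making the leaf-heavy/path-heavy dichotomy (and mixtures of the two) work uniformly over all bounded-degree $T$, keeping the reservoir and degree bookkeeping consistent throughout the greedy phase, and pushing the codegree bound through both the deleted-set reduction and the partite-matching reduction, is the technical heart. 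For a general $\ell$ this same scheme would reduce Conjecture~\ref{Conjecture1} to determining $\delta_{k,\ell}^{PM}$, and so would settle it whenever $\delta_{k,\ell}^{PM}=1/2$ (in particular for all $\ell\ge k/2$), which is exactly why the $\ell=k-2$, $k\ge4$ case of the conjecture is the one within current reach.
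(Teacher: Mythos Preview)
First, note that Conjecture~\ref{Conjecture1} is not proved in the paper; only the case $\ell=k-2$, $k\ge4$ (Theorem~\ref{Theorem Matching threshold}) is established, which is what you correctly target. Your lower bound via $T_{k,r}$ matches the paper's.

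The real gap is in Steps~(ii)--(iii). You justify greedy DFS extension of the almost-spanning $T_{\mathrm{bulk}}$ by condition~(a), but~(a) only survives deletion of $o(n)$ vertices, whereas the used set grows to $(1-O(\eta))n$. Concretely, from $\overline{\delta}_1(G)\ge 1/2+\gamma$ a vertex has about $(1/2+\gamma)\binom{n-1}{k-1}$ edges; once roughly $n/(2(k-1))$ vertices have been used, a given embedded vertex may have no edge left into the unused set. Greedy alone cannot give an almost-spanning embedding, and your reservoir does not fix this: nothing in the scheme forces the greedy to exhaust $V(G)\setminus R$ before touching $R$, nor to leave the leftover inside $R$. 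The paper's solution is to apply weak regularity and build a homomorphism of $T$ into the reduced graph, and the technical content of the paper is identifying and verifying what the reduced graph must satisfy for this homomorphism to exist and be balanced: a perfect fractional matching, a \emph{reachable} linear order on edges so the connected tree can navigate between matching edges, and a \emph{rotatable} property so colour classes land evenly within each matching edge (Definition~\ref{robust}). For $(k,k-1)$ and $(3,1)$ these came for free from a tight Hamilton cycle in the reduced graph, but the tight-Hamilton $(k,k-2)$-threshold is $5/9>1/2$, so that shortcut is unavailable here; Sections~\ref{defn}--\ref{rotatable  } construct a substitute via a Gallai-type colouring of the $(k-2)$-shadow and a reduction to $k=4$ for rotatability. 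Your conditions~(a)--(c) have no analogue of reachable/rotatable, and ``boundedly loose-connected'' is far weaker than what is needed to balance the embedding. A secondary gap: Step~(iv) requires a partite matching with \emph{prescribed} attachment vertices, which is not the same as ``$G-W$ has a perfect matching'' and is only asserted, not derived, in your~(b); the paper sidesteps this with a different absorber (star-swapping tuples, Section~\ref{sec:absorptionlemma}) for which $\overline{\delta}_1\ge 1/2+\gamma$ already suffices.
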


By Theorems~\ref{Thm1} and~\ref{Thm2}, this conjecture holds for~$(k,\ell)=(k,k-1)$ and $(k,\ell)=(3,1)$, respectively. 
Our main result shows that Conjecture~\ref{Conjecture1} indeed holds for all $\ell=k-2$ with~$k\ge 4$.
Note that $\delta_{k,k-2}^{PM}=1/2$ by Pikhurko~\cite{pikhurko2008perfect}. 

\begin{theorem}\label{Theorem Matching threshold}
    For all $k \ge 4$, $\delta_{k,k-2}^{T} = 1/2 =\delta_{k,k-2}^{PM}$.
    In particular, for all $\gamma>0$ and $\Delta\in \mathbb{N}$, there exists an~$n_0$ such that any $k$-graph~$G$ on~$n\ge n_0$ vertices with~$\overline{\delta}_{k-2}(G)\ge 1/2+\gamma$ and $n\equiv 1\mod k-1$ contains every spanning $k$-loose tree~$T$ with~$\Delta_1(T)\le \Delta$.
\end{theorem}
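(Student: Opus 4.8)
The statement has two halves: the lower bound $\delta_{k,k-2}^{T}\ge 1/2$ and the embedding statement, which yields $\delta_{k,k-2}^{T}\le 1/2$ and, combined with $\delta_{k,k-2}^{PM}=1/2$, the ``in particular'' part. The lower bound is immediate: the binary loose tree $T_{k,2j}$ has $\Delta_1(T_{k,2j})=2$ and contains a perfect matching, so in particular $k\mid |V(T_{k,2j})|$ (and, like every loose tree, $|V(T_{k,2j})|\equiv 1\bmod k-1$). Fix $\gamma\in(0,1/2)$. Since $\delta_{k,k-2}^{PM}=1/2$, there are arbitrarily large $N$ divisible by $k$ --- in particular $N=|V(T_{k,2j})|$ for $j\to\infty$ --- admitting an $N$-vertex $k$-graph $G_N$ with $\overline{\delta}_{k-2}(G_N)\ge 1/2-\gamma$ and no perfect matching. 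Such $G_N$ cannot contain $T_{k,2j}$, so it is not $2$-loose-tree-universal; as $\overline{\delta}_{k-2}(G_N)\to 1/2$, no $\delta<1/2$ is admissible for $\delta_{k,k-2}^{T}$.

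For the embedding, the plan is to first isolate a \emph{general sufficient condition} --- that a $k$-graph $G$ on $n\equiv 1\bmod k-1$ vertices is $\Delta$-loose-tree-universal whenever it is \emph{connecting} (any two vertices are joined by a loose path of at most three edges avoiding any prescribed set of $\mu n$ vertices, and one may route it through one further prescribed vertex), \emph{absorbing} (every vertex lies in polynomially many constant-size ``absorbing gadgets'', i.e.\ short loose paths with fixed endpoints admitting loose-path traversals both on their own vertex set and on that set together with the given vertex), and \emph{almost-covering} (every loose forest with maximum vertex degree $\le\Delta$ on at most $(1-\mu)n$ vertices embeds, and indeed $G$ minus any $\mu n$ vertices admits a robust near-perfect loose matching extendable to such a forest) --- and then to verify these for $\overline{\delta}_{k-2}(G)\ge 1/2+\gamma$. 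Given the three properties, a spanning loose tree $T$ with $\Delta_1(T)\le\Delta$ is embedded by the absorption method: via a structural dichotomy for bounded-degree loose trees, $T$ has either $\Omega(n)$ pendant edges or a ``bare'' loose subpath with $\Omega(n)$ internal vertices, so we may split off a flexible part $F$ and keep a bounded-degree loose tree $T_0=T-F$ with $F$ attached at one or two vertices; using the absorbing and connecting properties we chain gadgets into a small \emph{absorbing loose path} $P^{\ast}$ on a reservoir $V_0$ of $\Theta(\mu n)$ vertices that can swallow any sufficiently small leftover set $R$; we then embed $T_0$ into $G-V_0$ using the almost-covering property, covering all but such an $R$ and sending the attachment vertices of $F$ to the ends of $P^{\ast}$; finally we realise $F$ on $V_0\cup R$.

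Verifying the three properties under $\overline{\delta}_{k-2}(G)\ge 1/2+\gamma$: for \emph{connecting}, the link of any $(k-2)$-set $S$ is a graph on $V(G)\setminus S$ with more than $(1/2+\gamma)\binom n2$ edges, hence even after deleting $\mu n$ further vertices it still contains an edge on almost any prescribed pair, which iterated gives loose paths of two edges between any two vertices avoiding any $\mu n$-set, and a third edge passes through a prescribed vertex. For \emph{absorbing}, an absorbing gadget for a vertex $v$ is a fixed bounded configuration that exists as soon as $v$ together with any $k-3$ other vertices has codegree exceeding a small constant --- far below $1/2$ --- while the ``$>1/2$'' codegree (the same phenomenon behind $\delta_{k,k-2}^{PM}=1/2$) provides the matching-type room needed to splice $v$ in or out; a standard count then produces polynomially many such gadgets per vertex. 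For \emph{almost-covering}, the threshold $1/2$ is used in full: covering all but $o(n)$ vertices of $G$ by a bounded-degree loose forest amounts, up to bounded local adjustments, to finding an almost-perfect matching with enough flexibility to host the connector edges, and $\overline{\delta}_{k-2}(G)\ge 1/2+\gamma$ is exactly the regime in which $\delta_{k,k-2}^{PM}=1/2$ delivers a \emph{robust} perfect matching; we use this by embedding $T_0$ in linear blocks of private vertices, each placed along a near-perfect matching of the currently unused vertices and bridged to the next block via the connecting property.

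The main obstacle is the almost-covering step in combination with the degree bound $\Delta_1(T)\le\Delta$. Because $1/2$ is precisely the $(k-2)$-degree perfect-matching threshold there is no density to spare: once $(1-o(1))$ of $V(G)$ has been used, a naive greedy extension of the embedding can fail, so one must genuinely invoke a robustified form of Pikhurko's theorem and then transform the matchings it provides into the prescribed loose-tree structure while maintaining, throughout the iteration, that (i) the image of the still-unused part of $G$ remains matchable and (ii) every tree vertex of degree $d\le\Delta$ is sent to a vertex whose residual $(k-2)$-degree can still accommodate all $d$ incident edges of $T$. Keeping both invariants alive through the iterated near-cover, and dovetailing it with the reservoir $V_0$, is the technical heart of the argument; by contrast the connecting and absorbing properties are comparatively soft consequences of the codegree hypothesis, and the matching lower bound $\delta_{k,k-2}^{PM}=1/2$ makes the lower bound on $\delta_{k,k-2}^{T}$ immediate.
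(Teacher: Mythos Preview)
Your lower-bound argument via the binary loose trees $T_{k,2j}$ is correct and matches the paper's justification that $\delta_{k,k-2}^{T}\ge\delta_{k,k-2}^{PM}=1/2$.

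For the upper bound, your outline diverges from the paper's in two places --- absorption and almost-covering --- and the second is where your proposal has a genuine gap.

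On absorption, the paper does not use the pendant/bare-path dichotomy you invoke. Instead it uses star-based $\Delta$-absorbing tuples (Definition~\ref{absober consturction}): for any ordered $k$-tuple $(w_1,\dots,w_k)$ there are $k-1$ immersed loose $\Delta$-stars whose centres can be swapped out for $w_2,\dots,w_k$ while $w_1$ gains an edge. This absorbs \emph{edges} of $T$ (each carrying $k-1$ new vertices) directly, and needs only $\overline{\delta}_1(G)\ge 1/2+\gamma$ (which follows from the $(k-2)$-degree hypothesis via Proposition~\ref{j-degree}). Your path-type single-vertex absorbers are a different mechanism; they are not obviously wrong, but they do not buy you anything here and they force the tree dichotomy, which the paper avoids.

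The substantive gap is your almost-covering step. You write that embedding $T_0$ ``amounts, up to bounded local adjustments, to finding an almost-perfect matching'' and that blocks of $T_0$ are ``placed along a near-perfect matching\dots and bridged to the next block via the connecting property''. This does not explain how a block of the tree --- a small loose subtree whose colour classes $C_1,\dots,C_k$ may be arbitrarily unbalanced --- is realised on matching edges without overloading one vertex of each edge, nor how the transitions between blocks are made without inserting edges that are not in $T_0$. Your ``connecting'' property (short loose paths between vertices) is strictly weaker than what is needed: the paper gives an explicit example (a $4$-graph on $V_1\cup V_2$ whose every edge meets $V_1$ in an even number of vertices) in which all edges are loosely connected yet no loose path can be homomorphically routed from an edge inside $V_1$ to one inside $V_2$. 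Loose connectivity is therefore insufficient to navigate the tree between matching edges.

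The paper's resolution is to pass to the $(\mathcal{Q},\varepsilon,d)$-reduced graph $R$ via weak regularity and show that $R$ contains a spanning \emph{$\eta$-robust} subgraph $G^*$ (Definition~\ref{robust}): (R1) perfect $\omega$-fractional matchings for all near-uniform weightings $\omega$; (R2) a linear order on $E(G^*)$ under a reachability relation that is strictly stronger than loose connectivity (Definition~\ref{Defn:reachable}); and (R3) every edge is rotatable (Definition~\ref{defn:rotatable}), meaning one can prescribe any permutation of the colour classes when embedding a subtree onto that edge. Property~(R3) is precisely what handles the colour-balance issue you leave open, and (R2) handles the transitions. The verification of (R1)--(R3) at the threshold $1/2$ --- in particular without access to a tight Hamilton cycle in $R$, since $\delta_{k,k-2}^{THC}=5/9>1/2$ --- is the main work of the paper (Sections~\ref{matching}--\ref{rotatable  }), carried out via the link-graph construction $G^*$ of Definition~\ref{defn:our graph}, a Gallai locally $2$-edge-colouring analysis for reachability (Lemma~\ref{Gallai coloured}), and a reduction to $k=4$ for rotatability (Lemma~\ref{4-rotatable lemma}). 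Your proposal identifies that the almost-spanning step is the heart of the matter but does not supply any mechanism comparable to (R2) and (R3); invoking a ``robustified Pikhurko theorem'' gives matchings, not tree embeddings, and the passage from one to the other is exactly what the robust framework is for.
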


Note that in the proof of Theorems~\ref{Thm1} and~\ref{Thm2}, one ingredient is to find a tight Hamilton cycle. 
Tight Hamilton cycles exists by the minimum degree condition by~\cite{reiher2019minimum} and~\cite{RSS-Hamilton}.
Lang and Sanhueza-Matamala~\cite{lang2022minimum} and independently, Polcyn, Reiher, R\"odl and Sch\"ulke~\cite{polcyn2021Hamiltonian} showed that the corresponding tight Hamilton cycle $(k,k-2)$-threshold is $5/9 > \delta_{k,k-2}^{PM}$.
One of our main contributions is to provide a necessary condition, which we called the \emph{robust framework}, for~$\Delta$-loose-tree-universality, see Section~\ref{embed }.

\textbf{Paper organization:} In Section~\ref{preli}, we introduce some notations and terminology that we will use throughout the paper. In Section~\ref{sketch}, we give a sketch proof for Theorem~\ref{Theorem Matching threshold}. In Section~\ref{embed }
we give a robust framework for embedding loose trees. A robust framework will consist of three key properties, robust fractional matching, reachable and rotatable. In Section~\ref{Regular}, we introduce the hypergraph regularity lemma and related embedding method in the reduced graph. In Section~\ref{proof of general theorem}, we prove that the robust framework is sufficient for embedding loose trees. In Section~\ref{defn}, we construct a subgraph $G^*$ which will turn out to be a robust framework. We will verify that $G^*$ satisfies the three key properties in Sections~\ref{matching},~\ref{reachable } and~\ref{rotatable  }. We conclude the paper with a discussion and open problems in Section~\ref{remark}.

\section{Preliminaries}\label{preli}
\subsection{Notation}\label{Notations}

 We write $v_1\dots v_k$ for~$\{v_1,\dots,v_k \}$. 
 We omit floors and ceilings whenever they are not crucial.
  We say that a statement holds for~$\alpha \ll\beta $ if there exists a non-decreasing function~$f$ such that the statement holds for~$\alpha <f(\beta)$. We write $x=y\pm \gamma$ for~$x\in [y-\gamma,y+\gamma]$ and $[n]$ to denote the set of integers from~$1$ to~$n$. We denote the  symmetric group of~$[n]$ by~$S_n$.

 A \emph{$k$-graph} $G$ consists of a set of vertices~$V(G)$ and a set of edges $E(G)$, where each edge consists of~$k$ vertices. Let $H$ be a $k$-graph. We write $G-H$ for the subgraph of~$G$ by removing edges of~$H$ from~$G$ and~$G\backslash H$ for the subgraph of~$G$ by removing vertices of~$H$ from~$G$.
 For~$U \subseteq V(G)$, $G[U]$ is the subgraph induced on~$U$. 
 The degree $\deg_G(U)$ of~$U$ is the number of edges in~$G$ containing~$U$.

 Let $A_1,\dots,A_{\ell}\subseteq V(G)$. 
 An edge $v_1\dots v_k$ is an $A_1\dots A_\ell$\emph{-edge} if $v_i\in A_i$ for all $i\in [\ell]$.
 If $A_{i+1}=\dots=A_{\ell}=V(G)$, then an $A_1\dots A_i$-edge is also an $A_1\dots A_{\ell}$-edge.
 We write $\deg_{G}(A_1;A_2;\dots ;A_\ell)$ for the number of~$A_1\dots A_\ell$-edges in~$G$. 
 Note that $\deg_G(v_1\dots v_i)=\deg_G(v_1;\dots ;v_i)$ for all~$i\in [k]$.  
 Let $d_G(A_1;A_2;\dots;A_k) = \deg_{G}(A_1;A_2\dots ;A_k)/ \l(|A_1||A_2|\dots|A_k|\r)$. 
 
 For~$2\le j\le k$, let the \emph{$j$th-shadow graph} $\partial_j(G)$ be the $j$-graph on~$V(G)$ whose edges are the $j$-sets contained in some edge of~$G$. For any set $S\subseteq V(G)$ with size smaller than $k$, the \emph{link graph of~$S$}, denoted by~$L_{G}(S)$, is the $(k-|S|)$-graph on vertex set $V(G)$ whose edges are all $(k-|S|)$-tuples~$T$ for which $T\cup S\in E(G)$.  Write~$\partial_j(S)$ for~$\partial_j(L_G(S))$. 
 
 Given edges $e$ and $e'$ of~$G$, a \emph{tight walk} from~$e$ to~$e'$ is a sequence of edges $e_0,e_1,\dots,e_{\ell}$ such that $e_0 = e$, $e_\ell=e'$ and $|e_{i-1}\cap e_i|=k-1$ for each $i\in [\ell]$. If the vertices on the walk are distinct, we say it is a \emph{tight path}. Note that connected by tight walks gives an equivalence relation on the edge set. The \emph{tight components} of~$G$ are the equivalence classes of this relation. We say that $G$ is \emph{tight connected} if it has only one tight component. 

 For~$k$-graphs $H_1$ and $H_2$, a \emph{hypergraph homomorphism} from~$H_1$ to~$H_2$ is a function~$\phi:V(H_1)\rightarrow V(H_2)$ such that for all $e\in E(H_1)$, $\phi(e)\in E(H_2)$. An \emph{embedding} is an injective homomorphism. Throughout the paper, we will usually use $\phi$ to denote homomorphism and $\psi$ to denote embedding.

 The next property shows that relative minimum $j$-degree is no smaller than relative minimum $1$-degree.
\begin{proposition}\label{j-degree}
    Let $G$ be a $k$-graph on~$n$ vertices and $\ell\in [k-1]$. Then $\overline{\delta}_1(G)\ge \overline{\delta}_\ell(G)$.
\end{proposition}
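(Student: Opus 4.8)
The plan is to prove the inequality by a single double-counting argument followed by a routine comparison of binomial coefficients. First I would fix a vertex $v \in V(G)$ attaining the minimum degree, so that $\deg_G(v) = \delta_1(G)$, and count the pairs $(S,e)$ where $S$ is an $\ell$-subset of $V(G)$ with $v \in S$ and $e \in E(G)$ with $S \subseteq e$. Counting by the choice of $e$ first, every edge $e$ through $v$ contains exactly $\binom{k-1}{\ell-1}$ such sets $S$, so the number of pairs is $\binom{k-1}{\ell-1}\deg_G(v)$. Counting by the choice of $S$ first, there are $\binom{n-1}{\ell-1}$ choices of $S$, and each lies in at least $\delta_\ell(G)$ edges, so the number of pairs is at least $\binom{n-1}{\ell-1}\delta_\ell(G)$. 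Comparing the two counts gives $\binom{k-1}{\ell-1}\delta_1(G) \ge \binom{n-1}{\ell-1}\delta_\ell(G)$.

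It then remains to divide by the right normalising factors: one needs $\frac{\delta_1(G)}{\binom{n}{k-1}} \ge \frac{\delta_\ell(G)}{\binom{n}{k-\ell}}$, which (using the displayed inequality) reduces to the purely numerical claim $\binom{n-1}{\ell-1}\binom{n}{k-\ell} \ge \binom{k-1}{\ell-1}\binom{n}{k-1}$. Writing everything in factorials, the quotient of the left side by the right side is $\frac{(n-1)(n-2)\cdots(n-\ell+1)}{(n-k+\ell)(n-k+\ell-1)\cdots(n-k+2)}$, a ratio of two products of $\ell-1$ consecutive integers. Since $\ell \le k-1$, the $j$-th factor $n-j$ of the numerator exceeds the $j$-th factor $n-k+\ell+1-j$ of the denominator by exactly $k-1-\ell \ge 0$, and all factors are positive once $n \ge k$; hence the quotient is at least $1$, which yields $\overline{\delta}_1(G) \ge \overline{\delta}_\ell(G)$.

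There is no real obstacle here. The only point where the hypothesis is used is that $\ell \le k-1$, which is precisely what makes the term-by-term comparison of the two products go through (indeed the same computation shows $\overline{\delta}_{\ell'}(G) \ge \overline{\delta}_\ell(G)$ for all $\ell' \le \ell$, starting instead from an $\ell'$-set of minimum degree and counting pairs $(S,e)$ with that set contained in the $\ell$-set $S \subseteq e$). One should also dispose of the degenerate cases — $\ell = 1$ gives equality, and if $n < k$ then $G$ has no edges and both relative degrees equal $0$ — but these are immediate.
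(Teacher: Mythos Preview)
Your argument is correct and is essentially the same double-counting as the paper's proof: the paper writes the identity $\deg(v)=\sum_{v\in J\in\binom{V(G)}{\ell}}\deg(J)/\binom{k-1}{\ell-1}$ and bounds each summand below by $\delta_\ell(G)$, then collapses the binomials in one line, whereas you make the same count and then verify the resulting binomial inequality term by term. Your treatment of the normalisation step is in fact more careful than the paper's (which silently uses $\binom{n-1}{k-1}$ and $\binom{n-\ell}{k-\ell}$ rather than the $\binom{n}{k-1}$ and $\binom{n}{k-\ell}$ appearing in its own definition of relative degree).
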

\begin{proof}
    For any vertex $v\in G$, $$\deg(v)=\sum_{v\in J\in \binom{V(G)}{\ell}} \deg(J) /\binom{k-1}{\ell-1}\ge \binom{n-1}{\ell-1}\cdot \binom{n-\ell}{k-j}\overline{\delta}_\ell(G)/\binom{k-1}{\ell-1}=\binom{n-1}{k-1}\overline{\delta}_\ell(G).$$
    Hence $\overline{\delta}_1(G)\ge \overline{\delta}_\ell(G)$.
\end{proof}
 \subsection{Structure of loose trees}\label{sturc of tree}

 In this subsection, we give some basic facts and definitions about loose trees. Consider a $k$-graph $G$. We say a vertex colouring $c:V(G)\rightarrow \mathbb{N}$ is a \emph{proper colouring} if for any adjacent vertex pair $u,v$, $c(u)\neq c(v)$.
It is easy to see that any $k$-loose tree~$T$ can be properly coloured with colours in~$[k]$. 
We denote its colour classes by~$C_1(T),\dots,C_k(T)$.

Let $T$ be a $k$-loose tree with a root vertex $r$, colour classes $C_1(T),\dots,C_k(T)$ and $r\in C_1(T)$. A \emph{layering~$L(T)$ of~$T$} is a partition of~$V(T)$ into layers $L_1,\dots, L_p$ such that $L_1=\{r\}$,  $C_{i}(T)=\bigcup_{j\equiv i \mod k} L_j$, each edge is an $L_i L_{i+1}\dots L_{i+k-1}$-edge for some~$i$ and each $v\in L_i$ is contained in at most one non-$L_i L_{i+1}\dots L_{i+k-1}$-edge. We say a \emph{rooted k-loose tree~$T$ at~$r$} is a $k$-loose tree~$T$ together with a proper $k$-vertex colouring and a layering $L(T)$ of~$T$ rooted at~$r$. 

Let $T$ be a rooted $k$-loose tree at~$r$. For a vertex $v\in L_i\subseteq V(T)$, the subtree $T(v)$ rooted at~$v$ is defined to be the loose tree containing $v$ in~$T[\bigcup_{j\ge i} L_j]$. We say the vertices in~$V(T(v))\backslash \{v\}$ to be the \emph{descendants} of~$v$.

Note that if we remove the root vertex from a loose tree, then the remaining parts can be viewed as subtrees rooted at neighbours of the root vertex.
\begin{fact}\label{partition tree}
    Let $T$ be a rooted $k$-loose tree at~$r$ and $\tau$ be permutation~$(12\dots k)$. Let $\mathcal{T}_j$ be all rooted subtrees~$T(v)$ with~$v\in C^1_{\tau^j(1)}(T)$. Then $r,V(\mathcal{T}_1),\dots V(\mathcal{T}_{k-1})$ partition~$V(T)$. 
\end{fact}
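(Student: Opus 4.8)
The plan is to delete the root $r$ and then sort the resulting pieces by colour using the layering. The first step is to make precise the remark preceding the statement. Since $L_1=\{r\}$ is the top layer and there is no layer $L_0$, every edge of $T$ through $r$ is an $L_1L_2\cdots L_k$-edge, so it meets each of $L_2,\dots,L_k$ in exactly one vertex, and by linearity two edges through $r$ meet only in $r$. Hence $N_T(r)=\bigsqcup_{e\ni r}(e\setminus\{r\})$, with exactly one of these vertices lying in each of $L_2,\dots,L_k$ per edge through $r$. For $v\in N_T(r)\cap L_i$, the edge joining $v$ to $r$ is the unique non-$L_iL_{i+1}\cdots L_{i+k-1}$-edge at $v$ allowed by the layering, so every other edge at $v$ points into strictly higher-indexed layers; iterating this, the component of $v$ in $T-r$ never leaves $\bigcup_{j\ge i}L_j$ and therefore equals the subtree $T(v)$. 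Since $T$ is connected, every component of $T-r$ contains a neighbour of $r$, and no neighbour of $r$ is a descendant of another, so the components of $T-r$ are precisely the pairwise vertex-disjoint subtrees $\{T(v):v\in N_T(r)\}$; thus $V(T)=\{r\}\sqcup\bigsqcup_{v\in N_T(r)}V(T(v))$.

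The second step sorts $N_T(r)$ by colour. A neighbour of $r$ lies in some layer $L_i$ with $2\le i\le k$, and any such vertex has colour $i$ because $L_i$ is the unique layer among $L_1,\dots,L_k$ whose index is $\equiv i\pmod k$. Therefore the colour-$i$ neighbours of $r$ are exactly $C^1_i(T)=N_T(r)\cap L_i$, and these sets partition $N_T(r)$ as $i$ ranges over $\{2,\dots,k\}$.

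Finally I would assemble the two steps. Using $\tau^j(1)=j+1$ for $0\le j\le k-1$ and $\mathcal{T}_j=\{T(v):v\in C^1_{\tau^j(1)}(T)\}$, the colour partition of the second step turns the decomposition of the first step into $V(T)=\{r\}\sqcup V(\mathcal{T}_1)\sqcup\cdots\sqcup V(\mathcal{T}_{k-1})$, which is the claim. The only part with any real content is the identification, in the first step, of the components of $T-r$ with the subtrees $T(v)$: this rests on linearity (acyclicity) of $T$ together with the stated properties of the layering, and it is the part I would write out in full; everything else is routine bookkeeping with the colour classes and the cyclic shift $\tau$.
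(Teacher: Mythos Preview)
Your proof is correct and follows exactly the approach the paper gestures at (the sentence immediately preceding the Fact: ``if we remove the root vertex from a loose tree, then the remaining parts can be viewed as subtrees rooted at neighbours of the root vertex''); the paper itself gives no proof beyond that remark, so there is nothing further to compare. Your identification of the components of $T-r$ with the subtrees $T(v)$ via the layering properties is precisely the substance behind that sentence, and the colour bookkeeping with $\tau$ is routine as you say.
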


 We define a distance function to describe the location of each edge. 
 Let $T$ be a $k$-loose tree. 
 Between a vertex and an edge or between two edges, there exists only one path connecting them in~$T$.     
 For vertex~$v\in V(T)$ and edge~$e\in E(T)$, $\text{dist}_T(v,e)$ is the length of the path in~$T$ between $e$ and~$v$ minus~$1$. For two edges $e,e'\in E(T)$, $\text{dist}_T(e,e')$ is the length of the path in~$T$ between~$e$ and~$e'$ minus~$1$.
Moreover, if $v\in e$ or~$e=e'$, then $\dist(v,e)$ and $\dist(e,e')$ is zero by our definitions.

\subsection{$\alpha$-perturbed graphs}

The reduced graph~$R$ obtained after the regularity lemma inherits some key features of the host graph~$G$, see Section~\ref{Regular} for more details. 
In particular, if $\overline{\delta}_{\ell}(G) \ge \delta$, then almost all $\ell$-vertex subsets of~$R$ will also have degree at least $\delta - \varepsilon$. 
We use the following definition to quantify this property. 

\begin{defn}[$\alpha$-perturbed degree]\label{defn:perturbed degree}
    Let $1\le \ell <k$ and $\alpha,\delta>0$. We say that a $k$-graph $G$ has \emph{$\alpha$-perturbed minimum relative} $\ell$-\emph{degree}, denoted by~$\overline{\delta}^\alpha_{\ell}(G)$, at least $\delta$, if the following hold for every~$j\in [\ell]$
    \begin{enumerate}[label=(P\arabic*)]
        \item\label{itm:defn:perturbed degree1} every edge of~$E(\partial_j(G))$ has relative degree at least $\delta$ in~$G$;
        \item \label{itm:defn:perturbed degree2}$\overline{\partial_j(G)}$ has edge density at most $\alpha$;
        \item \label{itm:defn:perturbed degree3} each $(j-1)$-edge of~$\partial_{j-1}(G)$ has relative degree less than $\alpha$ in~$\overline{\partial_j(G)}$.        
    \end{enumerate}
\end{defn}

We obtain the following proposition.
\begin{proposition}\label{perturb prop}
    Let $\alpha\ll \delta\ll 1/k$. Let $G$ be a $k$-graph on~$n$ vertices with  $\overline{\delta}^{\alpha}_{k-2}(G)\ge \delta$.
    \begin{enumerate}[label=\rm{(L\arabic*)}]
        \item\label{itm:perturb prop 1} Let $S\in E(\partial_{j}(G))$ with~$j\le k-2$. Then the link graph $L_G(S)$ of~$S$ satisfies  $\overline{\delta}^{\alpha}_{k-j-2}(L_G(S))\ge \delta$.
        \item\label{itm:perturb prop 2} Let $A_1,\dots, A_{\ell}$ be subsets of~$V(G)$ with each of size~$m\le k-4$. Let $B\subseteq V(G)$ with size larger than $\alpha m n  $. Then there exists an edge $uv$ with~$u,v\in B$ and $uv\in \bigcap_{i=1}^{\ell}E(\partial_2(A_i))$. 
    \end{enumerate}
\end{proposition}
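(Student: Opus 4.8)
The plan is to prove \ref{itm:perturb prop 1} by transferring each of the three defining conditions \ref{itm:defn:perturbed degree1}--\ref{itm:defn:perturbed degree3} from $G$ to the link, with the level shifted by $|S|$. Write $L:=L_G(S)$, $j:=|S|\le k-2$ and $\ell':=k-j-2$; I must check \ref{itm:defn:perturbed degree1}--\ref{itm:defn:perturbed degree3} for $L$ at every level $i\in[\ell']$. The tools are the elementary identities $\deg_L(T)=\deg_G(S\cup T)$ and ``$T\in E(\partial_i(L))$ iff $S\cup T\in E(\partial_{j+i}(G))$'' (for $T\subseteq V(G)\setminus S$), together with the observation that the relative-degree normalisations agree, since an $i$-set in the $(k-j)$-graph $L$ and the $(j+i)$-set $S\cup T$ in $G$ are both normalised by $\binom{n}{k-j-i}$. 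Given these, \ref{itm:defn:perturbed degree1} for $L$ at level $i$ is immediate: $j+i\le k-2$, so \ref{itm:defn:perturbed degree1} for $G$ applies to $S\cup T$. For \ref{itm:defn:perturbed degree3} at level $i$, an $(i-1)$-edge $T'$ of $\partial_{i-1}(L)$ satisfies $S\cup T'\in E(\partial_{j+i-1}(G))$, and its non-neighbours in $\partial_i(L)$ are exactly those of $S\cup T'$ in $\partial_{j+i}(G)$ together with the at most $j$ vertices of $S$; \ref{itm:defn:perturbed degree3} for $G$ bounds the former by $\le\alpha n$, and the $O(1)$ correction is negligible.

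The only condition that needs real work is \ref{itm:defn:perturbed degree2} for $L$. Its non-edges at level $i$ are the $i$-sets meeting $S$ (there are $O(n^{i-1})$) and the $i$-sets $T$ disjoint from $S$ with $S\cup T\in\overline{\partial_{j+i}(G)}$, i.e.\ exactly $\deg_{\overline{\partial_{j+i}(G)}}(S)$ of them. I would bound this last quantity by induction on $i$: for $i=1$ it is $\le\alpha n$ by \ref{itm:defn:perturbed degree3} for $G$; for $i\ge2$ I would split the bad sets $T$ according to whether every $v\in T$ has $S\cup\{v\}\notin E(\partial_{j+1}(G))$ --- at most $\binom{\alpha n}{i}\le\alpha\binom{n}{i}$ of these, since such $v$ form a set of size $<\alpha n$ --- or some $v\in T$ has $S\cup\{v\}\in E(\partial_{j+1}(G))$, in which case $T\setminus\{v\}$ is one of the (inductively) $O_k(\alpha)\binom{n}{i-1}$ bad $(i-1)$-sets above the edge $S\cup\{v\}$, giving $\le n\cdot O_k(\alpha)\binom{n}{i-1}=O_k(\alpha)\binom{n}{i}$ in total. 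The accumulated factor depends only on $k$ and hence is absorbed using $\alpha\ll1/k$, so $\overline{\partial_i(L)}$ has density at most $\alpha$, as needed.

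For \ref{itm:perturb prop 2}, I would first record that $uv\in E(\partial_2(A_i))$ exactly when $A_i\cup\{u,v\}\in E(\partial_{m+2}(G))$; in particular each $A_i$ must itself lie in an edge of $G$ (this is necessary for the conclusion, and I assume it). Since $m\le k-4$, conditions \ref{itm:defn:perturbed degree1}--\ref{itm:defn:perturbed degree3} for $G$ are available up to level $m+2$ --- equivalently, one may apply \ref{itm:perturb prop 1} to $L_G(A_i)$, whose perturbed degree is controlled at the level $k-m-2\ge2$. Applying \ref{itm:defn:perturbed degree3} for $G$ at level $m+1$ to the $m$-edge $A_i$, all but $\le\alpha n$ vertices $u$ have $A_i\cup\{u\}\in E(\partial_{m+1}(G))$; deleting from $B$ the at most $\ell\alpha n$ vertices failing this for some $i$ together with the at most $\ell m$ vertices in $\bigcup_iA_i$, the hypothesis $|B|>\alpha m n$ leaves a vertex $u$. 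Then each $A_i\cup\{u\}$ is an $(m+1)$-edge of $\partial_{m+1}(G)$, so \ref{itm:defn:perturbed degree3} at level $m+2$ gives $\le\alpha n$ vertices $v$ with $A_i\cup\{u,v\}\notin E(\partial_{m+2}(G))$; deleting these $\le\ell\alpha n$ vertices, the vertices of $\bigcup_iA_i$, and $u$ itself from $B$, another vertex $v$ survives. Then $A_i\cup\{u,v\}\in E(\partial_{m+2}(G))$ for every $i$, i.e.\ $uv\in\bigcap_{i=1}^\ell E(\partial_2(A_i))$ with $u,v\in B$, as required.

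I expect the main obstacle to be the induction for \ref{itm:defn:perturbed degree2} in part \ref{itm:perturb prop 1}: the complement shadow $\overline{\partial_{j+i}(G)}$ is only assumed to be globally sparse, but I need it to be sparse \emph{above every fixed edge} of $\partial_j(G)$, which forces an iteration of \ref{itm:defn:perturbed degree3} rather than a direct use of \ref{itm:defn:perturbed degree2}, together with the bookkeeping that the accumulated constant depends only on $k$. Everything else is a direct unwinding of the definitions.
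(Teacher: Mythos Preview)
Your proof is correct and follows the same line as the paper's, which is extremely terse: the paper simply says \ref{itm:perturb prop 1} ``follows from Definition~\ref{defn:perturbed degree}'' and gives a two-line count for \ref{itm:perturb prop 2}. You are right that \ref{itm:defn:perturbed degree2} for the link graph does not follow directly from \ref{itm:defn:perturbed degree2} for $G$ (the latter is a global density bound on $\overline{\partial_{j+i}(G)}$, the former a density bound on non-edges \emph{above the fixed edge} $S$), and your iteration of \ref{itm:defn:perturbed degree3} is exactly what is needed to bridge this; the paper's one-line proof suppresses this entirely. Your two-step selection for \ref{itm:perturb prop 2}---pick $u\in B$ non-isolated in every $\partial_2(A_i)$ via \ref{itm:defn:perturbed degree3} at level $m+1$, then pick $v\in B$ via \ref{itm:defn:perturbed degree3} at level $m+2$---is precisely the paper's argument.

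The only quibble is numerical: your deletions in \ref{itm:perturb prop 2} total roughly $2\ell\alpha n$, while the hypothesis gives only $|B|>\alpha m n$, and there is no a priori relation between $m$ and $\ell$. The paper's arithmetic has the same mismatch (it writes $m\cdot\alpha n$ where the count of bad vertices is governed by $\ell$, and it also picks the first vertex in $V(G)$ rather than in $B$). In every application in the paper one has $\ell\le 3$ and $|B|$ comfortably exceeds what is needed, so nothing breaks, but the stated bound $|B|>\alpha m n$ should morally read $|B|>c\,\ell\alpha n$ for a suitable constant $c$.
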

\begin{proof}
    Note that~\ref{itm:perturb prop 1} follows from Definition~\ref{defn:perturbed degree}. For~\ref{itm:perturb prop 2}, by Definition~\ref{defn:perturbed degree}, there are at most $\alpha n$ isolated vertices in~$\partial_2(A_i)$ for~$i\in [\ell]$. This implies that there is a vertex~$v\in V(G)$ which is not isolated in all~$\partial_2(A_i)$. By Definition~\ref{defn:perturbed degree}~\ref{itm:defn:perturbed degree3}, the relative degree of~$v$ in~$V(\overline{\partial_2(A_i)})$ is at most $\alpha$. Note that 
    \begin{align*}
        |B|- m \cdot \alpha n > \alpha m n - \alpha m n= 0.
    \end{align*}
    Hence, there exists an edge $uv\in \bigcap_{i=1}^{\ell}E(\partial_2(A_i))$ with~$u,v\in B$. 
\end{proof}
We remark that the transition from perturbed edges to perturbed minimum degree comes at negligible costs. We get the following lemma.
\begin{lemma}[{\cite[Lemma~2.3]{lang2022minimum}}]\label{perturbed degree}
Let $1/n\ll \varepsilon\ll \alpha\ll \delta,1/k$. Let $G$ be a $k$-graph on~$n$ vertices with~$\overline{\delta_\ell}(G)\ge \delta$. Let $I$ be a subgraph of~$G$ of edge density at most $\varepsilon$. Then there exists a spanning subgraph $G'$ of~$G-I$ with~$\overline{\delta}^{\alpha}_{\ell}(G')\ge \delta-\alpha$.
\end{lemma}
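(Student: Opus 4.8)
The plan is to obtain $G'$ from $G-I$ by repeatedly deleting every edge through a \emph{light} set --- a $j$-set ($j\in[\ell]$) of positive but too-small relative degree --- and then to check that~\ref{itm:defn:perturbed degree1}--\ref{itm:defn:perturbed degree3} hold with $\delta$ replaced by $\delta-\alpha$. Two facts are used throughout: an argument identical to Proposition~\ref{j-degree} gives $\overline{\delta}_j(G)\ge\overline{\delta}_\ell(G)\ge\delta$ for every $j\in[\ell]$, so in $G$ every $j$-set with $j\le\ell$ lies in $\partial_j(G)$ and has relative degree at least $\delta$; and, for any edge set $D$ with $|D|\le\eta\binom{n}{k}$, any $j\le\ell$ and any $\rho>0$, counting incidences between $D$ and the $j$-subsets of its edges (and using $\binom{n}{k}\binom{k}{j}=\binom{n}{j}\binom{n-j}{k-j}$) shows that at most $(\eta/\rho)\binom{n}{j}$ many $j$-sets lose at least $\rho\binom{n-j}{k-j}$ of their $G$-edges to $D$; in particular at most $(\eta/\delta)\binom{n}{j}$ many $j$-sets leave $\partial_j(G-D)$. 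The cleaning process is: set $H:=G-I$; while there are $j\in[\ell]$ and a $j$-set $S$ with $0<\deg_H(S)<(\delta-\alpha)\binom{n-j}{k-j}$, delete from $H$ every edge containing $S$; output $G'$ once no such $S$ remains. Each step removes $S$ from $\partial_j(H)$ for good, so $\sum_{j\le\ell}|\partial_j(H)|$ strictly decreases and the process halts, and by construction $G'$ satisfies~\ref{itm:defn:perturbed degree1} with threshold $\delta-\alpha$ at every level.

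The heart of the argument is to show the total number $|D|$ of deleted edges, $D=E(G-I)\setminus E(G')$, is at most $\alpha^{4}\binom{n}{k}$, which is comfortable given $\varepsilon\ll\alpha$. A direct charge of each deleted edge to the $j$-set whose processing removed it loses a factor $(\delta/\alpha)\sum_{j\le\ell}\binom{k}{j}>1$, so instead I would partition the run of the process into $O_k(1)$ phases delimited by a slowly tightening buffer $\alpha/2=\gamma_1<\gamma_2<\cdots\to\alpha$ (in phase $t$ one processes only $j$-sets of relative degree below $\delta-\gamma_t$), and use the dichotomy: if processing a $j$-set $S$ pushes an already-cleaned $j'$-set $S'$ with $j'>j$ below the current threshold, then either $S\subseteq S'$, so that $S'$ is zeroed out entirely and is thereafter harmless for~\ref{itm:defn:perturbed degree1}; or $S$ and $S'$ are incomparable, and then, aggregated over all such $S$ at once, $S'$ either loses only an $O_k(\varepsilon/\alpha)$-fraction of its edges (harmless, being below the buffer gap) or loses a constant fraction of its edges, hence by a multiplicative drop is itself zeroed within the next few phases. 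Feeding the incidence bound of the first paragraph back into itself, the number of edges deleted in phase $t+1$ is at most a factor --- kept strictly below $1$ by the buffer gaps and by $\varepsilon\ll\alpha\ll 1/k$ --- times the number deleted in phase $t$, so the total is geometric and at most $\alpha^{4}\binom{n}{k}$. Making this dichotomy quantitative and ruling out an uncontrolled cascade down the levels is the one real difficulty; the remaining steps are routine counting.

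It remains to verify~\ref{itm:defn:perturbed degree2} and~\ref{itm:defn:perturbed degree3}. Since $|D\cup I|\le 2\alpha^{4}\binom{n}{k}$ and every $j$-set with $j\le\ell$ lies in $\partial_j(G)$, the incidence bound with $\rho=\delta$ gives at most $(2\alpha^{4}/\delta)\binom{n}{j}\le\alpha\binom{n}{j}$ many $j$-sets leaving the shadow, which is~\ref{itm:defn:perturbed degree2}. Each such $j$-set was zeroed, so the number of pairs $(T,v)$ with $T$ a $(j-1)$-set and $T\cup\{v\}\notin\partial_j(G')$ is at most $j$ times the number of these $j$-sets; using $\binom{n}{j}=\binom{n}{j-1}(n-j+1)/j$ it follows that at most $\alpha\binom{n}{j-1}$ many $(j-1)$-sets can have $\alpha(n-j+1)$ or more of their one-vertex extensions outside $\partial_j(G')$, which is~\ref{itm:defn:perturbed degree3}. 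This completes the verification that $\overline{\delta}^{\alpha}_{\ell}(G')\ge\delta-\alpha$.
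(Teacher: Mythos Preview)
The paper does not give its own proof of this lemma; it is quoted as \cite[Lemma~2.3]{lang2022minimum}. Your cleaning approach is the natural one and is in the spirit of that reference, but the write-up has a genuine gap.

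The gap is in your verification of~\ref{itm:defn:perturbed degree3}. You show that at most $\alpha\binom{n}{j-1}$ many $(j-1)$-sets $T$ have $\alpha(n-j+1)$ or more one-vertex extensions outside $\partial_j(G')$, and then declare this to be~\ref{itm:defn:perturbed degree3}. But~\ref{itm:defn:perturbed degree3} is a universal statement: it asserts that \emph{every} $T\in\partial_{j-1}(G')$ has fewer than $\alpha(n-j+1)$ such extensions. Bounding the number of violators is not the same as showing there are none left in the shadow, and your cleaning rule --- which only deletes through $j$-sets of small positive degree --- does nothing to force such a $(j-1)$-set $T$ out of $\partial_{j-1}(G')$. (Having many zeroed $j$-supersets does not by itself make $\deg_{G'}(T)$ small: the surviving edges through $T$ simply avoid the bad extension vertices, and there can still be close to $\binom{(1-\alpha)n}{k-j+1}$ of them.) The fix is to add a second cleaning rule --- delete all edges through any $(j-1)$-set with positive degree and at least $\alpha(n-j+1)$ extensions outside the current $j$-shadow --- and fold those deletions into the bookkeeping for $|D|$.

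Separately, you yourself flag the bound $|D|\le\alpha^4\binom{n}{k}$ as ``the one real difficulty'' and give only a sketch. The phased-buffer outline is plausible but is not a proof as written: the dichotomy you describe does not by itself produce a contraction factor strictly below $1$, and once the second cleaning rule above is added the cascade analysis has to be redone anyway. Since the present paper simply imports the lemma, you would need to consult \cite{lang2022minimum} for a complete argument.
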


\section{Proof sketch of Theorem~\ref{Theorem Matching threshold}}\label{sketch}

In this section, we sketch the proof of Theorem~\ref{Theorem Matching threshold} and discuss some new ideas we used.

Let $T$ be a $k$-loose tree on $n$ vertices. 
Suppose that we are given a large $k$-graph $G$~with $n$ vertices with $\overline{\delta}_{k-2}(G)\ge 1/2+2\gamma$.
We now find an embedding of~$T$ in~$G$ using the absorption technique, which splits the proof into the following three steps. 

\noindent
\textbf{Step 1: find an absorber.} Fix a root vertex~$r$ in~$T$ such that $T$ is union of two $k$-loose trees $T_1$ and $T_2$ both rooted at~$r$, where $T_1$ is small.
Find an embedding~$\psi_1$ from~$T_1$ to~$G$ such that any almost embedding of~$T$ can be extended to a full embedding (see Lemma~\ref{Absorption lemma}).

\noindent
\textbf{Step 2: embed an almost spanning tree.} 
Remove the vertices in $G$ that is used to embed~$T_1 \setminus r$ and call the resulting graph~$G_2$.
Since $T_1$ is small, $\overline{\delta_\ell}(G_2)\ge 1/2+\gamma$. 
Find an almost embedding~$\psi_2$ from~$T_2$ into~$G_2$ such that $\psi_1(r) = \psi_2(r)$, that is, the root vertices of $T_1$ and $T_2$ are mapped to the same vertex in~$G$.  
Together with $\psi_1$, we obtain an almost embedding from~$T$ to~$G$. 

\noindent
\textbf{Step 3: complete the embedding.} Extend the embedding of Step 2 to be a spanning tree by our absorber.

Clearly, Step~3 follows immediately from Steps~1 and~2.

Our proof of Step~1 follows a similar approach to~\cite{pehova2024embedding}, which was inspired by~\cite{Bottcher2019,Bottcher2020,stein2020tree}.
We describe the key ingredient for the absorber. 
Suppose we have already had an almost embedding~$\psi$ of~$T$ and we would like to extend the embedding by attaching an edge to~$\psi(x)$.
Pick vertices $w_2,\dots, w_k$ in~$G$ that are not covered. 
The embedding~$\psi_1$ of~$T_1$ contains some (unused) absorbing tuple, which consists of $k-1$ vertex~disjoint loose stars.
More importantly, the centres of these star together with $\psi(x)$ is an edge~in $G$ and (in~$\psi$) we can replace centres of these stars with $w_2,\dots, w_k$.
Thus we have `enlarged' the embedding~$\psi$. 
See Section~\ref{sec:absorptionlemma} for further details. 
Note that $\overline{\delta_1}(G)\ge 1/2+\gamma$ is already sufficient for Step~1.

Therefore, we now focus on Step~2. 
In particular, we aim to find an almost spanning $k$-loose tree~$T_2$ in the $k$-graph~$G_2$.
We first apply the weak hypergraph regularity lemma and obtain a reduce graph~$R_2$. 
To embed~$T_2$ into~$G_2$, we find a homomorphism~$\phi$ from~$T_2$ to~$R_2$ and then turn it into an embedding of~$T_2$ in~$G_2$. 
Note that we need to ensure that $\phi$ does not map too many vertices of~$T_2$ into one particular cluster of~$R_2$. 
We now give a sketch on how to construct $\phi$ and highlight the necessary properties of~$R_2$ that we need (see Section~\ref{embed } for their formal definitions).

First we find a perfect matching~$M = \{ e_1, \dots, e_\ell\}$ in $R_2$. 
This exists by our assumption as $\overline{\delta}_{k-2}(R_2) \ge \overline{\delta}_{k-2}(G_2) - \gamma/2 \ge (1 + \gamma)/2 \ge \delta_{k,k-2}^{PM}$.
We break $T_2$ into smaller trees $T_{2,1}, \dots, T_{2,s}$ with a large constant $s  \gg \ell$, where $T_{2,i}$ is a subtree of $\bigcup_{i' \in [i]} T_{2,i'}$.
We construct our homomorphism~$\phi$ by first mapping (most of) $T_{2,1}, \dots T_{2,s_1}$ into~$e_1$, $T_{2,s_1+1}, \dots T_{2,s_2}$ into~$e_2$ and so on. 

Since $T_2$ is connected, we require $e_1$ to be connected to~$e_2$ or else we cannot find a homomorphism. 
However just being in the same loose component is not sufficient as illustrated by the following example. 
Suppose that $R$ is a $4$-graph with vertex set $V_1 \cup V_2$ with $V_1 \cap V_2 = \emptyset$ and that all edges in~$R$ contains even number of vertices in~$V_1$.
Observe that there is no homomorphism of a $4$-loose path that maps edges to an edge in~$V_1$ and an edge in~$V_2$. 
Thus, we require a stronger notation, which we call `reachable'.
Roughly speaking, an edge~$e$ is reachable from an edge~$f$ if we can find a homomorphism of~$T_2$ that maps its root to a given vertex in~$f$ and maps almost all vertices of~$T_2$ to~$e$.
Note that reachable may not be symmetric but induces a partial ordering. 
We require that reachable induced a linear ordering on~$M$, namely, $e_j$ is reachable from $e_i$ for $i \le j$. 

Recall that $\phi$ will map most of $T_{2,1}, \dots T_{2,s_1}$ into~$e_1$.
Suppose for each such $T_{2,i}$, majority of its vertices lie in the same colour class that contains in its root. 
Thus a naive homomorphism from $\bigcup_{i \in [s_1]} T_{2,i}$ into~$e_1$ will map most of the vertices onto one vertex of~$e_1$.
In order to ensure that each vertex of~$e_1$ has similar number of preimages, we require each edge to be `rotatable'. 

We say that $R_2$ is `robust' if it satisfies the three mentioned properties of having a  perfect (fractional) matching, reachable and rotatable.
In fact, one of our contribution is to show that being robust is already suffices to Step~2, that is, finding a almost spanning tree, see Section~\ref{sec:almostembedding}.

Recall in the outline of Step~$2$ that we require both roots of $T_1$ and $T_2$ to be mapped onto the same vertex~$x^*$ in~$G$. 
Hence, we actually find a robust subgraph in the reduce graph before Step~$1$.
In particular, we pick~$x^*$ to be in the `first edge' of~$M$ (in the ordering induced by reachability). 
After embedding~$T_1$, the clusters of the reduce graph may have different sizes. 
Since $T_1$ is small, our robust condition still ensures the existence of a perfect fractional matching.

\subsection{Verifying robustness}
Marjory of our paper is dedicated to verifying the reduce graph contains a spanning robust subgraph.

We first review the known cases when $(k,\ell) = (k,k-1)$~\cite{pavez2024dirac} and $(k,\ell) = (3,1)$~\cite{pehova2024embedding}. 
In both cases, we have $\delta_{k,\ell}^T=\delta^{THC}_{k,\ell}$.
This ensures that there is a tight Hamilton cycle~$C$ in the reduce graph. 
Note that $C$ contains a perfect fractional matching. 
By winding around~$C$, one can show that $C$ is reachable.
When $(k,\ell) = (3,1)$, $C$ is also rotatable (c.f.~\cite[Lemma~3.1]{pehova2024embedding}). 
Therefore $C$ is robust. 

In this paper, we consider the case $(k,\ell) = (k,k-2)$ with $k \ge 4$.  
Most importantly, $\delta^{PM}_{k,k-2}<\delta^{THC}_{k,k-2}$ as discussed in Section~\ref{intro}.
Thus, one cannot guarantee a tight Hamiltonian cycle in the reduce graph and so various new ideas are needed to find a spanning robust subgraph.

For simplicity, we focus on the case when $(k,\ell) = (4,2)$.
We first describe the structure of the robust subgraph.
Consider $A \in \binom{V(R)}{2}$.
Note that its link graph is a $2$-graph and let $C_A$ be the largest component. 
Let $E_A$ be the set of $4$-edges of the form $e\cup A$ for $e\in E(C_A)$.
Note that edges in $E_A$ are tight connected (and being tight connected is ideal for being reachable).
Let $R^*$ be the subgraph induced by union of $E_A$ for all $A \in \binom{V(R)}{2}$.
See  Section~\ref{defn} for formal definition.
In Section~\ref{matching}, we show that $R^*$ satisfies the perfect matching property. 

Recall that $E_A$ is tight connected for $A\in \binom{V(R)}{2}$. 
Thus edges in $E_A$ are reachable within themselves. 
We can define an equivalent relationship on~$\binom{V(R)}{2}$ such that $A\sim A'$ if and only if $E_A\cup E_{A'}$ are reachable within themselves.
This can be represented by a colouring of~$\binom{V(R)}{2}$, i.e. an edge-colouring of the complete graph on $V(R)$. 
Furthermore, we prove that the colouring is Gallai (i.e., no rainbow triangle) and locally $2$-edge colouring, see Lemma~\ref{Gallai coloured}.
By analysing monochromatic components, we obtain the desired enumeration of~$E_{R^*}$.
See Section~\ref{reachable } for further discussions and its proof.

Roughly speaking, for a permutation $\sigma \in S_4$, we say that an edge $e = x_1x_2x_3x_4$ is $\sigma$-rotatable if for any $4$-loose tree $T$ with root $r$ and $i \in [4]$, there is a homomorphism~$\psi$ such that $r$ maps to $x_i$ and most of $C_i(T)$ is mapped to $x_{\sigma(i)}$.
Thus $e$ is rotatable if it is $\sigma$-rotatable for all $\sigma \in S_4$.
We will show that $e$ is $\sigma$-rotatable for $\sigma \in \{(12), (13), (14)\}$.
Since $\{(12), (13), (14)\}$ is a generator set of~$S_4$, we can then deduce that $e$ is rotatable. 
It should be noted that the general case when $k \ge 4$ can be deduced to the case when $k = 4$. 
See Section~\ref{rotatable  } for further discussions and its proof.


\section{A sufficient condition for almost spanning trees embedding}\label{embed }
 In this section, we discuss some natural conditions and give a sufficient condition for embedding almost spanning loose trees.

 We now consider a $k$-graph $G$ with its reduced graph $R$ obtained by hypergraph regularity lemma (see Section~\ref{Regular}). Let $T$ be an almost spanning $k$-loose tree. To embed~$T$, we will construct a homomorphism $\phi$ from~$T$ to~$R$ and then turn it into an embedding in~$G$.  Next, we discuss three natural conditions that $R$ needs to satisfy in order to construct $\phi$.

Recall that there exists a bounded vertex degree loose tree with a perfect matching.
Thus, we seek a (fractional) perfect matching structure in~$R$. Secondly, any two edges in a loose tree are connected by a loose path. This requires us to connect edges in the matching of~$R$. It is called \emph{reachable}. Meanwhile, to construct homomorphism in reduced graph $R$, we need to guarantee that there is no cluster overused. This is called \emph{rotatable}. Formally, we have the following definitions.

\begin{defn}($\omega$-fractional matching)\label{defn:fractional matching}
    Let $G$ be a $k$-graph and a vertex weighting $\omega:V(G)\rightarrow [0,1]$. We say $\omega^*:E(G)\rightarrow [0,1]$ is an \emph{$\omega$-fractional matching} if $\sum_{e\ni v}\omega^*(e)\le \omega(v)$ for all $v\in V(G)$. The \emph{size of~$\omega^*$} is $\sum_{e\in E(G)}\omega^*(e)$. We say $\omega^*$ is \emph{perfect} if the size of~$\omega^*$ is $\sum_{v\in V(G)}\omega(v)/k$.
\end{defn}
 The next definition will enable us to find a tree homomorphism on the reduced graph from one edge to another edge.
\begin{defn}\label{Defn:reachable}(Reachable)
    Let $G$ be a $k$-graph and $C \in \mathbb{N}$. We say that an edge $e\in E(G)$ is \emph{$C$-reachable} from a vertex $u\in V(G)$ if for any rooted $k$-loose tree~$T$ at~$r$, there exists a homomorphism~$\phi$ from~$T$ to~$G$ such that $\phi(r)=u$ and $\phi(v)\in e$ for all $v$ with~$\dist(r,v)>C$. Moreover, we say $e$ is \emph{$C$-reachable} from~$e'\in E(G)$ if $e$ is $C$-reachable from all $u\in e'$. 
\end{defn}

Meanwhile, we define a rotation property to embed vertices on each cluster in the reduced graph balanced. Recall that $S_k$ is the set of permutation of~$[k]$.

\begin{defn}\label{defn:rotatable}(Rotatable)
Let $C \in \mathbb{N}$, $G$ be a $k$-graph and $e=u_1\dots u_k\in E(G)$. We say $e$ is \emph{$C$-rotatable} if for any $u\in e$, any $\sigma\in S_k$ and any rooted $k$-loose tree~$T$ at~$r$, there exists a homomorphism~$\phi$ from~$T$ to~$G$ such that $\phi(r)=u$ and $\phi(v)=u_{\sigma(s)}$ for all $v\in C_s(T)$ with~$\dist(r,v)\ge C$.
\end{defn}

The next property shows that reachable and rotatable imply that we can map most edges in a tree to some specific edge while the root vertex is mapped to another edge. 
\begin{proposition}\label{prop:homo exist}
    Let $G$ be a $k$-graph, $v\in V(G)$ and $e=x_1\dots x_k\in E(G)$. Suppose that $e$ is $C$-reachable from~$v$ and $C'$-rotatable. Let $T$ be a rooted $k$-loose tree at~$r$ with~$\Delta_1(T)\le \Delta$. Then there exists a homomorphism~$\phi$ from~$T$ to~$G$ such that $\phi(r)=v$ and for all $j\in [k]$, $$|C_j(T)\backslash \phi^{-1}(x_j)|\le (k\Delta)^{C+C'}.$$
\end{proposition}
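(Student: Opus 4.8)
The plan is to combine the reachability homomorphism with the rotatability homomorphism along a two-stage decomposition of $T$. First I would use the hypothesis that $e$ is $C$-reachable from $v$: applied to the rooted loose tree $T$ itself, this yields a homomorphism $\phi_0$ from $T$ to $G$ with $\phi_0(r) = v$ and $\phi_0(w) \in e$ for every vertex $w$ with $\dist(r,w) > C$. This already controls everything far from the root, but it says nothing about \emph{which} vertex $x_j$ of $e$ the colour class $C_j(T)$ lands on — that is exactly what rotatability must fix. So the real work is to re-route the part of $T$ beyond distance $C$ using $\sigma$-rotatable homomorphisms with a correctly chosen permutation $\sigma$.

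The key step is the following. Consider the finitely many vertices $w$ of $T$ at distance exactly $C$ from $r$ (there are at most $(k\Delta)^C$ of them, since $T$ has maximum vertex degree $\le\Delta$ and hence at most $k\Delta$ vertices within tight-distance growth at each step). For each such $w$, let $T(w)$ be the subtree of $T$ hanging below $w$, rooted at $w$; these subtrees, together with the ``core'' $T_{\le C}$ of vertices within distance $C$ of $r$, partition $V(T)$ up to the shared roots $w$. Map the core $T_{\le C}$ into $G$ by the restriction of $\phi_0$ (or by any homomorphism sending $r$ to $v$ that matches $\phi_0$ on the boundary vertices $w$); this touches at most $(k\Delta)^{C+1}$ vertices in total, so it contributes at most $(k\Delta)^{C+1}$ to each $|C_j(T)\setminus\phi^{-1}(x_j)|$, which is absorbed into the claimed bound. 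For each boundary vertex $w$, we have $\phi_0(w) = x_{j_0}\in e$ for some index $j_0$ determined by the colour class of $w$; because $T$ is properly coloured with the layering respecting the cyclic shift, a vertex of colour class $C_s(T)$ inside $T(w)$ must be mapped, under \emph{any} homomorphism of $T(w)$ into $G$ fixing $w\mapsto x_{j_0}$ followed by a permutation, in a way that is consistent colour-class by colour-class. Now apply Definition~\ref{defn:rotatable} to $T(w)$: there is $C'$ and a homomorphism $\phi_w$ of $T(w)$ into $G$ with $\phi_w(w) = x_{j_0}$ and $\phi_w(u) = x_{\sigma(s)}$ for all $u\in C_s(T(w))$ with $\dist(w,u)\ge C'$, where $\sigma\in S_k$ is chosen so that $\sigma$ sends the colour-shift offset at $w$ back to the identity — concretely, if the layering tells us $C_s(T)$ restricted to $T(w)$ lives in position $s - j_0 + 1 \pmod k$ relative to $w$'s local colouring, we pick $\sigma$ to undo this shift so that $\phi_w$ places $C_s(T)\cap V(T(w))$ onto $x_s$. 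Gluing $\phi_w$ for all boundary $w$ together with the core map gives the desired $\phi$: it is a homomorphism because the pieces agree on the shared vertices $w$, and for each $j$ the set $C_j(T)\setminus\phi^{-1}(x_j)$ is contained in the core (size $\le (k\Delta)^{C+1}$) together with, for each of the $\le (k\Delta)^C$ boundary vertices $w$, the $\le (k\Delta)^{C'}$ vertices of $T(w)$ within distance $C'$ of $w$; in total this is at most $(k\Delta)^{C+1} + (k\Delta)^{C}(k\Delta)^{C'} \le (k\Delta)^{C+C'}$ as claimed (possibly after enlarging the exponent by a harmless additive constant, or observing $C, C' \ge 1$).

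The main obstacle I anticipate is the bookkeeping of colour classes under rotation: I must verify that the single permutation $\sigma$ supplied by $C'$-rotatability of $e$ can be chosen, for each boundary subtree $T(w)$, so that it simultaneously (i) respects that $\phi_w$ must send the root $w$ to the prescribed vertex $x_{j_0} = \phi_0(w)$ — which constrains $\sigma$ on one coordinate — and (ii) sends the globally-$j$-coloured vertices of $T(w)$ to $x_j$. These two requirements are compatible precisely because in a properly coloured layered loose tree the colour class of any vertex is a fixed cyclic shift (depending only on the layer index mod $k$) of the colour class it would have relative to a local root, so the constraint ``$w\mapsto x_{j_0}$'' already pins down the shift, and rotatability lets us realize exactly that shift as $\sigma$. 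I would isolate this compatibility as a short lemma about layerings (essentially re-reading the definition of layering in Section~\ref{sturc of tree} and Fact~\ref{partition tree}), after which the counting is immediate. A secondary, purely cosmetic point is to make sure the bound is stated with the exponent $C+C'$ rather than $\max(C,C')+1$ or similar; choosing the crude estimate $(k\Delta)^{C+1} \le (k\Delta)^{C+C'}$ (valid since $C'\ge 1$) handles this.
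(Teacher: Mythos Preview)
Your proposal is correct and follows essentially the same two-stage argument as the paper: apply reachability to push everything beyond distance $C$ into $e$, then for each subtree $T(w)$ hanging off a vertex $w$ at that boundary, re-colour its classes via the cyclic shift $\tau=(12\dots k)$ and invoke rotatability with the appropriate $\sigma$ to align $C_j(T)\cap V(T(w))$ with $x_j$. The only cosmetic difference is the counting: rather than summing $(k\Delta)^{C+1}+(k\Delta)^C(k\Delta)^{C'}$ and fussing over the exponent, the paper simply observes that every vertex of $C_j(T)\setminus\phi^{-1}(x_j)$ lies within distance $C+C'$ of $r$, giving the bound $(k\Delta)^{C+C'}$ directly.
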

\begin{proof}
     By Definition~\ref{Defn:reachable}, there is a homomorphism~$\phi'$ from~$T$ to~$G$ such that for~$w\in V(T)$ with~$\dist(r,w)\ge C$, $\phi'(w)\in e$. Let $\tau=(12\dots k)\in S_k$. Then for vertex $w\in V(T)$ with~$\dist(r,w)=C$ and $w\in C_i(T)$, consider the subtree $T(w)$ with colour classes $$C_j(T(w))=C_{\tau^{i-1}(j)}(T)\cap V(T(w)).$$ 
     Hence, $w\in C_i(T)\cap V(T(w))=C_1(T(w))$. By Definition~\ref{defn:rotatable}, there is a homomorphism~$\phi_w$ from~$T(w)$ to~$G$ such that $\phi_w(w)=\phi'(w)\in e$ and, for each vertex $x\in C_{\tau^{-(i-1)}(j)}(T(w))=C_j(T)\cap V(T(w))$ with~$\dist(w,x)\ge C'$, we have  $\phi_w(x)=x_j$. Define $\phi$ to be such that
  \[
\phi(u) =
\begin{cases}
\phi'(u)&\quad \text{ if }\dist(r,u)\le C, \\
\phi_w(u)&\quad \text{ if }u\in T(w).
\end{cases}
\]
 If $u\in C_j(T)$ with~$\dist(r,u)\ge C+C'$, then we have $\phi(u)=x_j$. So $|C_j(T)\backslash \phi^{-1}(x_j)|\le (k\Delta)^{C+C'}$.
\end{proof}
Next, we define the key properties for embedding almost spanning loose trees.
\begin{defn}~\label{robust}
    Let $G$ be a $k$-graph on~$n$ vertices and $\eta >0$. We say $G$ is $\eta$-\emph{robust} if 
    \begin{enumerate}[label={\rm(R\arabic*)}]
        \item\label{robust1} for any vertex weighting $\omega:V(G)\rightarrow [0,1]$ with~$\sum_{v\in V(G)}\omega(v)\ge (1-\eta)|V(G)|$ and $\omega(v)=0$ if $v$ is an isolated vertex of~$G$, there exists a perfect $\omega$-fractional matching $\omega^{\ast}$ of~$G$;
        \item\label{robust2} there exists an integer $C_1=C_1(n)$ and an enumeration of~$E(G)$, $e_1,e_2,\dots ,e_{|E(G)|}$ such that $e_i$ is $C_1$-reachable from~$e_j$ for all $i\ge j$;
        \item\label{robust3} there exists an integer $C_2=C_2(n)$ such that all edges in~$E(G)$ are $C_2$-rotatable.
\end{enumerate}
     For~$1\le \ell \le k-1$, the \emph{robust $\ell$-degree threshold for~$k$-graph}, denoted by~$\delta_{k,\ell}^R$, is the infimum $\delta>0$ such that for~$1/n\ll \alpha\ll \eta \ll \gamma$, every $k$-graph $G$ on~$n$ vertices with~$\overline{\delta}^\alpha_{\ell }(G)\ge \delta+\gamma$ contains an $\eta$-robust spanning subgraph.
\end{defn}
We remark that our proof shows that $C_1,C_2\le n^{4k}$. Since we will seek an $\eta$-robust subgraph in the reduced graph, $C_1$ and $C_2$ will be treated as large constants independent of the order of host graph.
Note that \ref{robust1} implies that $G$ contains a perfect matching, so we have 
\begin{align}
    \delta_{k,\ell}^R \ge \delta_{k,\ell}^{PM} \ge 1/2. \label{eqn:robustthresholdlowerbound}
\end{align}
Now, we state our general theorem.   

\begin{theorem}\label{generaltheorem}
For~$k\ge 2$ and $\ell\in [k-1]$, we have that $\delta_{k,\ell}^{T}\le \delta_{k,\ell}^{R}$.
\end{theorem}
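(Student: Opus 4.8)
The goal is to show that if the reduced graph of a dense host graph contains an $\eta$-robust spanning subgraph, then the host graph contains every bounded-degree spanning loose tree. The plan is to prove the weaker-looking statement that an $\eta$-robust $k$-graph suffices for \emph{almost} spanning embeddings (this is Step 2 from the sketch), and then combine it with the absorption lemma (Step 1, which only needs $\overline\delta_1(G)\ge 1/2+\gamma$) to upgrade an almost-spanning embedding to a spanning one.

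\textbf{Setup via regularity.} First I would apply the weak (weak = ``reduced graph is a $k$-graph with a vertex partition into clusters'') hypergraph regularity lemma to the host graph $G$, obtaining a reduced $k$-graph $R$ on a bounded number of clusters $V_1,\dots,V_t$, each of size $m=n/t$, together with an $\varepsilon$-regular ``clean'' structure. By the hypothesis $\overline\delta_\ell(G)\ge \delta_{k,\ell}^R+\gamma$ and a standard counting argument (passing from $G$ to $R$ loses only $o(1)$ in relative degree, and a small density of bad $\ell$-tuples can be discarded via Lemma~\ref{perturbed degree}), the reduced graph $R$ satisfies $\overline\delta^{\alpha}_\ell(R)\ge \delta_{k,\ell}^R+\gamma/2$, so by the definition of $\delta_{k,\ell}^R$ it contains an $\eta$-robust spanning subgraph $R'$.

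\textbf{From a homomorphism in $R'$ to an embedding in $G$.} Let $T$ be a rooted $k$-loose tree with $\Delta_1(T)\le\Delta$; pick a small subtree $T_1$ around the root and set $T_2=T-(T_1-r)$. Run the absorption lemma (Lemma~\ref{Absorption lemma}) to embed $T_1$ into $G$, fixing $\psi_1(r)=x^*$ where $x^*$ lies in a cluster belonging to the ``first'' edge $e_1$ of the reachability enumeration~\ref{robust2}; delete the used vertices to get $G_2$ with clusters only slightly shrunk. Now I build a homomorphism $\phi: T_2 \to R'$ as follows. Use \ref{robust1} to produce a perfect $\omega$-fractional matching of $R'$, where $\omega$ records the (nearly equal) cluster sizes; this certifies that we can partition the ``mass'' of $T_2$ among the matching edges $e_1,\dots,e_\ell$ in proportion to cluster sizes. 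Chop $T_2$ into many small rooted subtrees $T_{2,1},\dots,T_{2,s}$ with $s\gg\ell$, each attached to the previously-built forest at a single vertex, and assign consecutive blocks of these subtrees to $e_1,e_2,\dots$ respecting the fractional matching weights. For a block assigned to $e_j$ whose attachment vertex was previously mapped into $e_i$ with $i\le j$, use \ref{robust2} (that $e_j$ is $C_1$-reachable from $e_i$, hence from the specific vertex) together with \ref{robust3} ($e_j$ is $C_2$-rotatable), packaged as Proposition~\ref{prop:homo exist}, to extend $\phi$ so that all but $(k\Delta)^{C_1+C_2}$ vertices of each colour class of that subtree land on the prescribed vertices of $e_j$. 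Summing the error terms over the $s$ blocks and using that $s$ is a constant while $m\to\infty$, the total number of vertices $\phi$ maps into any single cluster is at most $(1+o(1))$ times its capacity; in fact by choosing the block sizes carefully one gets each cluster filled to within $o(m)$ of its size.

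\textbf{Turning $\phi$ into $\psi_2$ and finishing.} With such a balanced homomorphism $\phi:T_2\to R'\subseteq R$ in hand, a now-standard greedy/regularity embedding argument (process vertices of $T_2$ in the layering order, and since each new edge of $T_2$ corresponds to an $\varepsilon$-regular $k$-tuple of clusters with the current ``used'' sets still small, a valid image exists; the bounded degree $\Delta$ keeps the sets to which we restrict from shrinking too fast) yields an actual embedding $\psi_2$ of $T_2$ into $G_2$ with $\psi_2(r)=x^*$ that covers all but $o(n)$ vertices, and leaves exactly the right ``deficiency'' pattern for the absorber. Finally apply Lemma~\ref{Absorption lemma} to absorb the $o(n)$ uncovered vertices, completing the spanning embedding of $T$ and hence $\delta_{k,\ell}^T\le\delta_{k,\ell}^R$.

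\textbf{Main obstacle.} The delicate point is the balancing: ensuring that across all $s$ blocks the preimages $\phi^{-1}$ of the clusters match the cluster sizes up to $o(m)$, so that the regularity embedding never runs out of room in a cluster. This is exactly where \ref{robust1} (to apportion mass), \ref{robust2} (to route the forest across the matching), and \ref{robust3} (to spread mass evenly within each matching edge rather than piling it on one vertex) must be used in concert; the bookkeeping of the additive $(k\Delta)^{C_1+C_2}$ errors against the block count $s$ and the cluster size $m$ is the heart of the proof, and one must also carry along the requirement $\psi_1(r)=\psi_2(r)=x^*$ throughout, which forces the choice of $e_1$ as the source of all reachability routes.
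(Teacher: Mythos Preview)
Your proposal is correct and follows essentially the same three-step strategy as the paper: regularity to obtain an $\eta$-robust reduced subgraph, an absorber immersed by a small subtree rooted at a carefully chosen $v^*\in V_1\subseteq e_1$, a balanced homomorphism $T_2\to R'$ built from the fractional matching plus reachability plus rotatability (packaged as Proposition~\ref{prop:homo exist}), and a greedy regularity embedding followed by absorption. The paper's only refinements over your sketch are bookkeeping details you gesture at: it fixes $v^*$ \emph{before} the absorber step via Lemma~\ref{lem:embed}, protects the clusters of $e_1$ from the absorber embedding via a set $F$, and explicitly trims $T'$ to a subtree $T''$ with $|V(T')|-|V(T'')|=(k-1)\alpha n$ so that the absorption step has exactly $p=\alpha n$ edges to add.
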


 By this theorem, to prove Theorem~\ref{Theorem Matching threshold}, it suffices to show that $\delta_{k,k-2}^{R}=1/2$ (see Lemma~\ref{lem:k-2thereshold}). We prove Theorem~\ref{generaltheorem} in Section~\ref{proof of general theorem} after we introduce the hypergraph regularity lemma.

\section{Weak hypergraph regularity lemma}\label{Regular}
A main tool we use to prove Theorem~\ref{generaltheorem} is the \emph{weak hypergraph regularity lemma}. Given $\varepsilon>0$, we say that a $k$-partite $k$-graph $G$ on sets $V_1,\dots ,V_k$ is \emph{$\varepsilon$-regular} if for every $X_i\subseteq V_i$ of size at least $\varepsilon |V_i|$, we have that
\begin{align*}
    |d_G(X_1,\dots,X_k)-d_{G}(V_1,\dots, V_k)|\le \varepsilon.
\end{align*}

A partition $\{V_0,V_1,\dots, V_t\}$ of the vertex set of a $k$-graph $G$ on $n$ vertices is \emph{$\varepsilon$-regular} if $|V_0|\le \varepsilon n$, all other sets $V_i$ with $i\in [t]$ have equal size, and the graph induced by all but at most $\varepsilon \binom{t}{k}$ $k$-tuple of $V_i$ for $i\in [t]$ is $\varepsilon$-regular.

\begin{theorem}[\cite{ChungFan, Frankl}]
    For every $\varepsilon>0$ and $t_0\in \mathbb{N}$, there exist $T_0,n_0\in \mathbb{N}$ such that every $k$-graph $G$ with at least $n_0$ vertices admits an $\varepsilon$-regular partition $\{V_0,V_1,\dots, V_t\}$, where $t_0\le t\le T_0$.
\end{theorem}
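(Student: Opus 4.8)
The plan is to run the standard Szemer\'edi-type energy-increment argument, adapted to $k$-fold densities. For a partition $\mathcal{P}=\{U_1,\dots,U_m\}$ of $V(G)$ into parts of equal size I define the \emph{index}
\[
\mathrm{ind}(\mathcal{P}) \;=\; \frac{1}{m^k}\sum_{(i_1,\dots,i_k)\in[m]^k} d_G(U_{i_1},\dots,U_{i_k})^2 \;\in\;[0,1],
\]
with the convention $d_G=0$ when the indices are not pairwise distinct. The two facts driving the proof are: (i) \emph{monotonicity} --- if $\mathcal{Q}$ refines $\mathcal{P}$ then $\mathrm{ind}(\mathcal{Q})\ge\mathrm{ind}(\mathcal{P})$, which follows from convexity of $x\mapsto x^2$ applied coordinate by coordinate, since a density over a product of parts is a weighted average of the densities over the products of its sub-parts; and (ii) an \emph{increment lemma}: if $\mathcal{P}$ (with $m\ge 2k$ parts) is not $\varepsilon$-regular, then it has a refinement $\mathcal{Q}$ with at most $m\cdot 2^{\binom{m-1}{k-1}}$ parts and $\mathrm{ind}(\mathcal{Q})\ge\mathrm{ind}(\mathcal{P})+c_k\varepsilon^{k+3}$ for a constant $c_k>0$ depending only on $k$.

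First I would prove the increment lemma. By hypothesis more than $\varepsilon\binom{m}{k}$ of the $k$-subsets of parts fail to be $\varepsilon$-regular; for each such $k$-set $\{U_{i_1},\dots,U_{i_k}\}$ fix witnesses $X_{i_j}\subseteq U_{i_j}$ with $|X_{i_j}|\ge\varepsilon|U_{i_j}|$ and $\bigl|d_G(X_{i_1},\dots,X_{i_k})-d_G(U_{i_1},\dots,U_{i_k})\bigr|>\varepsilon$. Let $\mathcal{Q}$ be the coarsest common refinement: each $U_i$ is cut by the at most $\binom{m-1}{k-1}$ bipartitions $(X_i,U_i\setminus X_i)$ arising from the irregular $k$-sets through $U_i$, hence into at most $2^{\binom{m-1}{k-1}}$ pieces. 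The multilinear \emph{defect} Cauchy--Schwarz inequality shows that for each irregular $k$-set the contribution to $\mathrm{ind}$ rises by at least $\tfrac{|X_{i_1}|\cdots|X_{i_k}|}{|U_{i_1}|\cdots|U_{i_k}|}\bigl(d_G(X)-d_G(U)\bigr)^2\ge\varepsilon^{k}\cdot\varepsilon^{2}=\varepsilon^{k+2}$ in the locally normalised sense, while every other $k$-tuple contributes at least $0$ by monotonicity; since each irregular $k$-set appears as $k!$ ordered tuples of weight $m^{-k}$ each and $\binom{m}{k}k!\ge(m/2)^k$, summing over the $>\varepsilon\binom{m}{k}$ irregular sets yields the gain $c_k\varepsilon^{k+3}$ with $c_k=2^{-k}$.

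Next I would iterate. Starting from an arbitrary equipartition $\mathcal{P}_0$ into $t_0$ parts, as long as the current partition is not $\varepsilon$-regular I apply the increment lemma to obtain $\mathcal{P}_{s+1}$. Since $\mathrm{ind}\le1$ and each step gains $c_k\varepsilon^{k+3}$, the process terminates after at most $s^*=\lceil(c_k\varepsilon^{k+3})^{-1}\rceil$ steps; the number of parts at step $s$ is at most the $s$-fold iterated exponential of $t_0$, so the final partition has at most $T_0$ parts where $T_0$ is a tower of height $O(s^*)$ above $t_0$, which also fixes $n_0$. This gives $t_0\le t\le T_0$.

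Finally I would handle the two bookkeeping requirements the statement imposes, namely an exceptional part $V_0$ with $|V_0|\le\varepsilon n$ and exactly equal sizes for $V_1,\dots,V_t$. At each refinement step, after forming the common refinement I further cut every sub-part into blocks of a single common size $b$ (with $b$ chosen large relative to the current number of parts and to $\varepsilon^{-1}$) and sweep all leftover vertices, together with the old $V_0$, into the new $V_0$; over the at most $s^*$ steps the total swept is at most $\varepsilon n$ once $n\ge n_0$. Removing these vertices only deletes an $\varepsilon$-fraction from each part, so regularity on the cleaned-up parts is preserved after replacing $\varepsilon$ by, say, $2\varepsilon$ at the outset, which is harmless. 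I expect the genuine technical point to be the multilinear defect Cauchy--Schwarz inequality underpinning the increment lemma --- extracting the correct quantitative index gain from a single irregular $k$-tuple --- while the maintenance of equipartition through all iterations is administrative but error-prone; everything else is routine.
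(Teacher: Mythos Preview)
The paper does not give its own proof of this theorem: it is quoted as a black-box result from \cite{ChungFan, Frankl}. Your proposal is essentially the standard energy-increment argument used in those references, so there is nothing to compare on the level of strategy.

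Your sketch is correct in outline. One point worth tightening: with the normalisation $\mathrm{ind}(\mathcal{P})=m^{-k}\sum d^2$ you are implicitly assuming an equipartition, but the common refinement by the witness sets $X_{i_j}$ is not equitable, so monotonicity and the defect calculation are cleanest if you instead use the size-weighted index $\sum \frac{|U_{i_1}|\cdots|U_{i_k}|}{n^k}\,d^2$ (which agrees with yours on equipartitions). With that definition the increment lemma and monotonicity hold for arbitrary refinements, and the equitabilisation step at the end only needs to control a small additive loss in the index when vertices are swept into $V_0$; choosing the block size $b$ so that the total swept is $o(\varepsilon^{k+3})\cdot n$ per step makes the net gain survive. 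Apart from this bookkeeping, and the multilinear defect inequality you already flagged as the real content, the argument is complete.
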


Given an $\varepsilon$-regular partition $\mathcal{Q}=\{V_0,\dots, V_t\}$ of the vertex set of a $k$-graph $G$, we define the \emph{$(\mathcal{Q},\varepsilon, d)$-reduced graph} $R$ on vertex set $\{1,\dots, t\}$ corresponding to the sets $\{V_0,\dots,V_t\}$. The edges of $R$ are all $\varepsilon$-regular $k$-tuples of density at least $d$. 
By~Lemma~\ref{perturbed degree}, we can imply the following degree version of hypergraph regularity lemma.

\begin{lemma}\label{regular set up}
    For $0\ll \varepsilon\ll \alpha\ll d \ll \delta$, $k>j\ge 1$ and $t_0,\in \mathbb{N}$, there exist $T_0,n_0\in \mathbb{N}$ such that the following holds. Suppose that $G$ is a $k$-graph on $n\ge n_0$ vertices with minimum degree $\delta_{j}(G)\ge \delta\binom{n}{k-j}$. Then $G$ admits an $\varepsilon$-regular partition $\mathcal{Q}=\{V_0,\dots, V_t\}$ such that $t_0\le t<T_0 $ and the $(\mathcal{Q},\varepsilon,d)$-reduced graph $R$ contains a spanning subgraph $R'$ with minimum perturbed degree $\delta_{j}^{\alpha}(R')\ge (\delta-\alpha-d)\binom{t}{k-j}$
\end{lemma}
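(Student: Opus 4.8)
The plan is to combine the weak hypergraph regularity lemma with the perturbed-degree transfer lemma (Lemma~\ref{perturbed degree}) in a straightforward two-step fashion. First I would apply the weak hypergraph regularity lemma (the $\varepsilon$-regular partition theorem quoted above) to $G$ with parameters $\varepsilon$ and $t_0$, obtaining an $\varepsilon$-regular partition $\mathcal{Q}=\{V_0,V_1,\dots,V_t\}$ with $t_0\le t<T_0$, where $T_0,n_0$ are the constants the theorem provides; I would choose $\varepsilon$ small enough relative to $\alpha,d,\delta$ (and relative to $1/k$) at the outset, which is consistent with the hypothesis $0\ll\varepsilon\ll\alpha\ll d\ll\delta$. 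Then I would form the $(\mathcal{Q},\varepsilon,d)$-reduced $k$-graph $R$ on vertex set $[t]$.

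The heart of the argument is a standard counting estimate showing that $R$ inherits the minimum $j$-degree of $G$ up to a small loss. Concretely, fix any $j$-set $J=\{i_1,\dots,i_j\}\subseteq[t]$ and pick representative vertices $x_{i}\in V_{i}$ for $i\in J$. Since $\delta_j(G)\ge\delta\binom{n}{k-j}$, a random choice of $k-j$ further clusters and a random vertex in each shows that the number of $(k-j)$-cluster-tuples $Y$ such that the $k$-tuple $(V_{i_1},\dots,V_{i_j},Y)$ supports density at least $d$ in $G$ and is $\varepsilon$-regular is at least $(\delta-d-\varepsilon')\binom{t}{k-j}$ for a suitable $\varepsilon'=\varepsilon'(\varepsilon)\to 0$: the contribution of pairs that are irregular is at most $\varepsilon\binom{t}{k}$ of all $k$-tuples (hence a $k\varepsilon$-fraction of those through any fixed $j$-set after averaging over the choice of representatives), the contribution of low-density regular tuples is at most roughly $d\binom{t}{k-j}$, and the small exceptional set $V_0$ contributes at most an $\varepsilon$-fraction. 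Averaging over the choice of representatives $x_i\in V_i$ and using $\varepsilon$-regularity to transfer from "density in $G$ restricted to representative vertices" to "density of the cluster tuple" yields $\delta_j(R)\ge(\delta-d-\alpha/2)\binom{t}{k-j}$, say.

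Having established $\overline{\delta}_j(R)\ge\delta-d-\alpha/2$, I would invoke Lemma~\ref{perturbed degree} with host graph $R$ on $t$ vertices, relative minimum degree $\delta-d-\alpha/2$, empty (or negligible) exceptional subgraph $I$, and perturbation parameter $\alpha/2$; note $1/t\le 1/t_0\ll\varepsilon\ll\alpha$, so the hierarchy required by that lemma holds once $t_0$ is chosen large. This produces a spanning subgraph $R'\subseteq R$ with $\overline{\delta}^{\alpha/2}_j(R')\ge(\delta-d-\alpha/2)-\alpha/2=\delta-d-\alpha$, i.e. $\delta^{\alpha}_j(R')\ge\delta^{\alpha/2}_j(R')\ge(\delta-\alpha-d)\binom{t}{k-j}$, as desired (up to relabelling constants in the $\ll$-chain, which is harmless). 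Finally I would set $n_0$ to be the maximum of the $n_0$'s produced by the two invoked theorems, so that the statement holds for all $n\ge n_0$.

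The only genuine technical point — and the step I expect to require the most care — is the density-transfer estimate in the second paragraph: one must correctly account for the three error sources (the exceptional class $V_0$, the at most $\varepsilon\binom{t}{k}$ irregular $k$-tuples spread over the $\binom{t}{j}$ choices of base $j$-set, and the averaging over representative vertices that converts $G$-densities into cluster-tuple densities via $\varepsilon$-regularity), and check that each is bounded by a quantity that is $o(1)$ as $\varepsilon\to0$ and hence absorbable into the $d$ and $\alpha$ budgets. This is entirely routine but bookkeeping-heavy; everything else is a direct citation of the regularity lemma and of Lemma~\ref{perturbed degree}.
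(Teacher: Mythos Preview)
Your overall plan---apply weak regularity, transfer the degree condition to the reduced graph, then clean via Lemma~\ref{perturbed degree}---is the shape the paper intends (the paper gives no detailed proof, only the sentence that the lemma follows from Lemma~\ref{perturbed degree}). However, your Step~2 contains a genuine gap: you assert that for \emph{every} $j$-set $J\subseteq[t]$ one has $\deg_R(J)\ge(\delta-d-\varepsilon')\binom{t}{k-j}$, i.e.\ that $R$ inherits a true minimum $j$-degree. This is false in general. The regularity lemma only bounds the total number of irregular $k$-tuples by $\varepsilon\binom{t}{k}$, and nothing prevents all of them from containing one fixed $j$-set~$J$; since in the intended regime $t\ge t_0\gg 1/\varepsilon$, we have $\varepsilon\binom{t}{k}\gg\binom{t-j}{k-j}$, so every $k$-tuple through~$J$ could be irregular and $\deg_R(J)=0$. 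Your parenthetical ``hence a $k\varepsilon$-fraction of those through any fixed $j$-set after averaging over the choice of representatives'' conflates an average over $j$-sets with a pointwise bound; averaging over vertices $x_i\in V_i$ cannot help, because irregularity is a property of cluster-tuples.

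What your counting actually gives, after a Markov step, is that for each $j'\le j$ all but an $O(\sqrt{\varepsilon})$-fraction of $j'$-sets have degree at least $(\delta-d-\sqrt{\varepsilon})\binom{t}{k-j'}$ in~$R$. This is exactly the phenomenon that motivates the perturbed-degree notion, but it also means you cannot invoke Lemma~\ref{perturbed degree} on~$R$ as a black box, since that lemma takes true minimum degree as its hypothesis. The construction of~$R'$ must instead run the iterative shadow-cleaning argument (i.e.\ the \emph{proof} of Lemma~\ref{perturbed degree}) directly from the ``most $j'$-sets are good'' conclusion, or one cites the corresponding regularity setup lemma in~\cite{lang2022minimum} where this is carried out.
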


Next, we prove an embedding lemma for regular tuple in our reduce graph. This enables us to transform a homomorphism to an almost embedding. Given vertex sets $X_2, \dots ,X_k$ we say that a vertex $v\notin X_2\cup \dots \cup  X_k$ is $d$-\emph{dense} into~$\{X_2,\dots, X_k\}$ if $\deg(v;X_2;\dots; X_k)\ge d|X_2|\cdots |X_k|$. In the process of embedding trees, we aim to embed each edge into a dense $k$-tuple. In order to do this, we have the following lemma.

\begin{lemma}\label{expand}
    Let $1/m\ll \varepsilon\ll d\ll 1/k,1/\Delta$.
    Let $G$ be a $k$-graph.
    Let $\mathcal{Q}$ be a $\varepsilon$-regular partition of~$G$ and $R$ be the corresponding $(\mathcal{Q},\varepsilon,d)$-reduced graph.
    Let $X_1,\dots ,X_k$ and $Y_{j}^1,\dots, Y_{j}^{(k-1)\Delta}$ for~$j\in [2,k]$ (not necessarily distinct) be vertex clusters of~$\mathcal{Q}$ with size $m$ such that, for any~$0\le i < \Delta$ and~$j\in [2,k]$, $X_1X_2\dots X_{k}$ and $X_jY_j^{(k-1)i+1}Y_j^{(k-1)i+2} \dots Y_j^{(k-1)(i+1)}$  are $\varepsilon$-regular with density at least~$d$. 
    For $j\in [2,k]$ and $i \in [(k-1)\Delta]$, let $Z_j \subseteq X_j$ and $Z^i_j \subseteq Y^i_j$  be of size at least~$\sqrt{\varepsilon} m$.
    Suppose that $x\in X_1$ is a vertex which is $d/4$-dense into~$\{Z_2,\dots, Z_{k}\}$. 
    Then there exist vertices~$z_2\in Z_2,\dots ,z_k\in Z_k$ such that $xz_2\dots z_{k}$ is an edge and each $z_j$ is $d/4$-dense into~$\{ Z_j^{(k-1)i+1} , \dots , Z_j^{(k-1)(i+1)} \}$ for every $0\le i <\Delta$.
\end{lemma}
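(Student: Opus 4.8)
\textbf{Proof proposal for Lemma~\ref{expand}.}

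The plan is to find the vertices $z_2, \dots, z_k$ one at a time, using the $d/4$-density of~$x$ into~$\{Z_2, \dots, Z_k\}$ together with regularity to control how many vertices of each~$Z_j$ remain ``good'' after we impose the constraints. First I would set up the notion of a \emph{good} vertex in~$Z_j$: call $z \in Z_j$ good if it is $d/4$-dense into~$\{Z_j^{(k-1)i+1}, \dots, Z_j^{(k-1)(i+1)}\}$ for every~$0 \le i < \Delta$. For a single block $Y_j^{(k-1)i+1}, \dots, Y_j^{(k-1)(i+1)}$, the tuple $X_j Y_j^{(k-1)i+1} \cdots Y_j^{(k-1)(i+1)}$ is $\varepsilon$-regular of density at least~$d$; since each $Z_j^{\cdot} \subseteq Y_j^{\cdot}$ has size at least $\sqrt{\varepsilon} m \ge \varepsilon m$, regularity gives that the pair-density $d(X_j; Z_j^{(k-1)i+1}; \dots; Z_j^{(k-1)(i+1)})$ is at least~$d - \varepsilon$. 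A standard averaging/counting argument (a vertex that fails to be $d/4$-dense into one of these blocks contributes at most $\tfrac{d}{4}\prod |Z_j^{\cdot}|$ edges, so the number of such vertices in~$X_j$ is at most roughly $\tfrac{2(d-\varepsilon)}{d}|X_j| < |X_j|$, and in fact at most $\varepsilon^{1/3} m$ after choosing constants appropriately) bounds the number of vertices of~$X_j$ bad for that fixed~$i$ by at most, say, $\sqrt{\varepsilon} m / \Delta$ when $\varepsilon \ll d, 1/\Delta$. Taking a union bound over the $\Delta$ values of~$i$, the set $Z_j' \subseteq Z_j$ of good vertices satisfies $|Z_j'| \ge |Z_j| - \sqrt{\varepsilon} m \ge \tfrac{1}{2}\sqrt{\varepsilon} m$ (adjusting the constant in the hypothesis $\sqrt{\varepsilon} m$ if needed, or simply noting $|Z_j| \ge \sqrt{\varepsilon} m$ and sharpening the bad-vertex estimate to $\le \tfrac{1}{2k}\sqrt{\varepsilon} m$).

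Next I would use the $d/4$-density of~$x$ into $\{Z_2, \dots, Z_k\}$ to locate the actual edge $xz_2\cdots z_k$ with each $z_j \in Z_j'$. By hypothesis $\deg(x; Z_2; \dots; Z_k) \ge \tfrac{d}{4}\prod_{j=2}^{k}|Z_j|$. We want to discard, for each coordinate~$j$, the bad set $Z_j \setminus Z_j'$, which has size at most $\tfrac{1}{2k}\sqrt{\varepsilon} m \le \tfrac{1}{2k}|Z_j|$ by the previous paragraph; the number of $(k-1)$-tuples in $Z_2 \times \cdots \times Z_k$ meeting some bad set is at most $\sum_{j=2}^{k} |Z_j \setminus Z_j'| \prod_{j' \ne j} |Z_{j'}| \le \tfrac{k-1}{2k}\prod_{j=2}^{k}|Z_j| < \tfrac{d}{4}\prod_{j=2}^{k}|Z_j|$ once we know $d/4 > 1/2$ fails --- so this crude bound is not quite enough and I would instead use regularity directly. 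The cleaner route: since $X_1 X_2 \cdots X_k$ is $\varepsilon$-regular of density at least~$d$, and $\{x\}$ is too small to be a valid witness set, I would instead phrase the $d/4$-density hypothesis on~$x$ as saying $x$'s link into $Z_2 \times \cdots \times Z_k$ is dense, and then observe that the sets $Z_j'$ have size at least $\sqrt{\varepsilon} m$, hence are large enough relative to~$Z_j$. Concretely: the number of $(z_2,\dots,z_k) \in Z_2 \times \cdots \times Z_k$ with $xz_2\cdots z_k \in E(G)$ and all $z_j \in Z_j'$ is at least
\[
\deg(x; Z_2; \dots; Z_k) - \sum_{j=2}^{k} |Z_j \setminus Z_j'| \cdot \prod_{j' \in [2,k], j' \ne j} |Z_{j'}|.
\]
Choosing the bad-vertex bound $|Z_j \setminus Z_j'| \le \varepsilon^{2/3} m$ (achievable when $\varepsilon \ll d, 1/k, 1/\Delta$ by sharpening the counting in paragraph one) and using $|Z_j| \ge \sqrt{\varepsilon} m$, each subtracted term is at most $\varepsilon^{2/3} m \cdot \prod_{j' \ne j}|Z_{j'}| \le \varepsilon^{1/6} \prod_{j=2}^{k}|Z_j|$, so the total subtracted is at most $(k-1)\varepsilon^{1/6}\prod_{j=2}^k |Z_j| < \tfrac{d}{4}\prod_{j=2}^{k}|Z_j| \le \deg(x; Z_2; \dots; Z_k)$. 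Hence the count is strictly positive and a desired edge exists.

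I expect the main obstacle to be the bookkeeping of the hierarchy of constants: we need $\varepsilon$ small enough (relative to~$d$, $1/k$, and $1/\Delta$) that, simultaneously, (i) each single-block bad set has size at most about $\varepsilon^{2/3} m$, (ii) the union over~$\Delta$ blocks still leaves $|Z_j'| \ge \sqrt{\varepsilon} m$ (which is automatic if the bad bound is $o(\sqrt{\varepsilon} m / \Delta)$, e.g.\ $\varepsilon^{2/3} m \ll \sqrt{\varepsilon} m / \Delta$, i.e.\ $\varepsilon^{1/6} \ll 1/\Delta$), and (iii) the total $(k-1)$-tuples lost across coordinates is below the $\tfrac{d}{4}$-fraction guaranteed by $x$'s density. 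All three are satisfied under $1/m \ll \varepsilon \ll d \ll 1/k, 1/\Delta$, but one must be careful that the exponent we extract from ``not $d/4$-dense'' in the counting step (a vertex $v \in X_j$ failing to be $d/4$-dense into a block where the block has density $\ge d - \varepsilon$ means $v$ lies in a set of relative size $\le \tfrac{d - \varepsilon - d/4}{1 - d/4}$-ish by averaging, which is a constant fraction, not $o(1)$) --- so to get the $\varepsilon^{2/3} m$ bound one actually needs to invoke $\varepsilon$-regularity of the block tuple more carefully: the set of vertices in~$X_j$ with density below $d(V) - \varepsilon$ into $Z_j^{(k-1)i+1} \times \cdots$ has size at most $\varepsilon |X_j|$ by regularity applied with that bad set as one of the $X_i$'s (if it were large), which forces it to be small. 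This is the genuinely delicate point and I would write it out carefully, but it is routine once phrased correctly.
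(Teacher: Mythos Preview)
Your approach is correct and takes a genuinely different route from the paper. The paper fixes the coordinates sequentially: for $z_2$, it first restricts to the set $W \subseteq Z_2$ of vertices $w$ with $\deg(x; w; Z_3; \dots; Z_k) \ge d|Z_3|\cdots|Z_k|/8$ (by averaging from the $d/4$-density of $x$, $|W| \ge d|Z_2|/8 \ge \varepsilon m$), then uses regularity of $X_2Y_2^1\cdots Y_2^{k-1}$ to refine $W \supseteq W_1 \supseteq \cdots \supseteq W_\Delta$ by successively imposing $d/4$-density into each block, with $|W_i| \ge (d/4)|W_{i-1}| \ge \varepsilon m$ at every stage; it picks $z_2 \in W_\Delta$, notes that the chosen $z_2$ still satisfies $\deg(x; z_2; Z_3; \dots; Z_k) \ge d|Z_3|\cdots|Z_k|/8$, and iterates the same procedure for $z_3, \dots, z_k$. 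You instead globally identify the good set $Z_j' \subseteq Z_j$ (vertices $d/4$-dense into every block) via the standard contradiction argument---if the bad set for one block had size $\ge \varepsilon m$, regularity would force its density into that block to be at least $d - \varepsilon > d/4$---giving $|Z_j \setminus Z_j'| < \Delta \varepsilon m$, and then subtract the at most $(k-1)\Delta\sqrt{\varepsilon}\prod_j|Z_j|$ tuples hitting some bad vertex from the $\ge (d/4)\prod_j|Z_j|$ edges through~$x$. Your version separates ``goodness'' from ``edge-finding'' cleanly and yields a sharper bound on $|Z_j'|$ (losing $\Delta\varepsilon m$ additively rather than a multiplicative factor $(d/4)^{\Delta}$); the paper's version bakes the link-of-$x$ constraint into the refinement from the start, which matches the greedy embedding application more directly. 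One correction: regularity gives $|B| < \varepsilon m$ per block, not $\varepsilon^{2/3} m$ as you wrote, hence $|Z_j \setminus Z_j'| < \Delta\varepsilon m$; this is still enough for your subtraction since $(k-1)\Delta\varepsilon m/(\sqrt{\varepsilon}m) = (k-1)\Delta\sqrt{\varepsilon} < d/4$ under the stated hierarchy.
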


\begin{proof}
    Let $W$ be the set of all vertices $ w \in Z_2$ such that $\deg(x;w;Z_3;\dots; Z_k)\ge {d|Z_3|\cdots |Z_k|/8}$.
    Since the vertex $x\in X_1$ is $d/4$-dense into~$\{Z_2,\dots ,Z_k\}$, we have 
    \begin{align*}
        \frac{d|Z_2|\cdots |Z_k|}{4} & \le 
        \deg(x;Z_2;\dots; Z_k)=\sum_{w \in W} \deg(x;w;Z_3;\dots; Z_k)+ \sum_{w' \in Z_2 \setminus W} \deg(x;w';Z_3;\dots ;Z_k)\\
        & \le |W| |Z_3|\cdots  |Z_k| + |Z_2| \l(\frac{d|Z_3|\cdots |Z_k|}{8}\r),\\
        |W| & \ge d|Z_2|/8 \ge \varepsilon m.
    \end{align*}
    
    Since $X_2Y_2^{1} \dots Y_2^{k-1}$ is $\varepsilon$-regular with density at least~$d$, we obtain that $d(W;Z_2^1;\dots ;Z_2^{k-1}) \ge d/2$.
    Let $W_1$ be the set of all $ w \in W_1$ such that $w$ is $d/4$-dense into $\{ Z_2^{1} , \dots , Z_2^{k-1} \}$.
    Thus, each $w \in W_1$ satisfies $\deg(w;Z_2^1;\dots ;Z_2^{k-1})\ge d|Z_2^1|\dots |Z_2^{k-1}|/4$.
    By a similar calculation as above, we deduce that 
    \begin{align*}
        |W_1|\ge \frac{d|W|}{4}\ge \frac{d^2|Z_2|}{4^3}\ge \varepsilon m.
    \end{align*}
    We repeat this procedure and  obtain $W_1 \supseteq W_2 \supseteq \dots  \supseteq W_{\Delta}$ such that 
    each vertex~$w \in W_i$ is $d/4$-dense into~$\{ Z_2^{(k-1)(i-1)+1} , \dots , Z_2^{(k-1)i} \}$ and $ |W_i| \ge {d^{i+1} |Z_2|}/{ 4^{2+i} } \ge \varepsilon m $.
    We fix $z_2 \in W_{\Delta} \subseteq Z_2$.
    Note that $z_2$ is $d/4$-dense into~$\{ Z_2^{(k-1)i+1} , \dots , Z_2^{(k-1)(i+1)} \}$ for every $0\le i <\Delta$ and 
    $\deg(x;z_2;Z_3;\dots;Z_k )\ge d|Z_3|\dots |Z_k|/8$.

    By repeating the similar construction of~$z_2$, we obtain $z_3, \dots ,z_{k}$ as required.
\end{proof}


\section{From robust framework to embedding: proof of Theorem~\ref{generaltheorem}}\label{proof of general theorem}

The proof of Theorem~\ref{generaltheorem} can be divided into three steps similar to those in Section~\ref{sketch}.
We prove Steps~1 and~2 in Sections~\ref{sec:absorptionlemma} and~\ref{sec:almostembedding}, respectively.
We complete the proof of Theorem~\ref{generaltheorem} in Section~\ref{sec:complete embedding}.

\subsection{Absorption lemma}   \label{sec:absorptionlemma}
~~~~In this subsection, we show our absorption structure and related lemmas. We develop the absorption technique used in~\cite{Bottcher2019,Bottcher2020,stein2020tree,pehova2024embedding}. We start with some definitions.

\begin{defn}\label{absorber star}
    A \emph{$k$-uniform (loose) $d$-star} $S_v$ consists of~$d$ edges such that $V(S_v)=\{v,u_2^j,\dots,u_k^j:j\in [d]\}$ and $E(S_v)=\{vu_2^j\dots u_k^j:j\in [d]\}$. We call $v$ the \emph{centre vertex} of~$S_v$. The \emph{leaf set} of~$S$ is $N_S(v)$ which is a $(k-1)$-uniform matching of size~$d$.
\end{defn}
\begin{defn}\label{absober consturction}
    Let $G$ be a $k$-graph. For a $k$-ordered vertex tuple $(w_1,\dots ,w_k)$, a \emph{$d$-absorbing tuple} for $(w_1, \dots, w_k)$ consists of~$k-1$ vertex-disjoint $d$-stars $S_{v_2},\dots ,S_{v_k}$ such that $w_1v_2\dots v_k\in E(G)$ and for~$i\in [2,k]$, there is a $d$-star $S_{w_i}$ with centre $w_i$ and $N_{S_{w_i}}(w_i)=N_{S_{v_i}}(v_i)$. In other words, we can replace the centre of each $S_{v_i}$ with~$w_i$ for~$i\in [2,k]$.
    
    For~$F\subseteq V(G)$, we denote by~$A_d(w_1,\dots ,w_k,F)$ the set of all $d$-absorbing tuples for~$(w_1,\dots ,w_k)$ disjoint from~$F$, and by~$A_d(F)$ the union of~$A_d(w_1,\dots ,w_k,F)$ for all tuples $(w_1,\dots, w_k)$.

\end{defn}
Let $T$ be a $k$-loose tree and $\psi:V(T)\rightarrow V(G)$ be an embedding of~$T$ in~$G$. We say that a $d$-star~$S_v$ in~$G$ is \emph{immersed} by~$\psi$ if there exists $x\in V(T)$ such that $\psi(x)=v$ and each edge $e\in T$ with~$x\in e$ is mapped to a distinct edge of~$S_v$. We say that a $d$-absorbing tuple $(S_{v_2},\dots ,S_{v_k})$ is immersed by~$\psi$ if $S_{v_2},\dots, S_{v_k}$ are immersed by~$\psi$.

The next lemma can extend an embedding of a loose tree using immersed absorbing tuples. 

\begin{lemma}[Extension lemma]\label{Absorption lemma}
    Let $n,k,\Delta \in \mathbb{N}$ and $p\in \mathbb{N}\cup \{0\}$. Let $G$ be a $k$-graph on~$n$ vertices. Let $T$ be a $k$-loose tree on~$n-p(k-1)$ vertices with~$\Delta_1(T)\le \Delta$. Suppose that there exists an embedding~$\psi$ of~$T$ in~$G$ and a family $\mathcal{A}$ of pairwise vertex-disjoint $\Delta$-absorbing tuples such that 
    \begin{enumerate}[label=\rm (\roman*)]
        \item\label{absorb1} every absorbing tuple of~$\mathcal{A}$ is immersed by~$\psi$;
        \item\label{absorb2} for distinct $w_1,\dots,w_k\in V(G)$, $\mathcal{A}$ contains at least $p$ many $\Delta$-absorbing tuples for~$(w_1,\dots,w_k)$.
    \end{enumerate}
    Then for any  $k$-loose tree~$T^{\ast}$ on~$n$ vertices containing $T$, there is an embedding of~$T^\ast$.
\end{lemma}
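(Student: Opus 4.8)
The plan is to prove the Extension Lemma by iteratively absorbing the ``extra'' vertices of $T^\ast$ one edge at a time, using one immersed absorbing tuple per step. Write $T^\ast \setminus T$ as a sequence of $p$ edges $f_1, \dots, f_p$ added to $T$ one by one, so that for each $i$ the edge $f_i$ shares exactly one vertex, call it $x_i$, with $T \cup f_1 \cup \dots \cup f_{i-1}$, and introduces $k-1$ new vertices. (This is possible by the iterative definition of a $k$-loose tree: order the edges of $T^\ast$ so that $T$ comes first as an initial segment.) We will build a sequence of embeddings $\psi = \psi_0, \psi_1, \dots, \psi_p$ where $\psi_i$ is an embedding of $T \cup f_1 \cup \dots \cup f_i$ into $G$, together with a shrinking family $\mathcal{A} = \mathcal{A}_0 \supseteq \mathcal{A}_1 \supseteq \dots \supseteq \mathcal{A}_p$ of pairwise vertex-disjoint $\Delta$-absorbing tuples, each still immersed by the current embedding, and such that the invariant ``for all distinct $w_1,\dots,w_k$, $\mathcal{A}_i$ contains at least $p-i$ absorbing tuples for $(w_1,\dots,w_k)$'' is maintained.

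\textbf{The absorbing step.} Suppose $\psi_{i-1}$ and $\mathcal{A}_{i-1}$ are given, and we wish to embed $f_i$, which attaches at the already-embedded vertex $x_i$ via $k-1$ new leaf vertices $y_2, \dots, y_k$. Let $w_1 := \psi_{i-1}(x_i)$, and let $w_2, \dots, w_k$ be the $k-1$ currently-uncovered vertices of $G$ (note $|V(G)| = n$ while $|V(T \cup f_1 \cup \dots \cup f_{i-1})| = n - (p-i+1)(k-1)$, so exactly $(p-i+1)(k-1) \ge k-1$ vertices are free; pick any $k-1$ of them distinct from $w_1$, which is automatic). By the invariant, $\mathcal{A}_{i-1}$ contains at least $p - i + 1 \ge 1$ absorbing tuples for $(w_1, \dots, w_k)$; pick one, $(S_{v_2}, \dots, S_{v_k})$, and delete it from the family to form $\mathcal{A}_i$ (this preserves the invariant with $p-i$, since removing one tuple decreases each count by at most one). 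Now perform the swap guaranteed by Definition~\ref{absober consturction}: since $(S_{v_2}, \dots, S_{v_k})$ is immersed by $\psi_{i-1}$, each $v_j = \psi_{i-1}(z_j)$ for some $z_j \in V(T \cup \dots \cup f_{i-1})$ whose incident edges are mapped onto distinct edges of $S_{v_j}$. Re-route $\psi_{i-1}$ by setting the images of these $z_j$ to $w_j$ (legal because $N_{S_{w_j}}(w_j) = N_{S_{v_j}}(v_j)$, so every edge through $z_j$ maps to a genuine edge through $w_j$), which frees up the vertices $v_2, \dots, v_k$; then extend the embedding to $f_i$ by mapping $y_j \mapsto v_j$. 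Since $w_1 v_2 \dots v_k \in E(G)$, the image of $f_i$ is an edge. This yields $\psi_i$, an embedding of $T \cup \dots \cup f_i$.

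\textbf{Maintaining immersion and injectivity.} The remaining bookkeeping is to check that $\psi_i$ is injective and that every tuple of $\mathcal{A}_i$ is still immersed. Injectivity holds because we only moved images from $v_j$ (now vacated) to the previously-free vertices $w_j$, and we filled $v_j$ by the new leaf vertices $y_j$; no two distinct vertices of $T \cup \dots \cup f_i$ end up at the same vertex of $G$. For immersion: the tuples in $\mathcal{A}_i$ are vertex-disjoint from the used tuple $(S_{v_2}, \dots, S_{v_k})$ and from each other, and the re-routing only changed images at the vertices $z_2, \dots, z_k$ of the used tuple (and added images at $y_2, \dots, y_k = v_2, \dots, v_k$), so the local structure witnessing immersion of any $S \in \mathcal{A}_i$ is untouched --- the star $S$ still lies in $G$, the vertex of $T \cup \dots \cup f_i$ mapped to its centre is unchanged, and its incident edges still map to distinct edges of $S$. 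After $p$ steps, $\psi_p$ is an embedding of $T \cup f_1 \cup \dots \cup f_p = T^\ast$ on $n$ vertices, as required. The main obstacle, and the point requiring the most care, is precisely this verification that immersion and injectivity are preserved through the re-routing --- in particular ensuring that the absorbing tuples we plan to use later are disjoint from everything touched so far, which is exactly what pairwise vertex-disjointness of $\mathcal{A}$ and the counting invariant are designed to guarantee.
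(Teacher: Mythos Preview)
Your proof is correct and follows essentially the same approach as the paper's: both argue one edge at a time, using a single immersed absorbing tuple to swap star-centres with uncovered vertices and then placing the new leaf vertices at the vacated centres. The paper frames this as an induction on~$p$ while you write it as an explicit iteration with an invariant, but the absorbing step and the bookkeeping (injectivity, preservation of immersion for the remaining disjoint tuples, and decrementing the count) are the same.
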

\begin{proof}
   We proceed by induction on~$p$. The lemma holds when $p=0$, since $T=T^\ast$. Thus we assume that~$p\ge 1$.

   Consider any $k$-loose tree~$T^\ast$ on~$n$ vertices containing $T$. Let $x_1\dots x_k\in E(T^{\ast})\backslash E(T)$ with~$x_1\in V(T)$, so $x_2,\dots, x_k\notin V(T)$ and let $T'=T\cup \{x_1\dots x_k\}$. Let $u_1=\psi(x_1)$ and $u_2,\dots, u_k$ be distinct vertices in~$V(G)\backslash V(\psi(T))$. Fix a $\Delta$-absorbing tuple $\S=(S_{v_2},\dots, S_{v_k})$ for~$(u_1,\dots,u_k)$ in~$\mathcal{A}$. Then we define $\psi':V(T')\rightarrow V(G)$ be 
\[        
\psi'(x) =
\begin{cases}
\psi(v_i) &\quad \text{ if }x=x_i\text{ for }i\in [2,k], \\
u_i &\quad \text{ if }x=\psi^{-1}(v_i) \text{ for } i\in [2,k],\\
\psi(x)&\quad \text{ otherwise}.
\end{cases}
\]
We deduce that $\psi'$ is an embedding of~$T'$ to~$G$ with~$V(\psi'(T'))=V(\psi(T))\cup \{u_2,\dots,u_k\}$. Let $\mathcal{A}'=\mathcal{A}\backslash \S$. Furthermore,~\ref{absorb1} and~\ref{absorb2} are satisfied with~$(T',p-1,\psi',\mathcal{A}')$ playing the role of~$(T,p,\psi,\mathcal{A})$.
Since $T^\ast$ contains~$T'$, our induction hypothesis implies that $G$ contains an embedding of~$T^\ast$.
\end{proof}

We now find vertex-disjoint absorbing tuples when $\overline{\delta}_{1}(G)\ge 1/2+\gamma$.
\begin{lemma}\label{lemma:intersect}
    Let ${1}/{n}\ll \zeta \ll \alpha  \ll \beta \ll \gamma
\ll 1/\Delta, 1/k$. Let $G$ be a $k$-graph on~$n$ vertices with~$\overline{\delta}_{1}(G)\ge 1/2+\gamma$. Let $F$ be a subset of~$V(G)$ with size of~$\zeta n$. Then there exists a set $\mathcal{A}$ of vertex-disjoint $\Delta$-absorbing tuples $\{S_{v_2^i},\dots ,S_{v_k^i}\}$ with~$i\le \beta n$ such that for distinct $w_1,\dots,w_k \in V(G)$, we have $|A_{\Delta}(w_1,\dots ,w_k, F)\cap \mathcal{A}|\ge \alpha n$.
\end{lemma}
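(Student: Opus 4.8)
\textbf{Proof plan for Lemma~\ref{lemma:intersect}.}

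The plan is to construct the family $\mathcal{A}$ greedily, one absorbing tuple at a time, maintaining the invariant that so far we have used at most $\beta n$ tuples and hence at most $k\Delta \beta n + \zeta n \le 2k\Delta\beta n$ vertices (counting $F$). Write $F' = F \cup V(\mathcal{A})$ for the forbidden set at any stage; as long as $|\mathcal{A}| < \beta n$ we have $|F'| \le 2k\Delta\beta n$, which is still tiny compared with $n$. The key point to establish is the following counting claim: for every fixed ordered tuple $(w_1,\dots,w_k)$ of distinct vertices and every forbidden set $F'$ of size at most $2k\Delta\beta n$, the number of $\Delta$-absorbing tuples for $(w_1,\dots,w_k)$ that avoid $F'$ is at least $c n^{(k-1)\cdot (k-1)\Delta}$ for some constant $c = c(k,\Delta,\gamma) > 0$ (the exponent is the number of non-centre vertices in a $\Delta$-absorbing tuple, namely $k-1$ stars each with $(k-1)\Delta$ leaves). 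Given this claim, a standard greedy/random-greedy argument finishes: repeatedly pick an absorbing tuple for some tuple $(w_1,\dots,w_k)$, or more precisely, show that if $|A_{\Delta}(w_1,\dots,w_k,F)\cap\mathcal{A}| < \alpha n$ for some $(w_1,\dots,w_k)$ while $|\mathcal{A}| < \beta n$, then because each previously chosen tuple intersects only $O(1)$ of the $\Omega(n^{(k-1)^2\Delta})$ valid absorbing tuples for $(w_1,\dots,w_k)$ in a bounded number of vertices, there is still a valid absorbing tuple for $(w_1,\dots,w_k)$ disjoint from $F' \cup V(\mathcal{A})$; add it. Since $\alpha \ll \beta$, this process runs for fewer than $\beta n$ steps before every tuple $(w_1,\dots,w_k)$ has reached its quota $\alpha n$.

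To prove the counting claim, I would build the absorbing tuple in stages using the degree hypothesis $\overline{\delta}_1(G)\ge 1/2+\gamma$, which gives $\deg_G(v) \ge (1/2+\gamma)\binom{n-1}{k-1}$ for every $v$. First choose the ``link'' vertices $v_2,\dots,v_k$ so that $w_1 v_2\dots v_k \in E(G)$: since $\deg_G(w_1) \ge (1/2+\gamma)\binom{n-1}{k-1}$ and $F'$ is small, there are $\Omega(n^{k-1})$ choices of an edge through $w_1$ avoiding $F'$, so we may pick its other $k-1$ vertices to be $v_2,\dots,v_k$. Next, for each pair $(v_i, w_i)$ with $i \in [2,k]$ we must find a common leaf set: a $(k-1)$-uniform matching of size $\Delta$ lying in $L_G(v_i) \cap L_G(w_i)$ (the link $(k-1)$-graphs of $v_i$ and $w_i$), with all these leaf sets pairwise disjoint and avoiding $F'$ and the $v_j,w_j$. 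Here I use a ``common neighbourhood is large'' argument: by inclusion-exclusion, $|E(L_G(v_i)) \cap E(L_G(w_i))| \ge \deg_G(v_i) + \deg_G(w_i) - \binom{n-2}{k-1} \ge 2\gamma\binom{n-1}{k-1} - o(n^{k-1})$, so the $(k-1)$-graph $L_G(v_i)\cap L_G(w_i)$ has $\Omega(n^{k-1})$ edges on $n$ vertices; by a standard greedy argument (or Erd\H{o}s--Gallai-type bound) a $(k-1)$-graph with that many edges contains a matching of size $\Delta$, and in fact $\Omega(n^{(k-1)\Delta})$ of them, even after deleting the $O(\beta n)$ forbidden vertices. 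Doing this sequentially for $i = 2,\dots,k$, removing used vertices after each step, we get $\prod_{i=2}^{k}\Omega(n^{(k-1)\Delta}) = \Omega(n^{(k-1)^2\Delta})$ valid $\Delta$-absorbing tuples for $(w_1,\dots,w_k)$ avoiding $F'$, as claimed.

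The main obstacle is the bookkeeping around \emph{which} degree hypothesis is needed and making the common-link matching count robust to deletions: the naive bound $\deg(v_i)+\deg(w_i) - \binom{n-2}{k-1}$ on the common link size is only $2\gamma\binom{n-1}{k-1}$, a constant fraction, which is comfortable, but one must be careful that after fixing $v_2,\dots,v_k$ and some earlier leaf sets, the relevant links restricted to the surviving vertex set still have $\Omega(n^{k-1})$ edges — this is fine since we only ever delete $O(k^2\Delta) = O(1)$ vertices per pair plus the global $O(\beta n)$ from $F'$ and $V(\mathcal{A})$, and removing $o(n)$ vertices changes a link's edge count by $o(n^{k-1})$. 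A second, more cosmetic obstacle is verifying the \emph{disjointness} requirement in Definition~\ref{absober consturction} (the $k-1$ stars $S_{v_2},\dots,S_{v_k}$ must be vertex-disjoint, and also the ``shadow'' stars $S_{w_i}$ sit on the same leaf sets) — but this is automatic from building the leaf sets sequentially on disjoint vertex batches. I do not expect either issue to be serious; the proof is a routine greedy absorber construction once the $\Omega(n^{(k-1)^2\Delta})$ lower bound on absorbing tuples is in hand.
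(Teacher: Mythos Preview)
Your counting claim is essentially the same as the paper's (the paper gets $|A_{\Delta}(w_1,\dots,w_k,F)| \ge (\gamma n/k^2)^{((k-1)\Delta+1)(k-1)}$ by exactly the common-neighbourhood argument you describe), and the ``main obstacles'' you flag are indeed harmless. The real gap is the construction of~$\mathcal{A}$ from the counting claim.

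Your greedy termination argument (``since $\alpha \ll \beta$, this process runs for fewer than $\beta n$ steps before every tuple $(w_1,\dots,w_k)$ has reached its quota $\alpha n$'') does not follow. There are $\Theta(n^k)$ ordered tuples $(w_1,\dots,w_k)$ to satisfy, each requiring $\alpha n$ absorbers inside~$\mathcal{A}$; adding one absorbing tuple per step for some currently deficient $(w_1,\dots,w_k)$ gives no control over how many \emph{other} tuples that absorber serves, and in general a single absorbing tuple $(S_{v_2},\dots,S_{v_k})$ need not absorb a positive fraction of all $(w_1,\dots,w_k)$ (with only a $\overline{\delta}_1$ hypothesis, the number of valid replacement centres $w_i$ for a fixed leaf set could be $O(1)$). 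So there is no reason the greedy process finishes within $\beta n$ steps; the inequality $\alpha \ll \beta$ is irrelevant here because it says nothing about the~$n^k$ constraints you must satisfy simultaneously.

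The paper closes this gap with the standard random-selection argument: take each element of $A_\Delta(F)$ into $\mathcal{A}'$ independently with probability $p = c\,n^{-(k-1)^2\Delta-(k-2)}$, apply Chernoff plus a union bound over all $(w_1,\dots,w_k)$ to get $|\mathcal{A}'| \le \beta n$ and $|A_\Delta(w_1,\dots,w_k,F)\cap\mathcal{A}'|\ge \Omega(cn)$ simultaneously, then bound the expected number of intersecting pairs by $O(c^2 n)$ via Markov and delete them. Your counting claim feeds directly into this; you should replace the greedy step by this random sampling.
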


\begin{proof}
    Consider distinct $w_1,\dots, w_k\in V(G)$. Pick an edge $w_1v_2\dots v_k\in E(G)\backslash (F\cup \{w_2,\dots ,w_k\})$. 
    We further pick $(k-1)\Delta$ disjoint $(k-1)$-vertex subsets $U_2^1\dots U_2^\Delta, \dots ,U_k^1\dots U_k^\Delta$ such that $U_j^i\in N_{G\backslash F}(w_j)\cap N_{G\backslash F}(v_j)$ for~$i\in [\Delta]$ and $j\in [2,k]$. Set $E(S_j)=\{v_j\cup U_j^i~:~i\in [\Delta]\}$ for~$j\in [2,k]$. Then $S_2,\dots ,S_k$ is a $\Delta$-absorbing tuple for~$(w_1,\dots ,w_k)$ disjoint from~$F$. This is possible, since for each $j\in [2,k]$, 
    \begin{align*}
        |N_{G\backslash F}(w_j)\cap N_{G\backslash F}(v_j)|&\ge |N_{G}(w_j)\cap N_{G}(v_j)|-|F|n^{k-2}
        \ge \frac{3\gamma}{2}\binom{n}{k-1}-\zeta n^{k-1} \ge \gamma \binom{n}{k-1}.
    \end{align*} 
    Thus we have that
    \begin{align*}
        |A_{\Delta}(w_1,\dots ,w_k,F)| \ge \l( \gamma n /k^2\r)^{((k-1)\Delta +1)(k-1)}.
    \end{align*}
    Let $c$ be such that $\alpha\ll c\ll \beta $.
    Let $\mathcal{A}'$ be a random subset of~$A_{\Delta}(F)$, where each member is chosen independently at random with probability $p=cn^{-(k-1)^2\Delta-(k-2)}$. Note that each $\Delta$-absorbing tuple contains at most $(k-1)^2\Delta+k-1$ vertices and so $|A_\Delta(F)|\le n^{(k-1)^2\Delta+(k-1)}$. By a Chernoff bound and the union bound, w.h.p we have that $|\mathcal{A}'|\le 2cn\le \beta n$ and, for each $(w_1,\dots ,w_k)$, $|A_{\Delta}(w_1,\dots ,w_k,F)\cap \mathcal{A}'|\ge \beta cn/2$. 

    Let $Y$ be the number of pairs of tuples in~$\mathcal{A}'$ which intersect with at least one vertex. Then 
    $$E[Y]\le 2^{2(k-1)^2\Delta+2(k-1)}n^{2(k-1)^2\Delta+2(k-1)-1}p^2\le 2^{k^2\Delta}c^2 n.$$
   By Markov's inequality, with probability at least $3/4$, we have that $Y\le  2^{k^2\Delta+2}c^2 n$. Fix an outcome of~$\mathcal{A}''$ such that all above events hold. If we remove all pairs of intersecting tuples in~$\mathcal{A}'$, we could get a subset~$\mathcal{A}''$ of~$A_{\Delta}(F)$ of at most $\beta n$ vertex-disjoint tuples such that  
   $$|A_{\Delta}(w_1,\dots ,w_k)\cap \mathcal{A}''|\ge \beta c n/2-2^{k^2\Delta+1}c^2 n\ge \alpha n$$
   for each $w_1 \dots w_k\in \binom{V(G)}{k}$. Finally, let $\mathcal{A}=\mathcal{A}''$ as we desire. 
\end{proof}

Next, we show that we could immerse our absorbers by a small tree.
\begin{lemma}[Immersing lemma]\label{robust covering lemma}
    Let $1/n\ll \zeta \ll \beta \ll \eta \ll \gamma\ll 1/\Delta,1/k$. Let $G$ be a $k$-graph on~$n$ vertices with~$\overline{\delta}_{1}(G)\ge 1/2+\gamma$ and $T$ be a rooted $k$-loose tree at~$r$ on~$\eta n$ vertices with~$\Delta_1(T)\le \Delta$. Let $\mathcal{A}$ be a set of vertex-disjoint $d$-stars in~$G$ with~$|\mathcal{A}|\le k \beta n$ and $v\notin V(\mathcal{A})$.  Then there exists an embedding~$\psi: V(T)\rightarrow V(G)$ such that  $\psi(r)=v$ and every absorbing tuple in~$\mathcal{A}$ is immersed by~$\psi$. 
\end{lemma}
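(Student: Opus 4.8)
The plan is to embed the small rooted tree $T$ greedily, layer by layer, while reserving a linear-sized pool of "free" vertices at each step and, crucially, arranging that certain designated vertices of $T$ land exactly on the centres of the stars of $\mathcal A$. First I would set up the bookkeeping: since $|V(T)| \le \eta n$ and $|V(\mathcal A)| \le k\beta n (k-1)\Delta$ (each of the $\le k\beta n$ stars has $\le (k-1)\Delta + 1$ vertices), and $\beta \ll \eta \ll \gamma$, both $T$ and $\mathcal A$ are tiny compared to $n$. I would process $T$ in BFS order from the root $r$, maintaining a partial embedding $\psi$ of the already-visited part; at each stage the set $U$ of used vertices has size at most $|V(T)| \le \eta n$, so every vertex $w$ already embedded still has codegree into the unused part essentially $\overline{\delta}_1(G) - \eta \ge 1/2 + \gamma/2$ worth of its neighbourhood available, which by $\overline{\delta}_1(G) \ge 1/2 + \gamma$ means we can always extend any single edge at $w$ into fresh vertices. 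This handles the bulk of $T$.

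The one subtlety is immersion: recall $\psi$ immerses a $d$-star $S_v \in \mathcal A$ if some $x \in V(T)$ has $\psi(x) = v$ (the centre) and the $\le \Delta$ tree-edges at $x$ are sent to $\le \Delta$ distinct edges of $S_v$. So I would, before embedding, pick inside $T$ a set of $|\mathcal A|$ internal vertices $x_S$ (one per star $S \in \mathcal A$), each of degree exactly $\Delta$ in $T$ if possible, pairwise non-adjacent and none equal to $r$ — this is possible because $T$ has bounded degree and $\eta n \gg |\mathcal A|$, so $T$ has at least $|\mathcal A|$ such vertices at mutual distance $\ge 3$ (a small counting argument; if $T$ does not literally have that many degree-$\Delta$ vertices, one can truncate the stars in $\mathcal A$ to degree $\deg_T(x_S)$, which only makes immersion easier). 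When the BFS reaches such an $x_S$ with its parent already embedded at some vertex $u$, I need $u$ together with the leaf-set $N_{S}(v)$ of the star (a $(k-1)$-matching of size $d$) to cooperate: specifically I want to embed the tree-edge from the parent of $x_S$ to $x_S$ as $\{u\} \cup (\text{some } (k-1)\text{-set}) \cup \{v\}$ — wait, more carefully, $x_S$ maps to the centre $v$, and each tree-edge at $x_S$ (including the one to its parent) maps to an edge $v \cup U^i$ of $S_v$. So I must ensure the parent $u$ of $x_S$ lies in one of the sets $U^i$; but $u$ is an already-embedded vertex I do not control. The fix is standard: do not force the parent into the star; instead, when I reach $x_S$, map $x_S \mapsto v$ using the edge $u\,v\,(\ast)$ — here I need $u v$ to extend to a full edge, which follows from $\overline{\delta}_2(G) \ge \overline{\delta}_1(G) \ge 1/2 + \gamma$ — no, that is not quite it either; $uv$ together with $k-2$ more vertices must form an edge, and there are $\ge (1/2+\gamma)\binom{n}{k-2} - \eta n^{k-2} > 0$ choices, so yes. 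Then the remaining $\deg_T(x_S) - 1$ tree-edges at $x_S$ are embedded directly onto the remaining edges $v \cup U^i$ of $S_v$ (using distinct ones), which is legitimate since those edges are genuinely in $G$, and the $U^i$ are disjoint from everything used so far because $v \notin V(\mathcal A)$-reachable vertices — more precisely because I reserved $V(\mathcal A)$ from the free pool except at the moments I deliberately use a star.

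Putting it together, the algorithm is: (1) choose the immersion vertices $x_S \subseteq V(T)$; (2) run BFS from $r$ with $\psi(r) = v$, extending generic tree-edges into the free pool $V(G) \setminus (U \cup V(\mathcal A))$ using $\overline{\delta}_1(G) \ge 1/2 + \gamma$ (and the parameter hierarchy $\zeta,\beta \ll \eta \ll \gamma$ to absorb all the error terms $|U|, |V(\mathcal A)|, |F|$-type losses); (3) when BFS hits an $x_S$, map it to the centre $v_S$ via an edge through its already-placed parent and then attach the rest of its tree-edges onto the reserved star edges. I expect the main obstacle to be purely organisational: verifying that the immersion vertices $x_S$ can be chosen simultaneously non-adjacent, of the right degree, and avoiding $r$, and that the greedy extension never runs out of room at the $\Theta(\eta n + \beta n)$-sized forbidden sets — both are routine given the hierarchy, but the interface between "generic greedy step" and "star-immersion step" needs care, in particular checking that after an immersion step the descendants of $x_S$ (rooted at leaves of the star, which lie in $V(\mathcal A)$) can still be embedded generically into the free pool, using that those leaf vertices still have large codegree in $G$ since $|V(\mathcal A)| \le k\beta n \ll \gamma n$.
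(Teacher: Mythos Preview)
Your approach has a genuine gap at the immersion step. You correctly identify the crux: when BFS arrives at a designated vertex $x_S$, its parent is already embedded at some uncontrolled vertex $u$, and the parent tree-edge must map to an edge of the star $S_v$. Your proposed fix---map $x_S \mapsto v$ via an arbitrary edge through $u$ and $v$, then send only the \emph{remaining} tree-edges at $x_S$ onto star edges---does not work, for two reasons. First, the immersion definition requires \emph{every} tree-edge at $x_S$ (including the parent edge) to land on a distinct edge of $S_v$; an arbitrary edge of $G$ through $u$ and $v$ is not one of the specific edges $\{v, u_2^j, \ldots, u_k^j\}$ of the star, so the star would not be immersed. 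Second, the hypothesis $\overline{\delta}_1(G) \ge 1/2 + \gamma$ does not even guarantee that $u$ and $v$ lie together in any edge of $G$: the inequality in Proposition~\ref{j-degree} reads $\overline{\delta}_1 \ge \overline{\delta}_\ell$, not the reverse, so you have no control over $\deg_G(uv)$.

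The paper circumvents both issues by not pre-selecting the $x_S$. Instead it grows the embedded subtree iteratively and, at each stage, locates a tree vertex $x$ at distance exactly $3$ from the current subtree $T_i$ (such $x$ exists because $|V(T)| = \eta n$ is much larger than $|V(T_i)|$). The length-$3$ path in $T$ from $T_i$ to $x$ is then embedded so that its first two edges form a loose path in $G$ from the boundary vertex $\psi_i(z)$ to a prescribed \emph{leaf} $u_k^1$ of the next star---such a length-$2$ loose path between any two vertices exists because $|N(a) \cap N(b)| \ge \gamma \binom{n}{k-1}$ under the $\overline{\delta}_1$ hypothesis---and its third edge is the star edge $u_k^1 \cdots u_2^1 w$ itself. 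This places $x$ at the centre $w$ with its parent edge already a star edge; the remaining tree-edges at $x$ are then mapped onto the other star edges. The two extra edges of slack in the path are precisely what allow one to steer into the star from an arbitrary starting point using only the $\overline{\delta}_1$ condition; a single edge of slack, as in your BFS scheme, is not enough.
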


\begin{proof}
    Let $\mathcal{A}=\{S_1, \dots, S_m \}$, so $m\le k\beta n$. Let $T_0=r$ and $\psi_0:V(T_0)\rightarrow \{v\}$. Suppose that for some $i\in [m]\cup \{0\}$, we have found a rooted subtree $T_i$ of~$T$ rooted at~$r$ and an embedding~$\psi_i$ from~$T_i$ to~$G$ such that $S_{i'}$ is immersed by~$\psi_i$ for all $i'\le i$, $S_{i''}$ is disjoint from~$\psi_i(V(T_i))$ for all $i<i'' \le m$ and $|V(T_i)|\le 3(i+1)k\Delta$.
    
    If $i=m$, then we greedily extend $\psi_m$ to an embedding of~$T$. Hence, we may assume that $i<m$. Note that there exists a vertex $x\in V(T)\backslash V(T_i)$ such that the loose path from~$x$ to~$T_i$ has length~$3$. Otherwise, all vertices in~$T$ have distance at most $2$ from~$T_i$. It implies that $$|V(T)|\le |V(T_i)|(1+k\Delta+k^2\Delta^2)\ll \eta n=|V(T)|,$$
    a contradiction. Let $P$ be the path from~$T_i$ to~$x$ in~$T$ of length $3$. Let $T_{i+1}$ be the union of~$T_i$, $P$ and all edges incident to~$x$ in~$T$. Then we have $$|V(T_{i+1})|\le |V(T_i)|+|V(P)|+\Delta\cdot k\le 3(i+2)k\Delta.$$

    Let $w$ be the centre vertex of~$S_{i+1}$, $V(S_{i+1})=\{w, u_2^s,\dots ,u_k^s \text{ for }s\in [\Delta]\}$, $V(P)\cap V(T_i)=z$ and $G'=\l(G\backslash (\psi(T_i)\cup \mathcal{A})\r)\cup \{u_k^1\}$. We pick a loose path $P'$ from~$\psi_i(z)$ to~$u_k^1$ with length $2$ in~$G'$ which is possible as $\overline{\delta}_1(G)\ge 1/2+\gamma$\footnote{Note that $|N(x)\cap N(y)|\ge \gamma n^{r-1}$. Then there exists a $K_{1,2,\dots, 2}$ in~$N(x)\cap N(y)$ and so a loose path of length $2$ between $x$ and $y$.}. Let $P''=P'\cup u_k^1\dots u_2^1w$. Then set $\psi_{i+1}(P)=P''$ and for other edges adjacent to~$\psi^{-1}_{i+1}(w)$, set them to be distinct edges of~$S_{i+1}$ in~$\psi_{i+1}$. Hence $S_{i+1}$ is immersed by~$\psi_{i+1}$.
\end{proof}

\subsection{Embedding for almost spanning tree}  \label{sec:almostembedding}
In this subsection, our aim is to prove the second step in our proof. 

Let $R$ be a $k$-graph and $T$ be a rooted $k$-loose tree at~$r$. For~$v\in V(R)$ and $v\in e\in E(R)$, we say a homomorphism~$\phi$ is a~$(T,R,v,e)$-\emph{homomorphism}, if $\phi(r)=v$ and, for all $r\in e'\in E(T)$,  $\phi(e')=e$.

We show that if $R$ is $\eta$-\emph{robust}, then we can find a homomorphism of loose tree with a fixed rooted vertex.

\begin{lemma}\label{lem:assign}
Let $1/m\ll 1/t\ll \theta \ll \zeta\ll \alpha \ll \eta\ll 1/k, 1/\Delta$. Let $R$ be an $\eta$-robust $k$-graph on~$t$ vertices. Let $\omega$ be a vertex weighting of~$R$ with~$\sum_{v\in V(R)}\omega(v)\ge (1-\eta)t$, $\omega(v)=0$ for all isolated vertices~$v\in V(R)$ and $\omega(v)\in [\theta,1]$ for all other vertices~$v\in V(R)$. Let $e_1,\dots,e_{|E(R)|}$ be the edge enumeration of~$E(R)$ satisfying~\ref{robust2} and $v_1\in e_1$. Let $T$ be a rooted $k$-loose tree at~$r$ with~$\Delta_1(T)=\Delta$ and $|V(T)|\le (1-\alpha-\eta)tm$. Then there exists a $(T,R,v_1,e_1)$-homomorphism~$\phi$ such that for all $v\in V(R)$, $|\phi^{-1}(v)|\le (1-\zeta)\omega(v)m$.
\end{lemma}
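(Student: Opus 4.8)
The plan is to decompose $T$ into a bounded number of subtrees that can be handled one edge $e_i$ of the robust enumeration at a time, while tracking the load on each cluster of $R$. First I would use Fact~\ref{partition tree} and a greedy "breadth-first" cutting argument to write $T$ as a nested family of subtrees $T_1\subseteq T_1\cup T_2\subseteq\cdots$ so that each new piece attaches to the previous union along a single vertex, each piece has size at most (roughly) $\eta tm$, and there are at most $s=O(1/\eta)$ pieces (standard: keep growing a subtree until it first exceeds the size threshold, then cut at a low vertex; bounded degree keeps the overshoot bounded). Because the target bound $|V(T)|\le(1-\alpha-\eta)tm$ leaves slack, I would then assign to each piece a "budget" edge of the enumeration $e_1,\dots,e_{|E(R)|}$: using the perfect $\omega$-fractional matching $\omega^\ast$ guaranteed by \ref{robust1}, I obtain capacities $\omega^\ast(e_i)m$ summing to essentially $\sum_v\omega(v)m/k\ge(1-\eta)tm/k$, which is enough room (after shrinking by $\zeta$) to host all pieces of $T$ with total size $\le(1-\alpha-\eta)tm$. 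So I allocate consecutive pieces $T_1,\dots,T_{s_1}$ to $e_1$, the next batch to $e_2$, and so on, ensuring the pieces assigned to $e_i$ have total size at most $(1-\zeta)\omega^\ast(e_i)km$ — which is at most $(1-\zeta)\omega(v)m$ per vertex $v\in e_i$ after distributing.

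Next I would actually build the homomorphism $\phi$. The root $r$ of $T$ lies in the first piece, assigned to $e_1$, and $v_1\in e_1$; I need $\phi(r)=v_1$ and every edge of $T$ at $r$ mapped to $e_1$, which is exactly what the $C_1$-rotatable property of $e_1$ (via Proposition~\ref{prop:homo exist}, which combines reachability and rotatability) delivers, up to $(k\Delta)^{C_1+C_2}$ exceptional vertices per colour class. For a later piece $T_j$ assigned to $e_i$, its attachment vertex $x_j$ has already been embedded — either inside the same $e_i$, or, at the "seam" where we switch from $e_{i-1}$ to $e_i$, it was embedded into $e_{i-1}$. Here I use \ref{robust2}: $e_i$ is $C_1$-reachable from $e_{i-1}$ (indeed from any $e_{i'}$ with $i'\le i$), so I can route a bounded-length loose path from $\phi(x_j)\in e_{i-1}$ into $e_i$ and then apply Proposition~\ref{prop:homo exist} with $v=\phi(x_j)$ to map the bulk of $T_j$ onto $e_i$ in a colour-balanced way. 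Within a fixed $e_i$, successive pieces are just re-rooted at already-embedded vertices of $e_i$ and handled directly by rotatability. Each application leaks only $(k\Delta)^{O(C_1+C_2)}$ "stray" vertices, and since $s=O(1/\eta)$ and $t\gg C_1,C_2$, the total number of stray vertices is at most, say, $\zeta tm/2$.

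Finally I would do the bookkeeping to check the load bound $|\phi^{-1}(v)|\le(1-\zeta)\omega(v)m$ for every $v$. The "bulk" contribution to a vertex $v\in e_i$ is controlled because the pieces routed to $e_i$ have total size $\le(1-2\zeta)\omega(v)m$ (by our budgeting via $\omega^\ast$ and the slack in $|V(T)|$), and rotatability distributes each piece's colour classes evenly across $e_i$ up to bounded error; the "stray" contribution across all of $R$ is at most $\zeta tm/2$, so it adds at most $\zeta\omega(v)m/2$-ish to any single $v$ once spread out, or can simply be re-absorbed into the slack. Summing, $|\phi^{-1}(v)|\le(1-2\zeta)\omega(v)m+(\text{bounded error})\le(1-\zeta)\omega(v)m$, using $\theta\le\omega(v)$ to dominate the additive error by $\zeta\omega(v)m$ (this is where $\theta\gg 1/t$ and $1/m\ll\theta$ enter). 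I expect the main obstacle to be the seam handling: guaranteeing that when we move from $e_{i-1}$ to $e_i$ the connecting vertex $x_j$ was indeed placed in $e_{i-1}$ (not stranded among the stray vertices), and that the short reachability path plus the rotatable tail do not blow the load on the few clusters near the seam. This is handled by choosing the cut vertices of the decomposition to lie in $e_{i-1}$ by construction — i.e. interleaving the "cut $T$" step with the "assign to $e_i$" step so that seams are always placed deliberately — and by charging the bounded seam cost to the stray budget.
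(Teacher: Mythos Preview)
Your overall strategy---decompose $T$, allocate pieces to edges of a perfect $\omega$-fractional matching following the enumeration, glue pieces together via reachability, and invoke Proposition~\ref{prop:homo exist} at each step---matches the paper's. But your quantitative choices break the argument in two essential places.

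\textbf{Piece size.} You cut $T$ into $s=O(1/\eta)$ pieces of size roughly $\eta tm$. An edge $e_i$ of the fractional matching has total capacity $k\omega^{\ast}(e_i)m\le km$; since $\eta$ is a fixed constant while $t$ is large, a single piece of size $\eta tm\gg km$ does not fit into any edge's budget, so ``allocate consecutive pieces $T_1,\dots,T_{s_1}$ to $e_1$'' is already impossible. The paper introduces a new parameter $\xi$ with $1/m\ll\xi\ll 1/t$ and cuts $T$ into $s\le 1/\xi$ pieces each of size at most $2k\Delta\xi tm\ll m$; now many pieces go into each edge and the allocation works.

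\textbf{Balance across the $k$ vertices of an edge.} Rotatability (and Proposition~\ref{prop:homo exist}) does \emph{not} ``distribute each piece's colour classes evenly across $e_i$''. It lets you choose a permutation $\sigma\in S_k$ and then sends colour class $C_j(T_i)$ (minus bounded error) to vertex $x_{\sigma(j)}$. If one colour class of $T_i$ carries most of its vertices, that mass lands on a single vertex of $e_i$. The paper handles this by (a) making pieces tiny, so the imbalance injected by any one piece is at most its size $2k\Delta\xi tm$, and (b) at each step choosing $\sigma$ to send the largest colour class to the currently least-loaded vertex of $e_{j(i)}$, maintaining the invariant~\ref{map4} that any two vertices of $e_j$ differ in load by at most one piece size plus accumulated stray error. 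Without both ingredients you cannot pass from a per-edge budget to the per-vertex bound $(1-\zeta)\omega(v)m$.

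A smaller point: the claim ``$t\gg C_1,C_2$'' is false---$C_1,C_2$ are functions of $t$ and may be as large as $t^{4k}$---though your conclusion that strays are negligible is still salvageable, because $m$ is chosen after $t$ (and after $\xi$ in the paper's argument) and dominates $s\cdot(k\Delta)^{C_1+C_2}$.
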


We sketch its proof. First, we break $T$ into small pieces (see Claim~\ref{decompose}) $T_1,\dots, T_s$ with similar size such that the root of~$T_i$ is a leaf vertex of some tree~$T_j$ with~$j\le i$. Then by~$\eta$-robust, suppose that there exists a perfect matching $e_1,\dots ,e_{|V_{R}|/k}$ in~$R$ and $e_i$ is reachable from~$e_j$ for~$i\ge j$. Next, we embed each tree one by one into~$e_1,\dots ,e_{|V_{R}|/k}$. We aim to map most of~$T_i$ to~$e_{j(i)}$. In particular, $e_{j(i)}$ will be the first edge (in the ordering) has not been overused. Thus, $j(1), \dots ,j(i)$ form a non-decreasing sequence. Since the root of~$T_i$ is a leaf vertex of some tree~$T_j$ which has been embedded to~$e_{j'}$ with~$j'\le j(i)$. By reachable property~\ref{robust2}, we can extend the map such that most of the vertices in~$T_i$ maps to~$e_{j(i)}$. By rotatable property, we ensure that $T_i$ maps to~$e_{j(i)}$ in a balanced way.

\begin{proof}[Proof of Lemma~\ref{lem:assign}]
 Let $\xi$ with~$1/m\ll \xi\ll1/t$.
We decompose $T$ into small subtrees $T_1,\dots, T_s$. 
 \begin{claim}~\label{decompose}
There is a decomposition of~$T$ into~$T_1,  \dots, T_s$ such that $s \le 1/\xi$, $\xi tm\le |V(T_i)|\le 2k\Delta \xi tm$ for all $i\in [s]$ and for any $i'\in [2,s]$, $T_{i'}$ is a leaf vertex of~$T_j$ for some $j<i'$.
 \end{claim}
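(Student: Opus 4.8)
The plan is to build the decomposition greedily by repeatedly peeling off a subtree of the right size. Root $T$ at $r$ and process vertices in a BFS order from the root. Maintain a "remaining forest" together with the list of pieces already extracted. At each step I would look at the current remaining tree $T'$ (the component still attached to, or containing, an as-yet-unprocessed leaf vertex of a previously extracted piece) and try to locate a vertex $v$ whose rooted subtree $T'(v)$ has size in the window $[\xi t m, 2k\Delta\xi t m]$; then set the next piece $T_i := T'(v)$, delete $V(T'(v))\setminus\{v\}$ from $T'$, and record that the root of $T_i$ (namely $v$) is a leaf vertex of whichever piece currently owns $v$. The sequence of deletions is nested in the BFS order, so the condition "$T_{i'}$ is rooted at a leaf vertex of some earlier $T_j$" is automatic from the construction.

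The key point is to show such a $v$ always exists as long as the remaining tree is large. First I would observe the elementary fact that in a rooted $k$-loose tree, if a vertex $v$ has $\dist_T(r,v)$-subtree $T(v)$ of size exceeding $2k\Delta\xi t m$, then $v$ has a child edge leading to a subtree that is still reasonably large; more precisely, since $\Delta_1(T)\le\Delta$ and each edge has $k$ vertices, removing $v$ splits $T(v)$ into at most $\Delta$ pieces each attached by one edge, so some child subtree (together with its connecting edge) has size at least $(|V(T(v))|-1)/\Delta \ge |V(T(v))|/(2\Delta)$. Descending greedily along such "heavy" children, the subtree size decreases by a factor of at most $2k\Delta$ per step (we lose at most the $k-1$ new vertices of one edge and then pass to a child subtree), so we cannot jump over the window $[\xi tm, 2k\Delta\xi tm]$: the first time the current subtree drops below $2k\Delta\xi t m$ it is still at least $\xi t m$, provided the starting subtree had size at least $\xi t m$. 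Applying this with $v=r$ (or $v$ the root of the current remaining tree) gives a valid piece whenever $|V(T')|\ge \xi t m$.

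Next I would bound the number of pieces. Each extracted piece has at least $\xi t m$ vertices, and pieces are vertex-disjoint except for overlap at a single root vertex which is shared with an earlier piece; in particular the pieces partition $E(T)$, and $|E(T)| \le |V(T)| \le tm$ (the $-1$'s and the mild overcount from shared roots are absorbed since $\xi t m \gg$ the number of pieces), so $s \le |V(T)|/(\xi t m) \le tm/(\xi t m) = 1/\xi$, as claimed. When the remaining tree finally has fewer than $\xi t m$ vertices, I would simply append it to the most recently created adjacent piece (or, if it is all that remains, let it be $T_s$ on its own); this only increases one piece by at most $\xi t m$ vertices, keeping the upper bound $2k\Delta\xi t m$ intact (here I use $\xi t m \ll k\Delta\xi t m$), and does not affect the leaf-vertex-of-earlier-piece property.

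The main obstacle I expect is the bookkeeping around the shared root vertices: when a piece $T_i = T'(v)$ is peeled off, $v$ remains in the parent piece, and I must be careful that (a) $v$ really is a \emph{leaf vertex} of that parent piece in the sense defined (in only one edge of it, and not its root) — this needs the deletions to be done in an order consistent with the layering, which the BFS/root-outward order guarantees; and (b) the size accounting is done on a per-edge basis so that the shared vertices are not double-counted in the bound $s\le 1/\xi$. Neither is deep, but both require stating the invariant precisely and checking it is preserved at each peeling step. I would also need the trivial inequality $\Delta\ge 2$ and $k\ge 2$ so that the window $[\xi tm, 2k\Delta\xi tm]$ is nonempty and the descent argument has slack; these hold in our setting.
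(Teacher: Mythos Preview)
Your descent-and-peel idea is essentially the paper's approach: repeatedly locate a vertex $v$ whose subtree $T'(v)$ lies in the size window, remove $T'(v)\setminus\{v\}$, and continue. However, there is a genuine gap in the ordering. As you have written it, the root $v$ of the $i$th extracted piece remains in the \emph{remaining} tree~$T'$, and that remaining tree only becomes a piece $T_j$ with $j>i$ later on. So your construction actually produces the property ``the root of $T_i$ is a leaf vertex of some $T_j$ with $j>i$'' --- exactly the opposite of what the claim requires. Your sentence ``the condition is automatic from the construction'' is therefore incorrect, and in particular $T_1$ in your scheme is a deep subtree not containing the global root~$r$, whereas the subsequent argument in the lemma needs $T_{\le i}$ to be a subtree of $T$ rooted at~$r$ for every~$i$.

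The paper's fix is simply to reverse: peel off subtrees $T'_1,T'_2,\dots$ (each found, as you say, by descending to a deepest $v$ with $|T'(v)|\ge \xi tm$, so that all children have subtrees of size $<\xi tm$ and hence $|T'(v)|\le k\Delta\xi tm$), merge the small leftover into $T'_s$, and then set $T_i:=T'_{s+1-i}$. After this reversal, $T_1$ contains~$r$, and the root of each $T_i$ sits as a leaf in some $T_j$ with $j<i$, as required.

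A minor point: when you bound the number of ``child pieces'' of $v$ by~$\Delta$, you are counting down-edges, not child vertices; there are up to $\Delta(k-1)$ child vertices, which is why the paper's window is $[\xi tm,\,k\Delta\xi tm]$ (before the final merge) rather than $[\xi tm,\,2\Delta\xi tm]$. Your window $[\xi tm,\,2k\Delta\xi tm]$ has enough slack either way, so this does not break anything.
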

 \begin{proofclaim}
      Initially, let $T'=T$. Find a vertex $v\in T'$ such that the rooted tree~$\xi tm\le |V(T(v))|\le k\Delta  \xi tm$ and for each $u\in N_T(v)\cap V(T(v))$, $|V(T(u))|\le \xi tm$. Such vertex can be found by a breadth-first search. Set $T'_1=T(v)$ and update $T'=T\backslash (T'_1\backslash v)$. Repeat this procedure on~$T'$ until $|V(T')|\le k\Delta \xi tm$, we have $T_1',\dots ,T_s'$ with~$s\le \frac{tm}{\xi tm}\le \xi^{-1}$. We replace $T'_s$ with~$T'_s\cup T'$. So $T_1',\dots ,T_s'$ is a decomposition of~$T$. We are done by reversing the order of~$T_1',\dots ,T_s',$ that is $T_i=T'_{s+1-i}$ for~$i\in [s]$.
 \end{proofclaim}
 
 Since $R$ is $\eta$-robust, by~\ref{robust1}, there is a perfect $\omega$-fractional matching $\omega^{\ast}$ such that, for all~$v\in V(R)$, $\omega(v)=\sum_{e\ni v}\omega^{\ast}(e)$ and $\sum_{e\in E(R)}\omega^\ast(e)=\sum_{v\in V(R)}\omega(v)/k$. By scaling each edge weight by a factor of~$(1-2\zeta)$ and removing fractional edges with weight smaller than $1/t^{k+1}$, we could get a new fractional matching $\omega'$ of~$R$ such that $\omega'(e)\ge 1/t^{k+1}$ for all edges~$e \in E(R)$, 
 \begin{align}
 \sum_{e\ni v}\omega'(e)\le (1-2\zeta)\omega(v) \text{ for all $v\in V(R)$},  \label{eqn:omega'(e)}
 \end{align}
 and \begin{align}
        \sum_{e\in E(R)} \omega'(e)&\ge (1-2\zeta)\sum_{v\in V(R)}\omega(v)/k-t^k\cdot \frac{1}{t^{k+1}}\nonumber \\&\ge (1-3\zeta)\sum_{v\in V(R)}\omega(v)/k\ge (1-\eta-\alpha/2) t/k.\label{edge total weight}
    \end{align}
   
    Let $T_{\le 0}$ be the single vertex $r$ and for~$i\in [s]$, let $T_{\le i}$ be $\bigcup_{i'\le i}T_{i'}$. Suppose that for some $i\in [s]$, we have already defined a $(T_{\le i-1},R,v_1,e_1)$-homomorphism~$\phi_{i-1}$ such that, for~$i' \in [i-1]$,
   \begin{enumerate} [label=(\roman*)]
       \item\label{map1} there exists $j(i')$ such that the number of vertices in~$T_{i'}$ not mapped to~$e_{j(i')}$ is at most $k(k\Delta)^{C_1+C_2}$, that is, $|\{v\in V(T_{i'}):\phi_{i-1}(v)\notin e_{j(i')}\}|\le k(k\Delta)^{C_1+C_2};$\
       \item\label{map3} for~$j\in [|E(R)|]$, let $\mathcal{T}_{\le i-1}^j=\{T_{i'}:j(i')=j, i'\le i-1\}$ and we have 
       \begin{align*}
       j(i') = \min\{ j' : \omega'(e_{j'})-|V(\mathcal{T}^j_{\le i'-1})|/km\ge 2k\Delta\xi t\};
       \end{align*}
       \item\label{map2} $1=j(1)\le \dots \le j(i-1)$ and $\mathcal{T}_{\le i-1}^j=\emptyset$ for~$j>j(i-1)$;
       \item\label{map4} for all $e_j=x_1\dots x_k\in E(R)$ and distinct $p,p'\in [k]$, $$||\phi_{i-1}^{-1}(x_p)\cap V(\mathcal{T}^j_{\le i-1})|-|\phi_{i-1}^{-1}(x_{p'})\cap V(\mathcal{T}^j_{\le i-1})||\le 2k\Delta\xi tm+(i-1)k(k\Delta)^{C_1+C_2}.$$
   \end{enumerate}
We now give a rough interpretation of~\ref{map1} to~\ref{map4}. Property~\ref{map1} means most of~$T_{i'}$ is mapped to some edge~$e_{j(i')}$ in~$R$. Note that $\mathcal{T}_{\le i-1}^j$ is all trees $T_{i'}$ mapped to~$e_j$ with~$i'< i$. Property~\ref{map3} says that $T_{i'}$ maps to the first edge $e_j$ that is not full. Property~\ref{map2} says $j(1),\dots,{j(i)}$ is a non-decreasing sequence. Finally, Property~\ref{map4} says that we can ensure that~$T_{i'}$ maps to~$e_{j(i')}$ in a balance way. 

When $i=1$, we map $T_1$ to~$e_1$ and $r$ to~$v_1$. Since $|V(T_1)|\le 2k\Delta \xi t$, our claims~\ref{map1} to~\ref{map4} hold. We now define $\phi_i$ as follow. Let the root vertex of~$T_i$ be $r_i$. Then $V(T_{\le i-1})\cap V(T_i)=\{r_i\}$. Let $\pi \in S_k$ be such that $|C_{\pi (1)}(T_i)|\ge \dots \ge |C_{\pi (k)}(T_i)|$. Let
\begin{align*}
j(i) = \min\{ j' : \omega'(e_{j'})-|V(\mathcal{T}^j_{\le i-1})|/km\ge 2k\Delta\xi t\}.
\end{align*}
Note that $j(i)$ exists, otherwise, we have 
\begin{align*}
    \sum_{j\in [|E(R)|]}(\omega'(e_j)-|V(\mathcal{T}^{j}_{\le i-1})|/km) & < 2\Delta\xi t|E(R)|,\\
    |V(T_{\le i-1})|& > km \sum_{j\in [|E(R)|]} \omega'(e_j) - 2\Delta\xi t km |E(R)|\\
    & \overset{\mathclap{\text{\eqref{edge total weight}}}}{\ge} (1-\eta-\alpha/2)tm - 2\Delta\xi t^{k+1} km 
    \ge (1-\eta -\alpha)tm
    \ge|V(T)|,
\end{align*}
a contradiction. 

Let $e_{j(i)}=y_1\dots y_k$ and $\pi\in S_k$ be such that $$|\phi^{-1}_{i-1}(y_{\pi(1)})\cap V(\mathcal{T}^j_{\le i-1})|\le \dots \le |\phi^{-1}_{i-1}(y_{\pi(k)})\cap V(\mathcal{T}^j_{\le i-1})|.$$
Recall that~\ref{robust2} and~\ref{robust3} say that $e_j$ is $C_1$-reachable from~$v$ and $C_2$-rotatable.
By Proposition~\ref{prop:homo exist}, there exists a homomorphism~$\phi'$ from~$T_i$ to~$R$ such that $\phi'(r_i)= \phi(r_i)$ and, for all $j \in [k]$, 
\begin{align}
    |C_j(T_i) \backslash \phi'^{-1}(y_j)|\le (k\Delta)^{C_1+C_2} . \label{eqn:C_j(T_i)}
\end{align}
Define a homomorphism~$T_{\le i}$ to~$R$ such that 
\[        
\phi_{i}(u) =
\begin{cases}
\phi_{i-1}(u) &\text{ if }u\in V(T_{\le i-1}), \\
\phi'(u) & \text{ if }u\in V(T_i).\\
\end{cases}
\]
Note that from the choice of~$j(i)$, $\phi_i$ satisfies~\ref{map3} and~\ref{map2}.

For distinct $p,p'\in [k]$, we have 
\begin{align*}
    &||\phi_{i}^{-1}(x_p)\cap V(\mathcal{T}^{j(i)}_{\le i})|-|\phi_{i}^{-1}(x_{p'})\cap V(\mathcal{T}^{j(i)}_{\le i})||\\
    \le & \max \left\{ 
        \left||\phi_{i-1}^{-1}(x_p)\cap V(\mathcal{T}^{j(i)}_{\le i-1})|-|\phi_{i-1}^{-1}(x_{p'})\cap V(\mathcal{T}^{j(i)}_{\le i-1})| \right| +  \sum_{j \in [k]} |C_j(T_i) \backslash \phi'^{-1}(y_j)| 
        , |V(T_i)| \right\}\\
    \overset{\mathclap{\text{\ref{map4}, \eqref{eqn:C_j(T_i)}}}}{\le} & \max \{ 2k\Delta\xi tm+(i-1)k(k\Delta)^{C_1+C_2}+k(k\Delta)^{C_1+C_2} , 2k\Delta\xi tm\}
    \le 2k\Delta\xi tm+ik(k\Delta)^{C_1+C_2}.
\end{align*}
Hence,~\ref{map4} holds. 
Thus we obtain a homomorphism~$\phi_s$ from~$T$ to~$R$. 

Consider $j \in [|E(R)|]$ with~$\omega'(e_{j'})>0$, so $\omega'(e_{j'})\ge 1/t^{k+1}$. 
Let $i'$ be maximum such that $j(i') = i$, which exists as $\omega'(e_{j'})\ge 1/t^{k+1} \ge 2\Delta\xi t$. 
Note that 
\begin{align*}
    \omega'(e_{j})-\frac{|V(\mathcal{T}^j_{\le s})|}{km} 
    & = \omega'(e_{j})-\frac{|V(\mathcal{T}^j_{\le i'})|}{km}
     \ge \omega'(e_{j})-\frac{|V(\mathcal{T}^j_{\le i'-1})|}{km} - \frac{|V(T_{i'})|}{km}\\\
   &     \overset{\mathclap{\text{\ref{map3}}}}{\ge} 2k\Delta\xi t - 2\Delta\xi t = 2(k-1)\Delta \xi t .
\end{align*}
for~$x \in e_j$, we have 
\begin{align}
        |\phi_s^{-1}(x)\cap V(\mathcal{T}^j_{\le s})| 
            & \overset{\mathclap{\text{\ref{map4}}}}{\le} \frac{ | V(\mathcal{T}^j_{\le s})|}{k} + 2k\Delta\xi tm+ s k(k\Delta)^{C_1+C_2} \nonumber \\ 
            & \le \omega'(e_{j}) m  +  2\Delta\xi tm+ s k(k\Delta)^{C_1+C_2} \le \omega'(e_{j}) m +\theta^2 m. \label{eqn:phis(x)}
\end{align}
Therefore for each non-isolated vertex $v \in V(R)$, 
\begin{align*}
    |\phi_s^{-1}(v)| 
    & \le \sum_{e_j \in E(R)} |\phi^{-1}(v) \cap V(\mathcal{T}^j_{\le s}) |\\
    & \le  \sum_{v \in e_j \in E(R)} |\phi^{-1}(v) \cap V(\mathcal{T}^j_{\le s}) | + \sum_{i \in [s]} | V(T_i) \setminus \phi^{-1}(e_{j(i)})| \\
    & \overset{\mathclap{\text{\eqref{eqn:phis(x)},~\ref{map1}}}}{\le} 
       \sum_{v \in e_j \in E(R)} (\omega'(e_{j}) m +\theta m)+  s k(k\Delta)^{C_1+C_2}
    \\
    & \le m  \sum_{v \in e_j \in E(R)} \omega'(e_{j})  +  \zeta \theta m
    \overset{\mathclap{\text{\eqref{eqn:omega'(e)}}}}{\le} (1-\zeta)\omega(v)m.
\end{align*}
We are done by setting $\phi = \phi_s$. 
\end{proof}

Now, we are ready to turn this homomorphism on reduced graph to an embedding for almost spanning tree by Lemma~\ref{expand}. Recall that if $d(v,V_2;\dots;V_k) \ge d$, $v$ is $d$-dense into~$\{V_2,\dots , V_{k}\}$.

\begin{lemma}\label{lem:embed}
    Let $1/n\ll 1/t\ll 1/r\ll\varepsilon\ll \theta \ll \zeta \ll \alpha \ll \eta  \ll  d\ll 1/\Delta\ll 1/k$. Let $G$ be a $k$-graph on~$n$ vertices. 
		Let $\mathcal{Q}$ be a $\varepsilon$-regular partition of $G$ and $R$ be the corresponding $(\mathcal{Q},\varepsilon,d)$-reduced graph. Let the cluster of reduced graph $R$ be $V_1,\dots ,V_t$ and vertex set of~$R$ be $[t]$. Let an edge $e_1\in E(R)$ be $1\dots k$. Let $T$ be a rooted $k$-loose tree at~$r$ with~$\Delta_1(T)\le \Delta$ and $|V(T)|\le (1-\alpha - \eta)n$. Then there is a vertex $v^{\ast}\in V_1$ such that the following statement holds:
    
    Let $A\subseteq V(G)$ be such that $|A|\le \eta n$, $|V_i\backslash A|\ge \theta|V_i|$ for~$i\in [t]$ and $A\cap V_i=\emptyset$ for~$i\in [k]$. Suppose that $\phi$ is a $(T,R,1,e_1)$-homomorphism with~$|V_i\backslash A|-|\phi^{-1}(i)|\ge \zeta |V_i\backslash A|$ for~$i\in [t]$. Then there exists an embedding~$\psi$ from~$T$ to~$G$ such that $\psi(r)=v^{\ast}$ and  for any $v\in V(T)$, $\psi(v)\in V_{\phi(v)}\backslash A$. 
\end{lemma}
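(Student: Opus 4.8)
The plan is to embed $T$ greedily, edge by edge, following the homomorphism $\phi$ and using Lemma~\ref{expand} to find, at each step, a vertex in the correct cluster that is dense into the clusters where the children still need to go. First I would fix the vertex $v^*$: since $\phi(r)=1$ and the edges incident to $r$ in $T$ — there are at most $\Delta$ of them — are all mapped by $\phi$ into the edge $e_1 = 1\dots k$, I want $v^*\in V_1$ to be $d/4$-dense into $\{V_2,\dots,V_k\}$ simultaneously for the (at most $\Delta$) relevant regular $k$-tuples; by $\varepsilon$-regularity all but at most $\varepsilon|V_1|$ vertices of $V_1$ have this property for a single tuple, so all but at most $\Delta\varepsilon|V_1|$ work for all of them, and any such vertex not in $A$ (here $A\cap V_1=\emptyset$ is given) can be taken as $v^*$. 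Crucially $v^*$ must be chosen \emph{before} $A$ and $\phi$ are revealed, but the density conditions defining it depend only on $G$ and $\mathcal{Q}$, so this is fine.

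Next I would set up the greedy embedding as an induction over a breadth-first (layer-by-layer) order on $V(T)$ rooted at $r$. The invariant maintained is: having embedded the first few layers, every already-embedded vertex $x$ with $\phi(x)=i$ has been placed at a vertex $\psi(x)\in V_i\setminus(A\cup\psi(\text{used}))$ that is $d/4$-dense into each tuple $\{V_{j_1},\dots,V_{j_{k-1}}\}$ arising from an edge of $T$ through $x$ that has not yet been processed. To embed the next vertex, take an unprocessed edge $x_1\dots x_k$ of $T$ with $x_1$ already embedded; by the invariant $\psi(x_1)$ is $d/4$-dense into $\{V_{\phi(x_2)},\dots,V_{\phi(x_k)}\}$. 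Let $Z_j := V_{\phi(x_j)}\setminus(A\cup\psi(\text{already embedded vertices}))$; the hypotheses $|V_i\setminus A|-|\phi^{-1}(i)|\ge\zeta|V_i\setminus A|$ and $|V_i\setminus A|\ge\theta|V_i|$, together with $|V(T)|\le(1-\alpha-\eta)n$, guarantee $|Z_j|\ge\tfrac{\zeta\theta}{2}|V_j|\ge\sqrt{\varepsilon}|V_j|$. Apply Lemma~\ref{expand} with $X_j=V_{\phi(x_j)}$ and with the $Y$-clusters being the clusters $V_{\phi(y)}$ for the children $y$ of $x_2,\dots,x_k$ in $T$ (padding with arbitrary regular tuples if a vertex has fewer than $\Delta$ further incident edges; note $\phi$ being a homomorphism guarantees each required $k$-tuple $\phi(\text{edge})$ is an edge of $R$, hence a regular $k$-tuple of density $\ge d$). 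Lemma~\ref{expand} then produces $z_2\in Z_2,\dots,z_k\in Z_k$ with $\psi(x_1)z_2\dots z_k\in E(G)$ and each $z_j$ being $d/4$-dense into the clusters of its own pending children. Set $\psi(x_j)=z_j$; the invariant is restored.

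The main obstacle is bookkeeping the density condition through \emph{all} of a vertex's incident edges at once, rather than one at a time: when we embed $x_1\dots x_k$ we must secure $z_j$'s density into \emph{every} future edge through $z_j$, not just the next one, because $z_j$ may be the "$x_1$" of several later edges and we will not revisit it. This is exactly why Lemma~\ref{expand} is stated with the family $Y_j^1,\dots,Y_j^{(k-1)\Delta}$ and why the invariant is phrased to cover all pending edges — so the induction goes through as long as $\Delta_1(T)\le\Delta$ bounds the number of such tuples. A secondary point to check is that the sets $Z_j$ never shrink below $\sqrt{\varepsilon}|V_j|$: at every moment the number of embedded vertices in cluster $V_i$ is at most $|\phi^{-1}(i)|\le(1-\zeta)|V_i\setminus A|$, so at least $\zeta|V_i\setminus A|\ge\zeta\theta|V_i|\gg\sqrt{\varepsilon}|V_i|$ vertices of $Z_i$ remain available, which is what Lemma~\ref{expand} needs. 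Finally one checks the arithmetic of constants $1/n\ll1/t\ll1/r\ll\varepsilon\ll\theta\ll\zeta\ll\alpha\ll\eta\ll d\ll1/\Delta\ll1/k$ is consistent with the thresholds demanded by Lemma~\ref{expand}, which it is by the choice of hierarchy.
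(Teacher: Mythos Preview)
Your proposal is correct and follows essentially the same route as the paper: choose $v^*\in V_1$ to be dense into $\{V_2,\dots,V_k\}$ (which depends only on $\mathcal{Q}$, not on $A$ or $\phi$, exactly as you note), then greedily embed $T$ in breadth-first order, maintaining the invariant that each embedded vertex is $d/4$-dense into all cluster-tuples it will need for its pending edges, and invoking Lemma~\ref{expand} at each step with the $\zeta\theta$ lower bound on the remaining cluster sizes. One minor simplification: since $\phi$ is a $(T,R,1,e_1)$-homomorphism, all edges through $r$ map to the \emph{same} edge $e_1$, so the ``at most $\Delta$ relevant regular $k$-tuples'' for $v^*$ collapse to a single tuple $\{V_2,\dots,V_k\}$.
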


\begin{proof}
  By the definition of reduced graph $R$, we have that $\{V_1,\dots V_k\}$ is an $\varepsilon$-regular tuple with density at least $d$. Since $d(V_1;\dots; V_k)\ge d$, there exists a vertex~$z$ in~$V_1$ such that~$z$ is $d/2$-dense into~$\{V_2,\dots,V_{k-1}\}$. Let $v^*=z$. Next we show that $v^*$ is the vertex as required.

 Suppose that $\phi $ is a $(T,R,1,e_1)$-homomorphism with~$|V_i\backslash A|-|\phi^{-1}(i)|\ge \zeta |V_i\backslash A|$ for~$i\in [t]$. We now construct the embedding~$\psi$ from~$T$ to~$G$ as following. At first, embed the root vertex $r$ such that $\psi(r)=v^*$. Next we embed all edges in one time according to the breadth-first manner order. In this process, once we embed a vertex, then we remove this vertex from its cluster and update all~$V_i$. Recall that in homomorphism~$\phi$ we use at most $(1-\zeta)|V_i\backslash A|$ vertices for each cluster $V_i$ with~$i\in [s]$. So~$|V_i|$ for~$i\in[t]$ always larger than $\zeta \cdot \theta \cdot |V_i|> \varepsilon |V_i|$. 
    
    Throughout, we will keep the dense property: for every vertex $v\in V(T)$, every edge $e$ with~$v\in e$ and $\phi(e)=\phi(v){i_1}{i_2}\dots i_{k-1}$, $\psi(v)$ is $d/4$-dense into~$\{V_{i_1},V_{i_2},\dots ,V_{i_{k-1}}\}$. 
    
    Since $\phi$ is $(T,R,1,e_1)$-homomorphism and for all edges $e$ with~$r\in e$, $\phi(e)=e_1$. In particular, for the root vertex $r$, $\psi(r)=v^*\in V_1$ and $v^*$ is $d/2$-dense into~$\{V_2,\dots,V_k\}$. Hence $\psi(v^*)=r$ satisfies the dense property. 
    
    For other vertices except $r$, there are at most $\Delta$ $(k-1)$-tuples $\{V_{i_1},V_{i_2},\dots ,V_{i_{k-1}}\}$ such that there exists an edge $e$ with~$v\in e$ and $\phi(e)=\phi(v){i_1}{i_2}\dots i_{k-1}$. 
		Assume that we aim to embed an edge $a_1\dots a_{k}$, where $a_1$ has been embedded. Suppose that $V_{\phi(a_i)}=W_i$ for~$i\in [k]$. 
		Let $m = |V_i|$ for $i \in [k]$. 
		Since~$a_1$ is $d/4$-dense into~$\{W_2,\dots ,W_{k-1}\}$ and $|W_2|,|W_3|,\dots ,|W_k|\ge \zeta m\ge \sqrt{\varepsilon} m$, apply Lemma~\ref{expand} to obtain~$y^{2},y^{3},\dots ,y^{k}$ satisfying the dense property. Thus, we can embed all other vertices in~$T$.
\end{proof}

\subsection{ Complete embedding} \label{sec:complete embedding}
We are ready to prove Theorem~\ref{generaltheorem}.

\begin{proof}[Proof of Theorem~\ref{generaltheorem}]
   Let $$1/n\ll 1/t\ll 1/r\ll\varepsilon \ll \theta\ll \zeta \ll \alpha\ll \beta \ll \eta \ll d\ll \gamma \ll 1/\Delta.$$ 
   Let $G$ be a $k$-graph on~$n$ vertices with~$\overline{\delta_\ell}(G)\ge \delta_{k,\ell}^{R}+\gamma$. Let $\mathcal{Q}$ be a $\varepsilon$-regular partition of $G$ and $R$ be the $(\mathcal{Q},\varepsilon, d)$-regular reduced graph.
   Let $T$ be a rooted $k$-loose tree on~$n$ vertices. 
   \medskip
   
   \noindent \textbf{Step 0: setting up.} 
    Let $x$ be a vertex in~$T$ such that $\eta n/\Delta\le |V(T(x))|\le \eta n$, which exists by a breadth-first search. 
    Let $T_x = T(x)$ and $T'=T\backslash (T_x\backslash x)$ be subtrees rooted at~$x$.
    By adjusting~$\eta$ slightly, we may assume that 
    \begin{align*}
        |V(T_x)| =  \eta n.
    \end{align*}
    Let $T''$ be a $k$-loose tree rooted at~$x$ such that $T'' \subset T$ and 
    \begin{align*}
        |V(T')|-|V(T'')|=(k-1)\alpha n.
    \end{align*}
    (We can obtain~$T''$ from~$T'$ by deleting leaf edge $\alpha n $ times.)

   By Lemma~\ref{regular set up}, there exists a spanning subgraph $R'$ of the reduced graph $R$ with 
   $$\overline{\delta}_\ell^{\alpha}(R')\ge  \delta_{k,\ell}^{R}+\gamma/2.$$
   Let $R^*$ be a spanning $\eta$-robust subgraph of~$R'\subseteq R$. Let $e_1,\dots, e_{|E(R^*)|}$ be an enumeration of edges in~$R^*$ satisfying~\ref{robust2}. Without loss of generality, assume the first edge be $e_1=1\dots k$.

   By Lemma~\ref{lem:embed} with~$(\alpha,T) = ((k-1)\alpha, T'')$, we obtain a vertex $v^*\in V_1$ such that the following holds.
   \begin{enumerate}[label = {\rm($\ast$)}]
    \item \label{itm:embed}
    Let $A\subseteq V(G)$ with~$|A|\le \eta n$, $|V_i\backslash A|\ge \theta|V_i|$ for~$i\in [t]$ and $A\cap V_i=\emptyset$ for~$i\in [k]$. Suppose that~$\phi$ is a $(T'',R,1,e_1)$-homomorphism~$\phi$ with~$|V_i\backslash A|-|\phi^{-1}(i)|\ge \zeta |V_i\backslash A|$ for~$i\in [t]$. Then there exists an embedding~$\psi$ from~$T''$ to~$G \setminus A$ such that $\psi(r)=v^{\ast}$ and, for any $v\in V(T)$, $\psi(v)\in V_{\phi(v)}$.  
   \end{enumerate}

  \medskip \noindent  \textbf{Step 1: find absorbers immersed by~$T_x$.} 
    Let $F \subseteq V(G)$ be such that $V_1, \dots, V_k \subseteq F$ and, for~$i \in [k+1,t]$, $|F \cap V_i| = \theta|V_i|$.
    Note that $|F|\le \sum_{1\le i \le k}|V_i|+t\theta (n/t) \le \zeta n$. 
    By Proposition~\ref{j-degree} and~\eqref{eqn:robustthresholdlowerbound}, we have 
    \begin{align*}
       \overline{\delta}_{1}(G), \overline{\delta}_{1}(G \setminus (F\setminus x) ) \ge   1/2 + \gamma/2.
    \end{align*}
    By Lemma~\ref{lemma:intersect}, there is a set $\mathcal{A}\subseteq A_{\Delta}(F)$ of at most $\beta n$ pairwise vertex-disjoint $\Delta$-absorbing tuples such that for every ordered tuple $(w_1,\dots,w_k )$ in~$V(G)$, we have 
    \begin{align*}
    |A_{\Delta}(w_1,\dots,w_k,F)\cap \mathcal{A}|\ge \alpha n. 
    \end{align*}
    Note that $| V ( \mathcal{A}) | \le k^2 \Delta | \mathcal{A} | \le k^2 \Delta \beta n $.
 
    By Lemma~\ref{robust covering lemma} with~$(G,T) = ( G \setminus (F\setminus x) , T_x)$, we obtain an embedding~$\psi_1$ from~$T_x$ into~$G$ such that~$\mathcal{A}$ is immersed by~$\psi_1$, $\psi_1(x)=v^*\in V_1$ and, by setting $A =  \psi_1^{-1}(T_x \setminus x)$, we have  
    \begin{align*}
        | V_i \backslash A| \ge | (F \setminus x) \cap V_i | = 
        \begin{cases}
            |V_i|  & \text{if $i\in [k]$,}\\
            \theta|V_i| & \text{if $i\in [k+1,t]$.}\\
        \end{cases}
    \end{align*}

  \medskip \noindent \textbf{Step 2: extend $\psi_1$ to embedding of~$T'$.} Define a vertex weighted function~$\omega:[t]\rightarrow [\theta,1]$ such that, for~$i\in [t]$, if $i$ is non-isolated in~$R^*$, then set
  \begin{align*}
  \omega(i)=\frac{|V_i\backslash A|}{|V_i|} \ge  \theta 
  \end{align*}
  and set $\omega(i) = 0 $ otherwise. 
  Recall that $R^*$ is $\eta$-robust.
  By Lemma~\ref{lem:assign} with~$(v_1,T, \alpha) = (1,T'', \alpha)$, there exists a $(T'',R^*,1,e_1)$-homomorphism~$\phi$ such that, for all $i \in [t]$, $|\phi^{-1}(i)|\le (1-\zeta)\omega(i)|V_i|$. 
  Together with~\ref{itm:embed}, we obtain an embedding~$\psi_2$ of~$T''$ to~$G \setminus A$ such that $\psi_2(x)=v^{\ast}$.

  \medskip \noindent \textbf{Step 3: extend to an embedding of~$T$.} 
  Note that $\psi_1 \cup \psi_2$ is an embedding of~$T_x \cup T''$ to~$G$, which immerses~$\mathcal{A}$.
  Note that $|V(T) \setminus V(T_x \cup T'')| = |V(T')|-|V(T'')|=(k-1)\alpha n$.
  Then by Lemma~\ref{Absorption lemma} with~$(p,T,T^*)=(\alpha n,T_x \cup T'', T) $, we obtain an embedding of~$T$ to~$G$.
\end{proof}

\section{Robust spanning subgraph  construction}\label{defn}
In this section, we give the construction of the $\eta$-robust spanning subgraph in the $\alpha$-perturbed $k$-graph. Recall the definition of shadow graph~$\partial_j(G)$ and $\partial_j(A)$ in Section~\ref{Notations}.
\begin{defn}\label{defn:our graph}
    Let $G$ be a $k$-graph on~$V$ and $A\in E(\partial_{k-2}(G))$. Set $C_A(G)$ be the induced graph of largest component in~$L_G(A)$. In our setting, $L_G(A)$ is a $2$-graph and $C_A(G)$ will be uniquely defined. If $G$ is clear from the context, then we use $C_A$ instead of~$C_A(G)$. Let $E_A=\{A\cup uv~|~uv\in E(C_A) \text{ and }A\in E(\partial_{k-2}(G))\}$. We define a spanning subgraph $G^*$ of~$G$ with edge set $E(G^*)=\bigcup_{A\in \partial_{k-2}(G)}E_A$. 
    
    For~$0\le s\le (k-2)$ and $A\in E(\partial_s(G))$, let \begin{align*}
    E_A=\bigcup_{A\cup B\in E(\partial_{k-2}(G))}E_{A\cup B}=\bigcup_{A\subseteq A'\in E(\partial_{k-2}(G))}E_{A'}.
\end{align*} 
\end{defn}

Hence $E(G^*)=E_\emptyset$. By the above definition, we can imply a proposition about link graphs.
\begin{proposition}\label{propo:rotatat reduce}
    Let $G$ be a $k$-graph and $B\in E(\partial_{j}(G))$ with~$j\le k-4$. Then $(L_G(B))^*\subseteq L_{G^*}(B)$
\end{proposition}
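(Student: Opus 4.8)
The plan is to unwind the definitions of $\partial_j$, link graphs, and the starred construction, and check that every edge produced by applying the $*$-operation to $L_G(B)$ already appears in $L_{G^*}(B)$. Fix $B\in E(\partial_j(G))$ with $j\le k-4$, and write $H=L_G(B)$, a $(k-j)$-graph on $V(G)$; since $k-j\ge 4$, the construction of Definition~\ref{defn:our graph} applies to $H$, and $H^*$ is built from sets $A'\in E(\partial_{(k-j)-2}(H))$ together with edges of the largest component $C_{A'}(H)$ of the $2$-graph $L_H(A')$. So a generic edge of $H^*$ has the form $A'\cup uv$ with $A'\in E(\partial_{k-j-2}(H))$ and $uv\in E(C_{A'}(H))$. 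I want to show $B\cup A'\cup uv\in E(G^*)$, which by definition of $G^*$ (equivalently $E_\emptyset$) means finding some $A''\in E(\partial_{k-2}(G))$ with $B\cup A'\cup uv\in E_{A''}$, i.e. $A''=B\cup A'$ (a $(k-2)$-set once we observe $|B|=j$, $|A'|=k-j-2$, and $B,A'$ are disjoint since $A'\subseteq V(H)=V(G)$ but really $A'$ avoids $B$ as it indexes a link inside $H=L_G(B)$), together with $uv\in E(C_{B\cup A'}(G))$.

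The two things to verify are therefore: first, that $B\cup A'\in E(\partial_{k-2}(G))$ — this is immediate, since $A'\cup uv\in E(H)=E(L_G(B))$ gives $B\cup A'\cup uv\in E(G)$, hence every subset, in particular $B\cup A'$, lies in the appropriate shadow; second, and this is the crux, that $L_H(A')$ and $L_G(B\cup A')$ are the same $2$-graph, so that they have the same largest component, and hence $C_{A'}(H)=C_{B\cup A'}(G)$. But $L_H(A')=L_{L_G(B)}(A')$, and a pair $uv$ lies in $L_{L_G(B)}(A')$ iff $uv\cup A'\in E(L_G(B))$ iff $uv\cup A'\cup B\in E(G)$ iff $uv\in L_G(B\cup A')$; this is just associativity of taking links, which is essentially the content of the remark ``$\partial_j(S)$'' and link-of-link in Section~\ref{Notations}. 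Since the two $2$-graphs coincide as labelled graphs on $V(G)$, their components coincide, and in particular their largest components coincide (here I use that $L_G(A)$ being a $2$-graph makes $C_A$ uniquely defined, exactly as invoked in Definition~\ref{defn:our graph}; the uniqueness carries over because $k-j-2\ge 2$ ensures $L_H(A')$ is genuinely a $2$-graph as well).

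Putting these together: given $A'\cup uv\in E(H^*)$, we get $uv\in E(C_{A'}(H))=E(C_{B\cup A'}(G))$ and $B\cup A'\in E(\partial_{k-2}(G))$, so $(B\cup A')\cup uv\in E_{B\cup A'}\subseteq E(G^*)$, i.e. $A'\cup uv\in E(L_{G^*}(B))$. Hence $E(H^*)\subseteq E(L_{G^*}(B))$, which is the claim. I do not expect any serious obstacle here; the only point requiring a little care is bookkeeping the uniformities and disjointness so that ``link of a link equals link of the union'' is literally applicable, and confirming $k-j\ge 4$ (which follows from $j\le k-4$) so that the starred construction is even defined on $L_G(B)$ — this is why the hypothesis is $j\le k-4$ rather than $j\le k-2$.
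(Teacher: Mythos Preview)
Your proposal is correct and follows essentially the same approach as the paper: both arguments pick an edge of $(L_G(B))^*$, write it as $A'\cup uv$ with $uv$ in the largest component of the link, and then use the identity $L_{L_G(B)}(A')=L_G(B\cup A')$ to conclude that $uv\in E(C_{B\cup A'}(G))$, so that $B\cup A'\cup uv\in E_{B\cup A'}\subseteq E(G^*)$. Your write-up is in fact more explicit than the paper's about the link-of-link identity and the disjointness bookkeeping, but the underlying argument is identical.
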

\begin{proof}
    Let $e$ be an edge in~$(L_G(B))^*$, then $e\in E_A$ for some $A\in E(\partial_{k-2-j}(B))$. It implies that $e\backslash A\in E(C_A(L_G(B)))$ by Definition~\ref{defn:our graph} and $e\backslash A\in C_{A\cup B}(G)$. Since $|A\cup B|=k-2$, we have $e\cup B\in E(C_{A\cup B}(G))$, implying that $e\in L_{G^*}(B)$. Hence $(L_G(B))^*\subseteq L_{G^*}(B)$.
\end{proof}
The next lemma states that $G^*$ is $\eta$-robust. Recall the definition of~$\eta$-robust in Definition~\ref{robust}.
\begin{lemma}\label{lem:k-2thereshold}
    Let $1/n\ll \alpha \ll \eta \ll \gamma \ll1/k \le 1/4$. Let $G$ be a $k$-graph on~$n$ vertices with~$\overline{\delta}^\alpha_{k-2}(G)\ge 1/2+\gamma$. Then its spanning subgraph $G^*$ is $\eta$-robust.
\end{lemma}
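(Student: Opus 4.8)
The goal is to verify that $G^*$ satisfies the three conditions \ref{robust1}, \ref{robust2}, \ref{robust3} of Definition~\ref{robust}. I would organize the proof as three independent modules, each proved in its own section as the paper announces. The first step is to understand the structure of $G^*$. For $A\in E(\partial_{k-2}(G))$, the link $L_G(A)$ is a $2$-graph on $V(G)$ which, by~\ref{itm:defn:perturbed degree1}, has minimum degree at least $(1/2+\gamma)n$ among its non-isolated vertices; hence its largest component $C_A$ is essentially all of $L_G(A)$, covering all but at most $\alpha n$ vertices (using~\ref{itm:defn:perturbed degree2}, \ref{itm:defn:perturbed degree3}). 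Thus $E_A$ already carries almost all of the link-degree of $A$, and $G^*$ loses only a negligible fraction of the degree at every $(k-2)$-set. This "almost no loss" fact is what I would establish first, since all three modules rely on it.

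\textbf{Robust fractional matching \ref{robust1}.} Given a weighting $\omega$ with $\sum_v \omega(v)\ge (1-\eta)n$ and $\omega$ vanishing on isolated vertices, I would produce a perfect $\omega$-fractional matching in $G^*$. The natural route is an LP-duality / defect-Hall argument on the fractional relaxation of the perfect matching problem: a perfect $\omega$-fractional matching exists unless there is a "blocking" fractional vertex cover of small total weight, and the minimum codegree bound $\overline{\delta}_{k-2}(G^*)\ge 1/2+\gamma-o(1)$ rules this out. Equivalently, one can invoke a known fractional-matching threshold result for $(k-2)$-degree (this is where $\delta^{PM}_{k,k-2}=1/2$ from Pikhurko enters), applied to $G^*$ after verifying its perturbed $(k-2)$-degree is still $\ge 1/2+\gamma/2$. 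The weights $\omega(v)$ being bounded below by $\theta$ on non-isolated vertices (as in the application) helps, but for the clean statement here one argues directly with the defect version. I expect this to be the most routine of the three.

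\textbf{Reachable \ref{robust2}.} This is where I expect the main difficulty, and the paper's sketch confirms it. The key object is the colouring of $\binom{V(R)}{2}$ (here $V(R)=V(G)$) where $A\sim A'$ iff $E_A\cup E_{A'}$ is reachable within itself; one first shows each $E_A$ is tight-connected (immediate, since $C_A$ is connected and $E_A=\{A\cup uv: uv\in E(C_A)\}$), so edges within a single $E_A$ are mutually reachable by "winding". The plan is then: (i) prove this colouring is Gallai (no rainbow triangle) and locally $2$-bounded — this should follow from the fact that three pairwise-intersecting $(k-2)$-sets share $k-4$ vertices and their links interact through a common small structure, forcing any "triangle" of colours to collapse; (ii) invoke the structure theorem for Gallai colourings (monochromatic components have a tree-like/bipartite-like structure) to linearly order the colour classes; (iii) within the ordering, show consecutive classes are reachable from one another, which needs a short homomorphism that threads a loose path from $E_A$ into $E_{A'}$ using a shared vertex — here one uses Proposition~\ref{perturb prop}\ref{itm:perturb prop 2} to find the connecting edge lying in the right shadows. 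Assembling these into a single enumeration $e_1,\dots,e_{|E(G^*)|}$ with $C_1\le n^{4k}$ is the technical heart.

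\textbf{Rotatable \ref{robust3}.} I would reduce to $k=4$: for general $k$, pass to a link graph $L_G(B)$ with $|B|=k-4$ using Proposition~\ref{propo:rotatat reduce}, so it suffices to show every edge of $(L_G(B))^*$ is rotatable as an edge of a $4$-graph, and then lift. For the $4$-uniform case, fix an edge $e=x_1x_2x_3x_4$; I would show $e$ is $\sigma$-rotatable for the transpositions $\sigma\in\{(12),(13),(14)\}$ by an explicit homomorphism that, after a bounded depth, bounces the relevant colour class between $x_i$ and $x_{\sigma(i)}$ — this uses that $e$ lies in $E_A$ for $A=e\setminus\{x_i,x_{\sigma(i)}\}$, and that $C_A$ being a connected $2$-graph of large minimum degree contains a short even/odd closed walk through both vertices, letting us realize both parities needed for the loose tree's layering. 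Since $\{(12),(13),(14)\}$ generates $S_4$, composing these bounded-depth homomorphisms (and absorbing the additive constants into $C_2\le n^{4k}$) yields $\sigma$-rotatability for all $\sigma\in S_4$. The lifting step for $k>4$ just re-inserts the fixed coordinates $B$ and checks the colour classes still match up, which is bookkeeping. Putting the three modules together gives Lemma~\ref{lem:k-2thereshold}.
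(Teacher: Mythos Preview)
Your modular decomposition into matching/reachable/rotatable is exactly how the paper proceeds, and your reduction of the rotatable step to $k=4$ via link graphs matches the paper's strategy. However, there is a genuine gap in the rotatable module that would cause the argument to fail as stated.

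You write that for a transposition $(ij)$ you will ``use that $e$ lies in $E_A$ for $A=e\setminus\{x_i,x_{j}\}$.'' But membership in $G^*$ only guarantees $e\in E_B$ for \emph{some} $(k-2)$-subset $B\subseteq e$, not for every one. Concretely, $e=x_1x_2x_3x_4\in E(G^*)$ means, say, $x_3x_4\in E(C_{x_1x_2})$; it does \emph{not} force $x_1x_4\in E(C_{x_2x_3})$, since that pair could lie in a smaller component of $L_G(x_2x_3)$. This is tied to a misreading in your opening paragraph: the condition $\overline{\delta}^\alpha_{k-2}(G)\ge 1/2+\gamma$ gives that each $(k-2)$-set in $\partial_{k-2}(G)$ has link \emph{edge density} at least $1/2+\gamma$ (so $e(L_G(A))\ge(1/2+\gamma)\binom{n}{2}$), not link minimum degree. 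Consequently $C_A$ is only guaranteed to cover a bit more than half the vertices (cf.\ Lemma~\ref{lem:sturc1}\ref{Matching 1}), not ``all but $\alpha n$''. The paper therefore cannot simply invoke parity of walks in $C_A$ for the desired $A$; instead it proves the easy $[*,*,(34)]$-rotation first (Lemma~\ref{prop:triangle rotate}), then builds a transitivity gadget (Lemma~\ref{prop:special structure}) and a digraph argument on $V(G)$ (Claim~\ref{outdegree} onward) to propagate $(23)$- and $(13)$-rotatability from edges where the relevant link components \emph{do} intersect to all edges of $G^*$.

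A secondary point: your reachable sketch captures the $k=4$ picture, but for general $k$ the paper does not colour $\binom{V(G)}{k-2}$ globally. It runs an induction down the lattice $\partial_{\mathrm{mod}\,2}(G)$, at each level $A$ colouring $\partial_2(A)$ and extracting a subgraph $K_A$ (Definition~\ref{defn:clique}); your claim that ``three pairwise-intersecting $(k-2)$-sets share $k-4$ vertices'' is not true in general and is not what drives the Gallai property.
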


This will become immediately by Theorems~\ref{Matching existence},~\ref{reach} and~\ref{main rotatable lemma}. Our main result, Theorem~\ref{Theorem Matching threshold} immediately follows from Theorem~\ref{generaltheorem} and Lemma~\ref{lem:k-2thereshold}.
\begin{proof}[Proof of Theorem~\ref{Theorem Matching threshold}]
    By Theorem~\ref{generaltheorem}, it is sufficient to show that $\delta_{k,k-2}^{R}\le 1/2$. Let $1/n\ll \eta\ll \gamma$. Let $G$ be a $k$-graph on~$n$ vertices with~$\overline{\delta}^\alpha_{k-2}(G)\ge 1/2+\gamma$.
    Let $G^*$ be the corresponding spanning graph of~$G$ as in Definition~\ref{defn:our graph}. By Lemma~\ref{lem:k-2thereshold}. $G^*$ is $\eta$-robust. Hence, $\delta_{k,k-2}^{R}\le 1/2$.
\end{proof}

\section{Robust fractional matching}\label{matching}
In this section, our aim is to prove Lemma~\ref{Matching existence}, the existence of robust fractional matching for~$G^*$. 
\begin{lemma}\label{Matching existence}
      Let $1/n\ll \alpha \ll \eta \ll\gamma\ll 1/k\le 1/4$. Let $G$ be a $k$-graph on~$n$ vertices with~$\overline{\delta}^\alpha_{k-2}(G)\ge 1/2+\gamma$. Suppose that $\omega:V(G)\rightarrow [0,1]$ with~$\sum_{v\in V(G)}\omega(v)\ge (1-\eta)n$ and $\omega(v)=0$ for all isolated vertices~$v\in V(G)$. Then there exists a perfect $\omega$-fractional matching $\omega^{\ast}$ on~$G^*$. 
\end{lemma}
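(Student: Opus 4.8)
The plan is to prove Lemma~\ref{Matching existence} via linear programming duality. A perfect $\omega$-fractional matching of $G^*$ exists if and only if every fractional vertex cover $y\colon V(G)\to\mathbb{R}_{\ge 0}$ of $G^*$ (that is, $\sum_{v\in e}y(v)\ge 1$ for every $e\in E(G^*)$) satisfies $\sum_{v}\omega(v)y(v)\ge\tfrac1k\sum_v\omega(v)$; this bound is tight (attained at $y\equiv\tfrac1k$), so the content is exactly that $y\equiv\tfrac1k$ is an optimal cover. So I would fix a cover $y$, write $f(v)=y(v)-\tfrac1k$ (note $f$ may be negative), so that the cover condition becomes $\sum_{v\in e}f(v)\ge 0$ for all $e\in E(G^*)$ and the goal becomes $\sum_v\omega(v)f(v)\ge 0$; I would assume the contrary and produce an edge of $G^*$ violating its constraint.

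Before that I would record the structure of $G^*$. By~\ref{itm:defn:perturbed degree1} (applied with $j$ ranging down through $0$), for every $A\in E(\partial_{k-2}(G))$ the link $L_G(A)$ is a $2$-graph with at least $(1/2+\gamma)\binom{n}{2}$ edges, and a short counting argument — a graph whose largest component has $\beta n$ vertices spans at most roughly $\tfrac12\l(\beta^2+(1-\beta)^2\r)n^2$ edges, and $\beta<1/2$ is impossible here — shows that $C_A$ has more than $n/2$ vertices, in fact $|V(C_A)|\ge(1/2+\gamma')n$ for some $\gamma'=\gamma'(\gamma)>0$, and $e(C_A)\ge(1/4+\gamma')\binom n2$. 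By~\ref{itm:defn:perturbed degree2} and~\ref{itm:defn:perturbed degree3}, all but a $\sqrt\alpha$-fraction of $(k-2)$-sets lie in $E(\partial_{k-2}(G))$ and the exceptional ones are spread out over $V(G)$. Combining these, every vertex lies in $\Omega_\gamma(n^{k-1})$ edges of $G^*$ except for at most $\sqrt\alpha n$ vertices, and those exceptions are precisely the isolated vertices of $G^*$, on which $\omega$ vanishes by hypothesis. Finally, since $\sum_v\omega(v)\ge(1-\eta)n$ and $\omega\le 1$, all but at most $\sqrt\eta n$ vertices have $\omega(v)\ge 1-\sqrt\eta$.

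The heart of the argument is a ``space-barrier-free'' property of $G^*$: any vertex set $U$ spanning no edge of $G^*$ must have $\sum_{v\in U}\omega(v)$ bounded away from $\sum_v\omega(v)$. The mechanism is that, once $|U|$ is a definite proportion of $n$ (after discarding the at most $\sqrt\alpha n$ isolated vertices of $G^*$), a random $(k-2)$-subset $A$ of $U$ lies in $E(\partial_{k-2}(G))$, and since $V(C_A)$ covers more than half of $V(G)$ one can iterate Proposition~\ref{perturb prop}~\ref{itm:perturb prop 2} to locate a pair $uv$ with $u,v\in U\setminus A$ and $uv\in E(C_A)$, so that $A\cup uv$ is an edge of $G^*$ inside $U$. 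Applying this with $U=\{v:f(v)<-\delta\}$, for $\delta=\delta(\gamma,\eta)>0$ chosen so that an edge inside this set would already violate $\sum_{v\in e}f(v)\ge 0$, bounds the total negative $\omega$-mass of $f$; a symmetric analysis of $\{v:f(v)>\delta\}$, together with the cover constraints on the many edges of $G^*$ running between the ``$f$ small'' and ``$f$ large'' parts and the assumption $\sum_v\omega(v)f(v)<0$, then forces an edge $e\in E(G^*)$ with $\sum_{v\in e}f(v)<0$ — the desired contradiction.

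I expect the space-barrier step to be the main obstacle: one must make the extraction of an edge of $G^*$ inside a moderately large vertex set quantitatively sharp enough to clear the fractional-matching threshold, while absorbing the $\alpha$-perturbation losses (bad $(k-2)$-sets and $G^*$-isolated vertices) without degrading the bound, and one must carry out the $\omega$-bookkeeping so that deleting up to $\eta n$ worth of weight never destroys the matching — this robustness being exactly what the ``$|V(C_A)|>n/2$'' slack provides. The LP-duality wrapper and the passage from the $f$-inequalities back to a concrete violated edge are then routine.
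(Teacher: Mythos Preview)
Your approach via direct LP duality on $G^*$ is different from the paper's, but the sketch has two genuine gaps. First, a concrete error: Proposition~\ref{perturb prop}\ref{itm:perturb prop 2} only applies to sets $A_i$ of size at most $k-4$, and even then it locates a pair $uv$ in the shadow $\partial_2(A)$ of the link graph, not in the largest component~$C_A$; these can differ. Knowing $|V(C_A)|>n/2$ does not by itself place an edge of $C_A$ inside $U$. What actually works here is the Erd\H{o}s--Gallai matching of size $(1/4+\gamma/3)n$ in $C_A$ (the paper's Lemma~\ref{lem:sturc1}\ref{Matching 2}): that gives the independence bound $|U|\le (3/4-\gamma/3)n+o(n)$ for any $U$ spanning no edge of $G^*$.

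Second, and more seriously: for $k$-graphs with $k\ge 3$ the minimum fractional cover can be strictly cheaper than the minimum integer cover, so an independence bound of the form ``no independent set of $\omega$-weight exceeding $(1-1/k)m$'' is necessary but not sufficient for a perfect $\omega$-fractional matching. Your closing remarks (``a symmetric analysis of $\{v:f(v)>\delta\}$'' and ``the cover constraints on the many edges running between the parts'') are precisely where the real work would have to happen, and they are not carried out; I do not see how to push the thresholded analysis of $f$ through without importing substantially more structure.

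The paper sidesteps both issues by an induction through link graphs. It shows, by downward induction on $|S|$, that $L_{G^*}(S)$ has an $\omega$-fractional matching of size $m/k$ for every $S\in E(\partial_{|S|}(G))$. The base case $|S|=k-2$ uses the matching in $C_S$ from Lemma~\ref{lem:sturc1}\ref{Matching 2} to get a $\omega$-fractional matching of size $\ge n/4\ge m/k$ in $C_S\subseteq L_{G^*}(S)$ (Proposition~\ref{fractional matching existence}). The inductive step invokes the black-box LP fact Proposition~\ref{prop:link matching}: if every vertex-link of a hypergraph has a fractional matching of a given size $\le (\sum_v\omega(v))/k$, then so does the hypergraph. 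The $\alpha$-perturbation only enters to bound the isolated vertices of $L_{G^*}(S)$, and $S=\emptyset$ gives the lemma.
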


We need the following well-known results.
\begin{theorem}[Erd{\H{o}}s and Gallai~\cite{erdos1961minimal}]\label{theo:EG}
    Any graph $G$ on~$N$ vertices with 
    \begin{align*}
        e(G)>\max \{\binom{2k-1}{2},\binom{k-1}{2}+(k-1)(N-k+1)\}
    \end{align*}
    admits a matching of size~$k$.
\end{theorem}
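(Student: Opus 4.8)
The plan is to prove the contrapositive: if $G$ is an $N$-vertex graph with $\nu(G)\le k-1$ (no matching of size $k$), then $e(G)\le\max\{\binom{2k-1}{2},\binom{k-1}{2}+(k-1)(N-k+1)\}$. The engine will be the Tutte--Berge deficiency formula, $2\nu(G)=N-\max_{S\subseteq V(G)}\bigl(o(G-S)-|S|\bigr)$, where $o(H)$ is the number of odd components of $H$; this is a standard result I would cite rather than reprove. Since $\nu(G)\le k-1$, the formula supplies a set $S\subseteq V(G)$ with $o(G-S)-|S|\ge N-2k+2$. Write $s=|S|$ and $q=o(G-S)$, so that $q\ge N-2k+2+s$. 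As $G-S$ has $N-s$ vertices and its $q$ odd components are nonempty, $q\le N-s$, and combining this with the previous inequality yields $s\le k-1$. I would also record the parity fact $q\equiv N-s\pmod 2$ (the number of odd components of a graph has the parity of its number of vertices), which forces $N-s-q+1$ to be odd.

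Next comes the edge count. Splitting $E(G)$ into edges inside $S$, edges across the cut $(S,V\setminus S)$, and edges inside $G-S$, we get $e(G)\le\binom{s}{2}+s(N-s)+\sum_{C}\binom{|C|}{2}$, the sum running over the components $C$ of $G-S$. A short extremal argument, using $\binom{a}{2}+\binom{b}{2}\le\binom{a+b-1}{2}$ repeatedly — first to absorb all even components into a single odd one, then to push all the mass of the odd components into one part while leaving $q-1$ singletons behind, the parity of $N-s-q+1$ guaranteeing that the big part really is an odd component — shows $\sum_C\binom{|C|}{2}\le\binom{N-s-q+1}{2}$. Since $q\ge N-2k+2+s$ and $s\le k-1$ we have $1\le N-s-q+1\le 2k-1-2s$, so $\sum_C\binom{|C|}{2}\le\binom{2k-1-2s}{2}$. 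Hence $e(G)\le f(s)$, where $f(s):=\binom{s}{2}+s(N-s)+\binom{2k-1-2s}{2}$.

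Finally I would observe that $f$ is a convex quadratic in $s$: expanding gives $f(s)=\tfrac32 s^{2}+\bigl(N-4k+\tfrac52\bigr)s+\binom{2k-1}{2}$, with positive leading coefficient, so on the integer range $0\le s\le k-1$ the maximum of $f$ is attained at an endpoint. Since $f(0)=\binom{2k-1}{2}$ and $f(k-1)=\binom{k-1}{2}+(k-1)(N-k+1)$, this yields exactly $e(G)\le\max\{\binom{2k-1}{2},\binom{k-1}{2}+(k-1)(N-k+1)\}$, completing the proof. The only genuinely fiddly step is the extremal bound on $\sum_C\binom{|C|}{2}$ — getting the parity bookkeeping right so that the claimed extremal configuration (one large odd component together with $q-1$ singletons) is actually realizable; everything else is a one-line computation. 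If one prefers a Tutte--Berge-free route, one can instead take a maximum matching $M$ of size exactly $k-1$ (reducing to this case by induction on $k$), let $U$ be the $M$-unsaturated independent set, and use augmentation-freeness of $M$ to restrict the edges from $U$ and among $V(M)$; but reaching the tight constant there requires a delicate rerouting argument (swapping a matching edge for an edge into $U$ and re-applying the constraint to the new maximum matching), which is why I would favour the deficiency-formula proof.
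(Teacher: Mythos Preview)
The paper does not prove this statement at all: Theorem~\ref{theo:EG} is simply quoted as the classical Erd\H{o}s--Gallai theorem with a citation to~\cite{erdos1961minimal}, and is then applied once in the proof of Lemma~\ref{lem:sturc1}\ref{Matching 2}. So there is no ``paper's own proof'' to compare against.

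Your proof is correct. The Tutte--Berge step gives a set $S$ with $o(G-S)-|S|\ge N-2k+2$, from which $s:=|S|\le k-1$ follows; the convexity bound $\sum_C\binom{|C|}{2}\le\binom{N-s-q+1}{2}$ holds because $G-S$ has at least $q$ nonempty components summing to $N-s$ (the parity remark is only needed to certify that the extremal configuration is realisable, not for the upper bound itself); and the computation $f(s)=\tfrac32 s^2+(N-4k+\tfrac52)s+\binom{2k-1}{2}$ with $f(0)=\binom{2k-1}{2}$ and $f(k-1)=\binom{k-1}{2}+(k-1)(N-k+1)$ checks out, so convexity finishes it. This is a standard modern derivation; the original 1959 argument of Erd\H{o}s and Gallai is more hands-on, working directly with a maximum matching and the structure of edges incident to unsaturated vertices (closer in spirit to the alternative route you sketch at the end), but your deficiency-formula approach is cleaner and entirely adequate here.
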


\begin{theorem}[Karamata~\cite{karamata1932inegalite}]\label{Inequal}
    Let $\mathbf{a}=(a_i)_{i=1}^n$ and $\mathbf{b}=(b_i)_{i=1}^n$ be (finite) sequences of real numbers from an interval $(\alpha,\beta)$. Let $f:(\alpha,\beta)\rightarrow \mathbf{R}$ be a convex function. If the sequence $\mathbf{a}$ majorizes $\mathbf{b}$, that is $\sum_{i=1}^ja_i\ge \sum_{i=1}^jb_i$ for~$j\in [n]$, then the following holds $$\sum_{i=1}^nf(a_i)\ge \sum_{i=1}^nf(b_i).$$
\end{theorem}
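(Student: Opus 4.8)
The plan is to prove Karamata's inequality by the classical ``supporting line plus Abel summation'' argument. First I would work, as is standard for majorization, with both sequences listed in non-increasing order, $a_1\ge a_2\ge\cdots\ge a_n$ and $b_1\ge b_2\ge\cdots\ge b_n$ (this is the indexing under which the hypothesis $\sum_{i=1}^{j}a_i\ge\sum_{i=1}^{j}b_i$ is to be read, and in the intended application the sequences already arise sorted). Since $f$ is convex on $(\alpha,\beta)$, at each point $b_i$ it admits a subgradient $c_i$, i.e. $f(x)\ge f(b_i)+c_i(x-b_i)$ for all $x\in(\alpha,\beta)$. Because the left and right derivatives of a convex function exist and are non-decreasing, one may choose these subgradients compatibly with the ordering of the $b_i$, so that $c_1\ge c_2\ge\cdots\ge c_n$.

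Next I would substitute $x=a_i$ into the supporting-line inequality and sum over $i$ to get
\[
\sum_{i=1}^{n} f(a_i)\ \ge\ \sum_{i=1}^{n} f(b_i)+\sum_{i=1}^{n} c_i\,(a_i-b_i),
\]
so it remains to prove $\sum_{i=1}^{n} c_i(a_i-b_i)\ge 0$. Writing $d_i=a_i-b_i$ and $S_j=\sum_{i=1}^{j} d_i$ with $S_0=0$, the majorization hypothesis says precisely $S_j\ge 0$ for every $j\in[n]$. Summation by parts yields
\[
\sum_{i=1}^{n} c_i\,d_i\ =\ c_n S_n+\sum_{i=1}^{n-1}(c_i-c_{i+1})\,S_i,
\]
and every term on the right is nonnegative: $c_i-c_{i+1}\ge 0$ and $S_i\ge 0$ for $i<n$, while $c_nS_n\ge 0$ since $S_n\ge 0$ (when $f$ is merely convex, and not also non-decreasing, one uses here the usual normalization $\sum_{i=1}^n a_i=\sum_{i=1}^n b_i$, i.e. $S_n=0$). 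Combining the two displays gives $\sum_i f(a_i)\ge\sum_i f(b_i)$, as required.

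The argument is essentially routine; the only steps deserving care are the monotone choice of subgradients $c_1\ge\cdots\ge c_n$ (for differentiable $f$ this is just monotonicity of $f'$, and in general it follows from monotonicity of the one-sided derivatives of a convex function) and the bookkeeping of the boundary term $c_nS_n$ in the Abel summation. No combinatorial input is needed: this is a self-contained analytic fact, invoked later only as a convexity tool.
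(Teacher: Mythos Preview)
Your argument is the standard and correct proof of Karamata's inequality. Note, however, that the paper does not supply its own proof of this theorem: it is quoted as a classical result with a citation to Karamata's 1932 paper, and is used only as a black-box convexity tool in the proof of Lemma~\ref{lem:sturc1}. So there is no in-paper proof to compare against; your write-up simply fills in what the paper leaves to the reference.

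One small remark on the statement itself, which you already flagged: as written in the paper the hypothesis is only $\sum_{i\le j}a_i\ge\sum_{i\le j}b_i$ for all $j$, i.e.\ weak majorization, and for the conclusion to hold one needs either $f$ non-decreasing or $\sum_i a_i=\sum_i b_i$. In the paper's sole application (component sizes summing to $n$ versus $((1/2+\gamma)n,(1/2-\gamma)n,0,\dots,0)$), the total sums do agree, so $S_n=0$ and your Abel-summation boundary term vanishes exactly as you note.
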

Now we state a key structural lemma about $C_A$ for all $A\in E(\partial_{k-2}(G))$. 

\begin{lemma}\label{lem:sturc1}
    Let $1/n\ll \gamma\ll 1$. Let $G_1$, $G_2$ and $G_3$ be $2$-graphs on vertex set $V$ with~$|V|=n$ and $e(G_1),e(G_2),e(G_3)>(1/2+\gamma)\binom{n}{2}$. Let $C_1$, $C_2$ and $C_3$ be the induced subgraphs of the largest connected components in~$G_1$, $G_2$ and $G_3$, respectively. Then the following statements hold for all $i\in [3]$
    \begin{enumerate}[label=\rm{(M\arabic*)}]
        \item\label{Matching 1}$|V(C_i)|> (1/2+\gamma) n$ and $e(C_i)> (1/2+\gamma)\binom{n}{2}-\binom{n-|V(C_i)|}{2}\ge (1/4+\gamma)\binom{n}{2}$;
        \item\label{Matching 2}$C_i$ contains a matching of size~$\l(1/4+\gamma/3\r)n$;
        \item\label{Matching 3}$C_i$ contains a triangle;
        \item\label{Matching 4} there exist distinct $i_1,i_2\in [3]$ such that $E(C_{i_1})\cap E(C_{i_2})\neq \emptyset$.
    \end{enumerate}

\end{lemma}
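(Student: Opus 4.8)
The plan is to establish the four properties in order, since each builds on the previous. For \ref{Matching 1}, I would first observe that the largest component $C_i$ cannot be small: if $|V(C_i)| \le (1/2+\gamma)n$, then every edge of $G_i$ lies inside a component of size at most $(1/2+\gamma)n$, so $e(G_i) \le \binom{(1/2+\gamma)n}{2} \cdot \frac{n}{(1/2+\gamma)n}$ type bound — more carefully, partitioning $V$ into components each of order at most $(1/2+\gamma)n$, the number of edges is maximized by making components as large as possible, giving roughly $\le \frac{1}{2}(1/2+\gamma)n \cdot n < (1/2+\gamma)\binom n2$ once $\gamma$ is small, a contradiction. Actually the cleanest argument: a vertex $v$ outside $C_i$ has all its $\ge (1/2+\gamma)n - 1$ — wait, we only have an edge-count hypothesis, not a degree hypothesis, so I would argue via components directly. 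Once $|V(C_i)| > (1/2+\gamma)n$ is known, the edge bound $e(C_i) > (1/2+\gamma)\binom n2 - \binom{n-|V(C_i)|}{2}$ follows because at most $\binom{n - |V(C_i)|}{2}$ edges of $G_i$ lie outside $C_i$; and since $n - |V(C_i)| < (1/2-\gamma)n$ we get $\binom{n-|V(C_i)|}{2} < (1/4 - \gamma)\binom n2$ roughly, yielding $e(C_i) > (1/4+\gamma)\binom n2$ after adjusting constants.

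For \ref{Matching 2} I would apply the Erd\H{o}s--Gallai theorem (Theorem~\ref{theo:EG}) to $C_i$: with $N = |V(C_i)|$ and target matching size $s = \lceil(1/4+\gamma/3)n\rceil$, it suffices to check $e(C_i) > \binom{s-1}{2} + (s-1)(N - s + 1)$ (the other term $\binom{2s-1}{2}$ is smaller for large $n$). Plugging $N \le n$ and $s \approx n/4$, the right-hand side is about $(s-1)(N - s) \le \frac n4 \cdot \frac{3n}{4} = \frac{3}{16}n^2 \approx \frac{3}{8}\binom n2 < \frac{1}{4}\binom n2$, so the bound from \ref{Matching 1} beats it with room to spare — this is where the $\gamma/3$ slack is used. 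For \ref{Matching 3}, a connected graph on $N$ vertices with $e(C_i) > (1/4+\gamma)\binom n2 \ge \frac14\binom{N}{2}$-ish many edges is triangle-free only if it is bipartite (if triangle-free, Mantel gives $e \le N^2/4$; but more importantly a triangle-free connected graph on these parameters would be bipartite-like) — actually the cleanest route: if $C_i$ were triangle-free, then by Mantel $e(C_i) \le \lfloor N^2/4\rfloor \le n^2/4$, which is compatible, so Mantel alone is not enough. Instead I would use that a triangle-free graph is $K_3$-free hence its neighborhoods are independent sets; combined with high minimum-degree-on-average one forces a triangle. The likely intended argument: since $e(C_i) > (1/4+\gamma)\binom n2$ and $|V(C_i)| \le n$, the average degree in $C_i$ exceeds $(1/2 + 2\gamma - o(1))|V(C_i)| \cdot$ — hmm, that is not immediate either. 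I suspect one instead uses a vertex of high degree: some vertex $v$ has $\deg_{C_i}(v) > (1/2+\gamma)|V(C_i)|$ is false in general, but some vertex has degree $> \frac{2e(C_i)}{N} \ge \frac{(1/2+2\gamma)\binom n2}{N} \ge (1/4 + \gamma)N$ roughly; two adjacent vertices of degree $> N/2$ would share a neighbor — this needs degree $> N/2$, which we don't quite get, so the triangle must come from a more careful extremal argument. I will return to this.

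For \ref{Matching 4}, the point is that the three large components $C_1, C_2, C_3$ (in three different graphs on the same vertex set) cannot be pairwise edge-disjoint. Since $|V(C_i)| > (1/2+\gamma)n$ for each $i$ by \ref{Matching 1}, any two of them intersect in more than $2\gamma n$ vertices, in fact $|V(C_1) \cap V(C_2) \cap V(C_3)| > \tfrac{3}{2}\gamma n$ — wait, three sets each of size $>(1/2+\gamma)n$ in a universe of size $n$ have common intersection of size $> (3(1/2+\gamma) - 2)n = (3\gamma - 1/2)n$, which is only positive for $\gamma > 1/6$; that is too weak. So instead I would argue pairwise: $C_1$ and $C_2$ together span a vertex set of size $> (1/2+\gamma)n$ intersected appropriately; the subgraph $C_1[V(C_1)\cap V(C_2)]$ has, by \ref{Matching 1}-type counting, many edges, and so does $C_2$ on the same vertex set, and since the total number of pairs there is at most $\binom n2$ while $e(C_1) + e(C_2) > (1/2+\gamma)\binom n2 + (1/2+\gamma)\binom n2 > \binom n2$ after accounting for the edges outside the overlap — more precisely, the edges of $C_1$ and $C_2$ both restricted to $V(C_1) \cap V(C_2)$ have counts summing to more than $\binom{|V(C_1)\cap V(C_2)|}{2}$, forcing a common edge by pigeonhole. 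Checking this arithmetic (using \ref{Matching 1} for both the vertex-set sizes and the edge counts) is the crux of \ref{Matching 4}.

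The main obstacle I anticipate is \ref{Matching 3}: the edge-count hypothesis $e(G_i) > (1/2+\gamma)\binom n2$ gives $C_i$ roughly $(1/4+\gamma)\binom n2$ edges on $\le n$ vertices, which has average degree only around $n/2$ — exactly the bipartite (Mantel) threshold — so ruling out bipartiteness is delicate and must use that $C_i$ is \emph{connected} together with a quantitative surplus over $\binom n2 / 4$. I would prove it by contradiction: if $C_i$ is triangle-free it is essentially bipartite with parts $X, Y$ summing to $N = |V(C_i)| \le n$, so $e(C_i) \le |X||Y| \le N^2/4 \le n^2/4 = \tfrac12\binom n2 + \tfrac n4$; comparing with $e(C_i) > (1/4 + \gamma)\binom n2$ does \emph{not} yield a contradiction, so triangle-freeness alone is consistent and the real argument must instead exploit that $C_i$ misses only $< (1/2 - \gamma)n$ vertices and that the \emph{full} graph $G_i$ has its $> (1/2+\gamma)\binom n2$ edges packed into components — but there is essentially one big component, so $G_i$ itself is close to having all edges in $C_i$, and a triangle-free graph on $n$ vertices has at most $n^2/4 < (1/2+\gamma)\binom n2$ edges once $\gamma > 0$ and $n$ large (since $n^2/4 = \tfrac12\binom n2 \cdot \tfrac{n}{n-1} < (1/2+\gamma)\binom n2$). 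Wait — that comparison \emph{is} a contradiction: $n^2/4 = \frac12\binom{n}{2}\cdot\frac{n}{n-1} = \frac12\binom n2(1 + \tfrac{1}{n-1}) < (1/2+\gamma)\binom n2$ for large $n$. So if $G_i$ itself were triangle-free we would be done by Mantel; but we need $C_i$ triangle-free to force $G_i$ triangle-free, which requires all edges of $G_i$ to lie in $C_i$ — not guaranteed. However, the edges \emph{outside} $C_i$ number at most $\binom{n - |V(C_i)|}{2} < \binom{(1/2-\gamma)n}{2} < (1/4)\binom n2$, so $e(C_i) > (1/2+\gamma)\binom n2 - (1/4)\binom n2 = (1/4+\gamma)\binom n2$, and if $C_i$ is triangle-free Mantel gives $e(C_i) \le |V(C_i)|^2/4 \le n^2/4 < (1/2)\binom n2 \cdot (1 + o(1))$ — which is \emph{larger} than $(1/4+\gamma)\binom n2$, so still no contradiction. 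The resolution: $C_i$ triangle-free and connected forces $C_i$ bipartite, so $e(C_i) \le |V(C_i)|^2/4$; but we also need $|V(C_i)|$ bounded well below $n$ — it is not. I therefore expect the actual proof to combine the triangle-freeness of all three $C_i$ simultaneously or to use \ref{Matching 4} in conjunction — i.e., prove \ref{Matching 4} first, then derive \ref{Matching 3} from it, or use a counting argument across $G_1, G_2, G_3$ together — and this interdependence is the subtle point I would need to sort out carefully.
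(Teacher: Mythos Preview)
Your outline for \ref{Matching 1} and \ref{Matching 2} matches the paper in spirit (the paper uses Karamata for the component-size bound and Erd\H{o}s--Gallai for the matching), though note your arithmetic slip in \ref{Matching 2}: $\tfrac{3}{16}n^2 \approx \tfrac{3}{8}\binom n2$ is \emph{larger} than $(1/4+\gamma)\binom n2$, so the naive plug-in does not close; the paper tracks the parameter $x_i = 1 - |V(C_i)|/n$ through the Erd\H{o}s--Gallai bound to make the numbers work.

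For \ref{Matching 3} there is a real gap in your reasoning, and the fix is simpler than you suspect. You correctly observe that comparing the \emph{weak} bound $e(C_i)>(1/4+\gamma)\binom n2$ to Mantel's $|V(C_i)|^2/4$ fails. But the \emph{sharp} bound from \ref{Matching 1}, namely $e(C_i) > (1/2+\gamma)\binom n2 - \binom{x_i n}{2}$, does beat $|V(C_i)|^2/4 = (1-x_i)^2 n^2/4$: this reduces (up to lower-order terms) to $2\gamma > x_i(3x_i-2)$, and the right side is nonpositive for all $x_i\in[0,2/3]$. So Mantel on $C_i$ alone, with its own vertex count, immediately gives the triangle --- no interaction among the three graphs is needed.

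For \ref{Matching 4} your pairwise pigeonhole cannot succeed: two graphs on $V$ each with more than $(1/4+\gamma)\binom n2$ edges can perfectly well be edge-disjoint (their total is only about $\tfrac12\binom n2$), and restricting to $V(C_1)\cap V(C_2)$, which may have size as small as $2\gamma n$, throws away almost all edges. The paper's argument is genuinely three-way: assuming $E(C_1),E(C_2),E(C_3)$ pairwise disjoint, it bounds $\sum_i e(C_i)$ from above by inclusion--exclusion on the sets $\binom{V(C_i)}{2}$ (so $\sum_i\binom{|V(C_i)|}{2} - \sum_{i<j}\binom{|V(C_i)\cap V(C_j)|}{2} + \binom{|\bigcap_i V(C_i)|}{2}$), from below by \ref{Matching 1}, and then an explicit algebraic comparison in the variables $x_1\le x_2\le x_3<1/2$ produces a contradiction. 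All three components are essential to this count.
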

\begin{proof}
     Suppose for a contradiction that $G_i$ has no component with at least $(\frac{1}{2}+\gamma) n$ vertices. Then we could write $V=V_1\cup V_2\cup \dots \cup V_s$, such that $V_j$ are disjoint connected components of~$G_i$ with~$|V_j|\le (1/2+\gamma) n$. Let $f(x)=x(x-1)/2$ for~$x\ge 0$. So $f$ is a convex function. Note that $((1/2+\gamma)n,(1/2-\gamma)n,0,\dots ,0)$ majorizes the sequence of components size~$(|V_1|,\dots, |V_s|)$ . By Theorem~\ref{Inequal}, $$e(G_i)\le \sum_{j\in [s]}\binom{|V_j|}{2}\le \binom{(\frac{1}{2}+\gamma)n}{2}+\binom{(\frac{1}{2}-\gamma)n}{2}\le \l(\frac{1}{2}+\gamma\r)\binom{n}{2},$$
     a contradiction. Hence, $|V(C_i)|> (1/2+\gamma)\binom{n}{2}$. Moreover,  
    $$e(C_i)=e(G_i)-e(G_i\backslash C_i)> \l(\frac{1}{2}+\gamma\r)\binom{n}{2}-\binom{\l(\frac{1}{2}-\gamma\r)n}{2}\ge \l( \frac{1}{4}+\gamma \r) \binom{n}{2}.$$
    Thus,~\ref{Matching 1} holds.

    Let $|V(C_i)|=(1-x_i)n$. Note that $x_i\le\l( 1/2-\gamma \r)$ and there are at most $\binom{x_in}{2}$ edges not in~$C_i$. Hence we have
    \begin{align*}
         e(C_i)&\ge \l(\frac{1}{2}+\gamma\r)\binom{n}{2}-\binom{x_in}{2}
         \ge \l(\frac{1}{2}+\gamma-x_i^2\r)\frac{n^2}{2}\\
         = &\l(\frac{1}{4}+\gamma\r)^2\frac{n^2}{2}+\l(\frac{3}{16}-x_i^2+\frac{\gamma}{2}-\frac{\gamma^2}{4}\r)\frac{n^2}{2}
         \ge \binom{(\frac{1}{4}+\gamma)n}{2}+\l(\frac{3}{16}-\frac{x_i}{2}+\frac{\gamma}{4}\r)\frac{n^2}{2}\\
         >&\binom{(\frac{1}{4}+\frac{\gamma}{3})n}{2}+\l((\frac{1}{4}+\frac{\gamma}{3})n-1\r)\l((1-x_i)n-(\frac{1}{4}+\frac{\gamma}{3})n+1\r),
    \end{align*}
    where the second inequality holds as $x_i\le 1/2$. Also, $e(C_i)>(1/4+\gamma)\binom{n}{2}\ge \binom{(2(1/4 + \gamma/3)n}{2}$. By Theorem~\ref{theo:EG}, there exists a matching in~$G_i$ of size~$(1/4 + \gamma/3)n$ implying~\ref{Matching 2}.
    Note that $$e(C_i)\ge \l(\frac{1}{2}+\gamma\r)\binom{n}{2}-\binom{x_i n}{2}> \frac{(1-x_i)^2n^2}{4}=\frac{|V(C_i)|^2}{4}.$$
    Thus,~\ref{Matching 3} holds by Mantel's~Theorem.

    Suppose that~\ref{Matching 4} is false, so $E(C_1),E(C_2),E(C_3)$ are pairwise disjoint. Without loss of generality, assume that $x_1\le x_2\le x_3$. By~\ref{Matching 1},  
    \begin{align}
        e(C_1)+e(C_2)+e(C_3)&> \sum_{i\in [3]}\l( \left(\frac{1}{2}+\gamma \right) \binom{n}{2}-\binom{x_in}{2}\r)
        \ge
        \l(\frac{3}{2}+3\gamma-x_1^2-x_2^2-x_3^2\r)\binom{n}{2}.\label{inequal1}
    \end{align}
    On the other hand, since there is no common edge, the inclusion-exclusion principle implies that,
    \begin{align*}
        e(C_1)+e(C_2)+e(C_3)&\le \sum_{i\in [3]}\binom{|V(C_i)|}{2}-\sum_{i,j\in [3],i\neq j}\binom{|V(C_i\cap C_j)|}{2}+\binom{|V(C_1\cap C_2\cap C_3)|}{2}\\
        &\le \sum_{i\in [3]} \binom{|V(C_i)|}{2}-\sum_{i,j\in [3],i\neq j}\binom{|V(C_i\cap C_j)|}{2}+\binom{|V(C_1\cap C_2)|}{2}\\
        &=\sum_{i\in [3]} \binom{|V(C_i)|}{2}-\binom{|V(C_1\cap C_3)|}{2}-\binom{|V(C_2\cap C_3)|}{2}\\
        &\le\binom{n}{2}\l( \sum_{i\in [3]}(1-x_i)^2-(1-x_1-x_2)^2-(1-x_1-x_3)^2+\gamma\r)\\
        &=\binom{n}{2}\l (1+2x_1-x_1^2-2x_1x_2-2x_1x_3+\gamma \r).
    \end{align*}
Together with (\ref{inequal1}) and the fact that $x_1\le x_2\le x_3<1/2$, we have
\begin{align*}
    0&<\l (1+2x_1-x_1^2-2x_1x_2-2x_1x_3+\gamma \r)-\l(\frac{3}{2}+3\gamma-x_1^2-x_2^2-x_3^2\r)\\
    &=(x_2-x_1)^2+(x_3-x_1)^2-\frac{(1-2x_1)^2}{2}
    \le 2(x_3-x_1)^2-\frac{(1-2x_1)^2}{2}\\
    &=\frac{1}{2}(1-2x_3)(4x_1-2x_3-1)\le -\frac{1}{2}(1-2x_3)^2<0,
    \end{align*}
a contradiction. Hence~\ref{Matching 4} holds. 
\end{proof}
By this structural lemma, we immediately deduce that there exists a fractional matching with size~$n/4$ in~$C_{A}$ for all $A\in E(\partial_{k-2}(G))$.
\begin{proposition}\label{fractional matching existence}
    Let $1/n\ll \alpha \ll \eta \ll\gamma\ll 1/k\le 1/4$. Let $G$ be a $k$-graph on~$n$ vertices with~$\overline{\delta}^\alpha_{k-2}(G)\ge 1/2+\gamma$ and $\omega:V(G)\rightarrow [0,1]$ with~$\sum_{v\in V(G)}\omega(v)\ge (1-\eta)n$ and $\omega(v)=0$ for all isolated vertices~$v\in V(G)$. Let $A\in E(\partial_{k-2}(G))$. Then there is an $\omega$-fractional matching with size at least $n/4$ in~$C_{A}$.
\end{proposition}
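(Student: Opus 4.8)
The plan is to fix $A \in E(\partial_{k-2}(G))$, apply Proposition~\ref{perturb prop}~\ref{itm:perturb prop 1} to transfer the perturbed degree condition to the link graph $L_G(A)$, and then run the structural argument of Lemma~\ref{lem:sturc1} on the $2$-graph $L_G(A)$. Concretely, $L_G(A)$ is a $2$-graph on (at most) $n$ vertices, and by~\ref{itm:defn:perturbed degree1} every edge of $\partial_1(L_G(A))$ has relative degree at least $1/2+\gamma$ in $L_G(A)$; moreover~\ref{itm:defn:perturbed degree2} says only $\le \alpha n$ vertices are isolated. A standard degree-sum estimate then gives $e(L_G(A)) \ge (1/2 + \gamma - O(\alpha))\binom{n}{2} > (1/2 + \gamma/2)\binom{n}{2}$, so the hypothesis $e(G_i) > (1/2 + \gamma')\binom{n}{2}$ of Lemma~\ref{lem:sturc1} is met with $\gamma' = \gamma/2$. (We only need one of the three graphs, so we may take $G_1 = G_2 = G_3 = L_G(A)$, or simply invoke the single-graph conclusion~\ref{Matching 2}.)

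Next I would apply Lemma~\ref{lem:sturc1}~\ref{Matching 2} to conclude that $C_A = C_1$ (the largest component of $L_G(A)$, which by Definition~\ref{defn:our graph} is exactly $C_A(G)$) contains a matching $\{u_1v_1, \dots, u_sv_s\}$ of size $s \ge (1/4 + \gamma'/3)n \ge n/4$. This matching in the $2$-graph $C_A$ lifts to a matching $\{A \cup u_iv_i : i \in [s]\}$ of $k$-edges in $E_A \subseteq E(G^*)$, since each $u_iv_i \in E(C_A)$ means $A \cup u_iv_i \in E_A$ by Definition~\ref{defn:our graph}. Defining the fractional matching $\omega^*$ by assigning weight $1$ to each of these $s \ge n/4$ pairwise disjoint edges and $0$ elsewhere yields a fractional matching supported entirely on $C_A$ of size at least $n/4$, as required. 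Strictly speaking the statement only asks for a fractional matching of size $\ge n/4$ in $C_A$, so this integral matching suffices directly; the weighting $\omega$ plays no role in this particular proposition (it is recorded in the hypothesis because the proposition will later feed into the proof of Lemma~\ref{Matching existence}, where $\omega$ does matter).

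The only mild subtlety—and the step I would be most careful about—is the edge-count bookkeeping: one must check that after discarding the $\le \alpha n$ isolated vertices of $L_G(A)$ and accounting for the possibility that $L_G(A)$ lives on fewer than $n$ vertices (it lives on $V(G)$ but may have isolated vertices), the edge density threshold $(1/2+\gamma/2)\binom{n}{2}$ is genuinely exceeded. Since $\alpha \ll \gamma$, the loss $\binom{n}{2} - \binom{n - \alpha n}{2} = O(\alpha) \binom{n}{2}$ is absorbed into the slack $\gamma/2$, so this is routine. No deeper obstacle arises: all the combinatorial work (Erdős–Gallai, Karamata, the component-size analysis) is already packaged inside Lemma~\ref{lem:sturc1}, and Proposition~\ref{fractional matching existence} is essentially its corollary applied to link graphs.
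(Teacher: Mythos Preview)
Your overall strategy---apply Lemma~\ref{lem:sturc1}~\ref{Matching 2} to the link graph $L_G(A)$ to extract a large matching in $C_A$---is exactly right, and is what the paper does. However, there is a genuine gap in the final step.

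You write that assigning weight $1$ to each edge of the matching yields the desired fractional matching, and that ``the weighting $\omega$ plays no role in this particular proposition''. This is incorrect. Recall from Definition~\ref{defn:fractional matching} that an $\omega$-fractional matching must satisfy $\sum_{e \ni v} \omega^*(e) \le \omega(v)$ for every vertex $v$. If some vertex $v$ covered by your matching has $\omega(v) < 1$, then putting weight $1$ on the (unique) matching edge through $v$ violates this constraint. So your $\omega^*$ is not, in general, an $\omega$-fractional matching at all.

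The fix is short and is precisely where the hypothesis $\sum_v \omega(v) \ge (1-\eta)n$ enters: on the matching $M$ of size at least $(1/4+\gamma/3)n$ given by~\ref{Matching 2}, set $\omega^*(e) = \min_{v \in e} \omega(v)$ for $e \in M$ and $0$ otherwise. This is now a valid $\omega$-fractional matching (each vertex lies in at most one edge of $M$), and since $1 - \min_{v \in e}\omega(v) \le \sum_{v \in e}(1-\omega(v))$, one gets
\[
\sum_{e\in M}\omega^*(e) \;\ge\; |M| - \sum_{v\in V(G)}(1-\omega(v)) \;\ge\; \Bigl(\tfrac14+\tfrac{\gamma}{3}\Bigr)n - \eta n \;\ge\; \tfrac{n}{4},
\]
using $\eta \ll \gamma$. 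Two smaller remarks: (i) the edge count $e(L_G(A)) \ge (1/2+\gamma)\binom{n}{2}$ follows directly from~\ref{itm:defn:perturbed degree1} applied to $A\in E(\partial_{k-2}(G))$, so no $O(\alpha)$ bookkeeping is needed; (ii) the statement asks for a fractional matching in the $2$-graph $C_A$, so there is no need to lift to $k$-edges in~$E_A$.
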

\begin{proof}
    By Lemma~\ref{lem:sturc1}~\ref{Matching 2}, there exists a matching $M$ of size~$(1/4+\gamma/3)n$ in~$C_A$. Define a fractional matching $\omega^*:E(C_A)\rightarrow [0,1]$ such that $$\omega^*(e)=\min_{v\in e} \{\omega(v)\} \text{ for all $e\in E(M)$}.$$
    So $\omega^*$ is a $\omega$-fractional matching with size~$$\sum_{e\in E(C_A)}\omega^*(e)\ge \l(\frac{1}{4}+\frac{\gamma}{3}\r)n-2\eta n\ge \frac{1}{4}n.$$
\end{proof}
To prove the existence of perfect $\omega$-fractional matching in~$G^*$, we will use the following result, which is a consequence of linear programming.

\begin{proposition}[{\cite[Proposition~2.11]{lang2022minimum}}]\label{prop:link matching}
    Let $G$ be a $k$-graph and a weight function~$\omega:V(G)\rightarrow [0,1]$ with~$\omega(v)=0$ for isolated vertices. Suppose that there exists an $m\le (\sum_{v\in V(G)}\omega(v))/k$ such that for every non-isolated vertex $v\in V(G)$, the link graph $L_G(\{v\})$ has an $\omega$-fractional matching with size~$m$. Then $G$ has an $\omega$-fractional matching with size~$m$.
\end{proposition}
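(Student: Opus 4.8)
The plan is to route everything through linear-programming duality. First I would observe that the largest size of an $\omega$-fractional matching of $G$ is the optimum $M^*$ of the linear program that maximises $\sum_{e\in E(G)}x_e$ subject to $x_e\ge 0$ and $\sum_{e\ni v}x_e\le \omega(v)$ for every $v\in V(G)$; the box constraint $x_e\le 1$ from Definition~\ref{defn:fractional matching} is free here, since every edge meets some vertex and $\omega\le 1$, so any feasible $x$ has $x_e\le\omega(v)\le 1$ for $v\in e$. Its LP dual is the $\omega$-weighted fractional vertex cover problem: minimise $\sum_{v}\omega(v)y_v$ over $y_v\ge 0$ with $\sum_{v\in e}y_v\ge 1$ for all $e\in E(G)$. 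Both programs are feasible (take $x\equiv 0$, respectively $y\equiv 1$) and bounded, so strong duality and complementary slackness apply; I would fix optimal primal $x^*$ and dual $y^*$ obeying complementary slackness, and note that we may assume $y^*_v=0$ at each isolated $v$ without affecting optimality. It then suffices to prove $M^*\ge m$.

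The second step is a dichotomy on the support $W=\{v\in V(G):y^*_v>0\}$, which is a set of non-isolated vertices. If $W$ contains \emph{every} non-isolated vertex, complementary slackness makes each $v\in W$ saturated, $\sum_{e\ni v}x^*_e=\omega(v)$, and then double counting incidences together with $|e\cap W|\le k$ gives $kM^*\ge\sum_{e}|e\cap W|x^*_e=\sum_{v\in W}\omega(v)=\sum_{v\in V(G)}\omega(v)\ge km$, so $M^*\ge m$. Otherwise some non-isolated vertex $w$ has $y^*_w=0$; then for each edge $e\ni w$ the cover inequality reads $\sum_{v\in e\setminus\{w\}}y^*_v\ge 1$, and since the sets $e\setminus\{w\}$ over all $e\ni w$ are exactly the edges of the $(k-1)$-graph $L_G(\{w\})$, the restriction of $y^*$ is a feasible fractional vertex cover of $L_G(\{w\})$. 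Pairing it, via weak duality in $L_G(\{w\})$, against the $\omega$-fractional matching $z$ of size $m$ guaranteed by hypothesis gives $m=\sum_{f}z(f)\le\sum_{f}z(f)\sum_{v\in f}y^*_v=\sum_{v}y^*_v\sum_{f\ni v}z(f)\le\sum_{v}\omega(v)y^*_v=M^*$. Either way $M^*\ge m$, as required, and scaling an optimal $x^*$ down by $m/M^*$ yields an $\omega$-fractional matching of size exactly $m$.

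I expect the only non-routine point to be spotting this dichotomy, and in particular noticing that when the optimal cover $y^*$ vanishes at a non-isolated vertex $w$ it restricts --- \emph{with no rescaling} --- to a feasible cover of $L_G(\{w\})$, so the link hypothesis plugs in directly; the tempting alternative of rescaling $y^*$ by $1/(1-y^*_w)$ at a single chosen vertex only yields the lossy bound $M^*\ge m(1-y^*_w)$, which does not close. Everything else is standard: strong duality and complementary slackness for finite feasible linear programs, the incidence count $|e\cap W|\le k$, and the weak-duality inequality $\sum_f z(f)\le\sum_v\omega(v)y_v$ between an $\omega$-fractional matching $z$ and an $\omega$-weighted fractional cover $y$ of the same hypergraph. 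The degenerate case where $G$ has no non-isolated vertex (so $\omega\equiv 0$ and $m\le 0$) is covered by the empty matching.
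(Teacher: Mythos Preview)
Your argument is correct. The paper does not supply its own proof of this proposition; it cites \cite[Proposition~2.11]{lang2022minimum} and merely notes that it is ``a consequence of linear programming,'' which is precisely the route you take. Your dichotomy on the support of an optimal dual solution, together with complementary slackness in the saturated case and weak duality in the link graph in the other case, is the standard way to unpack that LP consequence.
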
  

We now prove Lemma~\ref{Matching existence}. 

\begin{proof}[Proof of Lemma~\ref{Matching existence}]
    Let $m=\sum_{v\in V(G)}\omega(v)$. Let $0\le j\le k-2$ and $S\in E(\partial_j(G))$. We claim that in~$L_{G^*}(S)$, there exists an $\omega$-fractional matching with size~$m/k$. We prove this claim by induction on~$|S|$ from~$k-2$ to~$0$.

    First if $|S|=k-2$, then our claim follows from Proposition~\ref{fractional matching existence}. For~$j=|S|<k-2$, let $x$ be a neighbour of~$S$ in~$\partial_{j+1}(G)$. So in~$L_{G^*}(S\cup \{x\})$, by induction hypothesis, there exists a fractional matching with size~$m/k$. Let $I_S$ be the set of isolated vertices in~$L_{G^*}(S)$. If $S\neq \emptyset$, then $j>0$. By  Definition~\ref{defn:perturbed degree}, the degree of~$S$ in~$\overline{\partial_{j+1}(G)}$ is at most $\alpha n$. Then $|I_S|\le \alpha n$ and $$\frac{\sum_{v\in V(G^*)\backslash I_S}\omega(v)}{k-j}=\frac{m-\sum_{v\in I_S}w(v)}{k-j}\ge  \frac{m-\alpha n}{k-j}\ge \frac{m}{k}.$$
    The above inequality follows $m\ge (1-\eta)n$. If $S=\emptyset$, then $\sum_{v\in V(G)\backslash I_S}\omega(v)/k=m/k$.
    By Proposition~\ref{prop:link matching} with~$(H,k)=(L_{G^*}(S)\backslash I_S,k-j)$, there exists an $\omega$-fractional matching with size~$m/k$ in~$L_{G^*}(S)$. Thus, our claim holds and the lemma follows $S=\emptyset$ case.
\end{proof}

\section{Reachable Lemma}\label{reachable }
Let $C\in \mathbb{N}$, let $G$ be a $k$-graph and $e=u_1\dots u_k\in E(G)$. Recall that $e$ is $C$-reachable from~$u$ if for any rooted $k$-loose tree~$T$ at~$r$, there exists a homomorphism~$\phi$ from~$T$ to~$G$ such that $\phi(r)=u$ and $\phi(v)\in e$ for~$v$ with~$\dist(r,v)>C$. Moreover, for an edge $e'\in E(G)$, if $e$ is $C$-reachable from all $u\in e'$, then we say $e$ is $C$-reachable from~$e'$. 

Recall the definition of~$G^*$ in Definition~\ref{defn:our graph}. 
Our aim in this section is to prove  the existence of the edge enumeration of~$E_{\emptyset}$ with tree reachable proposition. 
 \begin{lemma}[Reachable lemma]\label{reach}
    Let $1/n\ll \alpha\ll \gamma \ll 1/k\le 1/4$. Let $G$ be a $k$-graph on~$n$ vertices with~$\overline{\delta}^\alpha_{k-2}(G)\ge 1/2+\gamma$. Then there is an enumeration~$e_1,\dots ,e_m$ of all edges in~$G^*$ such that $e_j$ is $n^{4k}$-reachable from~$e_i$ for~$i\le j\le m$.
\end{lemma}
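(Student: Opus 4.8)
\textbf{Proof proposal for Lemma~\ref{reach}.}

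\emph{Setup via an edge-colouring of the complete graph on $V(R)$.}
The plan is to encode the reachability structure of the edge sets $E_A$ (for $A\in E(\partial_{k-2}(G))$) as an edge-colouring of $K_{V(G)}$ restricted to pairs $A$ which are edges of $\partial_{k-2}(G)$. First I would record the basic building block: by Definition~\ref{defn:our graph}, the edges of $E_A$ all lie in $A\cup C_A$, and since $C_A$ is a connected $2$-graph containing a triangle (Lemma~\ref{lem:sturc1}\ref{Matching 3}), winding a loose tree around that triangle shows that every edge of $E_A$ is $O(1)$-reachable from every vertex of every edge of $E_A$; that is, $E_A$ is "internally reachable" with a constant bound. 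Next I would define a relation on the index set $\{A\in E(\partial_{k-2}(G))\}$ by declaring $A\sim A'$ whenever $E_A\cup E_{A'}$ is internally reachable (every edge of one is $O(1)$-reachable from every vertex of every edge of the other). The key geometric input making this work for \emph{pairs} is Proposition~\ref{perturb prop}\ref{itm:perturb prop 2} (for $k\ge 6$) together with Lemma~\ref{lem:sturc1}\ref{Matching 4} (the three-components overlap statement) for the small cases: given two $(k-2)$-sets $A,A'$ one can find a short loose path whose edges alternate between $E_A$ and $E_{A'}$, provided their link-components share an edge or can be bridged. This relation, restricted to $(k-2)$-sets that differ in few coordinates, descends to a colouring $c$ of $\binom{V(G)}{2}$: colour the pair $\{x,y\}$ by the $\sim$-class of $A$ for a suitable completion $A\supseteq\{x,y\}$; the perturbed-degree hypotheses guarantee such completions exist and are essentially canonical.

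\emph{Structure of the colouring.}
The crucial structural claim — which I expect is isolated as the lemma referenced in the excerpt as ``Lemma~\ref{Gallai coloured}'' — is that $c$ is a \emph{Gallai colouring} (no rainbow triangle) and is \emph{locally $2$-bounded} (every vertex sees at most $2$ colours). Local $2$-boundedness should follow from Lemma~\ref{lem:sturc1}\ref{Matching 4}: for three $(k-2)$-sets through a common vertex, two of the associated largest link-components share an edge, hence two of the three colours coincide. The Gallai property follows similarly by applying the three-component statement to the three pairs of a triangle. Once $c$ is known to be Gallai and locally $2$-bounded, I would invoke the Gallai--Roy / Gallai structure theorem for Gallai colourings: $V(G)$ admits a partition into parts $V_1,\dots,V_p$ ($p\ge 2$) such that between parts only $\le 2$ colours appear and the "reduced" $2$-coloured complete graph on $\{V_1,\dots,V_p\}$ has a particularly simple (e.g. $2$-colour) structure; combined with local $2$-boundedness one gets that the whole colouring has at most a bounded number of colour classes, and more importantly that the monochromatic components can be linearly ordered so that consecutive ones overlap. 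Analysing these monochromatic components — each monochromatic component corresponds to a union of $E_A$'s that is internally reachable — yields a chain $\mathcal{C}_1,\mathcal{C}_2,\dots$ of "reachability blocks" whose union is all of $E_{\emptyset}=E(G^*)$ and such that $\mathcal{C}_{t}$ is $O(1)$-reachable from $\mathcal{C}_{t'}$ for $t'\le t$ (reachability being transitive with additive constants, using Definition~\ref{Defn:reachable} and concatenation of homomorphisms along a loose path).

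\emph{From the block chain to the edge enumeration.}
Given the chain of blocks, I would produce the required enumeration $e_1,\dots,e_m$ of $E(G^*)$ by listing first all edges of $\mathcal{C}_1$, then all edges of $\mathcal{C}_2$, and so on; within a block any order works since a block is internally reachable. Reachability from $e_i$ to $e_j$ for $i\le j$ then follows: if they lie in the same block it is internal reachability; if $e_i\in\mathcal{C}_{t'}$ and $e_j\in\mathcal{C}_t$ with $t'<t$, compose a homomorphism realising reachability across each of the overlaps $\mathcal{C}_{t'}\to\mathcal{C}_{t'+1}\to\cdots\to\mathcal{C}_t$. Each step costs an additive constant (the length $C$ of the bridging loose path, which is $O(1)$ depending only on $k$), and since the number of blocks is at most $n$ (in fact $O(1)$), the total reachability constant is at most $O(n)\ll n^{4k}$, comfortably within the stated bound; the generous $n^{4k}$ also absorbs the cost of the breadth-first bookkeeping needed to keep the homomorphism from overloading clusters, although that balancing is really the business of Lemma~\ref{lem:assign} and the rotatable lemma rather than this one.

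\emph{Main obstacle.}
The genuinely hard step is establishing that the colouring $c$ is well-defined, Gallai, and locally $2$-bounded — i.e.\ proving the analogue of Lemma~\ref{lem:sturc1}\ref{Matching 4} in the form needed here and showing it descends correctly to a colouring of pairs despite the fact that a $(k-2)$-set has many completions. This requires carefully controlling, for a given pair $\{x,y\}$ or triangle, the largest components of the link graphs $L_G(A)$ over all relevant $A$, and showing their edge sets intersect pairwise in the patterns that force Gallai and local $2$-boundedness; the perturbed minimum-degree hypothesis (all shadow edges have relative degree $\ge 1/2+\gamma$, Definition~\ref{defn:perturbed degree}) is what drives these intersection estimates, via the $2$-graph component analysis of Lemma~\ref{lem:sturc1}. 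Everything after that — Gallai structure theorem, chaining, enumeration — is comparatively routine.
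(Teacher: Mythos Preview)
Your proposal captures the right \emph{spirit} for $k=4$ (colour pairs by reachability class, show Gallai and locally $2$-bounded, analyse monochromatic components), but there is a genuine gap for $k\ge 5$, and your description of the output structure is off even for $k=4$.

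\textbf{The main gap (well-definedness for $k\ge 5$).} You want to colour a pair $\{x,y\}$ by the $\sim$-class of some completion $A\supseteq\{x,y\}$ with $|A|=k-2$, and you assert that ``the perturbed-degree hypotheses guarantee such completions exist and are essentially canonical''. They are not: different completions of the same pair can land in different reachability classes, and nothing you have written forces them to agree. The paper does \emph{not} try to define a single colouring on $\binom{V(G)}{2}$. Instead it works recursively on $\partial_{\mathrm{mod}\,2}(G)$: starting from $\mathcal{A}=E(\partial_{k-2}(G))$ (where $E_A$ is tight-connected and everything is easy), it \emph{inductively} adds one set $A$ at a time, with $|A|$ decreasing by $2$ at each stage. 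For each such $A$ it defines an edge-colouring $\phi_A$ of $\partial_2(A)$, proves it is Gallai and locally $2$-coloured (this is where Lemma~\ref{lem:sturc1}\ref{Matching 4} enters, together with a delicate ``common set'' argument, Lemma~\ref{label intersect}), and from the component structure of $\phi_A$ constructs an auxiliary induced subgraph $K_A\subseteq\partial_2(A)$. The whole package is axiomatised as ``$(\mathcal{A},\mathcal{K})$-reachable'' (Definition~\ref{defn:clique}), and the induction propagates properties~\ref{Reach1}--\ref{Reach5}. The labels $E_A^{+}$ (``$A$-label'') and $E_A^{-}$ record, for each edge of $G^*$, a nested sequence of membership decisions through the $K_A$'s --- this bookkeeping is exactly what replaces your ill-defined single colouring.

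\textbf{The output structure.} Even once the colouring is in hand, the conclusion is not a long linear chain of overlapping blocks. The Gallai analysis (Corollary~\ref{Gallai coloured prop}) gives at most \emph{two} large monochromatic components and possibly many small ones; edges indexed by the large component $H_1$ or any small component are reachable from \emph{everything} in $E_\emptyset$ (Lemmas~\ref{large component},~\ref{small component}), while edges indexed by the middle-sized $H_2$ (if it exists) are only guaranteed to be mutually reachable (Lemma~\ref{middle component}). So the enumeration is essentially a \emph{two}-block split: first $E_\emptyset^{+}$ (mutually reachable), then $E_\emptyset\setminus E_\emptyset^{+}\subseteq E_\emptyset^{-}$ (reachable from all). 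Your ``chain of $O(1)$ blocks with consecutive overlaps'' is not what comes out, and your cost estimate (``each step costs $O(1)$'') hides the $n^{O(k)}$ factor that actually arises from walking around $C_A$ via Proposition~\ref{prop:reachablefact1}\ref{itm:reachablefact5}. Finally, when $k$ is odd the recursion bottoms out at singletons rather than $\emptyset$, and a separate vertex-colouring argument (Section~\ref{reach odd}) is needed; your proposal does not distinguish parities.
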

In the next subsection, we give an overview of a key step in our proof and the definition of~$(\mathcal{A},\mathcal{K})$-reachable. In Section~\ref{subsec:pre}, we list some properties about reachability. In Section~\ref{reach general}, we give an auxiliary colouring method of shadow graphs to partition edges. In Section~\ref{(A,k)-reach}, we show a key lemma for proving Lemma~\ref{reach}. In Section~\ref{reach odd}, we prove Lemma~\ref{reach}.

\subsection{Sketch of Proof of Lemma~\ref{reach} and $(\mathcal{A},\mathcal{K})$-reachable}\label{Pre Reach}
We give an overview of one step in the proof of Lemma~\ref{reach} when $k$ is even.

Consider $A\in E(\partial_{k-2}(G))$. 
Note that all edges in~$E_A$ are reachable from themselves since $E_A$ is tight connected. 
This leads to an equivalent relationship on~$E(\partial_{k-2}(G))$ such that $A\sim A'$ if and only if $E_A\cup E_{A'}$ are reachable within themselves. This can be represented by a colouring of~$E(\partial_{k-2}(G))$.

When $k=4$, this is just an edge-colouring of~$\delta_{2}(G)$. It turns out that this is Gallai locally $2$-edge colouring. By analysing monochromatic components, we obtain the desired enumeration of~$E_{\emptyset}$.
In particular, we can partition~$E(\partial_{k-2}(G))$ into those $A$ such that each edge in~$E_A$ is reachable from all edges in~$E_{\emptyset}$. Such edges can be placed at the end of the enumeration of~$E_{\emptyset}$. We call edges in these $E_A$ ``good'' otherwise ``bad''. It turns out that the set of bad edges are reachable themselves.

We now discuss the case when $k=6$.  Consider $A\in E(\partial_{k-4}(G))=E(\partial_2(G))$. Then $E_A$ can be viewed as a $4$-graph. By the discussion above, we can partite $E_A$ into~$A$-good or $A$-bad edges. This gives us an ordering within~$E_A$. We would like to extend these into a total ordering of~$E_{\emptyset}$.  This can be done via defining a similar colouring on~$E(\partial_2(G))$ as in the case when $k=4$. 

Similarly, we extend this argument for larger $k$. Namely, we first look at~$E_A$ with~$|A|=k-2$, then $|A|=k-6$ and so on. In order to track the edges as in the overview, we need the following notations. 

Intuitively, the non-$A$-label edges are the $A$-good edges, while the $A$-label edges are the $A$-bad. For technical reason, we also define $K_A$
which is a $2$-graph, more importantly, $$K_A\cap E(\partial_2(e))\neq \emptyset~~~~\text{for all $A$-label edge } e .$$
which is needed to help us define $A$-label edges formally.

 Let $G$ be a $k$-graph on vertex set~$V$. Let $\partial_{\mod 2}(G)=\bigcup_{s\equiv k \mod 2, s< k}E(\partial_{s}(G))$. Let $\mathcal{A}$ be an upward closed subset of~$\partial_{\mod 2}(G)$. For~$A\in \mathcal{A}$, recall that $\partial_2(A)=\partial_2(L_G(A))$ and let $K_A$ be an induced subgraph of~$\partial_2(A)$ without isolated vertices. We write $\mathcal{K}=\{K_A~|~A\in \mathcal{A}\}$. Throughout this section, we further assume that $E(K_A)=E(\partial_2(A))$ if $A\in E(\partial_{k-2}(G))$. 

We say that $e\in E_A$ is \emph{$A$-label} with respect to~$(\mathcal{A},\mathcal{K})$ if $e$ can be written as $v_1v_2\dots v_k$ such that $A=v_1\dots v_{|A|}$ and $v_{s+1}v_{s+2}\in E(K_{v_1\dots v_s})$ for all $s$ with~$|A|\le s< k$ and $s\equiv k\mod 2$. Let $E^+_A$ be the set of all $A$-label edges.

We say that $e\in E_A$ is \emph{non-$A$-label} with respect to~$(\mathcal{A},\mathcal{K})$ if $e$ can be written as $v_1v_2\dots v_k$ such that $A=v_1\dots v_{|A|}$ and $v_{s+1}v_{s+2}\notin E(K_{v_1\dots v_s})$ for some $s$ with~$|A|\le s< k$ and $s\equiv k\mod 2$. Let~$E_A^{-}$ be the set of all non-$A$-label edges.

We remark that one edge can be both non-$A$-label and $A$-label which depends on the order of vertices in the edge. We will show that non-$A$-label edges are reachable from any edges in~$E_{\emptyset}$, while $A$-label edges are only reachable within themselves. It is captured in the following definition.
\begin{defn}\label{defn:clique}
    Let $1/n\ll \alpha \ll 1/k\le 1/4$. Let $G$ be an $\alpha$-perturbed $k$-graph on~$n$ vertices. We say $G$ is \emph{$(\mathcal{A},\mathcal{K})$-reachable} if
\begin{enumerate}[label={(A\arabic*)}]
    \item \label{Reach1} $\mathcal{A}$ is an upward closed subset of~$\partial_{\mod 2}(G)$;
    \item \label{Reach2} $\mathcal{K}=\{\text{$K_A$ is an induced subgraph of~$\partial_2(A)$ without isolated vertices}~|~A\in \mathcal{A}\} $ and $E(K_A)=E(\partial_2(A))$ for all $A\in E(\partial_{k-2}(G))$;
    \item \label{Reach3} $|V(K_A)|\ge (1/2+3\alpha)n$ for all $A\in \mathcal{A}$;
    \item \label{Reach4}if $E(K_A)= E(\partial_2(A))$, then every  $e\in E_A$ is $n^{3k+1-2|A|}$-reachable from all $e'\in E_A$;
    \item \label{Reach5}if $E(K_A)\neq E(\partial_2(A))$, then 
    \begin{enumerate}[label=(A5.\arabic*)]
        \item \label{Reach42}there is an edge $e\in E_A$ such that $e$ is not $n^{3k+1-2|A|}$-reachable for some $e'\in E_{A}\subseteq E_{\emptyset}$;
        \item \label{Reach43}every $e\in E_A^+$  is $n^{3k-2|A|}$-reachable from any $e'\in E_A^+$;
        \item \label{Reach44}every $e\in E_A^-$ is $n^{3k-2|A|}$-reachable from all edges $e'\in E_{\emptyset}$.
    \end{enumerate}
\end{enumerate}
\end{defn}
One key ingredient of proving Lemma~\ref{reach} is that $G$ is $(\partial_{\mod 2}(G),\mathcal{K})$-reachable. 
\begin{lemma}\label{Lem:reach even case}
    Let $1/n\ll \alpha \ll \gamma\ll 1/k\le 1/4$. Let $G$ be a $k$-graph on~$n$ vertices with~$\overline{\delta}^\alpha_{k-2}(G)\ge 1/2+\gamma$. Then $G$ is $(\partial_{\mod 2}(G),\mathcal{K})$-reachable for some $\mathcal{K}$.
\end{lemma}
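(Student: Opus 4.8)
The plan is to prove Lemma~\ref{Lem:reach even case} by a downward induction on the layer index, constructing the family~$\mathcal{K}=\{K_A\}$ one level at a time while maintaining the invariant that $G$ is $(\mathcal{A}_i,\mathcal{K}_i)$-reachable, where $\mathcal{A}_i=\bigcup_{s\ge i,\, s\equiv k\bmod 2}E(\partial_s(G))$. At the base level $|A|=k-2$ we are forced by~\ref{Reach2} to set $K_A=\partial_2(A)$, and then the required property~\ref{Reach4} is exactly the statement that all edges of~$E_A$ are mutually $n^{3k+1-2|A|}$-reachable; this holds because $E_A$ is the edge set of a tight component of~$G^*$ built from a single connected link component~$C_A$, so any two of its edges are joined by a tight path, and one can wind a rooted loose tree along such a tight path (using the reachability tools of Section~\ref{subsec:pre}) to land almost all of it on any target edge. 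The reachability radius $n^{3k+1-2|A|}$ is comfortably larger than the length of a tight path in an $n$-vertex graph, so the bound is safe.

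For the inductive step, fix $A\in E(\partial_{s-2}(G))$ with $s\equiv k\bmod2$ and $s-2<k-2$, and assume $\mathcal{K}$ has been defined on all supersets of~$A$ in $\partial_{\mod2}(G)$ with the claimed properties. Working inside the link graph $L:=L_G(A)$, which is an $(k-|A|)$-graph with $\overline{\delta}^\alpha_{k-|A|-2}(L)\ge 1/2+\gamma$ by Proposition~\ref{perturb prop}\ref{itm:perturb prop 1}, I would apply the $k=4$-type analysis: consider the $2$-graph on $V(L)$ whose colour of a pair $xy$ records which tight component of~$(L_L(xy))^*$ (equivalently of~$(L_G(A\cup xy))^*$) the edges through $xy$ belong to — this is the equivalence $B\sim B'$ iff $E_B\cup E_{B'}$ is reachable within itself, restricted to $B,B'\supseteq A$ of size $|A|+2$. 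By Lemma~\ref{Gallai coloured} (referenced in Section~\ref{intro} sketch, proved in Section~\ref{reach general}) this colouring is Gallai and locally $2$-edge-coloured, so its monochromatic component structure is highly constrained; the structural lemma~\ref{lem:sturc1} applied to the dense $2$-graphs $K_{A\cup x}$ for various $x$ forces a single ``dominant'' colour class whose monochromatic component $K_A$ spans more than $(1/2+3\alpha)n$ vertices, giving~\ref{Reach3}. Set $K_A$ to be (the induced subgraph on) that dominant component. Then: an edge $e\in E_A$ is non-$A$-label precisely when at some admissible level $s$ its relevant pair escapes~$K_{v_1\dots v_s}$, meaning it sits in a non-dominant link component that, by the induction hypothesis and the overlap property~\ref{Matching 4} of Lemma~\ref{lem:sturc1}, can be reached from everything in $E_\emptyset$ — this yields~\ref{Reach44}; and an $A$-label edge stays inside the dominant components all the way down, so by concatenating the within-component reachabilities (each of radius $n^{3k-2|A'|}$ for the relevant $|A'|>|A|$, and there are at most $k$ of them) one gets mutual reachability of $E_A^+$ within radius $n^{3k-2|A|}$, which is~\ref{Reach43}. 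Property~\ref{Reach42} — the existence of a genuinely unreachable pair when $K_A\neq\partial_2(A)$ — follows from the parity obstruction illustrated in the proof sketch (the $V_1\cup V_2$ example): a pair outside $K_A$ lies in a different component of~$L_G(A')$ for some $A'$, and no loose-tree homomorphism can cross between two such components because the colouring being a proper-type invariant blocks it.

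Stitching the levels together, since $\mathcal{A}=\partial_{\mod2}(G)$ is upward closed by construction and~\ref{Reach1},~\ref{Reach2} are immediate, the induction terminates at $\mathcal{A}=\partial_{\mod2}(G)$ and produces the desired $\mathcal{K}$; the radii accumulate geometrically but stay bounded by $n^{3k+1}$, well within the tolerances used in Lemma~\ref{reach}. I would also need to check the edge cases $|A|\in\{k-4,k-6\}$ separately only insofar as Proposition~\ref{propo:rotatat reduce} and Proposition~\ref{perturb prop}\ref{itm:perturb prop 2} require $|A|\le k-4$; for $|A|=k-4$ the link graph is a $4$-graph and the argument is exactly the $k=4$ model case, which anchors the whole induction.

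The main obstacle I anticipate is establishing the Gallai and local-$2$-colouring property of the auxiliary colouring rigorously, together with the uniqueness of the dominant monochromatic component at every level: the Gallai condition (no rainbow triangle) must be derived from the fact that three pairwise-different link components among $xy,yz,zx$ would contradict the tight-connectivity forced by Lemma~\ref{lem:sturc1}\ref{Matching 3}--\ref{Matching 4} inside the common link, and the local-$2$-colouring claim (at most two colours at any vertex) hinges on a careful accounting of how the dense $2$-graphs $K_{A\cup x}$ can fail to overlap. Handling the interaction between ``$A$-label'' depending on the \emph{order} of vertices in the edge — so that a single edge may be simultaneously $A$-label and non-$A$-label — also requires care: the safe route is to prove~\ref{Reach44} for an edge the moment \emph{some} ordering witnesses non-label status, and to prove~\ref{Reach43} only for orderings that are label-consistent at every level, which is enough because the reachability statements quantify over trees and homomorphisms, not over fixed orderings. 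Everything else — the winding-along-tight-paths lemmas, the degree inheritance in link graphs, the radius bookkeeping — is routine given the machinery already set up in Sections~\ref{preli} and~\ref{subsec:pre}.
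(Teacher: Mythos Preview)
Your inductive skeleton is right and matches the paper: build $\mathcal{K}$ by starting from $\mathcal{A}=E(\partial_{k-2}(G))$ (Proposition~\ref{k-2 reachable}) and repeatedly adjoin a maximal $A\in\partial_{\mod2}(G)\setminus\mathcal{A}$, using the Gallai locally $2$-edge-coloured auxiliary colouring $\phi_A$ on $\partial_2(A)$ (Lemma~\ref{Gallai coloured}). But your choice of $K_A$ is the wrong component, and this breaks~\ref{Reach44}.

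The paper first splits into cases. If every pair in $E_A$ is already mutually $n^{3k+1-2|A|}$-reachable, one sets $K_A=\partial_2(A)\setminus I_A$ and~\ref{Reach4} holds directly. Otherwise, writing $H_1,\dots,H_m$ for the monochromatic components of~$\phi_A$ with $|V(H_1)|\ge\cdots\ge|V(H_m)|$, Corollary~\ref{Gallai coloured prop} gives $|V(H_1)|\ge(1-3\alpha)n$ and $|V(H_i)|\le n/2$ for $i\ge3$, and one shows $(1/2+3\alpha)n\le|V(H_2)|\le(1-3\alpha)n$. The paper then sets $K_A=\partial_2(A)[V(H_2)]$, the induced subgraph on the \emph{second}, middle-sized component --- not the dominant one. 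The reason is the reverse of what you wrote: edges routed through the large $H_1$ or through any small $H_i$ ($i\ge3$) are reachable from \emph{all} of $E_\emptyset$ (Lemmas~\ref{large component} and~\ref{small component}), so these are the ``good'' non-$A$-label edges and must sit \emph{outside} $K_A$; edges routed through the middle-sized $H_2$ are only reachable from one another (Lemma~\ref{middle component}), so these are the $A$-label edges and must sit \emph{inside} $K_A$. With your ``dominant'' choice $K_A\approx H_1$, the $H_2$-edges become non-$A$-label, and~\ref{Reach44} fails because precisely those edges are not reachable from all of $E_\emptyset$. Finally,~\ref{Reach42} in the paper is immediate from the case hypothesis (the assumed non-reachable pair in $E_A$), not from a parity obstruction.
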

Note that this implies Lemma~\ref{reach} when $k$ is even.
\begin{proof}[Proof of Lemma~\ref{reach} for even $k$]
    By Lemma~\ref{Lem:reach even case}, $G$ is $(\mathcal{A},\mathcal{K})$-reachable with~$\mathcal{A}=\partial_{\mod 2}(G)$. We enumerate edges in~$E(G^*)=E_{\emptyset}$ such that if $e_i\in E_{\emptyset}^+$ and $e_j\in E_{\emptyset}\backslash E_{\emptyset}^+$, then $i<j$. Consider $e_i,e_j$ with~$i\le j$. If $e_i\in E_{\emptyset}^+$, by~\ref{Reach4}, ~\ref{Reach43} and~\ref{Reach44}, $e_j$ is $n^{4k}$-reachable from~$e_i$. If~$e_i\in E_{\emptyset}\backslash E_{\emptyset}^+\subseteq E_{\emptyset}^-$, then $e_j\in E_{\emptyset}^-$ as well. So $e_i$ is $n^{4k}$-reachable from~$e_j$ by~\ref{Reach4} or~\ref{Reach44}. Hence our enumeration is as required. 
\end{proof}

We now sketch the proof of Lemma~\ref{Lem:reach even case}. We will see that $G$ is $E(\partial_{k-2}(G), \mathcal{K})$-reachable (see Proposition~\ref{k-2 reachable}). Let $\mathcal{A}\subseteq \partial_{\mod 2}(G)$ be a maximal set such that $G$ is $(\mathcal{A}, \mathcal{K})$-reachable for some~$\mathcal{K}$. If $\mathcal{A} =  \partial_{\mod 2}(G)$, then we are done. 
Let $A$ be the maximal element of~$\partial_{\mod 2}(G)\backslash \mathcal{A}$. We aim to find a suitable $K_A$ such that $G$ is $(\mathcal{A}\cup A,\mathcal{K}\cup K_A)$-reachable (and repeat this until $\mathcal{A}=\partial_{\mod 2}(G)$). 

We know that for each $u_1v_1\in E(\partial_2(A))$, all edges in~$E_{A\cup u_1v_1}^+$ are reachable within themselves. Next, we characterize edges between different $E_{A\cup u_1v_1}^+$ and $E^+_{A\cup u_2v_2}$ by an edge-colouring~$\phi_A$ of~$\partial_2(A)$ such that $\phi_A(u_1v_1)=\phi_A(u_2v_2)$  if $E^+_{A\cup u_1v_1}\cup E^+_{A\cup u_2v_2}$ are reachable within themselves (see Section~\ref{reach general}). Clearly, if $\partial_2(A)$ is monochromatic, we can set $K_A=\partial_2(A)$. 
Next we identify the structure of this edge colouring of~$\partial_2(A)$. We will show that this edge colouring is a Gallai locally $2$-edge-colouring (see Corollary~\ref{Gallai coloured prop}) and then analysis the reachable properties of each monochromatic component depending on its order (see Lemmas~\ref{large component},~\ref{small component},~\ref{middle component}). Finally, we define the $K_A$ according to the edge colouring.

Note that to prove Lemma~\ref{reach} when $k$ is odd, we need some additional (but similar) arguments (see Section~\ref{reach odd}).

\subsection{Preliminaries and common sets}~\label{subsec:pre}
First, we list some basic properties about reachable.
\begin{proposition}
    Let $C\in \mathbb{N}$. Let $G$ be a $k$-graph and $e=u_1\dots u_k$, $e'=u_1'\dots u_k'$ in~$E(G)$. Then the following holds.
    \begin{enumerate}[label={\rm (\roman*)}]\label{prop:reachablefact1}
        \item $e$ is $1$-reachable from all $u\in e$ and so from~$e$; \label{itm:reachablefact1}
        \item if $e''\in E(G)$ is $C_1$-reachable from~$e'$ and $e'$ is $C_2$-reachable from~$e$, then $e''$ is $(C_1+C_2)$-reachable from~$e$;\label{itm:reachablefact2}
        \item if $e'$ is $C$-reachable from~$u_i$ for all $i\in [k]\backslash \{i_0\}$ and some $i_0$, then $e'$ is $(C+1)$-reachable from~$e$;\label{itm:reachablefact3}
        \item if $|e\cap e'|=k-1$, then $e'$ is $1$-reachable from~$e$;\label{itm:reachablefact4}
        \item if $e'$ and $e$ is tight connected, then $e'$ is $n^k$-reachable from~$e$;\label{itm:reachablefact5}

    \end{enumerate}
\end{proposition}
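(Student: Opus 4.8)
The plan is to derive all five parts directly from the definition of reachability, using one elementary fact as the engine: \emph{every} rooted $k$-loose tree admits a homomorphism onto a single prescribed edge of $G$. Concretely, if $e=u_1\dots u_k\in E(G)$ and $u_j\in e$, set $\phi(r)=u_j$ and process the edges of $T$ in breadth-first order from $r$; when an edge $f$ is first reached through an already-mapped vertex $w$ with $\phi(w)=u_i$, the other $k-1$ vertices of $f$ are still unmapped because $T$ is linear, so we send them bijectively onto $e\setminus\{u_i\}$. This produces a homomorphism with $\phi(V(T))\subseteq e$, which is exactly what \ref{itm:reachablefact1} asks for (taking $j$ arbitrary, and recalling that $e$ being $1$-reachable from $e$ means $1$-reachable from every $u\in e$).

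For \ref{itm:reachablefact2} the idea is a two-phase construction. Given a rooted $k$-loose tree $T$ and a vertex $u\in e$, I would first use that $e'$ is $C_2$-reachable from $e$ to get a homomorphism $\psi$ with $\psi(r)=u$ and $\psi(v)\in e'$ for all $v$ far from $r$. Then I would cut $T$ along the frontier $W$ of vertices at distance $C_2$ from $r$: every vertex strictly below the frontier lies in a unique subtree $T(w)$ with $w\in W$ (exactly as in the proof of Proposition~\ref{prop:homo exist}), and $\psi(w)\in e'$. Since $e''$ is $C_1$-reachable from $e'$, for each $w\in W$ there is a homomorphism $\psi_w:T(w)\to G$ with $\psi_w(w)=\psi(w)$ and $\psi_w(x)\in e''$ for all $x$ far from $w$ in $T(w)$. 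Gluing $\psi$ on the part above the frontier with the $\psi_w$ below it is well defined because every edge of $T$ lies wholly on one side, and using $\dist_T(r,x)=\dist_T(r,w)+\dist_{T(w)}(w,x)$ for $x\in T(w)$ one checks that the resulting homomorphism sends every vertex at distance more than $C_1+C_2$ from $r$ into $e''$. (Here one should remember that $T(w)$, rooted at $w$, is again a rooted $k$-loose tree, with its colouring cyclically relabelled, so the hypotheses genuinely apply; colours are irrelevant for this lemma.)

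Parts \ref{itm:reachablefact3} and \ref{itm:reachablefact4} are the same idea, but spending exactly one edge at the root. For \ref{itm:reachablefact4}, write $e=\{w_1,\dots,w_{k-1},a\}$ and $e'=\{w_1,\dots,w_{k-1},b\}$: from any $u\in e\cap e'$ we fold all of $T$ into $e'$ by the engine above, and from $u=a$ we map the edges at $r$ onto $e$, so that the children of $r$ land in $e\cap e'\subseteq e'$, and then fold each child-subtree into $e'$; either way everything except possibly $r$ itself is sent into $e'$, so $e'$ is $1$-reachable from $e$. For \ref{itm:reachablefact3}: from $u_i$ with $i\neq i_0$ the conclusion is the hypothesis together with the (trivial) monotonicity of $C$-reachability in $C$; from $u_{i_0}$ we map the edges at $r$ onto $e$ so the children of $r$ land in $\{u_j:j\neq i_0\}$, and on each child-subtree we invoke the hypothesis that $e'$ is $C$-reachable from that child's image, which costs one extra unit of distance and yields $(C+1)$-reachability.

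Finally \ref{itm:reachablefact5} follows by chaining: a shortest tight walk $e=e_0,e_1,\dots,e_\ell=e'$ repeats no edge (an earlier repetition could be shortcut), so $\ell<\binom{n}{k}<n^k$; each consecutive pair has $|e_{i-1}\cap e_i|=k-1$, hence $e_i$ is $1$-reachable from $e_{i-1}$ by \ref{itm:reachablefact4}, and iterating \ref{itm:reachablefact2} shows $e_\ell=e'$ is $\ell$-reachable, thus $n^k$-reachable, from $e$. I expect the only real care needed is the distance bookkeeping in \ref{itm:reachablefact2}--\ref{itm:reachablefact3} — verifying that the frontier decomposition behaves as stated and that the constants add up exactly — while everything else is a routine unwinding of the definitions.
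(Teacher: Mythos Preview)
Your proposal is correct and matches the paper's approach: the paper dismisses \ref{itm:reachablefact1}--\ref{itm:reachablefact2} as immediate from the definition (you supply more detail), proves \ref{itm:reachablefact3}--\ref{itm:reachablefact4} by the same root-edge-then-subtree construction you describe, and obtains \ref{itm:reachablefact5} by chaining \ref{itm:reachablefact4} along a tight walk via \ref{itm:reachablefact2}. The distance bookkeeping you flag in \ref{itm:reachablefact2} (placing the frontier so that the constants sum to exactly $C_1+C_2$) is indeed the only point needing care, and the paper does not spell it out either.
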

\begin{proof}
    Note that~\ref{itm:reachablefact1} and~\ref{itm:reachablefact2} follow from the definition of reachable. 
 
 Let $T$ be a rooted $k$-loose tree at~$r$. For~\ref{itm:reachablefact3}, without loss of generality, we may assume that $i_0=1$. For all $j\in [2,k]$, let $\mathcal{T}_j$ be all rooted subtrees $T(v)$ with~$ v \in C_{j}(T)$ and $\dist(r,v)=1$. 
By Fact~\ref{partition tree}, $r, V(\mathcal{T}_2), \dots, V(\mathcal{T}_{k})$ partition~$V(T)$. 
Consider $ j\in [2,k]$ and $T'\in \mathcal{T}_j$ rooted at~$r'$. By our assumption, there exists a homomorphism~$\phi_{T'}$ from~$T'$ to~$G$ such that, for all $ v \in V(T')$,
\begin{align*}
	\phi_{T'}(v) 
	\begin{cases}
		=u_{j}~~ &\text{if $v=r'$},\\
		  \in e~~& \text{if $\dist(r',v)\ge C$}.
	\end{cases}
\end{align*}
 Define $\phi : V(T) \rightarrow V(G)$ to be such that 
\begin{align*}
	\phi(v) = 
	\begin{cases}
		u_{i_0} & \text{if $v =r$,}\\
		\phi_{T'}(v)  & \text{if $ v \in V(T')$ for some $T' \in \bigcup_{j \in [2,k]}\mathcal{T}_j$}.
	\end{cases}
\end{align*}
So $\phi$ is the desire homomorphism.

For~\ref{itm:reachablefact4}, without loss of generality, suppose that $(u_2,\dots,u_k)=(u_2',\dots ,u_k')$. By~\ref{itm:reachablefact1}, it is enough to show that $e'$ is $1$-reachable from~$u_1$. Set homomorphism~$\phi$ such that \begin{align*}
    	\phi(v) =
	\begin{cases}
		u_1~~ &\text{if $v=r$},\\
		  u_i'~~& \text{if $v\in C_i(T)\backslash\{r\}$ for all $i\in [k]$}.
    \end{cases}
\end{align*}
Since the length of tight path in~$G$ is at most $n^k$,~\ref{itm:reachablefact5} follows from~\ref{itm:reachablefact2} and~\ref{itm:reachablefact4}.
\end{proof}

Next, we show that the definition of~$(\mathcal{A},\mathcal{K})$-reachable is well-defined when $\overline{\delta}^{\alpha}_{k-2}(G)>1/2$.
\begin{proposition}~\label{k-2 reachable}
    Let $1/n\ll\alpha\ll \gamma \ll 1/k\le 1/4$. Let $G$ be a $k$-graph on~$n$ vertices with~$\overline{\delta}^{\alpha}_{k-2}(G)>1/2+\gamma$. Then $G$ is $(E(\partial_{k-2}(G)), \mathcal{K})$-reachable.
\end{proposition}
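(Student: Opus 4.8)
The plan is to verify the five conditions \ref{Reach1}--\ref{Reach5} of Definition~\ref{defn:clique} directly for $\mathcal{A}=E(\partial_{k-2}(G))$, with the only sensible choice of $\mathcal{K}$. For each $A\in E(\partial_{k-2}(G))$ the link $L_G(A)$ is a $2$-graph on $V(G)$, so $\partial_2(A)=\partial_2(L_G(A))=L_G(A)$; I would set $K_A$ to be the subgraph of $\partial_2(A)$ induced on its non-isolated vertices, so that $E(K_A)=E(\partial_2(A))$ by construction. This gives \ref{Reach2}, and since $E(K_A)=E(\partial_2(A))$ holds for \emph{every} $A\in\mathcal{A}$, the hypothesis of \ref{Reach5} is never met and \ref{Reach5} holds vacuously. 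For \ref{Reach1}: $k-2\equiv k\pmod 2$ and there is no integer $s$ with $k-2<s<k$ and $s\equiv k\pmod 2$, so $E(\partial_{k-2}(G))$ consists of the maximal-size members of $\partial_{\mod 2}(G)$ and is trivially upward closed.

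For \ref{Reach3} I would use axiom~\ref{itm:defn:perturbed degree1} of the hypothesis $\overline{\delta}^{\alpha}_{k-2}(G)\ge 1/2+\gamma$: every $A\in E(\partial_{k-2}(G))$ has relative degree at least $1/2+\gamma$ in $G$, so the $2$-graph $L_G(A)$ has at least $(1/2+\gamma/2)\binom{n}{2}$ edges. Applying Lemma~\ref{lem:sturc1}~\ref{Matching 1} with $G_1=G_2=G_3=L_G(A)$ (and $\gamma/2$ in the role of $\gamma$) shows that the largest component $C_A$ of $L_G(A)$ satisfies $|V(C_A)|>(1/2+\gamma/2)n$. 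Since every vertex of $C_A$ is non-isolated in $\partial_2(A)$, this yields $|V(K_A)|\ge|V(C_A)|>(1/2+3\alpha)n$, using $\alpha\ll\gamma$.

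It remains to check \ref{Reach4}: for $A\in\mathcal{A}$ we have $|A|=k-2$, so the target exponent $3k+1-2|A|$ equals $k+5$, and I must show every $e\in E_A$ is $n^{k+5}$-reachable from every $e'\in E_A$. The same application of Lemma~\ref{lem:sturc1} gives that $C_A$ is a connected $2$-graph, hence its line graph is connected. Two edges $A\cup uv$ and $A\cup u'v'$ of $E_A$ meet in exactly $k-1$ vertices precisely when the edges $uv$ and $u'v'$ of $C_A$ share a vertex, so connectedness of the line graph of $C_A$ makes the $k$-graph $E_A$ tight connected (trivially so if $C_A$ has a single edge). By Proposition~\ref{prop:reachablefact1}~\ref{itm:reachablefact5}, every $e\in E_A$ is then $n^k$-reachable from every $e'\in E_A$, and $n^k\le n^{k+5}=n^{3k+1-2|A|}$, which is \ref{Reach4}. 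Assembling \ref{Reach1}--\ref{Reach5} shows $G$ is $(E(\partial_{k-2}(G)),\mathcal{K})$-reachable.

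I do not expect a genuine obstacle here — this is essentially bookkeeping, and the one substantive fact, namely that each link $L_G(A)$ contains a connected component spanning more than half of $V(G)$, is already available as Lemma~\ref{lem:sturc1}~\ref{Matching 1}. The points deserving mild care are (i) keeping track of the reachability exponents, so that $3k+1-2|A|$ at $|A|=k-2$ really is at least the $n^k$ coming from tight-connectedness; and (ii) that transferring the codegree bound from $G$ to $e(L_G(A))$ uses only axiom~\ref{itm:defn:perturbed degree1} of $\overline{\delta}^{\alpha}_{k-2}$, so the $\alpha$-perturbation costs nothing here beyond a harmless $\gamma/2$ slack.
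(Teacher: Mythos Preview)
Your proposal is correct and follows essentially the same approach as the paper: set $K_A$ to be $\partial_2(A)$ with isolated vertices removed, note that \ref{Reach1} is trivial, that \ref{Reach2} and the vacuity of \ref{Reach5} follow from $E(K_A)=E(\partial_2(A))$, and that \ref{Reach4} follows from tight-connectedness of $E_A$ via Proposition~\ref{prop:reachablefact1}\ref{itm:reachablefact5}. Your treatment of \ref{Reach3} via Lemma~\ref{lem:sturc1}\ref{Matching 1} is in fact a bit more careful than the paper's one-line appeal to Definition~\ref{defn:perturbed degree}, but the underlying argument is the same.
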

\begin{proof}
For all $A\in E(\partial_{k-2}(G))$, we set $K_A$ to be $\partial_2(A)$ after removing all isolated vertices. So~\ref{Reach2} holds. Note that~\ref{Reach1} holds trivially. By Definition~\ref{defn:perturbed degree}, $|K_A|\ge (1/2-\alpha)n,$ so~\ref{Reach3} holds. Since $|A|=k-2$ and $\overline{\delta}^{\alpha}_{k-2}(G)>1/2+\gamma$, then by Definition~\ref{defn:our graph}, $E_A$ is tight connected. Thus,~\ref{Reach4} follows Proposition~\ref{prop:reachablefact1}~\ref{itm:reachablefact5}. Hence,  $G$ is $(E(\partial_{k-2}(G), \mathcal{K})$-reachable.
\end{proof}

Then from Definition~\ref{defn:clique}, we can imply following properties about $(\mathcal{A},\mathcal{K})$-reachable.

\begin{proposition}\label{A,Kreachable fact}
    Let $1/n\ll \alpha\ll 1/k\le 1/4$. Let $G$ be an $\alpha$-perturbed $k$-graph on~$n$ vertices. Suppose that $G$ is $(\mathcal{A},\mathcal{K})$-reachable. Then the following hold for~$A,A_1,A_2\in \mathcal{A}$
    \begin{enumerate}[label={\rm (\roman*)}]
        \item \label{itmA,Kreachable fact1}  $E_A^+\cup E_A^-=E_A$;
        \item \label{itmA,Kreachable fact2}  if $|A|<k-2$, then $E_A^+=\bigcup_{uv\in E(K_A)}E_{A\cup uv}^+$ and $E_A^-=\bigcup_{uv\in E(K_A)}E_{A\cup uv}^-\cup \bigcup_{uv\notin E(K_A)}E_{A\cup uv}$;
        \item \label{itmA,Kreachable fact3} if $|A|<k-2$ and $u\in V(K_A)$, then there exists an edge $e\in E_A^+$ with~$u\in e$;
        \item \label{itmA,Kreachable fact7} every $e\in E_A^-$ is $n^{3k-2|A|}$-reachable from all edges in~$E_{\emptyset}$; 
        \item \label{itmA,Kreachable fact4} every $e\in E_A$ is $n^{3k-2|A|+1}$-reachable from any $e'\in E_A^+$;
        \item \label{itmA,Kreachable fact5} if $E(K_A)\neq E(\partial_2(A))$, then every $e\in E_A^+$ is not $4n^{3k-2|A|}$-reachable from any $e'\in E_A^-$;
        \item \label{itmA,Kreachable fact6} if some edge $e_1\in E_{A_1}^+$ is $C$-reachable from some edge $e_2\in E_{A_2}^+$, then every $f_1\in E_{A_1}^+$ is $C+n^{3k-2|A_1|+1}+n^{2k-2|A_2|+1}$-reachable from every $f_2\in E_{A_2}^+$.
    \end{enumerate}
\end{proposition}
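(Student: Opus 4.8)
The plan is to derive each item directly from Definition~\ref{defn:clique}, the definitions of $E_A^+$ and $E_A^-$, and the basic reachability facts in Proposition~\ref{prop:reachablefact1}. Items \ref{itmA,Kreachable fact1} and \ref{itmA,Kreachable fact2} are purely combinatorial unpacking: \ref{itmA,Kreachable fact1} follows since every edge $e \in E_A$, written as $v_1\dots v_k$ with $A = v_1\dots v_{|A|}$, either has $v_{s+1}v_{s+2} \in E(K_{v_1\dots v_s})$ for all relevant $s$ (so $e \in E_A^+$) or fails this for some $s$ (so $e \in E_A^-$); and \ref{itmA,Kreachable fact2} follows by peeling off the first pair $v_{|A|+1}v_{|A|+2}$ and noting that it lies in $E(K_A)$ exactly when we continue into $E_{A\cup uv}^+$ versus $E_{A\cup uv}^-$, and lies outside $E(K_A)$ exactly when the whole of $E_{A\cup uv}$ becomes non-$A$-label. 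For \ref{itmA,Kreachable fact3}, given $u \in V(K_A)$ pick an edge $uv \in E(K_A)$ (possible since $K_A$ has no isolated vertices), then apply \ref{itmA,Kreachable fact3} inductively — or rather, apply \ref{Reach2}/\ref{Reach5} to $A \cup uv$ to find an edge in $E_{A\cup uv}^+ \subseteq E_A^+$ containing $u$; the base case $|A| = k-2$ is immediate since then $E(K_A) = E(\partial_2(A))$ and any edge of $L_G(A)$ through $u$ works.

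For the reachability statements \ref{itmA,Kreachable fact7}--\ref{itmA,Kreachable fact5}, the idea is a downward induction on $|A|$ (from $k-2$ to $0$), splitting on whether $E(K_A) = E(\partial_2(A))$. If $E(K_A) = E(\partial_2(A))$, then $E_A^- = \emptyset$ by definition, so \ref{itmA,Kreachable fact7} is vacuous, and \ref{itmA,Kreachable fact4} holds by \ref{Reach4} with room to spare ($n^{3k+1-2|A|} \le n^{3k-2|A|+1}$), and \ref{itmA,Kreachable fact5} is vacuous. If $E(K_A) \ne E(\partial_2(A))$, then \ref{itmA,Kreachable fact7} is exactly \ref{Reach44}; \ref{itmA,Kreachable fact4} follows by combining: every $e \in E_A$ is either in $E_A^+$, reachable from $e'$ via \ref{Reach43}, or in $E_A^-$, in which case $e$ is reachable from all of $E_\emptyset$ by \ref{Reach44}, in particular from $e'$, and Proposition~\ref{prop:reachablefact1}\ref{itm:reachablefact2} gives the additive bound on the constants; and \ref{itmA,Kreachable fact5} follows from \ref{Reach42}: if some $e \in E_A^+$ were $4n^{3k-2|A|}$-reachable from some $e' \in E_A^-$, then by \ref{Reach44} $e'$ is $n^{3k-2|A|}$-reachable from every edge of $E_\emptyset$, whence by transitivity $e$ is $5n^{3k-2|A|} \le n^{3k+1-2|A|}$-reachable from every edge of $E_\emptyset$; but also every edge of $E_A^+$ is $n^{3k-2|A|}$-reachable from $e$ (using \ref{Reach43} and transitivity through $E_A^+$, absorbing the constant), so every edge of $E_A$ would be reachable from every edge of $E_\emptyset$ with a constant below $n^{3k+1-2|A|}$, contradicting \ref{Reach42}. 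Here the bookkeeping of the exponents is the only delicate point — one needs $4$ or $5$ copies of $n^{3k-2|A|}$ to stay below $n^{3k+1-2|A|}$, which holds for $n$ large, and the various additive terms $n^{3k-2|A|+1}$, $n^{2k-2|A|+1}$ are all dominated similarly.

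Finally, for \ref{itmA,Kreachable fact6}: given $e_1 \in E_{A_1}^+$ that is $C$-reachable from $e_2 \in E_{A_2}^+$, take any $f_1 \in E_{A_1}^+$ and $f_2 \in E_{A_2}^+$; by \ref{itmA,Kreachable fact4} applied to $A_2$, $e_2$ is $n^{3k-2|A_2|+1}$-reachable from $f_2$ — wait, we need reachability \emph{from} $f_2$ and $e_2 \in E_{A_2} = E_{A_2}^+ \cup E_{A_2}^-$, so \ref{itmA,Kreachable fact4} gives that $e_2$ is $n^{3k-2|A_2|+1}$-reachable from $f_2 \in E_{A_2}^+$; and $f_1$ is reachable from $e_1$ — but $f_1, e_1 \in E_{A_1}^+$, so again by the argument behind \ref{Reach43}/\ref{Reach4} (every edge of $E_{A_1}^+$ is reachable from every edge of $E_{A_1}^+$ within $n^{3k-2|A_1|}$, hence $n^{3k-2|A_1|+1}$), $f_1$ is $n^{3k-2|A_1|+1}$-reachable from $e_1$. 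Chaining $f_2 \to e_2 \to e_1 \to f_1$ via Proposition~\ref{prop:reachablefact1}\ref{itm:reachablefact2} gives $f_1$ is $(C + n^{3k-2|A_1|+1} + n^{2k-2|A_2|+1})$-reachable from $f_2$ once we check $n^{3k-2|A_2|+1} \le n^{2k-2|A_2|+1}$ is \emph{not} what we want — rather the statement allows the weaker exponent $3k$ on the $A_1$ side and $2k$ on the $A_2$ side, and since $e_2 \to e_1$ costs $C$ and $e_2$ is reachable from $f_2$ at cost at most $n^{2k-2|A_2|+1}$ (this is where we must be a bit more careful and invoke the sharper \ref{Reach43} bound $n^{3k-2|A_2|}$, which does exceed $n^{2k-2|A_2|+1}$ for large $k$ — so in fact one should route the $f_2 \to e_2$ step through \ref{Reach43} directly, noting $e_2, f_2 \in E_{A_2}^+$, which gives cost $n^{3k-2|A_2|} < n^{2k-2|A_2|+1}$ fails); I expect the intended reading is that the $A_2$-side bound in the statement is a typo for $n^{3k-2|A_2|+1}$ or that a coarser estimate suffices, and the honest version of the argument simply chains the three reachabilities and reports whatever polynomial bound comes out, which is dominated by $C + n^{3k-2|A_1|+1} + n^{3k-2|A_2|+1}$ and a fortiori by the looser expression if interpreted generously. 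The main obstacle throughout is not conceptual but this exponent bookkeeping: one must track that each "hop" between $E_A^+$-classes costs a polynomial in $n$ of controlled degree and that the finitely many hops sum to something still below the thresholds demanded by \ref{Reach4} and \ref{Reach42}, so that the contradiction in \ref{itmA,Kreachable fact5} actually bites.
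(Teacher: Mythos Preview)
Your overall plan matches the paper's and most items are handled correctly, but there is one genuine error in your treatment of \ref{itmA,Kreachable fact7}. You assert that if $E(K_A) = E(\partial_2(A))$ then $E_A^- = \emptyset$ ``by definition''; this is false for $|A| < k-2$. An edge $e = v_1\dots v_k \in E_A$ lies in $E_A^-$ whenever $v_{s+1}v_{s+2} \notin E(K_{v_1\dots v_s})$ for \emph{some} $s \ge |A|$ with $s \equiv k \pmod 2$, and the hypothesis $E(K_A) = E(\partial_2(A))$ only rules out failure at the bottom level $s = |A|$. Indeed, by your own \ref{itmA,Kreachable fact2}, in this case $E_A^- = \bigcup_{uv \in E(K_A)} E_{A\cup uv}^-$, which is generally nonempty. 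Your downward induction can be salvaged by invoking the inductive hypothesis on each $A \cup uv$ here (giving $n^{3k-2(|A|+2)} \le n^{3k-2|A|}$). The paper instead argues \ref{itmA,Kreachable fact7} directly without induction: given $e \in E_A^-$, locate the level $s \ge |A|$ at which $v_{s+1}v_{s+2} \notin E(K_{v_1\dots v_s})$; then $E(K_{v_1\dots v_s}) \ne E(\partial_2(v_1\dots v_s))$ and $e \in E_{v_1\dots v_s}^-$, so \ref{Reach44} applied at that level yields $n^{3k-2s} \le n^{3k-2|A|}$-reachability from all of $E_\emptyset$.

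Your suspicion about \ref{itmA,Kreachable fact6} is well founded: the exponent $n^{2k-2|A_2|+1}$ in the statement appears to be a typo for $n^{3k-2|A_2|+1}$, and the paper's own proof simply cites transitivity (Proposition~\ref{prop:reachablefact1}\ref{itm:reachablefact2}) twice without rechecking the bound. Your chaining $f_2 \to e_2 \to e_1 \to f_1$, using \ref{itmA,Kreachable fact4} on each end, is exactly the intended argument.
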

\begin{proof}
   \ref{itmA,Kreachable fact1} and~\ref{itmA,Kreachable fact2} follows from the definition. For~$\ref{itmA,Kreachable fact3}$, if $|A|<k-2$, then by~\ref{Reach3}, $|V(K_{A})|\ge (1/2+3\alpha) n$. By Definition~\ref{defn:perturbed degree}, the degree of~$u$ in~$\overline{\partial_2(A})$ is at most $\alpha n$. Then there exists $uv\in E(\partial_2(A))\cap E(K_A)$. Since $\mathcal{A}$ is upward closed, greedily choose $\{x_1,y_1,\dots, x_m,y_m\}$ with~$2m+|A|+2=k$ such that $$x_{i}y_i\in E(K_{A\cup uv\cup \bigcup_{j<i}x_jy_j}).$$ Let $e=A\cup uv\cup \bigcup_{i\le m}x_iy_i$ as desired.
    
    For~$\ref{itmA,Kreachable fact7}$, let $e\in E_A^-$. Since $e$ is non-$A$-label, $e=v_1\dots v_{k}$ such that $A=v_1\dots v_{|A|}$ and $v_{s+1}v_{s+2}\notin E(K_{v_{1}\dots v_{s}})$ for some $s$ with~$|A|\le s<k$ and $s\equiv k\mod 2$. On the other hand, $v_{s+1}v_{s+2}\in E(\partial _2(v_1\dots v_s))$ and so $E(K_{v_1\dots v_s})\neq E(\partial_2(v_1\dots v_s))$. Then by~\ref{Reach44} on~$K_{v_1\dots v_s}$, $e$ is $n^{3k-2|A|}$-reachable from all edges in~$E_{\emptyset}$. 

   To see~\ref{itmA,Kreachable fact4}, by~\ref{Reach4}, assume that $E(K_A)\neq E(\partial_2(A))$. By~\ref{itmA,Kreachable fact1},~\ref{Reach43} and~\ref{Reach44},~\ref{itmA,Kreachable fact4} holds.

    For~\ref{itmA,Kreachable fact5}, let $e\in E_{A}^+$. Suppose that some edge $f\in E_A^+$ is $2n^{3k-2|A|}$-reachable from some edge $f'\in E_A^-$. Then by~\ref{Reach43} and~\ref{Reach44},~$e$ is $n^{3k-2|A|}$-reachable from~$f$. By Proposition~\ref{prop:reachablefact1}~\ref{itm:reachablefact2}, $e$ is $3n^{3k-2|A|}$-reachable from~$f'$. Note that $f'\in E_A^-$ is $n^{3k-2|A|}$-reachable from any edge in~$E_{\emptyset}$. Hence again by Proposition~\ref{prop:reachablefact1}~\ref{itm:reachablefact2}, $e\in E_A^+$ is $4n^{3k-2|A|}$-reachable from all edges in~$E_\emptyset$. Note that $E_{A}\subseteq E_\emptyset$. This contradicts with~\ref{Reach42}.

    Finally,~\ref{itmA,Kreachable fact6} follows from Proposition~\ref{prop:reachablefact1}~\ref{itm:reachablefact2} with~$(e,e',e'')=$ $(f_2,e_2,e_1)$ and $(f_2,e_1,f_1)$.
    \end{proof}

Next we give a notation to describe the common edge set between different $K_A$. Let $r\in \mathbb{N}$. Suppose that $G$ is $(\mathcal{A},\mathcal{K})$-reachable. Let $A_1,\dots,A_r$ be $r$ sets in~$\mathcal{A}$ and each of size~$m\le k-2$. Let $C=\{x_1,y_1,\dots, x_i,y_i\}$ be a set of size~$2i\le k-m$. We say that $C$ is \emph{$(A_1,\dots, A_r,i)$-common} if for all $j\in [i]$,
$$x_{j}y_{j}\in \bigcap_{s\in [r]}E(K_{A_s\cup x_1y_1,\dots x_{j-1}y_{j-1}}).$$
Then we imply a property about the existence of such common set for~$r=2$.

\begin{proposition}\label{prop:common}
      Let $1/n\ll \alpha\ll \gamma \ll 1/k\le 1/4$. Let $G$ be a $k$-graph on~$n$ vertices with~$\overline{\delta}^\alpha_{k-2}(G)\ge 1/2+\gamma$. Suppose that $G$ is $(\mathcal{A},\mathcal{K})$-reachable. Then for any $A_1,A_2\in \mathcal{A}$ with~$|A_1|=|A_2|\le k-4$ and $i\le (k-|A|-2)/2$, there exists an $(A_1,A_2,i)$-common set.
\end{proposition}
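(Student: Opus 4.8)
The plan is to build the common set $C=\{x_1,y_1,\dots,x_i,y_i\}$ one pair at a time, greedily, maintaining at each stage the invariant that the pair $x_jy_j$ lies in $E(K_{A_1\cup x_1y_1\cdots x_{j-1}y_{j-1}})\cap E(K_{A_2\cup x_1y_1\cdots x_{j-1}y_{j-1}})$. Write $A_1^{(j-1)} := A_1\cup x_1y_1\cdots x_{j-1}y_{j-1}$ and $A_2^{(j-1)}$ analogously; both sets lie in $\mathcal{A}$ since $\mathcal{A}$ is upward closed, and each has size $|A_1|+2(j-1)\le k-4+2\cdot\frac{k-|A_1|-2}{2}-2 < k-2$, so in fact $|A_1^{(j-1)}|,|A_2^{(j-1)}|\le k-4$ throughout the construction. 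Consequently, by~\ref{Reach3}, both $K_{A_1^{(j-1)}}$ and $K_{A_2^{(j-1)}}$ have at least $(1/2+3\alpha)n$ vertices.

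For the inductive step, I want to find a single pair $xy$ that is simultaneously an edge of $K_{A_1^{(j-1)}}$ and of $K_{A_2^{(j-1)}}$. First pick a vertex $x$ that is non-isolated in both $\partial_2(A_1^{(j-1)})$ and $\partial_2(A_2^{(j-1)})$: by Definition~\ref{defn:perturbed degree}~\ref{itm:defn:perturbed degree2} applied to each shadow, at most $2\alpha n$ vertices are excluded, so such $x$ exists. Now $x\in V(K_{A_1^{(j-1)}})$ and $x\in V(K_{A_2^{(j-1)}})$ since $K_A$ is an induced subgraph of $\partial_2(A)$ on exactly its non-isolated vertices. The neighbourhood of $x$ inside $K_{A_s^{(j-1)}}$ is the link $L_{K_{A_s^{(j-1)}}}(x)$, a $1$-graph (vertex set); I claim it has size at least, say, $(1/2+2\alpha)n$. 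Indeed $N_{\partial_2(A_s^{(j-1)})}(x)$ is the relative neighbourhood of $x$ in the link $2$-graph $L_G(A_s^{(j-1)})$, which by Proposition~\ref{perturb prop}~\ref{itm:perturb prop 1} has $\overline\delta^\alpha_{k-|A_s^{(j-1)}|-2}\ge 1/2+\gamma$, so via~\ref{itm:defn:perturbed degree1} the vertex $x$ (being in the $1$st shadow of that link) has relative degree $\ge 1/2+\gamma$; and the vertices of $\partial_2(A_s^{(j-1)})$ that are dropped in passing to $K_{A_s^{(j-1)}}$ are only the isolated ones, which by~\ref{itm:defn:perturbed degree3} number at most $\alpha n$ among the neighbours of $x$. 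Hence $|N_{K_{A_1^{(j-1)}}}(x)|+|N_{K_{A_2^{(j-1)}}}(x)| > n$, so these two vertex sets intersect; pick $y$ in the intersection and set $x_j y_j := xy$. This completes the induction and produces the desired $(A_1,A_2,i)$-common set.

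The one point that needs a little care — and is the main obstacle — is the bookkeeping in the previous paragraph: one must be sure that the degree bound on $x$ inside $K_{A_s^{(j-1)}}$ really does come out above $n/2$ after accounting for both the perturbation slack ($\gamma$ vs.\ $\alpha$, which is fine since $\alpha\ll\gamma$) and the removal of isolated vertices when passing from $\partial_2$ to $K$. This is exactly where hypotheses~\ref{Reach2} (that $K_A$ is induced and isolated-vertex-free, with $E(K_A)=E(\partial_2(A))$ on the boundary layer $|A|=k-2$) and~\ref{Reach3} are used; the inequality $|A_s^{(j-1)}|\le k-4$ guarantees we stay in the regime where Proposition~\ref{perturb prop}~\ref{itm:perturb prop 1} and~\ref{itm:perturb prop 2} apply. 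A cleaner alternative for the whole step is to invoke Proposition~\ref{perturb prop}~\ref{itm:perturb prop 2} directly with $\ell=2$, $m=|A_s^{(j-1)}|\le k-4$ and $B=N_{K_{A_1^{(j-1)}}}(x)$ (which has size $> \alpha m n$), yielding an edge $xy$ with $y\in B$ lying in $E(\partial_2(A_1^{(j-1)}))\cap E(\partial_2(A_2^{(j-1)}))$, after which membership in the $K$'s follows since these equal the corresponding $\partial_2$'s on their non-isolated vertices. Either way the calculation is routine once the degree inequality is set up, so I would present the greedy construction and relegate the degree count to one displayed inequality.
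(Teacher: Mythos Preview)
Your greedy outline is the right shape, and it is exactly what the paper does; but the inductive step contains a genuine gap stemming from a misreading of condition~\ref{Reach2}. You write that ``$K_A$ is an induced subgraph of~$\partial_2(A)$ on exactly its non-isolated vertices'', and deduce that any vertex non-isolated in $\partial_2(A_s^{(j-1)})$ automatically lies in $V(K_{A_s^{(j-1)}})$. This is false: \ref{Reach2} only asserts that $K_A$ is \emph{some} induced subgraph of $\partial_2(A)$ having no isolated vertices, not that $V(K_A)$ equals the set of non-isolated vertices of $\partial_2(A)$. Indeed, in the construction (see the proof of Lemma~\ref{Lem:reach even case}, Case~2) one takes $K_A=\partial_2(A)[V(H_2)]$ for a monochromatic component $H_2$ with $|V(H_2)|\le (1-3\alpha)n$, so $V(K_A)$ can omit a positive fraction of the non-isolated vertices of $\partial_2(A)$. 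Consequently your chosen $x$ need not lie in either $K_{A_s^{(j-1)}}$, and the subsequent neighbourhood count (which also conflates the $(k-|A_s^{(j-1)}|)$-uniform link with a $2$-graph) does not get off the ground. The ``cleaner alternative'' has the same defect in its last line, and additionally mis-applies Proposition~\ref{perturb prop}~\ref{itm:perturb prop 2}: that proposition returns an edge with \emph{both} endpoints in $B$, not an edge of the form $xy$ with a prescribed endpoint~$x$.

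The fix is short. Set $B:=V(K_{A_1^{(j-1)}})\cap V(K_{A_2^{(j-1)}})$; by~\ref{Reach3} each factor has size at least $(1/2+3\alpha)n$, so $|B|\ge 6\alpha n$. Now apply Proposition~\ref{perturb prop}~\ref{itm:perturb prop 2} with this $B$ and the two sets $A_1^{(j-1)},A_2^{(j-1)}$ (each of size $\le k-4$) to obtain an edge $x_jy_j$ with $x_j,y_j\in B$ and $x_jy_j\in E(\partial_2(A_1^{(j-1)}))\cap E(\partial_2(A_2^{(j-1)}))$. Because $K_{A_s^{(j-1)}}$ is \emph{induced} in $\partial_2(A_s^{(j-1)})$ and both endpoints lie in $V(K_{A_s^{(j-1)}})$, it follows that $x_jy_j\in E(K_{A_1^{(j-1)}})\cap E(K_{A_2^{(j-1)}})$, completing the step. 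This is precisely the paper's argument; the only ingredient you were missing is that one must first intersect the \emph{vertex sets} of the two $K$-graphs (using~\ref{Reach3}) before invoking~\ref{itm:perturb prop 2}.
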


\begin{proof}
    It suffices to show that there exists a vertex sequence $\{x_1,y_1,\dots x_i,y_i\}$ such that for~$i^*\le i$, $$x_{i^*}y_{i^*}\in E(K_{A_1\cup \bigcup_{j<i^*} x_jy_j})\cap E(K_{A_2\cup \bigcup_{j<i^*} x_jy_j}).$$ 
    Suppose that we have found $x_1,y_1,\dots,x_{i^*-1},y_{i^*-1}$ for some~$i^*\in [i]$. Let $B_i=A_i\cup \bigcup_{j<i*} x_jy_j$ and $B_2=A_2\cup \bigcup_{j<i*} x_jy_j$. By Definition~\ref {defn:clique}~\ref{Reach3}, $|V(K_{B_i})|\ge (1/2+3\alpha)n$ for~$i\in [2]$ and so $$|V(K_{B_1})\cap V(K_{B_2})|\ge |V(K_{B_1})|+|V(K_{B_2})|-n>6\alpha n.$$
     By Proposition~\ref{perturb prop}~\ref{itm:perturb prop 2}, there exists an edge $x_{i^*}y_{i^*}\in E(K_{B_1})\cap E(K_{B_2})$ as we desire.
\end{proof}

In addition, the following lemma states that if there is no two label edges for two sets which are reachable, then the corresponding $K_A$ of these two sets in~$\mathcal{K}$ will intersect in a large size.

\begin{lemma}\label{label intersect}
    Let $1/n\ll \alpha\ll \gamma \ll 1/k\le 1/4$. Let $G$ be a $k$-graph on~$n$ vertices with~$\overline{\delta}^\alpha_{k-2}(G)\ge 1/2+\gamma$. Suppose that $G$ is $(\mathcal{A},\mathcal{K})$-reachable. Let $A\subseteq V$ with~$|A|<(k-4)$. Let $z_1,z_2\in V\backslash A$ be such that $A\cup z_1,A\cup z_2\in \mathcal{A}$. Suppose that for every $e\in E_{A\cup z_1}^+$ and every  $e'\in E^+_{A\cup z_2}$, $e$ is not $n^{3k-2|A|-2}$-reachable from~$e'$ or $e'$ is not $n^{3k-2|A|-2}$-reachable from~$e$. Then for any $(A\cup z_1,A\cup z_2,i)$-common set $C$ with~$2i\le k-|A|-5$, we have
    $$|V(K_{A\cup z_1 \cup C})\cap V(K_{A\cup z_2 \cup C})|> n/2.$$
\end{lemma}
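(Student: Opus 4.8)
The plan is to argue by contradiction. Put $A_1 := A \cup z_1 \cup C$ and $A_2 := A \cup z_2 \cup C$; since $C$ is $(A\cup z_1, A\cup z_2, i)$-common and $\mathcal{A}$ is upward closed by~\ref{Reach1}, both $A_1, A_2$ lie in $\mathcal{A}$, and $2i \le k-|A|-5$ forces $|A_1| = |A_2| = |A|+1+2i \le k-4$. A first observation is that $C$ witnesses the inclusions $E_{A_1}^{+} \subseteq E_{A\cup z_1}^{+}$ and $E_{A_2}^{+} \subseteq E_{A\cup z_2}^{+}$: reading an $A_1$-label edge with prefix $A\cup z_1$ followed by the pairs of $C$ (which by definition of a common set lie in the required members of $\mathcal K$) and then by its own label-pairs exhibits it as an $(A\cup z_1)$-label edge. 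Hence, assuming $|V(K_{A_1}) \cap V(K_{A_2})| \le n/2$, it suffices to produce $e \in E_{A_1}^{+}$ and $e' \in E_{A_2}^{+}$ that are mutually $n^{3k-2|A|-2}$-reachable, which contradicts the hypothesis.

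The next step is to push the common set up to the top level. By Proposition~\ref{prop:common} (applied to $A_1, A_2$, which have equal size at most $k-4$) there is an $(A_1, A_2, (k-|A_1|-2)/2)$-common set $D$. Write $B_0 := A \cup C \cup D$, so $|B_0| = k-3$, and $B_1 := B_0 \cup \{z_1\} = A_1 \cup D$, $B_2 := B_0 \cup \{z_2\} = A_2 \cup D$. Extending $A_1, A_2$ pair by pair along $D$ keeps us in $\mathcal{A}$ by~\ref{Reach1}, so $B_1, B_2 \in \mathcal{A}$, and as $|B_1| = |B_2| = k-2$ we get $B_1, B_2 \in E(\partial_{k-2}(G))$ and, by~\ref{Reach2}, $E(K_{B_j}) = E(\partial_2(B_j)) \supseteq E(C_{B_j})$. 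By~\ref{itm:defn:perturbed degree1} each $L_G(B_j)$ has more than $(1/2+\gamma)\binom{n}{2}$ edges, so by~\ref{Matching 1} of Lemma~\ref{lem:sturc1} the component $C_{B_j}$ (see Definition~\ref{defn:our graph}) has more than $(1/2+\gamma)n$ vertices, whence $|V(C_{B_1}) \cap V(C_{B_2})| > 2\gamma n$. The crucial reduction is this: if some pair $uv$ lies in $E(C_{B_1}) \cap E(C_{B_2})$, then $e := B_1 \cup uv \in E_{B_1} \subseteq E_{A_1}^{+}$ and $e' := B_2 \cup uv \in E_{B_2} \subseteq E_{A_2}^{+}$ (the label structure being inherited from $C$ and $D$, using $uv \in E(C_{B_j}) \subseteq E(K_{B_j})$), while $e \cap e' = B_0 \cup \{u,v\}$ has $k-1$ vertices, so by Proposition~\ref{prop:reachablefact1}~\ref{itm:reachablefact4} $e$ and $e'$ are $1$-reachable from one another, hence trivially mutually $n^{3k-2|A|-2}$-reachable. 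So everything reduces to finding a common edge of the components $C_{B_1}$ and $C_{B_2}$.

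To produce such an edge I would bring in a third link. Since $|B_0| = k-3$, Proposition~\ref{perturb prop}~\ref{itm:perturb prop 1} gives $\overline{\delta}^{\alpha}_{1}(L_G(B_0)) \ge 1/2+\gamma$ for the $3$-graph $L_G(B_0)$, so by~\ref{itm:defn:perturbed degree2} all but at most $\alpha n$ of its vertices are non-isolated, and for each such $z$ the $2$-graph $L_G(B_0 \cup z)$ has more than $(1/2+\gamma)\binom{n}{2}$ edges and hence a large component $C_{B_0 \cup z}$, every $E_{B_0\cup z}$ being tight connected and therefore mutually $n^k$-reachable by Proposition~\ref{prop:reachablefact1}~\ref{itm:reachablefact5}. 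Applying~\ref{Matching 4} of Lemma~\ref{lem:sturc1} to the three $2$-graphs $L_G(B_1), L_G(B_2), L_G(B_0 \cup z_3)$ (for $z_3 \notin B_0 \cup\{z_1,z_2\}$), two of $C_{B_1}, C_{B_2}, C_{B_0\cup z_3}$ share an edge; if it is $C_{B_1}, C_{B_2}$ we are done, and if $C_{B_0\cup z_3}$ meets both of them we chain $E_{B_1}\!\to\! E_{B_0\cup z_3}\!\to\! E_{B_2}$ through shared edges (each hop being $1$- or $n^k$-reachable, total well below $n^{3k-2|A|-2}$ since $|A| \le k-5$), again contradicting the hypothesis. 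Otherwise~\ref{Matching 4} forces the non-isolated vertices to split into two classes $Z_1, Z_2$ according to whether $C_{B_0\cup z}$ meets $C_{B_1}$ or $C_{B_2}$, with $z_1 \in Z_1$, $z_2 \in Z_2$; and a further application of~\ref{Matching 4} to triples of links shows that within each $Z_i$ the components pairwise share edges, while no cross-edges exist between the two classes. Thus, unless some chain as above already closes, we arrive at a \emph{fully separated} configuration, and \textbf{ruling this out is the main obstacle}: the hypothesis $|V(K_{A_1}) \cap V(K_{A_2})| \le n/2$ together with $|V(K_{A_1})|,|V(K_{A_2})| \ge (1/2+3\alpha)n$ from~\ref{Reach3} must be leveraged, via a counting argument pinning down how $V(K_{A_j})$ constrains the vertex sets of the link-components in the two classes, to show that separation would force $V(C_{B_1})$ and $V(C_{B_2})$ to be (essentially) confined to $V(K_{A_1})$ and $V(K_{A_2})$ and hence nearly disjoint — contradicting $|V(C_{B_1}) \cap V(C_{B_2})| > 2\gamma n$. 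Everything else is routine bookkeeping about label structure, parities, and additive constants in the reachability bounds.
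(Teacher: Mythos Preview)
Your set-up is sound through the point where you build $B_1=A_1\cup D$ and $B_2=A_2\cup D$, observe $E_{B_j}\subseteq E_{A\cup z_j}^{+}$, and note that a common edge of $C_{B_1}$ and $C_{B_2}$ would finish. The gap is in how you manufacture the third $(k-2)$-set. Picking a \emph{generic} $z_3$ and applying~\ref{Matching 4} does not use the contradiction hypothesis $|V(K_{A_1})\cap V(K_{A_2})|\le n/2$ at all; if your case analysis went through, you would have shown that $E_{A\cup z_1}^{+}$ and $E_{A\cup z_2}^{+}$ are always mutually reachable, which is false. Your ``fully separated'' configuration is genuinely possible at the level of the link components, and the sketch you give for ruling it out --- that $V(C_{B_j})$ should be confined to $V(K_{A_j})$ --- has no reason to hold: $K_{A_j}$ lives in $\partial_2(A_j)$ at level $|A_j|\le k-4$, while $C_{B_j}$ is the largest component of a $2$-graph link at level $k-2$, and there is no mechanism tying these together.

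The paper's proof uses the hypothesis $|V(K_{A_1})\cap V(K_{A_2})|\le n/2$ \emph{directly} in the choice of the third set. From $|V(K_{A_j})|\ge(1/2+3\alpha)n$ one gets both $|V(K_{A_1})\cap V(K_{A_2})|>3\alpha n$ (hence a pair $v_1v_2\in E(K_{A_1})\cap E(K_{A_2})$) and $|V(K_{A_2})\setminus V(K_{A_1})|>3\alpha n$; this yields $u_2\in V(K_{A_2})\setminus V(K_{A_1})$ with $v_1u_2\in E(\partial_2(A_1))$. One then extends $A_1\cup v_1v_2$ and $A_2\cup v_1v_2$ by a common set $D$ to level $k-2$, obtaining $B_1,B_2$, and takes $B_3:=A_1\cup v_1u_2\cup D$. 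The point of this construction is that $u_2\notin V(K_{A_1})$ forces $v_1u_2\notin E(K_{A_1})$, so $E_{B_3}\subseteq E_{A_1}^{-}$; and $u_2\in V(K_{A_2})$ puts an edge of $E_{A_2}^{+}$ through $u_2$. Now apply~\ref{Matching 4} to $B_1,B_2,B_3$: the $B_1$--$B_2$ case is your Case~1; the $B_1$--$B_3$ case gives an $E_{A_1}^{+}$ edge $1$-reachable from an $E_{A_1}^{-}$ edge, contradicting Proposition~\ref{A,Kreachable fact}\ref{itmA,Kreachable fact5} since the hypothesis also forces $E(K_{A_1})\ne E(\partial_2(A_1))$; and in the $B_2$--$B_3$ case one uses $u_2\in V(K_{A_2})$ to route from $e_3\in E_{A_1}^{-}$ (which is reachable from everything by~\ref{itmA,Kreachable fact7}) into $E_{A_2}^{+}$, again contradicting~\ref{itmA,Kreachable fact5}. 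The missing idea in your attempt is precisely this strategic choice of $B_3$ that simultaneously lands in $E_{A_1}^{-}$ and touches $V(K_{A_2})$.
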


 \begin{proof}
            Let $A_i=A\cup z_i \cup C$ and $K_i=K_{A_i}$ for~$i\in [2]$.
            Note that $|A_i|=|A|+1+2i\le k-4.$ Suppose to the contrary that $|V(K_1)\cap V(K_2)|\le n/2$. We claim the following hold
            \begin{enumerate}[label={(k$_{\arabic*})$}]
                
                \item  $|V(K_1)\cap V(K_2)|> 3\alpha n$ and $E(K_1)\cap E(K_2)\neq \emptyset$;\label{intersect1}
                \item  $|V(K_i)|< (1-3\alpha)n$ and $E(K_i)\neq E(\partial_2(A))$;\label{intersect2}
                \item  for~$\{i,j\}= [2]$, $|V(K_i)\backslash V(K_j)|>3\alpha n.$\label{intersect3}
            \end{enumerate}
        By~\ref{Reach3}, we have $$|V(K_1)\cap V(K_2)|\ge |V(K_1)|+|V(K_2)|-n\ge 3\alpha n.$$
        By Proposition~\ref{perturb prop}~\ref{itm:perturb prop 2}, $E(K_1)\cap E(K_2)\neq \emptyset$. Thus~\ref{intersect1} holds.  If $|K_1|\ge (1-3\alpha) n$, then  
         $$|V(K_1)\cap V(K_2)|\ge |V(K_1)|+|V(K_2)|-n>n/2,$$
         a contradiction and similarly we have $|V(K_2)|<(1-3\alpha) n$. Hence~\ref{intersect2} holds. 
         for~$\{i,j\}=[2]$,
         \begin{align*}
             |V(K_i)\backslash V(K_j)|\ge |V(K_i)|-|V(K_i)\cap V(K_j)|\overset{\ref{Reach3}}{\ge}(1/2+3\alpha) n-n/2 >3\alpha n.
         \end{align*}
        This implies~\ref{intersect3}. 
         
            Let $v_1v_2\in E(K_1)\cap E(K_2)$ which exists by~\ref{intersect1}. Since $K_1$ is an induced subgraph of~$\partial_2(A_1)$ without isolated vertices and by Definition~\ref{defn:perturbed degree}, the relative degree of~$v_1$ in~$\overline{\partial_{2}(A_1)}$ is at most $\alpha$. Then by~\ref{intersect3}, there exists $u_2\in V(K_2)\backslash V(K_1)$ such that $v_1u_2\in E(\partial_{2}(A_1))$. By Proposition~\ref{prop:common}, there exists an $(A_1\cup v_1v_2,A_2 \cup v_1v_2,\frac{k-(|A^*|+|C|+1)-2-2}{2})$-common set $D$. Then consider the following three $(k-2)$-subsets
          $$B_1=A_1\cup v_1v_2\cup D=A\cup C\cup D\cup z_1v_1v_2,$$
          $$B_2=A_2\cup v_1v_2\cup D=A\cup C\cup D\cup z_2v_1v_2,$$
          $$B_3=A_1\cup v_1u_2\cup D=A\cup C\cup D\cup z_1v_1u_2.$$

          We have 
          \begin{enumerate}[label={(R$_{\arabic*})$}]
              \item\label{in1} $E_{B_1}\subseteq E^+_{A_1}= E^+_{A\cup z_1}$ (by Proposition~\ref{A,Kreachable fact}~\ref{itm:reachablefact2} and $v_1v_2\in E(K_1)\cap E(K_2)$);
              \item\label{in2} $E_{B_2}\subseteq E^+_{A_2}= E^+_{A\cup z_2}$ (by Proposition~\ref{A,Kreachable fact}~\ref{itm:reachablefact2} and $v_1v_2\in E(K_1)\cap E(K_2)$);
              \item\label{in3} $E_{B_3}\subseteq E^-_{A_1}$ (as $u_2\notin V(K_1)$).
          \end{enumerate}
 By Lemma~\ref{lem:sturc1}~\ref{Matching 4}, there exist $i,j\in [3]$ such that $E(C_{B_i})\cap E(C_{B_j})\neq \emptyset$. Next we discuss the following cases:

    \medskip \noindent    \textbf{Case 1:} $E(C_{B_1})\cap E(C_{B_2})\neq \emptyset$. Let $y_1y_2\in E(C_{B_1})\cap E(C_{B_3})$, $e_1=B_1\cup y_1y_2$, $e_2=B_2\cup y_1y_2$. Then $$|e_1\cap e_2|=k-|B_1\backslash B_2|=k-1.$$
    Hence $e_1\in E_{A_1}^+$ is $1$-reachable from~$e_2\in E_{A_2}^+$ by Proposition~\ref{prop:reachablefact1}~\ref{itm:reachablefact4}, and vice versa. This is a contradiction. 
        
    \medskip \noindent      \textbf{Case 2:} $E(C_{B_1})\cap E(C_{B_3})\neq \emptyset$. Let $y_1y_2\in E(C_{B_1})\cap E(C_{B_3})$, $e_1=B_1\cup y_1y_2$ and $e_3=B_3\cup y_1y_2$. Note that $$|e_1\cap e_3|=k-|B_1\backslash B_3|=k-1.$$
    Hence $e_1\in E_{A_1}^+$ is $1$-reachable from~$e_3$ by Proposition~\ref{prop:reachablefact1}~\ref{itm:reachablefact4} and vice versa. By Proposition~\ref{A,Kreachable fact}~\ref{itmA,Kreachable fact5}, $E(K_1)=E(K_{A_1})=E(\partial_2(A))$. This contradicts with~\ref{intersect2}.

    \medskip \noindent      \textbf{Case 3:} $E(C_{B_2})\cap E(C_{B_3})\neq \emptyset$. Let $y_1y_2\in E(C_{B_2})\cap E(C_{B_3})$. Let $e_2=B_2\cup y_1y_2$ and $e_3=B_3\cup y_1y_2$. Next, we claim that $e_2$ is $2n^{3k-2|A_2|}$-reachable from~$e_3$.
        Note that $$e_3\backslash e_2=B_3\backslash B_2= \{z_1,u_2\}.$$ 
        By Proposition~\ref{prop:reachablefact1}~\ref{itm:reachablefact3} and~\ref{itm:reachablefact1}, it suffices to show that $e_2$ is $(2n^{3k-2|A_2|}-1)$-reachable from~$u_2$. 

        By~$u_2\in V(K_2)$ and Proposition~\ref{A,Kreachable fact}~\ref{itmA,Kreachable fact2}, there exists an edge $f\in E_{A_2}^+$ containing $u_2$. Since $e_2\in E_{A_2}^+$, by Definition~\ref{defn:clique}~\ref{Reach43} and Proposition~\ref{prop:reachablefact1}~\ref{itm:reachablefact1}, $e_2$ is $n^{3k-2|A_2|}$-reachable from~$f$ and so from~$u_2$. Hence, $e_2$ is $2n^{3k-2|A_2|}$-reachable from~$e_3$. Meanwhile, $e_3\in E^{-}_{A_1}$ by~\ref{in3}. By~Proposition~\ref{A,Kreachable fact}~\ref{itmA,Kreachable fact7}, $e_3$ is $n^{3k-2|A_1|}$-reachable from all edges $e'\in E_{\emptyset}$. By Proposition~\ref{prop:reachablefact1}~\ref{itm:reachablefact2}, $e_2$ is $3n^{3k-2|A_2|}$-reachable from all edges in~$E_{\emptyset}$. Since $E(K_2)\neq E(\partial_2(A_2))$ by~\ref{intersect2}, there is $e''\in E_{A_2}^-$. So $e_2\in E_{A_2}^+$ is $3n^{3k-2|A_2|}$-reachable from~$e''\in E_{A_2}^-$. This contradicts with Proposition~\ref{A,Kreachable fact}~\ref{itmA,Kreachable fact5} and~\ref{intersect2}.
    \end{proof}

\subsection{Auxiliary edge-colouring}\label{reach general}
Recall that $\partial_{\mod 2}(G)=\bigcup_{s\equiv k \mod 2, s< k}\partial_{s}(G)$. Let $A\in \partial_{\mod 2}(G)$. In this subsection, we will define an edge-colouring of~$\partial_2(A)$ to help us partition label edges.

\begin{defn}\label{2-edge colour}
   Let $1/n\ll \alpha\ll \gamma \ll 1/k\le 1/4$. Let $G$ be a $k$-graph on~$n$ vertices with~$\overline{\delta}^\alpha_{k-2}(G)\ge 1/2+\gamma$. Suppose that $G$ is $(\mathcal{A}, \mathcal{K})$-reachable with~$E(\partial_{k-2}(G))\subseteq \mathcal{A}$. Let $A$ be a maximal element of~$\partial_{\mod 2}(G)\backslash \mathcal{A}$. We define an auxiliary edge-colouring $\phi_A$ on~$\partial_2(A)$ to be such that for all distinct $u_1v_1,u_2v_2\in E(\partial_2(A))$,  $\phi_A(u_1v_1)=\phi_A(u_2v_2)$ if and only if there exists an edge~$e_1\in E^+_{A\cup u_1v_1}$ and an edge $e_2\in E^+_{A\cup u_2v_2}$ such that $e_i$ is $n^{3k-2|A|-2}$-reachable from~$e_j$ for~$\{i,j\}=[2]$.
\end{defn}

Then we have the following property by Proposition~\ref{A,Kreachable fact}~\ref{itmA,Kreachable fact5}.
\begin{proposition}\label{any label edge}
     Let $1/n\ll \alpha\ll \gamma \ll 1/k\le 1/4$. Let $G$ be a $k$-graph on~$n$ vertices with~$\overline{\delta}^\alpha_{k-2}(G)\ge 1/2+\gamma$. Suppose that $G$ is $(\mathcal{A}, \mathcal{K})$-reachable with~$E(\partial_{k-2}(G))\subseteq \mathcal{A}$. Let $A$ be a maximal element of~$\partial_{\mod 2}(G)\backslash \mathcal{A}$. Let $\phi_A$ be the edge-colouring as in Definition~\ref{2-edge colour}. If $u_1v_1,u_2v_2\in E(\partial_2(A))$ with~$\phi_A(u_1v_1)=\phi_A(u_2v_2)$, then any $e_1\in E^+_{A\cup u_1v_1}$ is $3n^{3k-2|A|-2}$-reachable from all edges $e_2\in E^+_{A\cup u_2v_2}$.
\end{proposition}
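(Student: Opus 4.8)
The plan is to unwind the definition of $\phi_A$ and then transport reachability along two label classes using the "upgrading" fact from Proposition~\ref{A,Kreachable fact}~\ref{itmA,Kreachable fact6}. By Definition~\ref{2-edge colour}, since $\phi_A(u_1v_1)=\phi_A(u_2v_2)$, there exist an edge $g_1\in E^+_{A\cup u_1v_1}$ and an edge $g_2\in E^+_{A\cup u_2v_2}$ such that $g_1$ is $n^{3k-2|A|-2}$-reachable from $g_2$. Set $A_1=A\cup u_1v_1$ and $A_2=A\cup u_2v_2$, so $|A_1|=|A_2|=|A|+2$. Thus $g_1\in E^+_{A_1}$ is $C$-reachable from $g_2\in E^+_{A_2}$ with $C=n^{3k-2|A|-2}=n^{3k-2|A_1|+2}$.

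Now I would apply Proposition~\ref{A,Kreachable fact}~\ref{itmA,Kreachable fact6} with $(A_1,A_2)$ in the roles there, $(e_1,e_2)=(g_1,g_2)$, and arbitrary targets $f_1=e_1\in E^+_{A\cup u_1v_1}=E^+_{A_1}$ and $f_2=e_2\in E^+_{A\cup u_2v_2}=E^+_{A_2}$. That proposition gives that $e_1$ is $C'$-reachable from $e_2$ with
\[
C' = C + n^{3k-2|A_1|+1} + n^{2k-2|A_2|+1}.
\]
Substituting $|A_1|=|A_2|=|A|+2$, we get $C = n^{3k-2|A|-2}$, $n^{3k-2|A_1|+1}=n^{3k-2|A|-3}$, and $n^{2k-2|A_2|+1}=n^{2k-2|A|-3}$. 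For $n$ large (recall $1/n\ll\alpha\ll\gamma\ll 1/k$) and $|A|\le k$, each of the latter two terms is at most $n^{3k-2|A|-2}$, so $C'\le 3n^{3k-2|A|-2}$, which is exactly the claimed bound.

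The only mild subtlety is checking that Proposition~\ref{A,Kreachable fact}~\ref{itmA,Kreachable fact6} is applicable, i.e.\ that $A_1,A_2\in\mathcal{A}$. Since $A$ is a maximal element of $\partial_{\mod 2}(G)\setminus\mathcal{A}$ and $|A_i|=|A|+2$ with $A_i\supseteq A$ in $\partial_{\mod 2}(G)$, maximality of $A$ forces $A_i\in\mathcal{A}$ (this is where upward-closedness of $\mathcal{A}$, i.e.\ \ref{Reach1}, and the choice of $A$ are used). I do not expect a genuine obstacle here; the proof is essentially a one-line invocation of \ref{itmA,Kreachable fact6} followed by a routine exponent bookkeeping, and I would present it as such.
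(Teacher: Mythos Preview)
Your proof is correct and follows essentially the same approach as the paper: the paper records this proposition as a direct consequence of Proposition~\ref{A,Kreachable fact} (the paper cites item~\ref{itmA,Kreachable fact5}, but that appears to be a typo for~\ref{itmA,Kreachable fact6}, which is precisely what you invoke). Your exponent bookkeeping and the check that $A_1,A_2\in\mathcal{A}$ via maximality of~$A$ are both accurate.
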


 We say a  $2$-graph $H$ on~$n$ vertices with an edge-colouring $\phi$ is $\emph{Gallai colouring}$ if and only if there is no rainbow triangle in~$H$.
 We say $\phi$ is \emph{locally $2$-edge-colouring}, if $\{\phi(e):v\in e\}|\le 2$ for all~$v\in V(H)$. Next, we show a key lemma that $\phi_A$ is Gallai locally $2$-edge-colouring.
\begin{lemma}\label{Gallai coloured}
    Let $1/n\ll \alpha\ll \gamma \ll 1/k\le 1/4$. Let $G$ be a $k$-graph on~$n$ vertices with~$\overline{\delta}^\alpha_{k-2}(G)\ge 1/2+\gamma$. Suppose that $G$ is $(\mathcal{A}, \mathcal{K})$-reachable with~$E(\partial_{k-2}(G))\subseteq \mathcal{A}$. Let $A$ be a maximal element of~$\partial_{\mod 2}(G)\backslash \mathcal{A}$. Let $\phi_A$ be the edge-colouring as in Definition~\ref{2-edge colour}. Then $\phi_A$ is Gallai locally $2$-edge-coloured. 
\end{lemma}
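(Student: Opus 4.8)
## Plan for proving Lemma~\ref{Gallai coloured}

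\textbf{Setup.} Let $A$ be a maximal element of $\partial_{\mod 2}(G)\setminus\mathcal A$, and consider $\phi_A$ on $\partial_2(A)$. The plan is to prove the two properties — locally $2$-edge-coloured and Gallai — separately, in both cases arguing by contradiction and using the structural Lemma~\ref{lem:sturc1}~\ref{Matching 4}, which says that among any three $2$-graphs on a common vertex set each with more than $(1/2+\gamma)\binom n2$ edges, two of the corresponding largest components share an edge. The key translation device is this: if $u_1v_1, u_2v_2\in E(\partial_2(A))$ and the largest components $C_{B_1}, C_{B_2}$ of suitable link $2$-graphs $L_G(B_1), L_G(B_2)$ share an edge $y_1y_2$ (where $B_i$ is a $(k-2)$-set extending $A\cup u_iv_i$ by a common set), then $e_1:=B_1\cup y_1y_2\in E^+_{A\cup u_1v_1}$ and $e_2:=B_2\cup y_1y_2\in E^+_{A\cup u_2v_2}$ are edges of $G^*$ with $|e_1\cap e_2|=k-|B_1\setminus B_2|$, so when $|B_1\setminus B_2|$ is small they are reachable from each other by Proposition~\ref{prop:reachablefact1}~\ref{itm:reachablefact3}–\ref{itm:reachablefact4}, forcing $\phi_A(u_1v_1)=\phi_A(u_2v_2)$.

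\textbf{Locally $2$-edge-coloured.} Suppose some vertex $v$ of $\partial_2(A)$ lies in three edges $vu_1, vu_2, vu_3$ with pairwise distinct colours. For each pair $\{i,j\}$, I would use Proposition~\ref{prop:common} to pick a single $(A\cup vu_1, A\cup vu_2, A\cup vu_3,\,i)$-common set $C$ (extending Proposition~\ref{prop:common} to three sets is routine since the three $V(K_\bullet)$ each have size $>(1/2+3\alpha)n$, so they have common non-isolated structure by Proposition~\ref{perturb prop}~\ref{itm:perturb prop 2} iterated), obtaining $(k-2)$-sets $B_i = A\cup C\cup vu_i$. These three sets pairwise satisfy $|B_i\setminus B_j| = 1$ (they differ only in $u_i$ versus $u_j$). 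Applying Lemma~\ref{lem:sturc1}~\ref{Matching 4} to $L_G(B_1), L_G(B_2), L_G(B_3)$ — each has more than $(1/2+\gamma)\binom n2 - \binom{2}{2}$ edges by \ref{itm:defn:perturbed degree1} since $B_i\in E(\partial_{k-2}(G))$ — gives $y_1y_2\in E(C_{B_i})\cap E(C_{B_j})$ for some $i\ne j$. Then $e_i = B_i\cup y_1y_2$ and $e_j = B_j\cup y_1y_2$ satisfy $|e_i\cap e_j| = k-1$, so each is $1$-reachable from the other by Proposition~\ref{prop:reachablefact1}~\ref{itm:reachablefact4}; since $e_i\in E^+_{A\cup vu_i}$, $e_j\in E^+_{A\cup vu_j}$ and $1 \le n^{3k-2|A|-2}$, Definition~\ref{2-edge colour} yields $\phi_A(vu_i)=\phi_A(vu_j)$, contradicting distinctness.

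\textbf{Gallai.} Suppose $u_1u_2u_3$ is a rainbow triangle in $\partial_2(A)$. By Proposition~\ref{prop:common} fix a common set $C$ for all three pairs, and set $B_1 = A\cup C\cup u_1u_2$, $B_2 = A\cup C\cup u_2u_3$, $B_3 = A\cup C\cup u_1u_3$ (choosing $|C|$ so each has size $k-2$; here $2i\le k-|A|-2$ which holds since $|A|\equiv k\bmod 2$ and $|A|<k-2$ forces $|A|\le k-4$, giving room). Note $|B_1\setminus B_2| = |B_1\setminus B_3| = |B_2\setminus B_3| = 1$. Applying Lemma~\ref{lem:sturc1}~\ref{Matching 4} to $L_G(B_1), L_G(B_2), L_G(B_3)$, two of the largest components, say $C_{B_i}$ and $C_{B_j}$, share an edge $y_1y_2$; then $e_i = B_i\cup y_1y_2$ and $e_j = B_j\cup y_1y_2$ are $1$-reachable from each other, and they lie in $E^+$ of the two corresponding pairs among $\{u_1u_2, u_2u_3, u_1u_3\}$, so those two pairs receive the same $\phi_A$-colour, contradicting rainbowness.

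\textbf{Main obstacle.} The real work is not the component-clash step but rather confirming that the edges produced genuinely lie in the \emph{label} sets $E^+_{A\cup u_iu_j}$ — that is, that all the intermediate pairs used in building $B_i$ from $A\cup u_iu_j$ can be taken inside the relevant $K_\bullet$'s simultaneously for two (or three) different extensions. This is exactly what the common-set machinery of Proposition~\ref{prop:common} (and its easy three-set generalisation) delivers, using \ref{Reach3} and Proposition~\ref{perturb prop}~\ref{itm:perturb prop 2} at each greedy step; one must also check the arithmetic that $2i \le k - |A| - 2$ leaves exactly enough room to reach a $(k-2)$-set after appending one more pair $y_1y_2$, which uses $|A|\equiv k\pmod 2$. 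A secondary point is to make sure $|B_i \setminus B_j| = 1$ in all the configurations — this holds because the $B$'s share $A$, share $C$, and differ only in a single vertex of $\{u_1,u_2,u_3\}$ — so that Proposition~\ref{prop:reachablefact1}~\ref{itm:reachablefact4} applies directly and we never need the weaker bound $n^{3k-2|A|-2}$, though that would suffice anyway.
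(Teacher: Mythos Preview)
Your overall architecture is the same as the paper's: build three $(k-2)$-sets $B_1,B_2,B_3$ extending $A$ by the three colour-distinct pairs plus a single common set~$C$, apply Lemma~\ref{lem:sturc1}~\ref{Matching 4} to get a shared $2$-edge $y_1y_2$, and observe that the resulting $k$-edges differ in only one vertex so are mutually $1$-reachable, forcing equal $\phi_A$-colour. That part is fine, and the Gallai case is indeed handled by the same argument with $(u_1u_2,u_1u_3,u_2u_3)$ in place of $(wz_1,wz_2,wz_3)$.

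The gap is in the sentence ``extending Proposition~\ref{prop:common} to three sets is routine since the three $V(K_\bullet)$ each have size $>(1/2+3\alpha)n$''. Three subsets of an $n$-set each of size just above $n/2$ can have empty triple intersection (inclusion--exclusion only gives a lower bound of roughly $-n/2$), so you cannot greedily pick a pair $x_jy_j$ in all three $K$-graphs using size alone. The paper closes this gap with Lemma~\ref{label intersect}: because $\phi_A(wz_1)\ne\phi_A(wz_2)$, no pair $e\in E^+_{A\cup wz_1}$, $e'\in E^+_{A\cup wz_2}$ is mutually $n^{3k-2|A|-2}$-reachable, and that non-reachability hypothesis forces $|V(K_{A\cup wz_1\cup C})\cap V(K_{A\cup wz_2\cup C})|>n/2$ for every partial common set~$C$. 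Only then does adding the third set (of size $>(1/2+3\alpha)n$) give a triple intersection of size $>3\alpha n$, to which Proposition~\ref{perturb prop}~\ref{itm:perturb prop 2} applies. In the rainbow-triangle case the same lemma is invoked with $A\cup u_1$ playing the role of the base set and $u_2,u_3$ as the two extra vertices. So the ``easy three-set generalisation'' you allude to is in fact the substantive step, and it genuinely consumes the hypothesis that the three colours are distinct.
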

\begin{proof}
        Suppose to the contrary that $\phi_A$ is not locally $2$-edge-coloured. Hence there exist distinct $\phi_A(wz_1)$, $\phi_A(wz_2)$ and $\phi_A(wz_3)$ for some $w\in V(G)$. Let $A\cup wz_i=A_i$ for~$i\in [3]$.
        \begin{claim}\label{Cliam: edge three common}
            There is an $(A_1,A_2,A_3, (k-|A_1|-2)/2)$-common set $C$.
        \end{claim}
       
        \begin{proofclaim}
            Let $m=(k-|A_1|-2)/2$. Suppose that we have chosen $x_1,y_1,\dots ,x_{j^*},y_{j^*}$ for some $j^*\in [m-1]\cup \{0\}$ such that $X_{j^*}=\{x_1,y_1,\dots ,x_{j^*},y_{j^*}\}$ and is $(A_1, A_2, A_3, j^*)$-common. By the definition of~$\phi_A$,  for every $e_1\in E^+_{A\cup wz_1}=E^+_{A_1}$ and every $e_2\in E^+_{A\cup wz_2}=E^+_{A_2}$, $e_i$ is not $n^{3k-2|A|-2}$-reachable from~$e_{i'}$ for some $\{i,i'\}= [2]$. So by Lemma~\ref{label intersect} with~$(C,A)=(X_{j^*},A\cup w)$, $$|V(K_{A_1\cup X_{j^*}})\cap  V(K_{A_2\cup X_{j^*}})|=|V(K_{A\cup wz_1\cup X_{j^*}})\cap  V(K_{A\cup wz_2\cup X_{j^*}})|>n/2.$$
              Since $A$ is the maximal set of~$\partial_{\mod 2}(G)\backslash \mathcal{A}$, we have $A_1,A_2,A_3\in \mathcal{A}$. By~\ref{Reach3}, $|V(K_{A_3\cup X_{j^*}})|\ge  (1/2+3\alpha)n$. Thus we have 
        \begin{align*}
            |\bigcap_{i\in [3]}V(K_{A_i\cup X_{j^*}})|\ge|V(K_{A_1\cup X_{j^*}})\cap  V(K_{A_2\cup X_{j^*}})|+|V(K_{A_3\cup X_{j^*}})|-n> 3\alpha n.
        \end{align*} 
    By Proposition~\ref{perturb prop}~\ref{itm:perturb prop 2}, there exists $x_{j^*+1}y_{j^*+1}\in \bigcap_{i\in [3]}E(K_{A_i\cup X_{j^*}}) $. Let $C=\{x_1,y_1\dots , x_m,y_m\}$ as required.
        \end{proofclaim}
 For~$i \in [3]$, let $B_i=A_i\cup C$. 
 Note that $B_i\in E(\partial_{k-2}(G))$ for~$i\in [3]$. By Lemma~\ref{lem:sturc1}~\ref{Matching 4}, there exist $B_{i_1}$ and $B_{i_2}$ such that $E(C_{B_{i_1}})\cap E(C_{B_{i_2}})\neq \emptyset$ for distinct $i_1,i_2\in [3]$. Let $u_1u_2\in E(C_{B_{i_1}})\cap E(C_{B_{i_2}})$. Let $f_j=B_{i_j}\cup u_1u_2$ for~$j\in [2]$. Then $f_j\in E_{B_{i_j}}\subseteq E^+_{A_{i_j}}=E^+_{A\cup wz_{i_j}}$. Note that 
          \begin{align*}
              |f_1\cap f_2|=|(B_{i_1}\cup u_1u_2)\cap (B_{i_2}\cup u_1u_2)|=|A\cup (wz_{i_1}\cap wz_{i_2})\cup C\cup u_1u_2|=k-1.
          \end{align*}
          Then by Proposition~\ref{prop:reachablefact1}~\ref{itm:reachablefact4}, $f_i$ is $1$-reachable from~$f_j$ for~$\{i,j\}=[2]$ . However, $f_j\in E^+_{A\cup wz_{i_j}}$ for~$j\in [2]$. This contradicts the fact that $\phi_A(wz_{i_1})\neq \phi_A(wz_{i_2})$. Hence, $\phi_A$ is locally $2$-edge-coloured.
          
         Suppose that $\phi_A$ is not Gallai-coloured. Then there exists a rainbow triangle $u_1u_2u_3$. We replace ($wz_1$, $wz_2$, $wz_3$) by~$(u_1u_2,u_1u_3,u_2u_3)$ in the above argument. We can deduce that there is no rainbow triangle in~$\phi_A$.  
    \end{proof}
Then the following lemma shows that if $H$ is almost complete with a Gallai locally $2$-edge-colouring, we can describe its structure.
\begin{lemma}\label{near Gallai}
    Let $1/n\ll \alpha\ll 1$ and $H$ be a $2$-graph on~$n$ vertices with~$\delta(H)\ge (1-\alpha)n$. Suppose that $H$ is Gallai locally $2$-edge-coloured by~$\phi$ with monochromatic components $H_1,\dots,H_m$ with~$|V(H_1)|\ge \dots \ge |V(H_m)|$. Then the following hold
    \begin{enumerate}[label=\rm{(g$_{\arabic*}$)} ]
        \item \label{gaedgecolour1} if $H_1$ spans $H$, then $H_2,\dots, H_m$ are vertex-disjoint and $|V(H_1)|=n$;
    \item\label{gaedgecolour2}if $H_1$ does not span $H$, then $|V(H_2)|\ge (1-2\alpha)n$ and $H_1\cup H_2$ spans $H$;
    \item\label{gaedgecolour3} $|V(H_1)|\ge (1-2\alpha)n$ and $|V(H_i)|\le n/2$ for~$i\in [3,m]$.
    \end{enumerate}

\end{lemma}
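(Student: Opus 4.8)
The plan is to locate the two largest monochromatic components of $H$ and then read off the three conclusions by a short double-counting argument that uses the local $2$-colouring.

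The crux is the claim $|V(H_1)|\ge(1-2\alpha)n$, where $c_1$ denotes the colour of $H_1$. I would argue by contradiction: assume every monochromatic component has fewer than $(1-2\alpha)n$ vertices, and set $U=V(H)\setminus V(H_1)$. Each $v\in V(H_1)$ satisfies $|N_H(v)\cap U|\ge\delta(H)-|V(H_1)|+1>\alpha n$; none of these edges is coloured $c_1$ (its endpoint in $U$ would then lie in the component $H_1$), and since $v$ is incident to a $c_1$-edge and sees at most two colours, all of them carry one common ``second colour'' $c(v)\ne c_1$. Next I would partition $V(H_1)$ into the second-colour classes $G_1\supseteq G_2\supseteq\cdots$ and note two facts. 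First, for $u\in U$ every edge from $u$ into $V(H_1)$ avoids $c_1$, so by $2$-locality $u$ meets at most two of the classes, and those two cover all but at most $\alpha n$ of $V(H_1)$; in particular $|G_1|+|G_2|\ge|V(H_1)|-\alpha n$. Second, if $v_1\in G_1$, $v_2\in G_2$ and $u\in U$ are pairwise adjacent, then $v_1v_2$ is coloured $c_1$, $v_iu$ is coloured $c(v_i)$, and $c_1,c(v_1),c(v_2)$ are pairwise distinct — a rainbow triangle. Since $|U|>2\alpha n$, the minimum-degree bound lets us find such a triangle whenever $|G_2|>\alpha n$, so in fact $|G_1|\ge|V(H_1)|-2\alpha n$; but then the (single) second colour on $G_1$ has a component that also contains all but $\alpha n$ vertices of $U$, hence more than $|V(H_1)|$ vertices, contradicting the maximality of $H_1$. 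This gives $|V(H_1)|\ge(1-2\alpha)n$ (a slightly sharper accounting of the $O(\alpha n)$ error terms is needed to land exactly the stated constant), so from now on $|U|\le 2\alpha n$.

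Now I would split on whether $V(H_1)=V(H)$. If it does, then $|V(H_1)|=n$, and $H_2,\dots,H_m$ are pairwise vertex-disjoint, because a common vertex $w$ of distinct $H_i,H_j$ with $i,j\ge 2$ would see the colour of $H_i$, that of $H_j$, and — as $w\in V(H_1)$ — the colour $c_1$, and these are pairwise distinct since $H_1$ is the unique $c_1$-component and $H_i\ne H_j$; this contradicts $2$-locality and proves (g$_1$). If $V(H_1)\ne V(H)$, then $U\ne\emptyset$; for $u\in U$ all of its at least $(1-3\alpha)n$ edges into $V(H_1)$ avoid $c_1$, so by $2$-locality they share one colour $c_u\ne c_1$, and for $u,u'\in U$ a common $c_u/c_{u'}$-neighbour $w\in V(H_1)$ (which exists, the two neighbourhoods having total size exceeding $n$) sees $c_1,c_u,c_{u'}$, forcing $c_u=c_{u'}=:c_2\ne c_1$. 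Hence there is one $c_2$-component $D\ne H_1$ with $U\subseteq V(D)$ and $|V(D)\cap V(H_1)|\ge(1-2\alpha)n$, so $|V(H_2)|\ge(1-2\alpha)n$ and $V(H_1)\cup V(H_2)\supseteq V(H_1)\cup U=V(H)$, which is (g$_2$). Finally, since every vertex lies in at most two of the $H_j$ we get $\sum_j|V(H_j)|\le 2n$; combining this with $|V(H_1)|,|V(H_2)|\ge(1-2\alpha)n$ (when $V(H_1)\ne V(H)$) or with $|V(H_1)|=n$ (otherwise) forces $|V(H_i)|\le n/2$ for all $i\ge 3$, which together with $|V(H_1)|\ge(1-2\alpha)n$ gives (g$_3$).

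The hard part is the colour-merging inside the proof of $|V(H_1)|\ge(1-2\alpha)n$: the delicate step is to rule out the scenario where the second colours of $V(H_1)$ split into two substantial classes, and this is precisely where the absence of rainbow triangles and the minimum-degree bound have to be used together (and where the arithmetic with the $O(\alpha n)$ error terms needs care to recover the stated constant). The analogous merging of the colours $c_u$ for $u\in U$, and all the double counting, are then routine.
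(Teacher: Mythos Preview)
Your approach is essentially correct and takes a genuinely different route from the paper's. The paper works vertex by vertex: it first shows that every $v$ has a colour $j_v$ in its palette with the property that at least $(1-2\alpha)n$ vertices of $\{v\}\cup N(v)$ are incident to a $j_v$-edge, then argues that the set $\{j_v:v\in V(H)\}$ has at most two elements $\{i_1,i_2\}$, and reads off (g$_1$)--(g$_3$) from this global two-colour structure. You instead go straight for the size of $H_1$: you study the ``second-colour'' partition of $V(H_1)$, use one rainbow-triangle argument to collapse it to a single dominant second colour, and exhibit a component larger than $H_1$ under the contradiction hypothesis. Both arguments use the Gallai and local-$2$ hypotheses in the same essential way; yours makes the extremal structure (one big component in a second colour absorbing $U$) explicit, while the paper's vertex-local claim gives a cleaner uniform statement from which all three parts drop out with less case analysis.

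One point to tighten in your (g$_2$) step: the assertion that all edges from $u\in U$ into $V(H_1)$ share a single colour $c_u$ does \emph{not} follow from $2$-locality alone, since $u$ need not see $c_1$. You need exactly the rainbow-triangle idea from your crux paragraph: if $u$ had neighbours $v_a,v_b\in V(H_1)$ with $\phi(uv_a)=a\ne b=\phi(uv_b)$ (both $\ne c_1$), then choosing them adjacent (possible since the larger of $N_a(u)\cap V(H_1)$, $N_b(u)\cap V(H_1)$ has size $>\alpha n$) forces $\phi(v_av_b)=c_1$ and $uv_av_b$ is rainbow. With this fix all of $N(u)\cap V(H_1)$ lies in $D$, and you in fact get $|V(D)|\ge |N(u)\cap V(H_1)|+|U|\ge (|V(H_1)|-\alpha n)+|U|=(1-\alpha)n$, so the stated constant $(1-2\alpha)n$ comes out with room to spare.
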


\begin{proof}
    For each vertex $v\in V(H)$, let $N_i(v)$ be the set of~$u$ such that $\phi(uv)=i$ and $H_v=H[v\cup N(v)]$, then
    \begin{claim}\label{size}
         There exists a colour $j_v$ in~$H_v$ such that $N_{j_v}(v)\neq \emptyset$ and  $$|\{z\in V(H_v)~|N_j(z)\neq \emptyset\}|\ge (1-2\alpha)n.$$
    \end{claim}
    \begin{proofclaim}
          Suppose the claim is false for some $v\in V(H)$. Without loss of generality, suppose that $\{\phi(vu)~|~u\in N(v)\}=[2]$. As $\delta(H)\ge (1-\alpha)n$, if for every $u\in N_1(v)$ with~$N_2(u)\neq \emptyset$, then 
          $$|\{z\in V(H_v)~|N_2(z)\neq \emptyset\}|\ge \deg(v)\ge (1-\alpha)n,$$
        a contradiction. Furthermore, let $u\in N_1(v)$ with~$N_2(u)=\emptyset$. Since there is no rainbow triangle for all $w\in N_2(v)\cap N(v)$, $\phi(uw)=1$. Hence
          \begin{align*}
              |\{u\in N(v)|N_1(u)\neq \emptyset\}|&\ge |N_1(v)|+|N_2(v)\cap N(u)|\\
              &\ge |N_1(v)|+(\deg(v)-|N_1(v)|)+\deg(u)-n
              \ge 2\delta(H)-n\ge (1-2\alpha)n,
          \end{align*}
          a contradiction.
    \end{proofclaim}
Next, we claim that there exist $i_1,i_2$ such that $j_v\in \{i_1,i_2\}$ for all $v\in V(H)$. Otherwise, assume that $j_u,j_v,j_w$ are pairwise distinct. Then
    $$|\{z~|N_{j_v}(z)\neq \emptyset\}\cap \{z~|N_{j_u}(z)\neq \emptyset\}\cap \{z~|N_{j_w}(z)\neq \emptyset\}|\ge 3(1-2\alpha)n-2n>0.$$
So there is a vertex $z'$ with~$N_{j_v}(z'),N_{j_w}(z'),N_{j_u}(z')\neq \emptyset$. This contradicts with that $\phi$ is locally $2$-edge-coloured.

To see~\ref{gaedgecolour1}, if $H_1$ spans $H$, then $|V(H_1)|=n$. Moreover, after removing edges of~$H_1$, every vertex is incident to edges of at most one colour. So the remaining graph are separated monochromatic components.

If $H_1$ does not span $H$, then $i_1\neq i_2$. By Claim~\ref{size}, for all $v\in V(H)$, $j_v\in \{i_1,i_2\}$ and so $N_{i_1}(v)\neq \emptyset$ or $N_{i_2}(v)\neq \emptyset$. This implies that $V(H)= V(H_{i_1})\cup V(H_{i_2})$ and $|V(H_{i_j})|\ge (1-2\alpha)n$ for~$j\in [2]$. After removing edges of~$H_1 \cup H_2$, every vertex is incident to edges of at most one colour. So the remaining graph are separated monochromatic components. If $|V(H_{i_3})|\ge 4\alpha n$ with~$i_3\neq i_1,i_2$, then $$|V(H_{i_3})\cap V(H_{i_1})\cap V(H_{i_2})|> 4\alpha n+(1-2\alpha)n+(1-2\alpha)n-2n\ge 0.$$
So there is a vertex $u\in V(H_{i_3})\cap V(H_{i_1})\cap V(H_{i_2})$. It implies that $N_{i_j}(u)\neq \emptyset$ for~$j\in [3]$. This contradicts with locally $2$-edge-coloured. Meanwhile, by Claim~\ref{size}, we have that $|V(H_{i_1})|$ and~$|V(H_{i_2})|$ are larger than $(1-2\alpha)n$. It indicates that $\{i_1,i_2\}=[2]$. Thus $|V(H_2)|\ge (1-2\alpha)n$ and $H_1\cup H_2$ spans $H$. Therefore,~\ref{gaedgecolour2} holds.

For~\ref{gaedgecolour3}, if $H_1$ spans $H$, then $|V(H_1)|=n$ and $H_2,H_3,\dots, H_m$ are vertex-disjoint. Hence for~$i\in [3,m]$, $$2|V(H_i)|\le |V(H_2)|+\dots +|V(H_m)|\le n$$ implying $V(H_i)\le n/2$. If $H_1$ does not span $H$, then $|V(H_1)|\ge (1-2\alpha)n$ and $|V(H_i)|\le 4\alpha n\le n/2$ for~$i\in [3,m]$ by the argument of~\ref{gaedgecolour2}.
\end{proof}

    Let $|A|\le k-4$ and $I_A$ be the set of isolated vertices in~$\partial_2(A)$. By Definition~\ref{defn:perturbed degree}, $|V(\partial_2(A))\backslash I_A|\ge (1-\alpha) n$ and the minimum degree of~$V(\partial_2(A))\backslash I_A$ is at least $(1-\alpha)n$. Next by Lemma~\ref{near Gallai} with~$H=\partial_2(A)\backslash I_A$, we obtain following structural statement for~$\phi_A$.

\begin{coro}\label{Gallai coloured prop}
    Let $1/n\ll \alpha\ll \gamma \ll 1/k\le 1/4$. Let $G$ be a $k$-graph on~$n$ vertices with~$\overline{\delta}^\alpha_{k-2}(G)\ge 1/2+\gamma$. Suppose that $G$ is $(\mathcal{A}, \mathcal{K})$-reachable with~$E(\partial_{k-2}(G))\subseteq \mathcal{A}$. Let $A$ be a maximal element of~$\partial_{\mod 2}(G)\backslash \mathcal{A}$. Let $\phi_A$ be the edge-colouring of~$\partial_2(A)$ as defined in Definition~\ref{2-edge colour} with colours~$[m]$. Let $H_1,\dots H_m$ be monochromatic components of~$\phi_A$ such that $|V(H_1)|\ge \dots \ge |V(H_m)|$. Let~$I_A$ be the set of isolated vertices in~$\partial_2(A)$. Then the following hold

\begin{enumerate}[label=\rm{(c$_{\arabic*})$}]
    \item\label{edgecolour1} if $H_1$ spans $\partial_2(A)\backslash I_A$, then $H_2,\dots, H_m$ are vertex-disjoint and $|V(H_1)|\ge (1-\alpha)n$;
    \item\label{edgecolour2}if $H_1$ does not span $\partial_2(A)\backslash I_A$, then $|V(H_2)|\ge (1-3\alpha)n$ and $H_1\cup H_2$ spans $\partial_2(A)\backslash I_A$;
    \item\label{edgecolour3} $|V(H_1)|\ge (1-3\alpha)n$ and $|V(H_i)|\le n/2$ for~$i\in [3,m]$.
\end{enumerate}
\end{coro}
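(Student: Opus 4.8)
The plan is to obtain Corollary~\ref{Gallai coloured prop} as a straightforward combination of Lemma~\ref{Gallai coloured} (which identifies $\phi_A$ as a Gallai locally $2$-edge-colouring) with the structural Lemma~\ref{near Gallai} applied to $H:=\partial_2(A)\setminus I_A$. The first step is to record that $|A|\le k-4$: since $A$ is a maximal element of $\partial_{\mod 2}(G)\setminus\mathcal{A}$ and $E(\partial_{k-2}(G))\subseteq\mathcal{A}$, we have $A\notin E(\partial_{k-2}(G))$, and because $|A|\equiv k \pmod 2$ with $|A|<k$ this leaves $|A|\le k-4$. In particular $k-|A|\ge 4$, so Proposition~\ref{perturb prop}~\ref{itm:perturb prop 1} applies to the link graph $L_G(A)$ and gives that $L_G(A)$ has $\alpha$-perturbed minimum relative $(k-|A|-2)$-degree at least $1/2+\gamma$, where $k-|A|-2\ge 2$.

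The second step is to verify the hypotheses of Lemma~\ref{near Gallai} for $H$. Applying Definition~\ref{defn:perturbed degree} to $L_G(A)$ with $j=1$, property~\ref{itm:defn:perturbed degree2} bounds the number of vertices of $G$ lying in no edge of $\partial_2(A)$, that is $|I_A|$, by $\alpha n$, so $|V(H)|\ge(1-\alpha)n$; applying it with $j=2$, property~\ref{itm:defn:perturbed degree3} bounds the non-degree of each non-isolated vertex in $\overline{\partial_2(A)}$ by $\alpha n$, so $\delta(H)\ge(1-\alpha)n$ up to the usual $\pm1$ rounding absorbed by $1/n\ll\alpha$. This is precisely the assertion stated immediately before the corollary. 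Since $\phi_A$ is an edge-colouring of $\partial_2(A)$, the vertices of $I_A$ lie in no edge, so the monochromatic components of $\phi_A$ coincide with those of its restriction to $H$; and Lemma~\ref{Gallai coloured} tells us $\phi_A$, hence its restriction to $H$, is Gallai and locally $2$-edge-coloured.

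With these in hand, Lemma~\ref{near Gallai} applies to $H$ (with vertex count $n':=|V(H)|\in[(1-\alpha)n,n]$ and error parameter $\alpha$, using $\delta(H)\ge(1-\alpha)n\ge(1-\alpha)n'$), and its conclusions~\ref{gaedgecolour1}, \ref{gaedgecolour2}, \ref{gaedgecolour3} translate into~\ref{edgecolour1}, \ref{edgecolour2}, \ref{edgecolour3} after converting $n'$ back to $n$: in~\ref{gaedgecolour1} one gets $|V(H_1)|=n'\ge(1-\alpha)n$, while in~\ref{gaedgecolour2} and~\ref{gaedgecolour3} one uses $(1-2\alpha)n'\ge(1-2\alpha)(1-\alpha)n\ge(1-3\alpha)n$ and $n'/2\le n/2$. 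The case split ``$H_1$ spans $\partial_2(A)\setminus I_A$'' versus not is exactly the case split of Lemma~\ref{near Gallai}, since $H_1$ spans $H$ if and only if it spans $\partial_2(A)\setminus I_A$.

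I do not expect a genuine obstacle: the corollary is a packaging statement, the substantive content (that $\phi_A$ is Gallai and locally $2$-edge-coloured) having already been established in Lemma~\ref{Gallai coloured}. The only points needing mild care are (i) deriving the vertex-count and minimum-degree bounds for $\partial_2(A)\setminus I_A$ from the $\alpha$-perturbed axioms of $L_G(A)$ — which is where $|A|\le k-4$ is used, so that $1$ and $2$ both lie in the admissible degree range — and (ii) the routine tracking of the constant $\alpha$ against the slightly looser $2\alpha,3\alpha$ in the statement, all of which is absorbed by $1/n\ll\alpha$.
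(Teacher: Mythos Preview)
Your proposal is correct and follows exactly the approach the paper takes: the text immediately preceding the corollary records that $|A|\le k-4$, that $|V(\partial_2(A))\setminus I_A|\ge(1-\alpha)n$ and $\delta(\partial_2(A)\setminus I_A)\ge(1-\alpha)n$, and then invokes Lemma~\ref{near Gallai} with $H=\partial_2(A)\setminus I_A$ (together with Lemma~\ref{Gallai coloured} for the colouring hypothesis). Your write-up is in fact more explicit than the paper about the passage from $n'=|V(H)|$ back to $n$ in the constants, which the paper leaves implicit.
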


Then according to the size of each component, we discuss reachable properties in different way. First, we show that if $|V(H_i)|$ is near $n$, then all edges in~$E(H_i)$ are reachable from all edges in~$E_{\emptyset}$. 

\begin{lemma}\label{large component}
     Let $1/n\ll \alpha\ll \gamma \ll 1/k\le 1/4$. Let $G$ be a $k$-graph on~$n$ vertices with~$\overline{\delta}^\alpha_{k-2}(G)\ge 1/2+\gamma$. Suppose that $G$ is $(\mathcal{A}, \mathcal{K})$-reachable with~$E(\partial_{k-2}(G))\subseteq \mathcal{A}$. Let $A$ be a maximal element of~$\partial_{\mod 2}(G)\backslash \mathcal{A}$. Let $\phi_A$ be the edge-colouring of~$\partial_2(A)$ as defined in Definition~\ref{2-edge colour} with colours~$[m]$. Suppose that $H_i$ is a monochromatic component of~$\phi_A$ with~$|V(H_i)|> (1-3\alpha)n$. Then any $e\in E_{A\cup u_1v_1}$ with~$u_1v_1\in E(H_i)$ is $n^{3k-2|A|-1}$-reachable from every edge in~$E_\emptyset$.
\end{lemma}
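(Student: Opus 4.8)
The plan is to use the reachability calculus of Propositions~\ref{prop:reachablefact1},~\ref{A,Kreachable fact} and~\ref{any label edge} to reduce the lemma to a single navigation statement, and then prove that statement by a rotation argument driven by Lemma~\ref{lem:sturc1}. Fix any $u_1v_1\in E(H_i)$. Since $A$ is a maximal element of $\partial_{\mod 2}(G)\setminus\mathcal A$ and $A\cup u_1v_1\supsetneq A$ has the same parity as $A$, we get $A\cup u_1v_1\in\mathcal A$, so $E^+_{A\cup u_1v_1}$ is defined and nonempty (Proposition~\ref{A,Kreachable fact}\ref{itmA,Kreachable fact3}, or trivially if $|A|=k-4$). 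By Proposition~\ref{A,Kreachable fact}\ref{itmA,Kreachable fact1},\ref{itmA,Kreachable fact7}, every edge of $E^-_{A\cup u_1v_1}$ is already $n^{3k-2|A|-2}$-reachable from every edge of $E_\emptyset$, and by~\ref{itmA,Kreachable fact4} every $e\in E_{A\cup u_1v_1}$ is $n^{3k-2|A|-3}$-reachable from any edge of $E^+_{A\cup u_1v_1}$. Moreover, all edges of $H_i$ carry colour $i$ under $\phi_A$, so Proposition~\ref{any label edge} shows that every edge of $E^+_{A\cup u_1v_1}$ is $3n^{3k-2|A|-2}$-reachable from every edge of $E^+_{A\cup u^*v^*}$ for any $u^*v^*\in E(H_i)$. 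Combining these with transitivity of reachability (Proposition~\ref{prop:reachablefact1}\ref{itm:reachablefact2}), the lemma follows once we exhibit a \emph{single} $u^*v^*\in E(H_i)$ and a \emph{single} $e^*\in E^+_{A\cup u^*v^*}$ that is $\tfrac12 n^{3k-2|A|-1}$-reachable from every edge of $E_\emptyset$: the constants then sum to at most $n^{3k-2|A|-1}$.

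For the target, pick $u^*v^*\in E(H_i)$; we may assume $u^*,v^*\notin A$ since $|A|\le k-4$ while $|V(H_i)|>(1-3\alpha)n$. Complete $A\cup u^*v^*$ to a $(k-2)$-set $B^*=A\cup u^*v^*\cup C$ by an $(A\cup u^*v^*)$-common set $C$ with $|C|=k-|A|-4$, which exists by iterating Proposition~\ref{perturb prop}\ref{itm:perturb prop 2} exactly as in the proof of Proposition~\ref{prop:common}. Then $B^*\in E(\partial_{k-2}(G))$, and by Proposition~\ref{A,Kreachable fact}\ref{itmA,Kreachable fact2} applied iteratively, $E_{B^*}\subseteq E^+_{A\cup u^*v^*}$, while $E_{B^*}$ is tight connected (Definition~\ref{defn:our graph}), so all its edges are mutually $n^k$-reachable (Proposition~\ref{prop:reachablefact1}\ref{itm:reachablefact5}). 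Hence it suffices to show that from every $f\in E_\emptyset$ one can reach an edge of $E_{B^*}$ by a tight walk in $G^*$ of length at most $n^{k+1}$.

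Given $f\in E_\emptyset$, write $f=B\cup xy$ with $B$ a $(k-2)$-set, $B\in E(\partial_{k-2}(G))$, and $xy\in E(C_B)$; as $E_B$ is tight connected it is enough to tight-connect $E_B$ to $E_{B^*}$. The plan is to do this by at most $k-2$ successive single-vertex swaps transforming $B$ into $B^*$: at each stage, with current $(k-2)$-set $B'$ and a candidate vertex $b'\in B^*\setminus B'$ to be swapped in for some $b\in B'\setminus B^*$, apply Lemma~\ref{lem:sturc1}\ref{Matching 4} to the three largest link components $C_{B'}$, $C_{(B'\setminus b)\cup b'}$, $C_{(B'\setminus b)\cup b''}$ (each of these link graphs has more than $(1/2+\gamma/2)\binom n2$ edges by~\ref{itm:defn:perturbed degree1} of Definition~\ref{defn:perturbed degree}, and~\ref{itm:defn:perturbed degree2},\ref{itm:defn:perturbed degree3} allow one to keep all intermediate $(k-2)$-sets inside $\partial_{k-2}(G)$); this yields a common edge $pq$ disjoint from the $O(k)$ relevant vertices, which makes an edge of $E_{B'}$ tight-adjacent to an edge of one of the swapped sets, advancing the rotation by one step. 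The main obstacle — and the only place where the hypothesis $|V(H_i)|>(1-3\alpha)n$ is genuinely used — is to guarantee that this rotation actually reaches $B^*$ rather than stalling; the plan is to argue that a stalled rotation would, as in the $4$-partite example of Section~\ref{sketch}, induce a $2$-part partition of the relevant link structure and thereby bound every pertinent monochromatic component of $\phi_A$ by roughly $n/2$ vertices via Corollary~\ref{Gallai coloured prop}, contradicting $|V(H_i)|>(1-3\alpha)n$. Each swap costs at most a tight-adjacency together with a tight walk inside one $E_{B'}$, i.e.\ at most $n^k+1$ edges, so the full walk has length at most $(k-2)(n^k+1)\le n^{k+1}$, and with the reductions of the first paragraph this gives the bound in the statement. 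The remaining points — extracting a common edge avoiding a bounded vertex set (the components have linear size, so only $O(k)$ vertices are forbidden) and ordering the swaps so that intermediate sets stay in $\partial_{k-2}(G)$ — are routine and follow the pattern of Propositions~\ref{prop:common},~\ref{perturb prop} and Lemma~\ref{label intersect}.
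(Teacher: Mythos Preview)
Your reduction in the first two paragraphs is sound, but the rotation in the third paragraph has a genuine gap. When you invoke \ref{Matching 4} on $C_{B'}$, $C_{(B'\setminus b)\cup b'}$, $C_{(B'\setminus b)\cup b''}$, the conclusion is only that \emph{some} two of the three share an edge. If the common edge $pq$ lies in $C_{(B'\setminus b)\cup b'}\cap C_{(B'\setminus b)\cup b''}$ but not in $C_{B'}$, the tight adjacency you obtain is between two edges neither of which is in $E_{B'}$, so nothing connects the current walk to it and the rotation stalls at $B'$. You flag this possibility and assert that stalling would force $|V(H_i)|\lesssim n/2$ via Corollary~\ref{Gallai coloured prop}, but no mechanism is supplied: the rotation runs over arbitrary $(k-2)$-sets $B,B',B^*$ that need not contain $A$, and the colouring $\phi_A$ on $\partial_2(A)$ says nothing about tight connectivity among such $E_{B'}$. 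Moreover, $b''$ is never specified, and keeping every intermediate $(k-2)$-set inside $\partial_{k-2}(G)$ is not automatic once the vertices being swapped are constrained.

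The paper sidesteps this entirely by using the hypothesis $|V(H_i)|>(1-3\alpha)n$ in the most direct way. It reduces to showing that a fixed $e\in E^+_{A\cup u_1v_1}$ is reachable from every \emph{vertex} $v\in V(G)$. If $v\in V(H_i)$, pick any $vv'\in E(H_i)$ and any $e'\in E^+_{A\cup vv'}$; since $\phi_A(vv')=\phi_A(u_1v_1)$, Proposition~\ref{any label edge} makes $e$ reachable from $e'$ and hence from $v$. If $v\notin V(H_i)$, build an edge $f\in E_\emptyset$ through $v$ with $f\setminus\{v\}\subseteq V(H_i)$: iteratively extend $\{v\}$ to $B=vu_2\dots u_{k-2}\in E(\partial_{k-2}(G))$ with each $u_j\in V(H_i)$ (at each step at most $\alpha n$ vertices are forbidden by~\ref{itm:defn:perturbed degree3} and at most $3\alpha n$ by $V(G)\setminus V(H_i)$), then take $u_{k-1}u_k\in E(C_B)\cap\binom{V(H_i)}{2}$, which exists since $e(C_B)\ge\tfrac14\binom n2$ by~\ref{Matching 1} while fewer than $3\alpha n^2$ pairs meet $V(G)\setminus V(H_i)$. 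Now $e$ is reachable from every vertex of $f\setminus\{v\}$ by the first case, and Proposition~\ref{prop:reachablefact1}\ref{itm:reachablefact3} gives reachability of $e$ from $f$, hence from $v$. No rotation or tight-walk machinery is needed.
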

\begin{proof}
    It suffices to show that $e\in E^+_{A\cup u_1v_1}$ is $n^{3k-2|A|-1}$-reachable from all vertices~$v\in V(G)$. First, if~$v\in V(H_i)$, then there exists a vertex $v'$ such that $vv'\in E(H_i)$. Let $e'\in E^+_{A\cup vv'}$. Then since $\phi_A(vv')=\phi_A(u_1v_1)$, by Proposition~\ref{any label edge}, $e'$ is $3n^{3k-2|A|-2}$-reachable from~$e$ and so from~$v$. Hence we may assume that~$v\notin V(H_i)$.
    \begin{claim}
         There exists an edge $e'\in E_{\emptyset}$ such that $v\in e'$ and $e'\backslash v\subseteq V(H_i)$. 
    \end{claim}
    \begin{proofclaim}
        Since $|V(H_i)|\ge (1-3\alpha)n$ and any $S\in E(\partial_j(G))$ has relative degree at most $\alpha$ in~$\overline{\partial_{j+1}(G)}$ for~$j\in [k-2]$, we can iteratively choose $u_2,\dots ,u_{k-2}\in V(H_i)$ such that $vu_2\dots u_{k-2}\in E(\partial_{k-2}(G))$. Let $B=vu_2\dots u_{k-2}$. Since $\overline{\delta}^{\alpha}_{k-2}(G)\ge 1/2+\gamma$, by Lemma~\ref{lem:sturc1}~\ref{Matching 1}, there exist at least $\frac{1}{4}\binom{n}{2}$ edges in~$C_B$. Note that $$|E(C_{B})\cap \binom{V(H_i)}{2}|\ge \frac{1}{4}\binom{n}{2}+\binom{(1-3\alpha)n}{2}-\binom{n}{2}>0.$$
        Let $u_{k-1}u_k\in E(C_B)\cap \binom{V(H_i)}{2}$. So $vu_2\dots u_k\in E_\emptyset$ as desired.
    \end{proofclaim}
 Let $e'$ be the edge such that $v\in e'$ and $e'\backslash v\subseteq V(H_i)$, so $e$ is $3n^{3k-2|A|-2}$-reachable from all vertices of~$e'$ except~$v$. By Proposition~\ref{prop:reachablefact1}~\ref{itm:reachablefact3}, $e$ is $n^{3k-2|A|-1}$-reachable from~$e'$ and so from~$v$.
\end{proof}
Then we show that edges in small components are also reachable from every edge in~$E_\emptyset$. 
\begin{lemma}\label{small component}
      Let $1/n\ll \alpha\ll \gamma \ll 1/k\le 1/4$. Let $G$ be a $k$-graph on~$n$ vertices with~$\overline{\delta}^\alpha_{k-2}(G)\ge 1/2+\gamma$. Suppose that $G$ is $(\mathcal{A}, \mathcal{K})$-reachable with~$E(\partial_{k-2}(G))\subseteq \mathcal{A}$. Let $A$ be a maximal element of~$\partial_{\mod 2}(G)\backslash \mathcal{A}$. Let $\phi_A$ be the edge-colouring of~$\partial_2(A)$ as defined in Definition~\ref{2-edge colour} with colours~$[m]$. Suppose that $H_i$ is a monochromatic component of~$\phi_A$ with~$|V(H_i)|<(1/2+3\alpha)n$ for some $i\in [m]$. Then any $e\in E_{A\cup u_1v_1}$ with~$u_1v_1\in E(H_i)$ is $4n^{3k-2|A|-1}$-reachable from every edge in~$E_\emptyset$.
\end{lemma}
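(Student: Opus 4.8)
The plan is to deduce Lemma~\ref{small component} from Lemma~\ref{large component} by building a single ``bridge'' edge. Write $a=|A|$, so $a\le k-4$ and the target bound is $4n^{3k-2a-1}$. First I would record the reductions: $A\cup u_1v_1\in\mathcal A$ (by maximality of $A$), the set $E^+_{A\cup u_1v_1}$ is non-empty (build an edge greedily along a chain of the $K_{\cdot}$'s, each of which has at least $(1/2+3\alpha)n$ vertices by~\ref{Reach3} and hence an edge), and by Proposition~\ref{A,Kreachable fact}~\ref{itmA,Kreachable fact7} the conclusion already holds for $e\in E^-_{A\cup u_1v_1}$. By Proposition~\ref{A,Kreachable fact}~\ref{itmA,Kreachable fact4}, every $e\in E_{A\cup u_1v_1}$ is $n^{3k-2a-3}$-reachable from any fixed $e''\in E^+_{A\cup u_1v_1}$. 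Hence, composing with Proposition~\ref{prop:reachablefact1}~\ref{itm:reachablefact2}, it suffices to exhibit one $e''\in E^+_{A\cup u_1v_1}$ that is $n^{3k-2a-1}$-reachable from every edge of $E_\emptyset$.

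To produce such an $e''$, I would first locate the relevant large component. Since $u_1$ is non-isolated in $\partial_2(A)$, Corollary~\ref{Gallai coloured prop} gives a monochromatic component $H_\ell$ with $u_1\in V(H_\ell)$ and $|V(H_\ell)|\ge (1-3\alpha)n$. As $\phi_A$ is locally $2$-edge-coloured, $u_1$ meets only the colours of $H_i$ and $H_\ell$, so $|N_{H_\ell}(u_1)|\ge (1-\alpha)n-|V(H_i)|\ge (1/2-4\alpha)n$ using $|V(H_i)|<(1/2+3\alpha)n$; pick $r\in N_{H_\ell}(u_1)$, so $u_1r\in E(H_\ell)$. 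Using Proposition~\ref{prop:common} I would build a common set $C$ for $A\cup u_1v_1$ and $A\cup u_1r$ of size $k-a-4$, giving $(k-2)$-sets $B_1:=A\cup u_1v_1\cup C$ and $B_2:=A\cup u_1r\cup C$ in $\partial_{k-2}(G)$ with $|B_1\cap B_2|=k-3$; the chain defining $C$ forces $E_{B_1}\subseteq E^+_{A\cup u_1v_1}$ and $E_{B_2}\subseteq E_{A\cup u_1r}$ (the case $a=k-4$ is the degenerate one with $C=\emptyset$). By Lemma~\ref{lem:sturc1}~\ref{Matching 1}, each of $C_{B_1},C_{B_2}$ has more than $(1/4+\gamma)\binom n2$ edges. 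Applying Lemma~\ref{lem:sturc1}~\ref{Matching 4} to a carefully chosen triple of such link graphs---the third $(k-2)$-set sharing $k-3$ vertices with both $B_1$ and $B_2$ and obtained by rerouting through a second vertex of $N_{H_\ell}(u_1)$---I would obtain two of them with a common edge $xy$; the two corresponding $k$-edges $B_i\cup xy$ then agree on $k-1$ vertices, so one is $1$-reachable from the other by Proposition~\ref{prop:reachablefact1}~\ref{itm:reachablefact4}. If the triple is set up so that this always connects an edge of $E^+_{A\cup u_1v_1}$ to an edge of $E_{A\cup u_1r'}$ with $u_1r'\in E(H_\ell)$, then Lemma~\ref{large component} applied to $H_\ell$ finishes: the latter edge is $n^{3k-2a-1}$-reachable from every edge of $E_\emptyset$, hence so is $e'':=B_1\cup xy$ after adding one step, and composing with the earlier reductions yields the $4n^{3k-2a-1}$ bound.

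I expect the main obstacle to be exactly that last sentence. Lemma~\ref{lem:sturc1}~\ref{Matching 4} guarantees only that \emph{some} pair of the three link graphs shares an edge, so the triple must be engineered so that \emph{every} outcome still links an $(A\cup u_1v_1)$-label edge to a big-component edge; a naive triple leaves an outcome (two ``$H_\ell$-type'' sets intersecting) that gives nothing. Compounding this, one has to respect the ``largest component'' constraint hidden in the definition of $E_B$---an edge of $G$ lying in $\partial_2(B)$ need not lie in $C_B$---so the third set of the triple must be designed so that its $E_{\cdot}$ is simultaneously label-compatible with $A\cup u_1v_1$ and contained in $E_{A\cup u_1r'}$ for a neighbour $r'\in N_{H_\ell}(u_1)$; here the fact that $|V(H_\ell)|\ge(1-3\alpha)n$ (so $H_\ell$ overlaps $V(K_{A\cup u_1v_1})$ in more than $n/2$ vertices) is the structural input that makes such a set available. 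This bookkeeping is where the real work lies; the rest is a routine composition of reachability bounds.
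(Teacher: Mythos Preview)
Your reductions are correct, and you rightly locate the crux at the appeal to~\ref{Matching 4}. But the obstacle you flag is more than bookkeeping: in the boundary case $a=k-4$ (your ``degenerate'' case with $C=\emptyset$) there is no evident way to carry the argument through. There $B_1=A\cup u_1v_1$ and $B_2=A\cup u_1r$; for the natural choice $B_3=A\cup u_1z$, whichever colour $u_1z$ carries, one of the three outcomes of~\ref{Matching 4} pairs two sets of the \emph{same} type (both on the $H_i$-side or both on the $H_\ell$-side) and yields nothing. Your suggested fix---a $B_3$ whose $E_{B_3}$ is simultaneously $(A\cup u_1v_1)$-labelled and contained in some $E_{A\cup u_1r'}$---does work when $a<k-4$ and in fact makes~\ref{Matching 4} unnecessary there, but for $a=k-4$ no such $B_3$ exists: then $v_1\notin B_3$, so edges of $E_{B_3}$ cannot be $(A\cup u_1v_1)$-labelled.

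The paper never invokes~\ref{Matching 4} here. When $a<k-4$ it picks $x\in V(K_{A\cup u_1v_1})\setminus V(H_i)$ (possible since $|V(K_{A\cup u_1v_1})|\ge(1/2+3\alpha)n>|V(H_i)|$), takes $xy\in E(K_{A\cup u_1v_1})$, and any $e'\in E^+_{A\cup u_1v_1xy}$; this single edge lies in both $E^+_{A\cup u_1v_1}$ and $E_{A\cup u_1x}$ with $u_1x$ in a large component, so Lemma~\ref{large component} finishes directly---no triple needed. When $a=k-4$ it picks $x\in V(C_{A\cup u_1v_1})\setminus V(H_i)$ with $u_1x\in E(\partial_2(A))$, uses only that $V(C_{A\cup u_1v_1})\cap V(C_{A\cup u_1x})\ne\emptyset$ (both sets exceed $n/2$) to find a common \emph{vertex}~$w$, and bridges via edges $f=A\cup u_1v_1\cup wz_2\in E^+_{A\cup u_1v_1}$ and $f'=A\cup u_1x\cup wz_1$ with $f'\setminus f=\{x,z_1\}$; reachability of $f$ from $f'$ follows from Proposition~\ref{prop:reachablefact1}~\ref{itm:reachablefact3} once one checks $f$ is reachable from~$x$ via the tight-connectedness of $E_{A\cup u_1v_1}$ (using $x\in V(C_{A\cup u_1v_1})$). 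The structural point you are missing is that a single shared vertex between $C_{A\cup u_1v_1}$ and $C_{A\cup u_1x}$ already suffices; you do not need a shared edge, and hence do not need the three-graph lemma.
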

\begin{proof}
Let $e\in E_{A\cup u_1v_1}$. By Lemma~\ref{large component} and Proposition~\ref{prop:reachablefact1}~\ref{itm:reachablefact2}, it suffices to show that $e$ is $3n^{3k-2|A|-1}$-reachable from some edge $e'\in E_{A\cup u_2v_2}$ such that $u_2v_2\in E(H_j)$ with~$|V(H_j)|\ge (1-3\alpha)n$ for some $j\in [m]$. Consider following cases.

\medskip \noindent  \textbf{Case 1:} $|A\cup u_1v_1|< k-2$. Then by~\ref{Reach3}, $$|V(K_{A\cup u_1v_1} )\backslash V(H_i)|\ge |V(K_{A\cup u_1v_1})|-|V(H_i)|>0.$$
Let $x\in V(K_{A\cup u_1v_1} )\backslash V(H_i)$. Note that $x$ is not isolated in~$\partial_2(A\cup u_1v_1)$ by~\ref{Reach2}. By Definition~\ref{defn:perturbed degree},~\ref{Reach2} and $|V(K_{A\cup u_1v_1})|\ge (1/2+3\alpha)n$, there exists  $xy\in E(K_{A\cup u_1v_1})$. Choose $e'\in E^+_{A\cup u_1v_1xy}$, so $e'\in E^+_{A\cup u_1v_1}$. Then by Proposition~\ref{A,Kreachable fact}~\ref{itmA,Kreachable fact4} and $e\in E_{A\cup u_1v_1}$, $e$ is $n^{3k-2|A|-1}$-reachable from~$e'$. 
Note that $x\notin V(H_i)$, $u_1\in V(H_i)$ and so $xu_1\notin E(H_i)$. By Corollary~\ref{Gallai coloured prop}~\ref{edgecolour1} and~\ref{edgecolour2}, $u_1\in V(H_j)$ with~$|V(H_j)|\ge (1-3\alpha)n$. Since $\phi_A$ is locally $2$-edge colouring, $xu_1\in E(H_j)$. By Definition~\ref{(A,k)-reach}, $e'\in E_{A\cup u_1v_1xy}\subseteq E_{A\cup u_1x}$ with~$u_1x\in E(H_j)$ as required.

\medskip \noindent  \textbf{Case 2: $|A\cup u_1v_1|=k-2$}. By Lemma~\ref{lem:sturc1}~\ref{Matching 1}, $|V(C_{A\cup u_1v_1})|\ge (1/2+\gamma)n$ and then $$|V(C_{A\cup u_1v_1})\backslash V(H_i)|\ge |V(C_{A\cup u_1v_1})|-|V(H_i)|>\alpha n.$$
By Definition~\ref{defn:perturbed degree}, there exists a vertex $x\notin V(H_i)$ such that $u_1x\in E(\partial_2(A))$. Note that $u_1\in V(H_i)$ and so $xu_1\notin E(H_i)$. By Corollary~\ref{Gallai coloured prop}~\ref{edgecolour1} and~\ref{edgecolour2}, $u_1\in V(H_j)$ with~$|V(H_j)|\ge (1-3\alpha)n$ for some~$j\in [2]$. Since $\phi_A$ is locally $2$-edge-colouring, this implies that $u_1x\in E(H_j)$. By Lemma~\ref{lem:sturc1}~\ref{Matching 1}, $|V(C_{A\cup u_1x})|\ge (1/2+\gamma)n$. Then $$ | V (C_{A\cup u_1x})\cap V(C_{A\cup u_1v_1})|\ge |V(C_{A\cup u_1v_1})|+|V(C_{A\cup u_1x})|-n\ge 0.$$
Let $w\in V (C_{A\cup u_1x})\cap V(C_{A\cup u_1v_1})$, $wz_1\in E(C_{A\cup u_1x})$ and $wz_2\in E(C_{A\cup u_1u_2})$. Let
$$f=A\cup u_1v_1wz_1\text{ and }f'=A\cup u_1xwz_2.$$
So $f\in E_{A\cup u_1v_1}^+$. Note that $f'\in E_{A\cup u_1x}$ with~$u_1x\in E(H_j)$ and $|V(H_j)|\ge (1-3\alpha)n$. Since $f,e\in E^+_{A\cup u_1v_1}$, $e$ is $n^{3k-2|A|-2}$-reachable from~$f$ by~\ref{Reach4}. Therefore to complete the proof, by Proposition~\ref{prop:reachablefact1}~\ref{itm:reachablefact2}, it suffices to show that $f$ is $2n^{3k-2|A|-1}$-reachable from~$f'$. Since $f'\backslash f=\{x,z_2\}$, by Proposition~\ref{prop:reachablefact1}~\ref{itm:reachablefact3} and~\ref{itm:reachablefact1}, it suffices to show that $f$ is $n^{3k-2|A|-1}$-reachable from~$x$. Since $x\in V(C_{A\cup u_1v_1})$, let $xy\in E(C_{A\cup u_1v_1})$ and $f''=A\cup u_1v_1xy$. Then $f'',f\in E_{A\cup u_1v_1}$. Thus $f$ and $f''$ is tight connect by Definition~\ref{defn:our graph}. By Proposition~\ref{prop:reachablefact1}~\ref{itm:reachablefact5}, $f$ is $n^k$-reachable from~$f''$. Since $x\in f''$ and $|A|<k$, $f$ is $n^{3k-2|A|-1}$-reachable from~$x$ and so $2n^{3k-2|A|-1}$-reachable from~$f'$.
\end{proof}

Finally, we deal with the reachable property about edges in a component with middle size. Note that by Corollary~\ref{Gallai coloured prop}, this component is $H_2$. We will show that these edges are reachable within themselves.
\begin{lemma}\label{middle component}
   Let $1/n\ll \alpha\ll \gamma \ll 1/k\le 1/4$. Let $G$ be a $k$-graph on~$n$ vertices with~$\overline{\delta}^\alpha_{k-2}(G)\ge 1/2+\gamma$. Suppose that $G$ is $(\mathcal{A}, \mathcal{K})$-reachable with~$E(\partial_{k-2}(G))\subseteq \mathcal{A}$. Let $A$ be a maximal element of~$\partial_{\mod 2}(G)\backslash \mathcal{A}$. Let $\phi_A$ be the edge-colouring of~$\partial_2(A)$ as defined in Definition~\ref{2-edge colour} with colours $[m]$. Let $H_1,\dots H_m$ be monochromatic components of~$\phi_A$ such that $|V(H_1)|\ge \dots \ge |V(H_m)|$. Suppose that $(1/2+3\alpha)n<|V(H_2)|<(1-3\alpha)n$. Then every edge $e\in E^+_{A\cup u_1v_1}$ with~$u_1v_1\in E(\partial_2(A)[V(H_2)])$ is $n^{3k-2|A|}$-reachable from every edge $e'\in E^+_{A\cup u_2v_2}$ with~$u_2v_2\in E(\partial_2(A)[V(H_2)])$.
\end{lemma}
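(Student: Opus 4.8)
The plan is to split according to which monochromatic component of $\phi_A$ contains each defining pair. First I would record the structure forced by the hypotheses: since $|V(H_2)|<(1-3\alpha)n$, the contrapositive of Corollary~\ref{Gallai coloured prop}\ref{edgecolour2} shows $H_1$ spans $\partial_2(A)\setminus I_A$, so Corollary~\ref{Gallai coloured prop}\ref{edgecolour1} gives $|V(H_1)|\ge(1-\alpha)n$ and that $H_2,\dots,H_m$ are pairwise vertex-disjoint. Hence any edge of $\partial_2(A)$ inside $V(H_2)$ lies in $E(H_1)$ or $E(H_2)$ (it cannot meet $H_i$ for $i\ge3$), and, since $\phi_A$ is locally $2$-edge-coloured, every $v\in V(H_2)$ is incident only to the colours of $H_1$ and $H_2$ and in particular has at least $2\alpha n$ neighbours in the colour of $H_1$ (its degree in that colour is at least $(1-\alpha)n-(|V(H_2)|-1)$). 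If $u_1v_1\in E(H_1)$, then Lemma~\ref{large component} (applied to the component $H_1$, which has more than $(1-3\alpha)n$ vertices) shows $e$ is $n^{3k-2|A|-1}$-reachable from every edge of $E_\emptyset$, and since $e'\in E^+_{A\cup u_2v_2}\subseteq E_\emptyset$ we are done; so from now on assume $u_1v_1\in E(H_2)$.

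If also $u_2v_2\in E(H_2)$, I would take a shortest path $u_1v_1=f_0,\dots,f_\ell=u_2v_2$ in the connected graph $H_2$, so that $\ell\le|V(H_2)|\le n$, consecutive $f_j$ share a vertex, and all $f_j$ have the same $\phi_A$-colour. By Proposition~\ref{any label edge}, for each $j$ every label edge of $E^+_{A\cup f_j}$ is $3n^{3k-2|A|-2}$-reachable from every label edge of $E^+_{A\cup f_{j+1}}$ (the sets $E^+_{A\cup f_j}$ being nonempty by Proposition~\ref{A,Kreachable fact}\ref{itmA,Kreachable fact3}); picking label edges $g_0=e,g_1,\dots,g_{\ell-1},g_\ell=e'$ and composing along the path via Proposition~\ref{prop:reachablefact1}\ref{itm:reachablefact2} then gives that $e$ is $3\ell n^{3k-2|A|-2}\le n^{3k-2|A|}$-reachable from $e'$.

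The remaining case, $u_1v_1\in E(H_2)$ but $u_2v_2\in E(H_1)$, is where I expect the real work to lie: here $e$ lies over the middle component and $e'$ over the large component, and reachability is not symmetric. The plan is to route from $e'$ to some label edge over an $H_2$-coloured edge $\tilde f$ at $u_2$ and then close the chain by the previous case. Pick an $H_2$-coloured neighbour $w\in V(H_2)$ of $u_2$ and, using Proposition~\ref{prop:common} and the fact that the relevant link graphs miss only $O(\alpha n)$ vertices (Proposition~\ref{perturb prop}, Definition~\ref{defn:perturbed degree}), choose a common set $D$ and an $H_1$-coloured neighbour $z$ of $u_2$ so that $B_1:=A\cup u_2v_2\cup D$, $B_2:=A\cup u_2w\cup D$ and $B_3:=A\cup u_2z\cup D$ are $(k-2)$-sets in $\partial_{k-2}(G)$; then each $L_G(B_i)$ has more than $(1/2+\gamma)\binom n2$ edges by Definition~\ref{defn:perturbed degree}\ref{itm:defn:perturbed degree1}. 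By Lemma~\ref{lem:sturc1}\ref{Matching 4}, two of $C_{B_1},C_{B_2},C_{B_3}$ share an edge $pq$, and if $\{B_i,B_j\}$ is this pair then $B_i\cup pq$ and $B_j\cup pq$ differ in one vertex, hence are mutually $1$-reachable (Proposition~\ref{prop:reachablefact1}\ref{itm:reachablefact4}). If the pair is $\{B_1,B_2\}$, then $B_1\cup pq\in E^+_{A\cup u_2v_2}$ is reachable from $e'$ by Proposition~\ref{A,Kreachable fact}\ref{itmA,Kreachable fact4}, while $B_2\cup pq$ is a label edge over the $H_2$-coloured edge $u_2w$, so the previous case completes the chain. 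If the pair is $\{B_2,B_3\}$, then $B_3\cup pq\in E_{A\cup u_2z}$ with $u_2z\in E(H_1)$, so by Lemma~\ref{large component} it is reachable from all of $E_\emptyset$ and hence from $e'$, and the previous case again closes the chain.

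The one genuinely delicate point is the sub-case $\{B_1,B_3\}$ (both sharing pairs $H_1$-coloured), which on its own does not link to $H_2$. My plan is to dispose of it using the abundance of choices: $u_2$ has at least $2\alpha n$ $H_1$-coloured neighbours and almost all of $V(H_2)$ consists of $H_2$-coloured neighbours of $u_2$, giving a large family of admissible triples $\{B_1,B_2,B_3\}$; running Lemma~\ref{lem:sturc1}\ref{Matching 4} over this family, one should be able to argue the bad sub-case cannot happen for every choice, e.g.\ by a counting argument comparing $e(C_{B_1})$ with the edge-counts of the $C_{B_2}$'s. Establishing this last point cleanly is the step I expect to require the most care.
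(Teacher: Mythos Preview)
Your structural opening and the case $u_1v_1\in E(H_1)$ are correct and match the paper. Two points, however:

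\medskip\noindent\textbf{The same-colour case is over-engineered.} When $u_1v_1,u_2v_2\in E(H_2)$ you build a path in $H_2$ and chain Proposition~\ref{any label edge} along it. But Proposition~\ref{any label edge} does not require the two edges to be adjacent in $H_2$; it only needs $\phi_A(u_1v_1)=\phi_A(u_2v_2)$. A single application already gives that $e$ is $3n^{3k-2|A|-2}$-reachable from $e'$. Your path argument is correct but wastes a factor of $n$ for no reason.

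\medskip\noindent\textbf{The mixed-colour case has a real gap.} In the case $u_1v_1\in E(H_2)$, $u_2v_2\in E(H_1)$ you introduce three $(k-2)$-sets $B_1,B_2,B_3$ at $u_2$ and invoke \ref{Matching 4}. The sub-case where only $C_{B_1}$ and $C_{B_3}$ share an edge is genuinely bad: it links two $H_1$-coloured pairs and says nothing about routing into $H_2$. Your proposed fix (vary $z$ and $w$ and count) is not a proof; for each new $z$ the common set $D$ changes and hence so does $B_1$, so you are not accumulating constraints on a single $C_{B_1}$, and there is no obvious counting contradiction. This step, as written, does not close.

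The paper avoids this entirely with a much shorter argument. It takes a two-set common set $C$ for $A\cup u_1v_1$ and $A\cup u_2v_2$ (Proposition~\ref{prop:common}), sets $B_i=A\cup u_iv_i\cup C$, and uses \ref{Matching 1} (not \ref{Matching 4}) to find a single vertex $x\in V(C_{B_1})\cap V(C_{B_2})$; then with $xy\in E(C_{B_1})$ and $xz\in E(C_{B_2})$ it forms $f=B_1\cup xy\in E^+_{A\cup u_1v_1}$ and $f'=B_2\cup xz\in E^+_{A\cup u_2v_2}$, so $f'\setminus f\subseteq\{u_2,v_2,z\}$. The key observation you missed is that Proposition~\ref{prop:reachablefact1}\ref{itm:reachablefact3} allows one exceptional vertex, so it suffices to show $f$ is reachable from $u_2$ and $v_2$; and since both $u_2,v_2\in V(H_2)$, each has an $H_2$-coloured edge incident to it, and Proposition~\ref{any label edge} gives reachability of $f$ from a label edge over that $H_2$-edge, hence from $u_2$ and $v_2$. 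No case analysis over \ref{Matching 4}, no averaging, and only a pairwise common set is needed.
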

\begin{proof}
    Let $I_A$ be the set of isolated vertices in~$\partial_2(A)$. By Corollary~\ref{Gallai coloured prop}, $H_1$ spans $V(\partial_2(A))\backslash I_A$ and $|V(H_1)|\ge (1-\alpha)n$.  By Corollary~\ref{Gallai coloured prop}~\ref{edgecolour1}, we have that $u_1,u_2\in V(H_1)\cap V(H_2)$. So $u_1v_1, u_2v_2\in \{E(H_1),E(H_2)\}$ as~$\phi_A$ is locally $2$-edge-coloured. 
    
    If $\phi_A(u_1v_1)=\phi_A(u_2v_2)$, then lemma follows Proposition~\ref{any label edge}. Assume that $\phi_A(u_1v_1)\neq \phi_A(u_2v_2)$. If $u_1v_1\in E(H_1)$, then by Lemma~\ref{large component} on~$H_1$, $e$ is $n^{3k-2|A|}$-reachable from all edges in~$E_{\emptyset}$ and so from~$e'$. Hence, suppose that $u_1v_1\in E(H_2)$ and $u_2v_2\in E(H_1)$. 

    By Proposition~\ref{prop:common}, there exists a $(A\cup u_1v_1,A\cup u_2v_2, (k-|A|-4)/2)$-common set $C$. Let $B_i=A\cup u_iv_i\cup C$ for~$i\in [2]$ and so $|B_1|=|B_2|=k-2$. Then by Lemma~\ref{lem:sturc1}, we have $$|V(C_{B_1})\cap V(C_{B_2})|\ge |V(C_{B_1})|+|V(C_{B_2})|-n> 0.$$
    Let $x\in V(C_{B_1})\cap V(C_{B_2})$, $xy\in E(C_{B_1})$ and $xz\in E(C_{B_2})$. Let
    $$f=B_1\cup xy=A\cup u_1v_1\cup C\cup xy\text{ and }f'=B_2\cup xz=A\cup u_2v_2\cup C\cup xz.$$
    Note that $f'\backslash f=\{z,u_2,v_2\}$. Next we claim that $f$ is $2n^{3k-2|A|-1}$-reachable from~$f'$. By Proposition~\ref{prop:reachablefact1}~\ref{itm:reachablefact1} and~\ref{itm:reachablefact3}, it suffices to show that $f$ is $n^{3k-2|A|-1}$-reachable from~$u_2,v_2$. Note that $u_2\in V(H_2)$ implying that there exists an edge $u_2u'\in E(H_2)$ such that $\phi_A(u_2u')=\phi_A(u_1v_1)$. Let $f''\in E^+_{A\cup u_2u'}$. Since $f\in E^+_{A\cup u_1v_1}$ and~$u_2\in f''$, by the definition of~$\phi_A$ and Proposition~\ref{any label edge}, $f$ is $n^{3k-2|A|-1}$-reachable from~$f''$ and so from~$u_2$. Note that $v_2\in V(H_2)$ as well. We replace $u_2$ by~$v_2$ in the above argument. We can deduce that $f$ is also  $n^{3k-2|A|-1}$-reachable from~$v_2$. Thus $f$ is $2n^{3k-2|A|-1}$-reachable from~$f'$.

Meanwhile, $e,f\in E^+_{A\cup u_1v_1}$ and $e',f'\in E^+_{A\cup u_2v_2}$. Then by Proposition~\ref{prop:reachablefact1}~\ref{itm:reachablefact2} and~\ref{Reach43},
we obtain that $e$ is $n^{3k-2|A|}$-reachable from~$e'$.
\end{proof}
 
\subsection{$(\mathcal{A},\mathcal{K})$-reachable}\label{(A,k)-reach}
In this subsection, we aim to prove that our host graph $G$ is $(\mathcal{A},\mathcal{K})$-reachable for~$\mathcal{A}=\partial_{\mod 2}(G)$. Recall the definition of~$G^*$ and $E_{\emptyset}$ in Definition~\ref{defn:our graph}. 

\begin{proof}[Proof of Lemma~\ref{Lem:reach even case}]
Suppose that $G$ is $(\mathcal{A}, \mathcal{K})$-reachable for some upward closed subset $\mathcal{A}\subseteq \partial_{\mod 2}(G)$. If $\mathcal{A}=\partial_{\mod 2}(G)$, then we are done. Otherwise, let $A$ be a maximal subset of~$\partial_{\mod 2}(G)\backslash \mathcal{A}$. By Proposition~\ref{k-2 reachable}, we can assume that $|A|<k-2$. Note that $\mathcal{A}\cup \{A\}$ is still upward closed. To prove the lemma, it suffices to show that $G$ is $(\mathcal{A}\cup \{A\},\mathcal{K}\cup \{K_A\})$-reachable for some $K_A$.

 Let  $I_A$ be the set of isolated vertices in~$\partial_2(A)$. By Definition~\ref{defn:perturbed degree}, we have $|I_A|\le \alpha n$. We now define $K_A$ as follows.

\medskip \noindent   \textbf{Case 1:} every edge $e\in E_A$ is $n^{3k+1-2|A|}$-reachable from every edge $e'\in E_A$. Then set $K_A=\partial_2(A)\backslash I_A$. Note that~\ref{Reach1}, ~\ref{Reach2} and~\ref{Reach4} hold by construction. Since $|V(\partial_2(A))\backslash I_A|\ge (1-\alpha)n$,~\ref{Reach3} holds. 

\medskip \noindent   \textbf{Case 2:}
there exists an edge $e\in E_A$ is not $n^{3k+1-2|A|}$-reachable to some $e'\in E_A\subseteq E_{\emptyset}$. Define~$\phi_A$ as in Definition~\ref{2-edge colour}. Then $\phi_A$ has at least two monochromatic components. Let $H_1,\dots ,H_m$ be monochromatic components of~$\phi_A$ with~$|V(H_1)|\ge \dots \ge |V(H_m)|$. Set $K_A=\partial_2(A)[V(H_2)]$. 

Note that by our construction and the definition of~$\phi_A$,~\ref{Reach1},~\ref{Reach2} and~\ref{Reach42} hold for~$K_A$. 

 \begin{claim}~\label{size claim}
       $(1/2+3\alpha)n\le |V(H_2)|\le (1-3\alpha)n$.
    \end{claim}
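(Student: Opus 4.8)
The plan is to argue by contradiction: if $|V(H_2)|$ lies outside the interval $[(1/2+3\alpha)n,(1-3\alpha)n]$, I will show that \emph{every} edge of $E_A$ is $n^{3k+1-2|A|}$-reachable from every edge of $E_\emptyset$. Since $E_A\subseteq E_\emptyset$ (by Definition~\ref{defn:our graph}), this directly contradicts the defining hypothesis of Case~2, namely that some $e\in E_A$ is \emph{not} $n^{3k+1-2|A|}$-reachable from some $e'\in E_A$. So assume $|V(H_2)|<(1/2+3\alpha)n$ or $|V(H_2)|>(1-3\alpha)n$.

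First I would use Corollary~\ref{Gallai coloured prop} to show that, in either sub-case, every monochromatic component of $\phi_A$ has order $>(1-3\alpha)n$ or $<(1/2+3\alpha)n$. If $|V(H_2)|<(1/2+3\alpha)n$, then~\ref{edgecolour2} is impossible, so $H_1$ spans $\partial_2(A)\setminus I_A$ and $|V(H_1)|\ge(1-\alpha)n>(1-3\alpha)n$ by~\ref{edgecolour1}, while $|V(H_i)|\le|V(H_2)|<(1/2+3\alpha)n$ for all $i\ge 2$. If $|V(H_2)|>(1-3\alpha)n$, then $H_1$ cannot span (otherwise $H_2,\dots,H_m$ would be vertex-disjoint, giving $|V(H_1)|+|V(H_2)|\le n$, contradicting $|V(H_1)|\ge|V(H_2)|>(1-3\alpha)n$), so~\ref{edgecolour2} gives $|V(H_1)|\ge|V(H_2)|>(1-3\alpha)n$ and~\ref{edgecolour3} gives $|V(H_i)|\le n/2<(1/2+3\alpha)n$ for all $i\ge 3$.

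Next I would take an arbitrary $e\in E_A$ and put it in the form required by Lemmas~\ref{large component} and~\ref{small component}. By Definition~\ref{defn:our graph}, $e\in E_{A'}$ for some $A'\in E(\partial_{k-2}(G))$ with $A\subseteq A'$; since $|A|<k-2$ and $|A|\equiv k\bmod 2$ we have $|A|\le k-4$, so $A'\setminus A$ has at least two vertices $u_1,v_1$. Then $\{u_1,v_1\}\subseteq A'\subseteq e\in E(G)$, hence $u_1v_1\in E(\partial_2(A))$, and $e\in E_{A'}\subseteq E_{A\cup u_1v_1}$. Thus $u_1v_1$ lies in some component $H_i$, which by the previous paragraph has order $>(1-3\alpha)n$ or $<(1/2+3\alpha)n$; Lemma~\ref{large component} (resp.\ Lemma~\ref{small component}) then gives that $e$ is $n^{3k-2|A|-1}$-reachable (resp.\ $4n^{3k-2|A|-1}$-reachable) from every edge of $E_\emptyset$, and both constants are at most $n^{3k+1-2|A|}$ for $n$ large. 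Since $e\in E_A$ was arbitrary and $E_A\subseteq E_\emptyset$, this contradicts Case~2, which completes the proof.

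The step I expect to be the crux is the structural observation in the second paragraph: the only monochromatic component of $\phi_A$ whose order can fall into the ``middle'' window $[(1/2+3\alpha)n,(1-3\alpha)n]$ is $H_2$, which is precisely what forces the non-reachable pair guaranteed by Case~2 to be witnessed by an edge sitting over that middle component. The remaining work --- selecting the correct representation $e\in E_{A\cup u_1v_1}$ and comparing the reachability exponents --- is routine bookkeeping.
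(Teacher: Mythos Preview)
Your approach is essentially the paper's: argue by contradiction, show every monochromatic component $H_i$ is either large ($>(1-3\alpha)n$) or small ($<(1/2+3\alpha)n$), then apply Lemmas~\ref{large component} and~\ref{small component} to conclude every $e\in E_A$ is $n^{3k+1-2|A|}$-reachable from all of $E_\emptyset$, contradicting Case~2.

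There is, however, one local error. In the sub-case $|V(H_2)|>(1-3\alpha)n$ you write that if $H_1$ spans then ``$H_2,\dots,H_m$ would be vertex-disjoint, giving $|V(H_1)|+|V(H_2)|\le n$''. This inference is wrong: \ref{edgecolour1} only asserts that $H_2,\dots,H_m$ are pairwise vertex-disjoint \emph{among themselves}; $H_1$ may well share all of its vertices with $H_2$ (indeed, if $H_1$ spans, it does). So you cannot conclude $|V(H_1)|+|V(H_2)|\le n$, and in fact $H_1$ spanning with $|V(H_2)|>(1-3\alpha)n$ is perfectly consistent with \ref{edgecolour1}.

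Fortunately this detour is unnecessary. Corollary~\ref{Gallai coloured prop}\ref{edgecolour3} is unconditional: it always gives $|V(H_1)|\ge(1-3\alpha)n$ and $|V(H_i)|\le n/2<(1/2+3\alpha)n$ for all $i\ge 3$. Combined with the contradiction hypothesis on $|V(H_2)|$, this immediately yields that every $H_i$ is large or small, without any case split on whether $H_1$ spans. This is exactly how the paper proceeds. Once you drop the faulty intermediate step, your argument is correct and matches the paper's.
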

\begin{proofclaim}
   Suppose to the contrary. By Corollary~\ref{Gallai coloured prop}~\ref{edgecolour3}, $|V(H_i)|\ge (1-2\alpha)n$ or $|V(H_i)|\le (1/2+3\alpha)n$ for all $i\in [m]$.
   By Proposition~\ref{A,Kreachable fact}~\ref{itmA,Kreachable fact2}, $E_A=\bigcup_{i\in [m]}\bigcup_{uv\in E(H_i)}E_{A\cup uv}$. Hence together with Lemmas~\ref{small component} and~\ref{large component}, all edges in~$E_A$ are $n^{3k-2|A|+1}$-reachable from all edges in~$E_{\emptyset}$ contradicting with our assumption.
\end{proofclaim}

By Claim~\ref{size claim}, $|V(K_A)|=|V(H_2)|\ge (1/2+3\alpha)n$. So~\ref{Reach3} holds for~$K_A$.

Suppose that $e_1,e_2\in E_A^+$. By Proposition~\ref{A,Kreachable fact}~\ref{itmA,Kreachable fact2}, there exist $u_1v_1, u_2v_2\in E(K_A)$ such that $e_j\in E_{A\cup u_jv_j}^+$ for~$j\in [2]$. By Lemma~\ref{middle component},~\ref{Reach43} holds.

Suppose that $e_1\in E_{A}^-$. By Proposition~\ref{A,Kreachable fact}~\ref{itmA,Kreachable fact2}, we have
$$E_{A}^-=\bigcup_{uv\in E(K_A)}E_{A\cup uv}^-\cup \bigcup_{uv\notin E(K_A)}E_{A\cup uv}.$$
If $e_1\in E^-_{A\cup uv}$ for some $uv\in E(K_A)$, then by~Proposition~\ref{A,Kreachable fact}~\ref{itmA,Kreachable fact7} on~$A\cup uv$, $e_1$ is $n^{3k-2|A|-3}$-reachable from all edges in~$E_{\emptyset}$. If $e_1\in E_{A\cup uv}$ with~$uv\notin E(K_A)$, then $uv\in E(H_i)$ with~$i\neq 2$.  By Corollary~\ref{Gallai coloured prop}~\ref{edgecolour3}, $|V(H_i)|\ge (1-\alpha)n$ or $|V(H_i)|\le n/2$.
Hence, by Lemmas~\ref{small component} and~\ref{large component}, $e_1$ is $n^{3k-2|A|}$-reachable from all edges in~$E_{\emptyset}$. Thus~\ref{Reach44} holds for~$K_A$. 

Therefore, the definition of~$K_A$ is as required.
\end{proof}

\subsection{Proof of Lemma~\ref{reach} for odd $k$}\label{reach odd}
For odd $k$, $\emptyset\notin \mathcal{A}=\partial_{\mod 2}(G)$ and all non-isolated vertices in~$G$ are contained in~$\partial_{\mod 2}(G)$. Next we will define an auxiliary vertex-colouring on~$V(G)$ which enables us to partition label edges in different $E_v$. 
\begin{defn}\label{vertex colour}
   Let $1/n\ll \alpha\ll \gamma \ll 1/k\le 1/5$ with~$k$ odd. Let $G$ be a $k$-graph on~$n$ vertices with~$\overline{\delta}^\alpha_{k-2}(G)\ge 1/2+\gamma$ and $I_G$ be the set of isolated vertex in~$G$. Suppose that $G$ is $(\mathcal{A}, \mathcal{K})$-reachable for~$\mathcal{A}=\partial_{\mod 2}(G)$. We define a vertex-colouring $\psi$ on~$V(G)\backslash I_G$ such that for all distinct $u,v\in V(G)\backslash I_G$, $\psi(u)=\psi(v)$ if and only if there exists $e\in E^+_u$ and $e'\in E^+_v$ such that $e$ is $n^{3k-2}$-reachable from~$e'$ and $e'$ is $n^{3k-2}$-reachable from~$e$.
\end{defn}

Then we have the following property by Proposition~\ref{A,Kreachable fact}~\ref{itmA,Kreachable fact4}.
\begin{proposition}\label{any label vertex}
     Let $1/n\ll \alpha\ll \gamma \ll 1/k\le 1/5$ with~$k$ odd. Let $G$ be a $k$-graph on~$n$ vertices with~$\overline{\delta}^\alpha_{k-2}(G)\ge 1/2+\gamma$ and $I_G$ be the set of isolated vertex in~$G$. Suppose that $G$ is $(\mathcal{A}, \mathcal{K})$-reachable for~$\mathcal{A}=\partial_{\mod 2}(G)$. Let $\psi$ be the vertex-colouring as in Definition~\ref{vertex colour}. If $x,y\in V(G)\backslash I_G$ with~$\psi(x)=\psi(y)$, then for any $e_1\in E^+_{x}$ and any $e_2\in E^+_{y}$, $e_1$ is $3n^{3k-2}$-reachable from~$e_2$.
\end{proposition}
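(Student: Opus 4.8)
The plan is to prove Proposition~\ref{any label vertex} exactly as one proves the edge‑colouring version Proposition~\ref{any label edge}, with the vertex‑colouring $\psi$ in place of $\phi_A$ and the singletons $\{x\},\{y\}$ in place of the sets $A\cup u_iv_i$. The only ingredients are transitivity of reachability (Proposition~\ref{prop:reachablefact1}~\ref{itm:reachablefact2}) together with the ``a single reachable witness pair upgrades to all pairs'' principle coming from the internal structure of $E^+_x$ and $E^+_y$ (Proposition~\ref{A,Kreachable fact}~\ref{itmA,Kreachable fact4}). Note that $\{x\},\{y\}\in\partial_1(G)\subseteq\partial_{\mod 2}(G)=\mathcal{A}$ since $x,y\notin I_G$, so these propositions are applicable at the singletons.

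First I would unpack Definition~\ref{vertex colour}: since $\psi(x)=\psi(y)$, there is a witness pair $g_x\in E^+_x$ and $g_y\in E^+_y$ with $g_x$ being $n^{3k-2}$‑reachable from $g_y$ (only this one direction is needed). Now take the arbitrary $e_1\in E^+_x$ and $e_2\in E^+_y$ from the statement. Applying Proposition~\ref{A,Kreachable fact}~\ref{itmA,Kreachable fact4} at the singleton $\{y\}$, $g_y\in E_y$ is reachable from $e_2\in E^+_y$; applying it at $\{x\}$, $e_1\in E_x$ is reachable from $g_x\in E^+_x$. Splicing the three legs $e_2\to g_y\to g_x\to e_1$ by transitivity then gives that $e_1$ is reachable from $e_2$, which is what we want. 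The reversed statement (if needed elsewhere) follows symmetrically.

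The delicate point is the reachability constant. The middle leg costs $n^{3k-2}$ by construction, so it suffices that each outer leg costs at most $n^{3k-2}$. The crude bound from Proposition~\ref{A,Kreachable fact}~\ref{itmA,Kreachable fact4} at a singleton is $n^{3k-1}$, which is too large, so one must be sharper: when $E(K_x)\neq E(\partial_2(x))$ one uses~\ref{Reach43} instead, which gives directly that any two edges of $E^+_x$ are $n^{3k-2}$‑reachable from one another, and similarly for $y$. The complementary subcase $E(K_x)=E(\partial_2(x))$ must be handled by hand — either by observing that the family $\mathcal{K}$ produced by the odd‑$k$ construction of Section~\ref{reach odd} never assigns such a $K_v$ to a singleton $v$, or by tracking the slightly larger exponent through the final bound. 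This arithmetic is the main (and essentially the only) obstacle; once the exponents are accounted for, the three legs telescope to $3n^{3k-2}$, and the remainder is a verbatim transcription of the proof of Proposition~\ref{any label edge}.
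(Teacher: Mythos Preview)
Your three-leg argument --- take the witness pair $(g_x,g_y)$ from Definition~\ref{vertex colour}, then use internal reachability inside $E^+_x$ and $E^+_y$ via Proposition~\ref{A,Kreachable fact}~\ref{itmA,Kreachable fact4}, then splice by Proposition~\ref{prop:reachablefact1}~\ref{itm:reachablefact2} --- is exactly what the paper intends: its entire proof is the one-line reference to~\ref{itmA,Kreachable fact4}, which unwinds to precisely this.

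The constant discrepancy you flag is genuine and is not resolved by the paper's citation either. At a singleton,~\ref{itmA,Kreachable fact4} gives only $n^{3k-1}$ per outer leg, so the splice yields $n^{3k-2}+2n^{3k-1}\le 3n^{3k-1}$ rather than the stated $3n^{3k-2}$. Your sharpening via~\ref{Reach43} recovers $n^{3k-2}$ per leg when $E(K_x)\ne E(\partial_2(x))$; but in the complementary case the only available bound is~\ref{Reach4}, again $n^{3k-1}$. Contrary to your first suggestion, the construction of~$\mathcal{K}$ in the proof of Lemma~\ref{Lem:reach even case} \emph{does} permit $E(K_v)=E(\partial_2(v))$ at a singleton~$v$ --- this is exactly Case~1 there --- so one cannot simply rule this case out. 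The clean fix is your second alternative: carry the weaker bound $3n^{3k-1}$ through. This is harmless, since every application of Proposition~\ref{any label vertex} in the proof of Lemma~\ref{reach} for odd~$k$ feeds into an $n^{3k}$- or $n^{3k+1}$-reachability conclusion, for which $3n^{3k-1}$ is amply sufficient.
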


Next we can show that $\psi$ is $2$-vertex-colouring.
\begin{lemma}\label{vertex 2 colour}
    Let $1/n\ll \alpha\ll  \gamma\ll 1/k \ll 1/5$ with~$k$ odd. Let $G=(V,E)$ be a $k$-graph on~$n$ vertices with~$\overline{\delta}^\alpha_{k-2}(G)\ge 1/2+\gamma$. Suppose that $G$ is $(\mathcal{A}, \mathcal{K})$-reachable for~$\mathcal{A}=\partial_{\mod 2}(G)$. Then $\psi$ is $2$-coloured on~$V(G)\backslash I_G$.
\end{lemma}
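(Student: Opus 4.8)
The plan is to transcribe the ``locally $2$-edge-coloured'' half of the proof of Lemma~\ref{Gallai coloured} into the vertex setting, replacing the edges of $\partial_2(A)$ by single vertices of $V(G)\setminus I_G$. Suppose for a contradiction that $\psi$ takes at least three values, so there are distinct non-isolated vertices $x,y,z$ with $\psi(x),\psi(y),\psi(z)$ pairwise distinct. By Definition~\ref{vertex colour}, $\psi(x)\neq\psi(y)$ says that for every $e\in E^+_x$ and every $e'\in E^+_y$, $e$ is not $n^{3k-2}$-reachable from $e'$ or $e'$ is not $n^{3k-2}$-reachable from $e$; this is exactly the hypothesis of Lemma~\ref{label intersect} with $A=\emptyset$, which is legitimate since $|\emptyset|<k-4$ and, $k$ being odd, $\{x\},\{y\},\{z\}\in\mathcal{A}=\partial_{\mod 2}(G)$ (they are non-isolated vertices). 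The same applies to the pairs $\{x,z\}$ and $\{y,z\}$.

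First I would build a common set, following Claim~\ref{Cliam: edge three common}. Put $i=(k-3)/2$, an integer as $k$ is odd. Starting from $X_0=\emptyset$, suppose $X_j=\{x_1,y_1,\dots,x_j,y_j\}$ is $(\{x\},\{y\},\{z\},j)$-common and avoids $\{x,y,z\}$, with $j<i$; then $2j\le k-5$, so Lemma~\ref{label intersect} (applied with $A=\emptyset$ and common set $X_j$) yields $|V(K_{\{x\}\cup X_j})\cap V(K_{\{y\}\cup X_j})|>n/2$, and intersecting with $V(K_{\{z\}\cup X_j})$, which has size at least $(1/2+3\alpha)n$ by~\ref{Reach3}, leaves more than $3\alpha n$ common vertices. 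Proposition~\ref{perturb prop}~\ref{itm:perturb prop 2}, applied to the three shadows $\partial_2(\{x\}\cup X_j)$, $\partial_2(\{y\}\cup X_j)$, $\partial_2(\{z\}\cup X_j)$, then produces an edge $x_{j+1}y_{j+1}$, which we may take to avoid $\{x,y,z\}$, extending the common set. After $i$ steps $C:=X_i$ has size $k-3$ and $B_x:=\{x\}\cup C$, $B_y:=\{y\}\cup C$, $B_z:=\{z\}\cup C$ are $(k-2)$-sets in $E(\partial_{k-2}(G))$.

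Next I would invoke the structural dichotomy. Since $\overline{\delta}^\alpha_{k-2}(G)\ge 1/2+\gamma$, each of the link $2$-graphs $L_G(B_x),L_G(B_y),L_G(B_z)$ has more than $(1/2+\gamma)\binom{n}{2}$ edges, so Lemma~\ref{lem:sturc1}~\ref{Matching 4} gives two of their largest components, say $C_{B_x}$ and $C_{B_y}$ (the remaining two cases being identical up to relabelling), that share an edge $u_1u_2$. As $u_1u_2\in E(\partial_2(B_x))\cap E(\partial_2(B_y))$, it is disjoint from $B_x\cup B_y$, and since $|B_x|=k-2$ we have $E(K_{B_x})=E(\partial_2(B_x))$ by~\ref{Reach2}, so $u_1u_2\in E(K_{B_x})$; hence $f:=B_x\cup u_1u_2$ is an $x$-label edge, that is, $f\in E^+_x$, and similarly $f':=B_y\cup u_1u_2\in E^+_y$. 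These are $k$-element sets with $f\cap f'=C\cup\{u_1,u_2\}$ of size $k-1$, so by Proposition~\ref{prop:reachablefact1}~\ref{itm:reachablefact4} $f$ is $1$-reachable from $f'$ and $f'$ is $1$-reachable from $f$; a fortiori each is $n^{3k-2}$-reachable from the other, forcing $\psi(x)=\psi(y)$, a contradiction. Therefore $\psi$ uses at most two colours on $V(G)\setminus I_G$.

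The only delicate point is keeping the triple intersection of the vertex sets $V(K_{\,\cdot\,})$ large enough to feed Proposition~\ref{perturb prop}~\ref{itm:perturb prop 2} at every step of the common-set construction: here the bound $>n/2$ coming from Lemma~\ref{label intersect} is essential, because~\ref{Reach3} alone only gives $|V(K_{\{x\}\cup X_j})\cap V(K_{\{y\}\cup X_j})|\ge 6\alpha n$, which would collapse once a third set of relative size $1/2+3\alpha$ is intersected in. Everything else is a routine adaptation of the even-$k$ argument.
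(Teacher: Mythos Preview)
Your proof is correct and follows essentially the same approach as the paper's own proof: assume three colours $\psi(z_1),\psi(z_2),\psi(z_3)$, build a $(\{z_1\},\{z_2\},\{z_3\},(k-3)/2)$-common set via Lemma~\ref{label intersect} (with $A=\emptyset$) and Proposition~\ref{perturb prop}~\ref{itm:perturb prop 2}, then apply Lemma~\ref{lem:sturc1}~\ref{Matching 4} to the three resulting $(k-2)$-sets to produce two label edges differing in one vertex, contradicting the colouring via Proposition~\ref{prop:reachablefact1}~\ref{itm:reachablefact4}. Your added remarks (avoiding $\{x,y,z\}$ in the common set, and the necessity of the $>n/2$ bound from Lemma~\ref{label intersect}) are correct but not strictly needed, since edges of $\partial_2(\{x\}\cup X_j)$ are automatically disjoint from $\{x\}\cup X_j$.
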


\begin{proof}
Suppose to the contrary that there are $z_1,z_2,z_3$ with distinct $\psi(z_1),\psi(z_2)$ and $\psi(z_3)$. 
\begin{claim}~\label{three common tuple}
There is a $(z_1, z_2, z_3, (k-3)/2)$-common set $C$. 
\end{claim}

   \begin{proofclaim}
       Let $m=(k-3)/2$. Suppose that we have chosen $x_1,y_1,\dots ,x_{j^*},y_{j^*}$ for some $j^*\in [m-1]\cup \{0\}$ such that $\bigcup_{j\le j^*} x_jy_j=X_{j^*}$ and $X_{j^*}$ is a $(A\cup z_1, A\cup z_2,A\cup z_3, j^*)$-common set. By the definition of~$\psi$, for any $e\in E^+_{z_1}$ and $e'\in E^+_{z_2}$, $e$ is not $n^{3k-2}$-reachable from~$e'$ or $e'$ is not $n^{3k-2}$-reachable from~$e$. By Lemma~\ref{label intersect} with~$(C,A)=(X_{j^*},\emptyset)$, it implies that $$|V(K_{z_1\cup X_{j^*}})\cap  V(K_{z_2\cup X_{j^*}})|> n/2.$$
     Since  $|V(K_{z_3\cup X_{j^*}})|\ge  (1/2+3\alpha)n$ by~\ref{Reach3}, we have 
        
        \begin{align*}
            |\bigcap_{i\in [3]}V(K_{z_i\cup X_{j^*}})|\ge|V(K_{z_1\cup X_{j^*}})\cap  V(K_{z_2\cup X_{j^*}})|+|V(K_{z_3\cup X_{j^*}})|-n\ge 3\alpha n.
        \end{align*} 
Then there exists $x_{j^*+1}y_{j^*+1}\in \bigcap_{i\in [3]}E(K_{z_i\cup X_{j^*}}) $ by Proposition~\ref{perturb prop}~\ref{itm:perturb prop 2}. Let $C=\{x_1,y_1\dots , x_m,y_m\}$ as required.
   \end{proofclaim}
Let $B_i=z_i\cup C \text{ for } i\in [3].$
Note that $|B_i|=k-2$ for~$i\in [3]$. By Lemma~\ref{lem:sturc1}, there exist $B_{i_1}$ and~$B_{i_2}$ such that $E(C_{B_{i_1}})\cap E(C_{B_{i_2}})\neq \emptyset$ for distinct $i_1,i_2\in [3]$. Let $u_1u_2\in E(C_{B_{i_1}})\cap E(C_{B_{i_2}})$ and $e_j=B_{i_j}\cup u_1u_2\in E_{z_{i_j}}^+$ for~$j\in [2]$. Then
          \begin{align*}
              |e_1\cap e_2|=|(B_{i_1}\cup u_1u_2)\cap (B_{i_2}\cup u_1u_2)|=k-|B_{i_1}\backslash B_{i_2}|=k-1
          \end{align*}
          and so $e_j$ is $1$-reachable from~$e_{j'}$ for~$\{j,j'\}=[2]$ by Proposition~\ref{prop:reachablefact1}~\ref{itm:reachablefact4}. 
          However, $e_{i_j}\in E^+_{z_{i_j}}$ for~$j\in [2]$. This contradicts with the fact that $\psi(z_{i_1})\neq \psi(z_{i_2})$.
\end{proof}

Finally, we show the reachable lemma for odd case.

\begin{proof}[Proof of Lemma~\ref{reach} for odd $k$]

 By Lemma~\ref{Lem:reach even case}, $G$ is $(\mathcal{A},\mathcal{K})$-reachable for~$\mathcal{A}=\partial_{\mod 2}(G)$. Let $I_G$ be the set of isolated vertices in~$G$. Define a vertex-colouring $\psi$ on~$V(G)\backslash I_G$ as in Definition~\ref{vertex colour}. By Lemma~\ref{vertex 2 colour}, $\psi$ is 2-vertex coloured. Let $H_1$ and $H_2$ be the colour classes of~$\psi$ with~$|H_1|\ge |H_2|$. Note that $|I_G|\le \alpha n$ by Definition~\ref{defn:perturbed degree}. Then $|H_1|\ge (1-\alpha)n/2$.

Let $B_i=\{e~|~e\in E_x^+ \text{ with }x\in H_i\}$. Now, we enumerate all edges in~$E(G^*)=E_{\emptyset}$ such that if $e_{i_1}\in B_1, e_{i_2}\in B_2\backslash B_1$ and $e_{i_3}\in E_{\emptyset}\backslash (B_1\cup B_2)$, then $i_1\le i_2\le i_3$. We claim this is the enumeration as required.

 If $e\in E_{\emptyset}\backslash(B_1\cup B_2)$, then by Proposition~\ref{A,Kreachable fact}~\ref{itmA,Kreachable fact1}, $e\in E_x^-$ for some $x\in V(G)$. By~Proposition~\ref{A,Kreachable fact}~\ref{itmA,Kreachable fact7}, $e$ is $n^{3k}$-reachable from all edges in~$E_{\emptyset}$. 

\begin{claim}
    If $e\in B_2$, then $e$ is $n^{3k+1}$-reachable from all $e'\in B_1\cup B_2$. 
\end{claim}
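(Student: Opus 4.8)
We want to show that if $e \in B_2$ (that is, $e \in E_x^+$ for some $x \in H_2$), then $e$ is $n^{3k+1}$-reachable from every $e' \in B_1 \cup B_2$. First I would dispose of the easy case: if $e' \in B_2$, then $e' \in E_y^+$ for some $y \in H_2$, so $\psi(x) = \psi(y)$ and Proposition~\ref{any label vertex} immediately gives that $e$ is $3n^{3k-2}$-reachable from $e'$, which is stronger than what we need. So the real content is when $e' \in B_1 \setminus B_2$, i.e. $e' \in E_z^+$ for some $z \in H_1$ with $\psi(z) \ne \psi(x)$. The strategy is to build a bridge: find a single pair of label edges $f \in E_x^+$ and $f' \in E_z^+$ that are mutually reachable (within a bounded number of steps), and then use Proposition~\ref{A,Kreachable fact}~\ref{itmA,Kreachable fact6} to promote this into reachability between the arbitrary $e \in E_x^+$ and $e' \in E_z^+$, picking up only additive terms of the form $n^{3k-2|A|+1}$ with $|A|=1$, i.e. $n^{3k-1}$.

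To construct the bridge, I would mimic the argument in the proof of Lemma~\ref{vertex 2 colour} (the ``three common tuple'' construction, but now with two vertices). By Proposition~\ref{prop:common} applied with $A_1 = x$, $A_2 = z$ — or more directly by iterating Proposition~\ref{perturb prop}~\ref{itm:perturb prop 2} together with the size bound $|V(K_{x \cup X})|, |V(K_{z \cup X})| \ge (1/2+3\alpha)n$ from~\ref{Reach3} — I would greedily build an $(x, z, (k-3)/2)$-common set $C$, so that $B_1 := x \cup C$ and $B_2 := z \cup C$ both lie in $E(\partial_{k-2}(G))$. By Lemma~\ref{lem:sturc1}~\ref{Matching 1} each of $C_{B_1}$ and $C_{B_2}$ has at least $(1/2+\gamma)n$ vertices, so $|V(C_{B_1}) \cap V(C_{B_2})| > 0$; pick $w$ in the intersection, pick $wu_1 \in E(C_{B_1})$ and $wu_2 \in E(C_{B_2})$, and set $f = B_1 \cup wu_1 = x \cup C \cup wu_1$ and $f' = B_2 \cup wu_2 = z \cup C \cup wu_2$. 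These are $A$-label edges with $A = x$ and $A = z$ respectively (since the vertex pairs were chosen in the relevant $K$-graphs), so $f \in E_x^+$ and $f' \in E_z^+$.

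The heart of the matter is establishing that $f$ and $f'$ are mutually reachable in a bounded number of steps, despite $\psi(x) \ne \psi(z)$: this seems paradoxical at first, so this is the step I expect to be the main obstacle. The resolution should be that $f$ and $f'$ differ only in $f' \setminus f = \{z, u_2\}$ (two vertices), and $z \in H_1 \subseteq V(G) \setminus I_G$ while $u_2 \in V(C_{B_2})$; using Proposition~\ref{prop:reachablefact1}~\ref{itm:reachablefact3} it suffices to show $f$ is reachable from each of $z$ and $u_2$ individually within the allotted budget, and symmetrically for $f'$ from $x$ and $u_1$. For $u_2$: since $u_2 \in V(C_{B_2})$ pick $u_2 y \in E(C_{B_2})$, so $B_2 \cup u_2 y$ and $f' = B_2 \cup wu_2$ both lie in $E_{B_2}$, hence are tight connected (Definition~\ref{defn:our graph}) and thus mutually $n^k$-reachable by Proposition~\ref{prop:reachablefact1}~\ref{itm:reachablefact5}; this handles $u_2$. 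The vertex $z$ is the delicate one: I would use that $z \in V(K_z)$ (or argue directly via Proposition~\ref{A,Kreachable fact}~\ref{itmA,Kreachable fact3}) to find an edge $g \in E_z^+$ containing $z$, then by~\ref{Reach43} applied to $A = z$, $f' \in E_z^+$ is $n^{3k-2}$-reachable from $g$ and hence from $z$. Combining these pieces with Proposition~\ref{prop:reachablefact1}~\ref{itm:reachablefact2}-\ref{itm:reachablefact3} shows $f$ is reachable from $f'$ (and vice versa) in at most $O(n^{3k-2})$ steps; then Proposition~\ref{A,Kreachable fact}~\ref{itmA,Kreachable fact6} with $|A_1| = |A_2| = 1$ upgrades this to: every $e \in E_x^+$ is $\left(O(n^{3k-2}) + n^{3k-1} + n^{2k-1}\right)$-reachable from every $e' \in E_z^+$, which is at most $n^{3k+1}$ for large $n$. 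Feeding in the easy $B_2$-to-$B_2$ case completes the proof of the claim.
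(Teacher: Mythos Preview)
Your overall strategy matches the paper's, but the bridge step has a real gap. You correctly set up $f = x\cup C\cup wu_1 \in E_x^+$ and $f' = z\cup C\cup wu_2 \in E_z^+$ with $f'\setminus f=\{z,u_2\}$, and you recognize that by Proposition~\ref{prop:reachablefact1}~\ref{itm:reachablefact3} it suffices to show $f$ is reachable from one of $z,u_2$. But neither of your two arguments actually shows this. Your $u_2$ paragraph establishes that $f'$ and $(z\cup C)\cup u_2y$ are tight connected in $E_{z\cup C}$, which only gives that $f'$ is reachable from $u_2$ --- trivial, since $u_2\in f'$. Your $z$ paragraph finds $g\in E_z^+$ with $z\in g$ (again trivially, e.g.\ $g=f'$) and deduces $f'$ is reachable from $z$ --- also trivial. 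In both cases you land on the wrong edge: you need $f$, not $f'$, reachable from these vertices, and nothing in your setup links $z$ or $u_2$ to $E_x^+$.

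The missing idea is that you should not build the bridge to the \emph{given} vertex in $H_1$ (your $z$ with $e'\in E_z^+$). Instead, as the paper does, first pick an auxiliary $z'\in V(K_x)\cap H_1$; this intersection is nonempty because $|V(K_x)|\ge(1/2+3\alpha)n$ by~\ref{Reach3} and $|H_1|\ge(1-\alpha)n/2$. The point of $z'\in V(K_x)$ is exactly that Proposition~\ref{A,Kreachable fact}~\ref{itmA,Kreachable fact3} now yields some $f''\in E_x^+$ with $z'\in f''$, whence $f$ is reachable from $f''$ (both in $E_x^+$) and hence from $z'$; so $f$ is reachable from $f'$ as desired. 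To reconnect to your original $e'\in E_y^+$ with $y\in H_1$, use $\psi(z')=\psi(y)$ and Proposition~\ref{any label vertex} to get $f'\in E_{z'}^+$ reachable from $e'$, and then chain $e\to f\to f'\to e'$ via Proposition~\ref{prop:reachablefact1}~\ref{itm:reachablefact2}. Also note: $z\in V(K_z)$ is not generally true, since $K_z\subseteq\partial_2(L_G(z))$ lives on $V(G)\setminus\{z\}$; this is a separate slip in your argument.
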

\begin{proofclaim}
Suppose that $e\in E_x^+$ for some $x\in H_2$ and $e'\in E_y^+$ for some $y\in V(G)$. If $y\in H_2$, then our claim follows $\psi(x)=\psi(y)$ and Proposition~\ref{any label vertex}. If $y\in H_1$, then $e'\in B_1$. Since $|V(K_{x})|> (1/2+3\alpha)n$ by~\ref{Reach3}, $$|V(K_x)\cap H_1|\ge |V(K_x)|+|H_1|-n>0.$$ Let $z\in V(K_x)\cap H_1$. By Proposition~\ref{prop:common}, there exists a $(x,z,(k-3)/2)$-common set $C$. Then $|x\cup C|=|z\cup C|=k-2$. By Lemma~\ref{lem:sturc1}~\ref{Matching 1}, we have that 
    $$|V(C_{x\cup C})\cap V(C_{z\cup C})|\ge |V(C_{x\cup C})|+|V(C_{z\cup C})|-n>0.$$
    Let $w_1\in V(C_{x\cup C})\cap V(C_{z\cup C})$, $w_1w_2\in E(C_{x\cup C})$ and $w_1w_3\in E(C_{x\cup C})$. Let
    $$f=x\cup C\cup w_1w_2\text{ and }f'=z\cup C\cup w_1w_3.$$
    Then $f\in E_x^+$ and $f'\in E_z^+$. We claim that $f$ is $2n^{3k}$-reachable from~$f'$. Note that $f'\backslash f=\{z,w_3\}$. By Proposition~\ref{prop:reachablefact1}~\ref{itm:reachablefact3} and~\ref{itm:reachablefact1}, it suffices to show that $f$ is $n^{3k}$-reachable from~$z$. Since $z\in V(K_x)$, by Proposition~\ref{A,Kreachable fact}~\ref{itmA,Kreachable fact2}, there exists an $f''\in E_x^+$ with~$z\in f''$. Then $f$ is $n^{3k}$-reachable from~$f''$ by~\ref{Reach43}. Hence, $f$ is $n^{3k}$-reachable from~$z$ and so $2n^{3k}$-reachable from~$f'$.
    
Note that $f$ and $e$ are both in~$E_x^+$. So $e$ is $n^{3k}$-reachable from~$f$ by~\ref{Reach43}. Since $f'\in E_z^+$ and $e'\in E_y^+$ with~$y,z\in H_1$, by~$\psi(y)=\psi(z)$ and Proposition~\ref{any label vertex}, $f'$ is $n^{3k}$-reachable from~$e'$. Therefore, by Proposition~\ref{prop:reachablefact1}~\ref{itm:reachablefact2} with~$(e,e',e'')=(f',f,e)$ and $(e',f',e)$, $e$ is $n^{3k+1}$-reachable from~$e'$.
\end{proofclaim}
Finally, if $e,e'\in B_1$, suppose that $e\in E_x^+$ with~$x\in H_1$ and $e'\in E_y^+$ with~$y\in H_1$. Then by~$\psi(x)=\psi(y)$ and Proposition~\ref{any label vertex}, $e$ is $n^{3k+1}$-reachable from~$e'$. 

Therefore our enumeration is as required.
\end{proof}
\section{Rotatable lemma}\label{rotatable  }
Let $C\in \mathbb{N}$, let $G$ be a $k$-graph, and $e=u_1\dots u_k\in E(G)$. Recall that $e$ is $C$-rotatable if for any $u\in e$, $\sigma\in S_k$ and any rooted $k$-loose tree~$T$ at~$r$, there exists a homomorphism~$\phi$ from~$T$ to~$G$ such that $\phi(r)=u$ and $\phi(v)=u_{\sigma(s)}$ for all $v\in C_s(T)$ with~$\dist(r,v)\ge C$.

Our aim of this section is to prove the rotatable proposition for~$G^*$.

\begin{lemma}\label{main rotatable lemma}
    Let $1/n\ll\alpha \ll  \gamma\le  {1}/{k}\le {1}/{4}$. Let $G$ be a $k$-graph on~$n$ vertices with~$\overline{\delta}^\alpha_{k-2}(G)\ge 1/2+\gamma$. Then all edges in~$G^*$ is $10^7(k!)n^4$-rotatable.
\end{lemma}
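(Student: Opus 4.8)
The goal is to show every edge $e = u_1\dots u_k \in E(G^*)$ is $C$-rotatable with $C = 10^7(k!)n^4$, meaning for each $u \in e$, each $\sigma \in S_k$, and each rooted $k$-loose tree $T$ at $r$, there is a homomorphism $\phi: T \to G^*$ with $\phi(r) = u$ and $\phi(v) = u_{\sigma(s)}$ for every $v \in C_s(T)$ with $\dist(r,v) \ge C$. Following the sketch in Section~\ref{sketch}, my plan is to reduce first to $k = 4$ and then to exhibit, for a generating set of transpositions, homomorphisms that realise the required ``swap'' of colour classes deep inside $T$.

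\textbf{Step 1: reduction to $k=4$.} For a general edge $e = u_1\dots u_k$ and a $(k-2)$-set $A \subseteq e$, by Definition~\ref{defn:our graph} the link $L_G(A)$ is a $2$-graph and $E_A$ consists of all $A \cup xy$ with $xy \in E(C_A)$; since $\overline{\delta}^\alpha_{k-2}(G) \ge 1/2+\gamma$, Lemma~\ref{lem:sturc1}\ref{Matching 1} gives $|V(C_A)| \ge (1/2+\gamma)n$. I would fix a homomorphism of the tree that first sends almost all of $T$ into the single edge $e$ using the Reachable Lemma (Lemma~\ref{reach}) plus Proposition~\ref{prop:reachablefact1}\ref{itm:reachablefact5} (edges of $E_A$ are tight connected, hence pairwise $n^k$-reachable), and then treat the problem of permuting the four ``active'' coordinates: for a permutation $\sigma$ fixing all but four indices, say $\{i,j,p,q\}$, set $A = e \setminus \{u_i,u_j,u_p,u_q\}$ (of size $k-4$), observe by Proposition~\ref{propo:rotatat reduce} that $(L_G(A))^* \subseteq L_{G^*}(A)$ and that $\overline{\delta}^\alpha_{2}(L_G(A)) \ge 1/2+\gamma$ by Proposition~\ref{perturb prop}\ref{itm:perturb prop 1}, so the $4$-uniform case applied to $L_G(A)$ yields the desired rotation on those four coordinates. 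Writing $\sigma$ as a bounded-length product of such ``$4$-cycles/transpositions'' (at most $k!$ many of the relevant pieces, each costing an additive $O(n^3)$ or $O(n^4)$ to the depth constant) composes these rotations, which is why the final constant carries the factor $k!$.

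\textbf{Step 2: the case $k=4$ via generators.} It suffices to produce, for each generator $\sigma \in \{(12),(13),(14)\}$ of $S_4$, a homomorphism $\phi_\sigma : T \to G^*$ with $\phi_\sigma(r)$ a prescribed vertex of $e$ and $\phi_\sigma(v) = u_{\sigma(s)}$ for $v \in C_s(T)$ deep in $T$; composing these (tracking how colour classes map through successive homomorphisms, as in Proposition~\ref{prop:homo exist}) realises an arbitrary $\sigma \in S_4$ with depth constant multiplied by at most a bounded factor. To build $\phi_\sigma$ for a transposition $\sigma$, I would wind the tree around a short tight walk inside $E_A$ (for the appropriate $2$-set $A$) that effects the swap: concretely, for $\sigma = (12)$ with $A = u_3u_4$, edges $u_1u_2u_3u_4$ and $u_2u_1u_3u_4$ are the same edge, so swapping colour classes $1$ and $2$ only requires re-colouring the tree along a loose path of bounded length — one pushes the root down a path of length $\equiv 0 \bmod k$ so that after the shift the colour class that was $C_1(T)$ now plays the role of $C_2$. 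For $\sigma = (13)$ and $(14)$, where $A$ then includes $u_2$ and one of $u_1$, the swap uses a triangle in $C_A$ (available by Lemma~\ref{lem:sturc1}\ref{Matching 3}): traversing a triangle $u_ixu_j$ in the link lets one interchange the roles of coordinates $i$ and $j$ while keeping the homomorphism valid, at the cost of $O(1)$ extra depth (hence the bound $n^k$ on tight-walk length contributes, and the factor $n^4$ in the final constant).

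\textbf{The main obstacle.} The delicate point is controlling the ``transition zone'': near the root, where $\dist(r,v) < C$, the homomorphism may be arbitrary, but we must guarantee that by depth $C$ every colour class has been routed to the correct target vertex, and that composing the generator-homomorphisms does not make one colour class of $T$ unbounded in some transition layer. This is precisely the bookkeeping in Proposition~\ref{prop:homo exist} but done $k!$ times in series; one must verify that at each stage the ``uncontrolled'' part of $T$ still shrinks (its depth is bounded by a fixed constant independent of the later stages), so that the cumulative constant stays $O((k!)n^4)$ rather than blowing up multiplicatively with the tree. The other subtlety is ensuring every intermediate loose path / triangle used for the winding actually lies in $E(C_A)$ (the large component) and not merely in $L_G(A)$; this is where the component structure from Lemma~\ref{lem:sturc1} and the perturbed-degree bounds of Proposition~\ref{perturb prop} are used repeatedly to find the required short structures inside $C_A$ avoiding any small set of already-used vertices.
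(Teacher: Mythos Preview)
Your reduction to $k=4$ (Step~1) is essentially the paper's approach, though you gloss over one point the paper handles carefully: to apply the $4$-uniform case to $L_G(A)$ with $A = e\setminus\{u_i,u_j,u_p,u_q\}$, you need the $4$-edge $u_iu_ju_pu_q$ to actually lie in $(L_G(A))^*$, not merely in $L_G(A)$. This is \emph{not} automatic for an arbitrary choice of four indices. The paper fixes this by writing $e=\overline{u_1u_2}u_3\dots u_k$ (i.e.\ $u_1u_2\in E(C_{u_3\dots u_k})$) and always taking $X=\{1,2,3,i\}$; then the link of $e\setminus X$ has $u_3u_i$ as a valid $2$-shadow and $u_1u_2\in E(C_{u_3\dots u_k})$ witnesses $u_1u_2u_3u_i\in (L_G(A))^*$.

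The real gap is in Step~2. Your description of the $(13)$ and $(14)$ swaps is where the argument breaks. A triangle in $C_A$ with $A=u_1u_2$ lets you wind a tight walk that permutes the two \emph{link} coordinates $u_3,u_4$ --- this is exactly the paper's Lemma~\ref{prop:triangle rotate} giving $(34)$-rotatability --- but it does \emph{not} let you swap a coordinate in $A$ with a coordinate outside $A$. For $(23)$-rotatability the paper needs the additional hypothesis $E(C_{u_1u_2})\cap E(C_{u_1u_3})\neq\emptyset$ (Corollary~\ref{cor:23-rotation}), which need not hold for the given edge. To get around this, the paper builds an auxiliary digraph $D$ on $V(G)$ where $\overrightarrow{u_1u_2}\in E(D)$ iff every edge in $E_{u_1u_2}$ is $[n^4,*,(23)]$-rotatable, proves $d_D^+(v)\ge(1/2-2\alpha)n$ via a separate argument (Proposition~\ref{Proposition:many mon} finding a vertex $w$ with many $v$ satisfying $E(C_{uw})\cap E(C_{uv})\neq\emptyset$), and then uses a transitivity lemma (Lemma~\ref{prop:special structure}) together with intersections like $V(C_{u_1u_2})\cap N_D^+(u_2)\neq\emptyset$ to bootstrap from edges with the extra intersection property to all edges of $G^*$. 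Your ``traverse a triangle $u_ixu_j$ in the link'' sketch does not supply any of this machinery, and without it the $(13)$-rotatability claim is unsupported.
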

When $k=4$, we prove the following (slightly stronger) statement.
\begin{lemma}\label{4-rotatable lemma}
    Let $1/n\ll\alpha \ll  \gamma\le 1/4$. Let $G$ be a $4$-graph on~$n$ vertices with~$\overline{\delta}^\alpha_{2}(G)\ge 1/2+\gamma$. Then all edges in~$G^*$ is $10^7n^4$-rotatable.
\end{lemma}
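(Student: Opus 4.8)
The plan is to prove Lemma~\ref{4-rotatable lemma} by reducing the $C$-rotatability of an edge $e=u_1u_2u_3u_4\in E(G^*)$ to the construction of a small family of ``transposition gadgets''. By Definition~\ref{defn:rotatable} it suffices, for every $u\in e$, every $\sigma\in S_4$, and every rooted $4$-loose tree $T$ at $r$, to build a homomorphism $\phi$ with $\phi(r)=u$ that collapses onto $e$ beyond distance $C$ with colour permutation $\sigma$ (all deeper subtrees can then be mapped onto $e$ via $s\mapsto u_{\sigma(s)}$, which is automatically a homomorphism since each loose edge far from $r$ uses all four colours). I would first establish the local statement: for each generator $\tau\in\{(12),(13),(14)\}$, each $u\in e$, and each rooted $4$-loose tree, there is a homomorphism sending $r$ to $u$ and realising $\tau$ beyond distance $C_0n^4$, for an absolute constant $C_0$. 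Granting this, full rotatability follows by induction on the length of a word for $\sigma$ in these generators: writing $\sigma=\tau\sigma'$, run the $\tau$-gadget on the first $C_0n^4$ layers (reaching a frontier with colour-$s$ vertices on $u_{\tau(s)}$), and on each subtree hanging off this frontier — rooted at some $u_i$ — run the $\sigma'$-gadget relative to a suitably relabelled copy of $e$, exactly as in the proof of Proposition~\ref{prop:homo exist} and using Fact~\ref{partition tree} to recurse through the subtrees; the bounds add, so $\sigma$ becomes $(\operatorname{len}(\sigma)\cdot C_0n^4)$-rotatable. Since $\{(12),(13),(14)\}$ generates $S_4$ with diameter at most six, every $\sigma$ is $6C_0n^4$-rotatable, and $6C_0n^4\le 10^7 n^4$. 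The colour-shift bookkeeping in this composition is routine but must be done carefully.

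For the gadgets, I would set up as follows. Since $e\in E(G^*)=\bigcup_A E_A$, fix a $2$-set $A\subseteq e$ with $e\in E_A$; relabelling, say $A=u_3u_4$, so that $u_1u_2\in E(C_A)$, where $C_A$ is the unique largest component of the $2$-graph $L_G(A)$ (Definition~\ref{defn:our graph}). Applying Lemma~\ref{lem:sturc1} to $L_G(A)$ together with two further link graphs $L_G(A')$, $L_G(A'')$ of other $2$-sets (each having more than $(1/2+\gamma)\binom n2$ edges by the perturbed degree hypothesis), we obtain: $|V(C_A)|>(1/2+\gamma)n$ and $e(C_A)>(1/4+\gamma)\binom n2$; $C_A$ contains a triangle~\ref{Matching 3} and a matching of size $(1/4+\gamma/3)n$~\ref{Matching 2}; and among any three such largest components two share an edge~\ref{Matching 4}. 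This yields two structural tools. First, the $4$-graph $E_A\subseteq E(G)$ is tight connected (edges $A\cup xy$ and $A\cup x'y'$ are tight-adjacent iff $xy,x'y'$ are adjacent in the connected graph $C_A$), so by the mechanism of Proposition~\ref{prop:reachablefact1}\ref{itm:reachablefact5} a loose path may be ``wound'' freely inside $E_A$ at cost at most $n^4$ in distance; crucially the triangle in $C_A$ removes the bipartiteness obstruction illustrated in Section~\ref{sketch}, so winding can in fact realise odd permutations. Second,~\ref{Matching 4} furnishes ``tight bridges'': if $C_{A_1}$ and $C_{A_2}$ share an edge $xy$ with $|A_1\cap A_2|=1$, then the $4$-edges $A_1\cup xy$ and $A_2\cup xy$ agree in three vertices, so a single tight step carries us from $E_{A_1}$ into $E_{A_2}$.

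With these tools the gadgets are built directly. For $\tau=(12)$, which fixes $u_3,u_4$ and swaps $u_1\leftrightarrow u_2$: take a shortest $C_A$-path from $u_1$ to $u_2$ (length $\le n$) and a triangle of $C_A$, and wind a loose path through the corresponding edges of $E_A$; this transports the colour-$1$ and colour-$2$ slots between $u_1$ and $u_2$, the triangle providing the exchange, while the colour-$3$ and colour-$4$ slots never leave $A=\{u_3,u_4\}$. This uses $O(n)$ loose edges and $O(1)$ windings inside a tight component, hence depth $O(n^4)$; one checks each frontier vertex of colour $s$ lands on $u_{\tau(s)}$, from any root position of $e$. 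For $\tau\in\{(13),(14)\}$, which moves a vertex of $A$, I would first use a tight bridge (via~\ref{Matching 4} applied to a suitable triple of $2$-sets such as $\{u_3u_4,\,u_1u_4,\,u_1u_2\}$) to pass into some $E_{A'}$ whose defining $2$-set $A'$ avoids the vertex to be moved, and then repeat the $(12)$-type argument inside $E_{A'}$; alternatively $(13)$ and $(14)$ can be realised as conjugates of $(12)$ by gadgets already constructed. The \emph{main obstacle} is precisely this construction and verification: tracking the location of all four colour slots through the winding, resolving the finitely many cases for how the $C_A$-path, the triangle, and the bridging pair interact with the labelling of $e$, and ensuring the resulting partial homomorphism is consistent on a tree of arbitrary degree while having depth bounded by $C_0n^4$ uniformly in all these choices and in the root position. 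Everything else is bookkeeping — including, for Lemma~\ref{main rotatable lemma}, the reduction of general $k\ge4$ to $k=4$ by running $4$-dimensional gadgets inside the link graphs $L_G(B)$ with $|B|=k-4$ (using $(L_G(B))^*\subseteq L_{G^*}(B)$ from Proposition~\ref{propo:rotatat reduce}) and composing over the choices of $B$ to generate all of $S_k$.
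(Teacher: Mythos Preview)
Your high-level framework is right and matches the paper: reduce to a generating set of transpositions via Proposition~\ref{prop:rotationfact}\ref{itm:rotationfact7}, and build each one by winding through tight-connected pieces. Your $(12)$ gadget (which, up to relabelling, is the paper's Lemma~\ref{prop:triangle rotate} giving $[30n^2,*,(34)]$-rotatability for $\overline{u_1u_2}u_3u_4$) is correct: winding inside $E_A$ plus a triangle in $C_A$ for parity does the job.

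The gap is in your $(13)$ and $(14)$ gadgets. Applying \ref{Matching 4} to a triple such as $\{u_3u_4,u_1u_4,u_1u_2\}$ only tells you that \emph{some} pair among the three $C_\bullet$'s shares an edge; it may well be the pair not containing $C_{u_3u_4}$, giving you no bridge out of $E_A$ at all. Even when a bridge from $E_{u_3u_4}$ to, say, $E_{u_1u_4}$ exists via a shared edge $xy$, you land at $u_1u_4xy$, and returning to $e=u_1u_2u_3u_4$ inside $E_{u_1u_4}$ would require $u_2u_3\in E(C_{u_1u_4})$, which you have no control over. Your fallback ``realise $(13),(14)$ as conjugates of $(12)$'' is circular: conjugating $(12)$ to $(13)$ needs $(23)$, which is not among your base gadgets, and relabelling so that $(13)$ becomes a free-slot swap would require $e\in E_{u_2u_4}$, which is not given.

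The paper handles exactly this obstruction by an indirect, two-stage argument you do not anticipate. First, Lemma~\ref{lem:13rotatable} and Corollary~\ref{cor:23-rotation} give $(23)$-rotatability of $\overline{u_1u_2}u_3u_4$ only under the \emph{conditional} hypothesis $E(C_{u_1u_2})\cap E(C_{u_1u_3})\neq\emptyset$. This condition can fail for a given edge. The paper then (i) proves a pigeonhole statement, Proposition~\ref{Proposition:many mon}, that for each $u_1$ there is a $w$ with $E(C_{u_1w})\cap E(C_{u_1v})\neq\emptyset$ for at least $(1/2-2\alpha)n$ choices of $v$; (ii) encodes this in a digraph $D$ whose arcs $\overrightarrow{u_1u_2}$ mean ``every $e\in E_{u_1u_2}$ is $[n^4,*,(23)]$-rotatable'' and shows $d_D^+\ge(1/2-2\alpha)n$; and (iii) uses a transitivity lemma, Lemma~\ref{prop:special structure}, to bootstrap: from any $\overline{u_1u_2}u_3u_4$ one finds $u_3\in V(C_{u_1u_2})\cap N_D^+(u_2)$ and a suitable $y_4$, pulls $(13)$-rotatability back through the special structure, and combines $(34),(23),(13)$ to generate $S_4$. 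A second pass of the same bootstrap then covers the edges with $\overrightarrow{u_1u_2}\notin E(D)$. So the heart of the proof is not the gadget for a single edge but the global out-degree bound plus the transfer lemma; your plan stops just short of that.
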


Then together with Lemmas~\ref{Matching existence} and~\ref{reach}, these imply Lemma~\ref{lem:k-2thereshold}.
\begin{proof}[Proof of Lemma~\ref{lem:k-2thereshold}]
    By Lemmas~\ref{Matching existence},~\ref{reach} and~\ref{main rotatable lemma},~\ref{robust1},~\ref{robust2} and~\ref{robust3} in Definition~\ref{robust} hold for~$G^*$. Therefore $G^*$ is $\eta$-robust.
\end{proof}

In the next subsection, we refine the definition of rotatable. In Section~\ref{subsetcion:2} we show that Lemma~\ref{main rotatable lemma} can be deduced from Lemma~\ref{4-rotatable lemma}. We show some auxiliary structures in Section~\ref{subsetcion:3} when $k=4$. At last, We prove Lemma~\ref{4-rotatable lemma} in Section~\ref{subsetcion:4}.
\subsection{Definitions refinement and basic properties}\label{subsetcion:1}

We refine the definition of rotatable as follows. 
Let $C \in \mathbb{N}$ and $s \in[k]$. 
Let $G$ be a $k$-graph and $ e = u_1 \dots u_k \in E(G)$. 
Let $T$ be a rooted $k$-loose tree at~$r$.
Recall that $C_j(T)$ in Section~\ref{sturc of tree}.
 For~$j\ge 0$ and $s\in [k]$, let $C_s^j(T)=\{v~|~v\in C_{s}(T): \dist(r,v)=j\}$,  $C_s^{\le j}(T)=\bigcup_{i \in [j]}C_s^i(T)$ and $C_s^{\ge j}(T)=\bigcup_{j\ge i}C_s^i(T)$.

Let $\sigma, \pi \in S_k$. We say that $e$ is \emph{$(C,\pi,\sigma)$-rotatable} if, for any rooted $k$-loose tree~$T$ at~$r$, there exists a homomorphism~$\phi$ from~$T$ to~$G$ such that
\begin{align*}
	\phi(v) = 
	\begin{cases}
		u_{\pi(1)} & \text{if $v=r$},\\
		u_{\sigma(s)} & \text{if $v\in C^{\ge C}_{s}(T)$}.
		\end{cases}
\end{align*}

We say that $e$ is \emph{$[C,\pi,\sigma]$-rotatable} if $e$ is $(C,\pi,\sigma \pi)$-rotatable.
We say that $e$ is \emph{$[C,i,\sigma]$-rotatable} if $e$ is $[C,\pi,\sigma ]$-rotatable for all $\pi\in S_k$ with~$\pi(1)=i$.
We say that $e$ is \emph{$[C,*,\sigma]$-rotatable} if $e$ is $[C,\pi,\sigma]$-rotatable for all $\pi \in S_k$. Note that the order of vertices of~$e$ matters, but it will be known from context.

We illustrate one key idea used in the proof of Lemma~\ref{4-rotatable lemma}. Suppose that $\sigma\in S_k$. Let $e=u_1\dots u_k$ be an $k$-edge. Consider a tree~$T$ rooted at~$r$. Let $\mathcal{T}_i=\{T(v):v\in C_i^1(T)\}$ for~$i\in [k-1]$. By Fact~\ref{partition tree}, $r, V(\mathcal{T}_1), \dots, V(\mathcal{T}_{k-1})$ partition~$V(T)$. If $e$ is $[C,i,\sigma]$-rotatable for~$i\neq 1$. Then we attempt to initially embed~$r$ to~$u_1$ and $C_i^1(T)$ to~$u_i$. Next we extend our embedding for each $T'\in \mathcal{T}_i$ such that the root vertex in~$T'$ mapping to~$u_i$ and most vertices of~$C_i(T)\cap V(T')$ mapping to~$u_{\sigma(i)}$.

 We now list some basic properties of rotatable. 

\begin{proposition} \label{prop:rotationfact}
Let $C \in \mathbb{N}$.
Let $G$ be a $k$-graph and $ e = u_1 \dots u_k \in E(G)$.
Let $\pi, \sigma, \tau \in S_k$ with~$\tau=(123\dots k)\in S_k$. 
Then the following holds
\begin{enumerate}[label={\rm (\roman*)}]
	\item if $\sigma(i) = i$, then $e$ is $[1, i, \sigma]$-rotatable and so $e$ is $[1,*,\emph{\textbf{id}}]$-rotatable;
	\label{itm:rotationfact2}
	\item if $e$ is $(C, \pi\tau^j ,  \sigma \tau^j)$-rotatable for all $j \in [k-1]$, then $e$ is $(C+1, \pi, \sigma)$-rotatable; \label{itm:rotationfact3}
    \item if $i_0\in [k]$ and $e$ is $[C,i,\sigma]$-rotatable for all $i\in [k]\backslash \{i_0\}$, then $e$ is $[C+1,*,\sigma]$-rotatable; 
    \label{itm:rotationfact1}
	\item if $e$ is $C$-rotatable, then $e$ is $[C, * ,\sigma]$-rotatable;  \label{itm:rotationfact4}
	\item if $e$ is $[C,*,\sigma]$-rotatable for all $\sigma \in S_k$, then $e$ is $C$-rotatable;
	\label{itm:rotationfact5}
    \item if $e$ is $[C,*,\sigma]$-rotatable, then $e'=u_{\pi(1)}\dots u_{\pi(k)}$ is $[C,*,\pi\sigma\pi^{-1}]$-rotatable.\label{itm:rotationfact8};
	\item if $\sigma_1, \sigma_2 \in S_k$ and $e$ is $[C,*,\sigma_i]$-rotatable for~$i \in [2]$, then $e$ is $[2C,*,\sigma_1\sigma_2]$-rotatable;
	\label{itm:rotationfact6}
	\item if $A$ is a generator of~$S_k$ and $e$ is $[C,\ast,\sigma]$-rotatable for all $\sigma \in A$, then $e$ is $k!C$-rotatable. 
	\label{itm:rotationfact7}

\end{enumerate}
\end{proposition}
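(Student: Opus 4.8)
The statement to prove is Proposition~\ref{prop:rotationfact}, a list of eight basic closure properties of the rotatable notions. I will sketch a plan that addresses each item in turn, since they are largely independent bookkeeping arguments about homomorphisms of rooted loose trees built by partitioning into subtrees.

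\textbf{Overall approach.} The plan is to unwind the definitions. Recall that $e$ being $[C,\pi,\sigma]$-rotatable means $e$ is $(C,\pi,\sigma\pi)$-rotatable, that $[C,i,\sigma]$ means $[C,\pi,\sigma]$ for all $\pi$ with $\pi(1)=i$, and that $[C,*,\sigma]$ means for all $\pi\in S_k$. The workhorse is Fact~\ref{partition tree}: removing the root $r$ from a rooted loose tree $T$ splits $V(T)$ into $r$ together with the vertex sets of the subtrees hanging off the neighbours of $r$, grouped by colour class, and crucially each such subtree, re-rooted, is again a rooted loose tree with a cyclically shifted colouring (shift by $\tau$). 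This is exactly the mechanism already used in the proof of Proposition~\ref{prop:reachablefact1}\ref{itm:reachablefact3}, and several items here are the ``rotatable'' analogues of facts proved there.

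\textbf{Item-by-item plan.} For \ref{itm:rotationfact2}: if $\sigma(i)=i$, define $\phi$ sending $r\mapsto u_i$ and every non-root vertex of colour class $s$ to $u_i$ when $s=i$ and — wait, more carefully — we only need $\phi(v)=u_{\sigma\pi(s)}$ for $v\in C_s^{\ge1}(T)$ with $\pi(1)=i$; but in fact with $C=1$ there are no constraints for distance $\ge 1$ beyond being an edge, and one checks the constant-ish assignment dictated by $\sigma$ works because each edge of a loose tree meets each colour class exactly once, so its image is exactly $\{u_1,\dots,u_k\}=e$. I would write this out: define $\phi(r)=u_{\pi(1)}$ and $\phi(v)=u_{\sigma\pi(s)}$ for $v\in C_s(T)$, $v\neq r$; any edge of $T$ hits colour classes $\{1,\dots,k\}$ once each near a ``central'' vertex, and one verifies that it maps onto $e$ — here the hypothesis $\sigma(i)=i$ ensures the root's own edges are handled consistently. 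For \ref{itm:rotationfact3}: this is the direct analogue of Proposition~\ref{prop:reachablefact1}\ref{itm:reachablefact3}. Partition $T$ via Fact~\ref{partition tree} into $r$ and subtrees $\mathcal T_2,\dots,\mathcal T_k$ hanging off colour classes $\tau^{j}(1)$; each $T'\in\mathcal T_j$ is a rooted loose tree with colouring shifted by $\tau^{j}$, so the hypothesis that $e$ is $(C,\pi\tau^j,\sigma\tau^j)$-rotatable provides a homomorphism on $T'$ placing its root at $u_{\pi(1)}$-side correctly and its deep vertices according to $\sigma$; glue these together and the distance threshold increases by $1$. Item \ref{itm:rotationfact1} follows from \ref{itm:rotationfact3} by quantifying over all $\pi$: given $\pi$ with $\pi(1)\neq i_0$, the subtrees require rotatability at indices $\pi\tau^j(1)$ which range over values; one arranges, possibly by a relabelling argument, that the single excluded index $i_0$ never needs to serve as a root colour, then applies \ref{itm:rotationfact3} — I should be careful here, as this is the subtlest of the ``one-step'' items.

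\textbf{The remaining items and the main obstacle.} Items \ref{itm:rotationfact4} and \ref{itm:rotationfact5} are immediate translations between the $C$-rotatable definition (Definition~\ref{defn:rotatable}) and the $[C,*,\sigma]$ notation: $C$-rotatable says for all $u\in e$, all $\sigma\in S_k$, all $T$ there is a homomorphism with $r\mapsto u$ and $C_s^{\ge C}(T)\mapsto u_{\sigma(s)}$; writing $u=u_{\pi(1)}$ and substituting $\sigma\mapsto\sigma\pi^{-1}$ shows this is the same as $[C,*,\sigma]$-rotatable for all $\sigma$. Item \ref{itm:rotationfact8} is a relabelling of the ground edge $e$: if $e'=u_{\pi(1)}\dots u_{\pi(k)}$, then a homomorphism witnessing $[C,*,\sigma]$-rotatability of $e$ becomes one witnessing $[C,*,\pi\sigma\pi^{-1}]$-rotatability of $e'$ by composing the index permutations; I would just track how the colour-class-to-vertex map transforms. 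Item \ref{itm:rotationfact6} is the composition step and is where the factor of $2C$ enters: given $[C,*,\sigma_1]$- and $[C,*,\sigma_2]$-rotatability, first build a homomorphism $\phi_1$ placing deep ($\ge C$) vertices of each colour class $s$ onto $u_{\sigma_2(s)}$, then on the subtrees rooted at the depth-$C$ vertices apply $\sigma_1$-rotatability to push the still-deeper ($\ge 2C$) vertices to $u_{\sigma_1\sigma_2(s)}$ — the cyclic-shift bookkeeping of the colouring on re-rooted subtrees must be handled exactly as in Proposition~\ref{prop:homo exist}. Finally \ref{itm:rotationfact7} iterates \ref{itm:rotationfact6}: any $\sigma\in S_k$ is a product of at most (a bounded function of $k$, crudely $k!$) generators, so $[(k!)C,*,\sigma]$-rotatability holds for all $\sigma$, hence $k!C$-rotatable by \ref{itm:rotationfact5} — one should note the constant stated, $k!C$, is what the telescoping gives up to absorbing the iteration, and I would check the arithmetic is consistent with repeated doubling versus the claimed bound (the paper is evidently being generous with constants, cf.\ the $10^7(k!)n^4$ in Lemma~\ref{main rotatable lemma}). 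The main obstacle throughout is purely notational: keeping straight the distinction between the three decorations $(\cdot)$, $[\cdot]$ with a number index, and $[\cdot]$ with $*$, and correctly tracking how the colour classes of a re-rooted subtree relate to those of $T$ under the cyclic shift $\tau$ — getting item \ref{itm:rotationfact1} right (ensuring the forbidden index $i_0$ is genuinely avoidable) is the one place where a clean argument requires a moment's thought rather than pure symbol-pushing.
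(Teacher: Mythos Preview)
Your proposal is correct and follows the same approach as the paper's proof: each item is handled by unwinding definitions and, where needed, decomposing $T$ via Fact~\ref{partition tree} into subtrees with cyclically shifted colourings. Two small clarifications on the points you flagged: for \ref{itm:rotationfact1}, the nontrivial case is $\pi(1)=i_0$ (not $\pi(1)\neq i_0$, which is already covered by hypothesis), and there the shifts $\pi\tau^j(1)$ for $j\in[k-1]$ hit exactly $[k]\setminus\{i_0\}$, so \ref{itm:rotationfact3} applies directly with no relabelling needed; for \ref{itm:rotationfact7}, the proof of \ref{itm:rotationfact6} actually gives the additive bound $[C_1+C_2,*,\sigma_1\sigma_2]$ from $[C_1,*,\sigma_1]$ and $[C_2,*,\sigma_2]$, so a word of length at most $k!$ in the generators yields $[(k!)C,*,\sigma]$ rather than anything exponential.
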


\begin{proof}
For~\ref{itm:rotationfact2}, note that if $\pi\in S_k$ and $\pi(1)=i$, then $\sigma\pi(1)=i$. Set $\phi: V(T) \rightarrow V(G)$ to be such that, for all $ v \in C_{s}(T)$, $\phi(v) = u_{\sigma\pi(s)}$. Then $e$ is $[1,i,\sigma]$-rotatable and so $e$ is $[1,*,\textbf{id}]$-rotatable.

We now prove~\ref{itm:rotationfact3}.
Consider any rooted $k$-loose tree~$T$ at~$r$. 
For~$j \in [k-1]$, let $\mathcal{T}_j$ be all rooted subtrees~$T(v)$ with~$ v \in C_{\tau^j(1)}^1(T)$. 
Note that $r, V(\mathcal{T}_1), \dots, V(\mathcal{T}_{k-1})$ partition~$V(T)$. Consider $ j\in [k-1]$, let $T'\in \mathcal{T}_j$ rooted at~$r'$.
Let $C_{s}(T')=C_{\tau^j(s)}(T)\cap V(T')$ for~$s\in [k]$ and $C^{C}_s(T')=C^{C+1}_{\tau^j (s)}\cap V(T')$.
By our assumption, there exists a homomorphism~$\phi_{T'}$ from~$T'$ to~$G$ such that
\begin{align*}
	\phi_{T'}(v) = 
	\begin{cases}
		u_{ \pi\tau^j(1) } & \text{if $v=r'\in C^1_{\tau^j(1)}(T)$},\\
		u_{ \sigma\tau^j(s) } & \text{if $v\in  C^{\ge C}_{s}(T')$}= C^{\ge (C+1)}_{\tau^j(s)}(T)\cap V(T').
	\end{cases}
\end{align*}
Define $\phi : V(T) \rightarrow V(G)$ to be such that 
\begin{align*}
	\phi(v) = 
	\begin{cases}
		u_{\pi(1)} & \text{if $v=r$,}\\
		\phi_{T'}(v)  & \text{if $ v \in V(T')$ for some $T' \in \bigcup_{j \in [k-1]}\mathcal{T}_j$}.
	\end{cases}
\end{align*}
It is easy to check that $\phi$ is a homomorphism from~$T$ to~$G$ with~$\phi(r)=u_{\pi(1)}$ and for all $v\in C_s^{\ge C+1}(T)$, $\phi(v)=u_{\sigma(s)}$.

For~\ref{itm:rotationfact1}, it suffices to show that $e$ is $[C+1,i_0,\sigma]$-rotatable. Let $\pi\in S_k$ with~$\pi(i)=i_0$. Note that for~$j\in [k-1]$, $\pi\tau^j(1)\neq i_0$. Then $e$ is $(C,\pi\tau^j,\sigma\pi \tau^j)$-rotatable by our assumption. So $e$ is $(C+1,\pi,\sigma\pi)$-rotatable by~\ref{itm:rotationfact3}. Hence $e$ is $[C+1,i_0,\sigma]$-rotatable. 

It is easy to see that~\ref{itm:rotationfact4},~\ref{itm:rotationfact5} and~\ref{itm:rotationfact8} hold.

For~\ref{itm:rotationfact6}, 
consider any rooted $k$-loose tree~$T$ at~$r$. 
By our assumption, there exists a homomorphism~$\phi_1$ from~$T$ to~$G$ such that
\begin{align*}
	\phi_1(v) = 
	\begin{cases}
		u_{ \pi(1) } & \text{if $v=r$,}\\
		u_{ \sigma_1\pi(s) } & \text{if $v\in  C^{\ge C}_{s}(T)$.} 
	\end{cases}
\end{align*}
For~$j\in [k]$, let $\mathcal{T}_j$ be all rooted subtrees $T(v)$ with~$ v \in C_{j}^C(T)$. Consider $ j\in [k]$ and let $T'\in \mathcal{T}_j$ rooted at~$r'$.
Let $\pi_j=\sigma_1\pi\tau^{j-1}$ and $C_{s}(T')=C_{\tau^{j-1}(s)}(T)\cap V(T')$ for~$s\in [k]$. So $C^{C'}_s(T')=C^{C+C'}_{\tau^{j-1} (s)}(T)\cap V(T')$. Since $e$ is $[C,*,\sigma_2]$-rotatable, there exists a homomorphism~$\phi'_{T'}$ from~$T'$ to~$G$ to be such that 
\begin{align*}
	\phi'_{T'}(v) = 
	\begin{cases}
		u_{ \pi_s(1) }= u_{ \sigma_1\pi(j) }& \text{if $v=r'$,}\\
		u_{ \sigma_2\pi_s(s) }=u_{\sigma_2\sigma_1\pi\tau^{j-1}(s)} & \text{if $v\in  C^{\ge C}_{s}(T')=C^{\ge 2C}_{\tau^{j-1} (s)}(T)\cap V(T')$.}
	\end{cases}
\end{align*}
Next, define $\phi : V(T) \rightarrow V(G)$ to be such that 
\begin{align*}
	\phi(v) = 
	\begin{cases}
		\phi_1(v) & \text{if $\dist(r,v)\le C$,}\\
		\phi_{T'}(v)  & \text{if $ v \in V(T')$ for some $T' \in \bigcup_{j \in [k-1]}\mathcal{T}_j$}.
	\end{cases}
\end{align*}
It is easy to check that $\phi$ is a homomorphism from~$T$ to~$G$ with~$\phi(r)=u_{\pi(1)}$ and for all $v\in C_s^{\ge 2C}(T)$, $\phi(v)=u_{\sigma_1\sigma_2\pi(s)}$. Therefore~\ref{itm:rotationfact6} holds and~\ref{itm:rotationfact7} follows. 
\end{proof}
\subsection{Reduction to the case $k=4$}\label{subsetcion:2}

We now prove Lemma~\ref{main rotatable lemma} assuming Lemma~\ref{4-rotatable lemma}. Recall the Definition~\ref{defn:our graph} of~$G^*$.
  \begin{proof}[Proof of Lemma~\ref{main rotatable lemma}]
      Let $e=u_1\dots u_k\in E_{\emptyset}=E(G^*)$ with~$u_{1}u_2\in C_{u_3\dots u_{k}}$. Note that $\{(1i):i\in [2,k]\}$ is a generator set of~$S_k$.
      By Proposition~\ref{prop:rotationfact}~\ref{itm:rotationfact7}, it suffices to show that for~$i\in [2,k]$, $e$ is $[10^7n^4,\ast,(1i)]$-rotatable.
      
      Let $i\in [4,k]$, $X=\{1,2,3,i\}$ and $\sigma\in S_X$. We now show that $e$ is $[10^7n^4,*,\sigma]$-rotatable (which implies the lemma). Let $L$ be the link graph of~$e\backslash\{u_1,u_2,u_3,u_i\}$ in~$G$ and $L'$ be the link graph of~$e\backslash\{u_1,u_2,u_3,u_i\}$ in~$G^*$. So $\alpha$-perturbed relative $2$-degree in~$L$ is at least $1/2+\gamma$ by Proposition~\ref{perturb prop}~\ref{itm:perturb prop 1}. Let $L^*$ be the corresponding graph of~$L$ as in Definition~\ref{defn:our graph}. Then by Proposition~\ref{propo:rotatat reduce}, we have that $L^*\subseteq L'$. 

     Let $T$ be a rooted $k$-loose tree at~$r$ and $\pi\in S_k$ with~$\pi(1)=y$. If $y\notin  X$, then $\sigma(y)=y$. Thus by Proposition~\ref{prop:rotationfact}~\ref{itm:rotationfact2}, $e$ is $[1,y,\sigma]$-rotatable for~$y\notin X$.

     Suppose that $\pi(1)=y$ for some $y\in X$. Remove vertices in~$\bigcup_{j\in [k]\backslash X}C_{\pi^{-1}(j)}(T)$ from each edge of~$T$. Then we obtain a set $\mathcal{T}$ of vertex-disjoint $4$-trees. Let $T'\in \mathcal{T}$ be the unique tree with root vertex $r$. Let $C_{\pi^{-1}(j)}(T')=C_{\pi^{-1}(j)}(T)\cap V(T')$ for~$j\in X$. By our assumption, each edge in~$L^*$ is $10^7n^4$-rotatable. Then there is a homomorphism $\phi_{T'}$ from~$T'$ to~$L^*$ such that for~$u\in V(T')$ and $s\in \pi^{-1}(X)$,
     \begin{align*}
	\phi_{T'}(u) = 
	\begin{cases}
		u_{ \pi(1) } & \text{if $u=r$,}\\
		u_{ \sigma \pi(s) } & \text{if $u\in  C^{\ge 10^7n^4}_{s}(T')$}.
	\end{cases}
\end{align*}
 Define $\phi : V(T) \rightarrow V(G)$ to be such that 
     \[
\phi(u) =
\begin{cases}
u_{\pi(s)}=u_{\sigma \pi(s)} &~\text{ if }u\in C_{s}(T)\text{ with }\pi(s)\notin X, \\
u_{\sigma\pi(s)}&~\text{ if }u\in C_{s}(T)\backslash V(T')\text{ with }\pi(s)\in X,\\
\phi_{T'}(u) &~ \text{ if }u\in V(T').

\end{cases}
\]   
Note that $\phi$ is the desired homomorphism from~$T$ to~$G$.
  \end{proof}

    \subsection{Rotatable Structures for~$4$-graphs}\label{subsetcion:3}

We now establish some rotatable structures for proving Lemma~\ref{4-rotatable lemma}. Thus throughout this subsection, we will take $k=4$. We will also adapt the following notation throughout. Let $G$ be a $4$-graph and $e$ an edge in~$G^*$. We write $e=\overline{u_1u_2}u_3u_4$ to mean that $e\in E_{u_1u_2}$.

We sketch the proof for Lemma~\ref{4-rotatable lemma}. Let $A=\{(13),(23),(43)\}\subseteq S_4$. Note that $A$ is a generator set of~$S_4$. By Proposition~\ref{prop:rotationfact}~\ref{itm:rotationfact7}, if any edge $e$ is $[C,*,\sigma]$-rotatable for all $\sigma\in A$, then $e$ is rotatable and we are done. In this subsection, we will focus on the cases that $\sigma=(34)$ and $(23)$ (see more in Lemmas~\ref{prop:triangle rotate} and~\ref{lem:13rotatable}). Furthermore, we will show a structure in Lemma~\ref{prop:special structure} with a transitive property which will help us prove $[C,*,(13)]$-rotatable for edge $e$ in Section~\ref{subsetcion:4}.

The next proposition states that the relationship between rotatablility of two edges that are tight connected.

\begin{proposition}\label{prop:rotationtight}
    Let $C\in \mathbb{N}$. Let $G$ be a $4$-graph. Let $e=u_1\dots u_4$ and $f=u_1'\dots u_4'$ be two edges in~$G$. Suppose that there is a tight walk $W=w_1\dots w_{4\ell}$ from~$e$ to~$f$ such that 
    \begin{align*}
        w_i=u_{\rho_e(i)} \text{ and } w_{4(\ell-1)+i}=u_{\rho_f(i)}' \text{ for } i\in [4]. 
    \end{align*}
Let $T$ be a rooted $k$-loose tree at~$r$, $j\in [4]$ and $\sigma\in S_4$. Suppose that $\phi$ is a homomorphism from~$T$ to~$G$ such that
    \begin{align*}
        \phi(v)=
        \begin{cases}
            u_j &~\text{if $v=r$,}\\
            u_{\sigma(s)}&~\text{if $v\in C^{\ge C}_s(T)$}.
        \end{cases}
    \end{align*}
    Then there exists a homomorphism~$\phi'$ from~$T$ to~$G$ such that 
    \begin{align*}
        \phi'(v)=
        \begin{cases}
            u_j &~\text{if $v=r$,}\\
            u'_{\rho_f \rho_e^{-1} \sigma(s)}&~\text{if $v\in C^{\ge 4(C+{\ell})}_s(T)$}.
        \end{cases}
    \end{align*}
    Moreover, if $e$ is $[C,*,\sigma]$-rotatable, then $f$ is $[4C+8\ell,*,\rho^{-1}\sigma]$-rotatable, where $\rho=\rho_e\rho_f^{-1}$.
    Furthermore, if $e$ is $C$-rotatable, then $f$ is $(4C+8\ell)$-rotatable.
\end{proposition}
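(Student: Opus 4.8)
The plan is to transport a homomorphism along the tight walk $W$ one step at a time. First I would prove the basic transport step: if $e$ and $e'$ are two $4$-edges with $|e\cap e'|=3$, and $\phi$ is a homomorphism of a rooted tree $T$ at $r$ with $\phi(r)=u_j$ and $\phi(v)=u_{\sigma(s)}$ for all $v\in C_s^{\ge C}(T)$, then there is a homomorphism $\phi'$ with the image of the deep part ``shifted'' by the transposition that swaps the unique vertex of $e\setminus e'$ with the unique vertex of $e'\setminus e$. Concretely, if $e=u_1u_2u_3u_4$ and $e'$ agrees with $e$ except $u_p$ is replaced by $u_p'$, I keep $\phi$ on the shallow part ($\dist(r,v)\le C$), and on each subtree $T(v)$ rooted at a vertex $v$ with $\dist(r,v)=C$ I re-root and re-colour so that its root is mapped to the appropriate vertex of $e'$; a vertex deep in $C_p$ is now mapped to $u_p'$, using that $e'$ is an edge through $u_p'$ together with the three shared vertices. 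This costs an additive constant (at most $C$) in the depth parameter per step, because we only re-embed below depth $C$.

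Next I would iterate this over the $\ell$ consecutive overlapping $4$-tuples $w_1\dots w_4,\ w_2\dots w_5,\ \dots,\ w_{4\ell-3}\dots w_{4\ell}$ that make up the tight walk $W$. Each overlap has size $3$, so each step applies the transport lemma; after $\ell-1$ steps I reach the tuple $\{w_{4(\ell-1)+1},\dots,w_{4\ell}\} = \{u_1',\dots,u_4'\}$. The composition of the transpositions picked up along the way, read against the identifications $w_i=u_{\rho_e(i)}$ at the start and $w_{4(\ell-1)+i}=u_{\rho_f(i)}'$ at the end, is exactly the permutation $\rho_f\rho_e^{-1}$ relabelling indices of $e$ into indices of $f$; so a vertex deep in $C_s(T)$ lands on $u'_{\rho_f\rho_e^{-1}\sigma(s)}$. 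Keeping track of the depth increments, each of the (at most) $4\ell$ single-vertex swaps along the walk (a tight walk on $4\ell$ vertices decomposes into $\le 4\ell$ such elementary shifts) adds at most $C$ to the threshold, but more carefully one re-embedding at depth $C$ followed by $O(\ell)$ shifts at bounded depth gives the stated bound $4(C+\ell)$; I would state the crude bound and check it suffices. The root stays fixed at $u_j$ throughout since $u_j\in e$ and $e$ is the first tuple of $W$ — here one uses that $w_1\dots w_4 = e$ with the prescribed ordering, so $u_j$ is never the vertex being swapped out at the first step, or if it is, one first re-roots harmlessly.

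For the ``moreover'' clauses: if $e$ is $[C,*,\sigma]$-rotatable, apply the transport statement for every choice of starting vertex $u_j$ and every $\pi$ with $\pi(1)=j$; since $[C,*,\sigma]$-rotatable means $[C,\pi,\sigma]$-rotatable, i.e.\ $(C,\pi,\sigma\pi)$-rotatable, for all $\pi$, the transported homomorphisms witness $(4C+8\ell,\pi,\rho_f\rho_e^{-1}\sigma\pi)$-rotatability of $f$ for all $\pi$, which with $\rho=\rho_e\rho_f^{-1}$ is exactly $f$ being $[4C+8\ell,*,\rho^{-1}\sigma]$-rotatable. Finally, if $e$ is $C$-rotatable then by Proposition~\ref{prop:rotationfact}\ref{itm:rotationfact4} it is $[C,*,\sigma]$-rotatable for every $\sigma\in S_4$; running the above for all $\sigma$ shows $f$ is $[4C+8\ell,*,\tau]$-rotatable for every $\tau\in S_4$ (as $\sigma$ ranges over $S_4$ so does $\rho^{-1}\sigma$), and then Proposition~\ref{prop:rotationfact}\ref{itm:rotationfact5} gives that $f$ is $(4C+8\ell)$-rotatable.

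The main obstacle I expect is the bookkeeping in the iteration: correctly composing the transpositions along the walk to obtain precisely $\rho_f\rho_e^{-1}$ (rather than its inverse or a conjugate), and verifying that re-rooting each depth-$C$ subtree at a prescribed vertex of the next $4$-tuple is always possible — this needs the re-rooting/re-colouring device already used in Proposition~\ref{prop:homo exist} and Proposition~\ref{prop:rotationfact}\ref{itm:rotationfact3}, applied with the cyclic permutation $\tau$ to realign colour classes. The depth arithmetic ($4(C+\ell)$ vs.\ the rougher $4C+8\ell$) is routine once the elementary step's cost is pinned down.
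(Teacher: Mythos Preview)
Your approach to the first part differs from the paper's and is clumsier than needed. The paper does not iterate a one-step transport lemma. Instead it uses the layering of~$T$ directly: every edge of~$T$ spans four consecutive layers $L_a,L_{a+1},L_{a+2},L_{a+3}$, and every four consecutive vertices $w_m w_{m+1} w_{m+2} w_{m+3}$ of a tight walk form an edge of~$G$. So the paper simply sets $\phi'=\phi$ on the first $4C$ layers and then sends $v\in L_{4C+4i+s}$ to $w_{\rho_e^{-1}\sigma(s)+4(i-1)}$ (with the walk extended periodically past~$f$). This is a homomorphism by construction and gives the depth bound $4(C+\ell)$ immediately. Your iterative scheme, taken at face value, costs an additive~$C$ per step and yields only $O(C\ell)$; the ``more carefully'' you allude to is precisely this direct layering trick, not a refinement of the iteration.

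Your argument for the ``moreover'' clause has a genuine gap. The first part produces homomorphisms with root $u_j\in e$, but $[4C+8\ell,*,\rho^{-1}\sigma]$-rotatability of~$f$ requires homomorphisms rooted at an arbitrary $u'_{\pi(1)}\in f$. You cannot just ``apply the transport statement for every choice of~$u_j$'': the root never moves off~$e$. The paper fixes this by first running the walk \emph{backwards}: it maps the first $4\ell$ layers of~$T$ along~$W$ from~$f$ to~$e$ (so $\phi_0(r)=u'_{\pi(1)}$ and vertices at layer~$\sim 4\ell$ land in~$e$), and then, on each subtree rooted at layer~$4\ell$, invokes the first part (rotatability of~$e$ plus forward transport to~$f$). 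That is where the extra $4\ell$ in $4C+8\ell$ comes from. Your proposal needs this reverse-walk step added before the moreover goes through.
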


\begin{proof}
Recall that the definition of layers $L_j$ of~$T$ for~$j\ge 1$ in Section~\ref{sturc of tree}. Note that if $u\in L_j$, then $j/4\le \dist(r,u)\le j$. For all $t\ge 0$ and $j\in [4]$, let $w_{4(\ell-1+t)+j}=w_{4(\ell-1)+j}$. To define $\phi$, we first embed the first $4C$ layers of~$T$ as $\phi'$ so that it has the correct ``orientation''. We then use $W$ to transfer the embedding onto~$f$. Formally, we define $\phi':V(T)\rightarrow V(G)$ to be such that
   \begin{align*}
       \phi'(v)=
       \begin{cases}
           \phi(v) &~\text{ if $v\in L_i$ with~$i\le 4C$,}\\
           w_{\rho_e^{-1}\sigma(s)+4(i-1)}&~\text{ if $v\in L_{4C+4i+s}$ with~$i\ge 1$ and $s\in [4]$}.
       \end{cases}
   \end{align*}
Then for each $i\ge \ell$ and $s\in [4]$, if $v\in L_{4C+4i+s}$, we have $$\phi'(v)=w_{4(l-1)+\rho_e^{-1}\sigma(s)}=u'_{\rho_f \rho_e^{-1} \sigma(s)}.$$ Therefore $\phi'$ is the desired homomorphism.

For the moreover statement, suppose that $e$ is $[C,*,\sigma]$-rotatable. Let $\pi\in S_k$. Let $w\in T$ with~$w\in \bigcup_{j\in [4]}L_{4\ell +j}$. Consider subtree $T(w)$. Then for each $i\in [4]$, there exists a homomorphism~$\phi_{w,i}$ from~$T(w)$ to~$G^*$ such that  
\begin{align*}
        \phi_{w,i}(v)=
        \begin{cases}
            u_i&~\text{if $v=w$,}\\
            u'_{\rho_f \rho_e^{-1}\sigma(s)}&~\text{if $v\in C^{\ge 4(C+{\ell})}_s(T(w))$.}
        \end{cases}
    \end{align*}
Thus, define $\phi_0:V(T)\rightarrow V(G)$ to be such that for~$j\in [4]$, 
\begin{align*}
        \phi_0(v)=
        \begin{cases}
            \phi_1(v)=w_{4(\ell-i)+\rho_f^{-1}\pi(j)} &~\text{if $v\in L_{4(i-1)+j}$ and $1\le i\le \ell$,}\\
            \phi_{w,i}(v)&~\text{if $v\in V(T(w))$ with~$w\in \bigcup_{j\in [4]}L_{4\ell +j}, \phi_1(w)=u_i$.}
        \end{cases}
    \end{align*}
Note that we have that $$\phi_0(r)=w_{4(\ell-1)+\rho_f^{-1}\pi(1)}=u'_{\pi(1)}$$ and for vertex $v\in C_s^{\ge 8\ell +4C}(T)$, $\phi_0(v)= u'_{\rho^{-1}\sigma (s)}$.
Hence, $f$ is $[4C+8\ell,*, \rho^{-1}\sigma]$-rotatable and so the furthermore statement follows.
\end{proof}

The next proposition states that there is an even tight walk between any edges in~$E_{uv}$. Recall that $C_{uv}$ is the induce graph of the largest component in the link graph of~$uv$ in~$G$.
\begin{proposition}\label{prop:rotationfact2}
    Let $1/n\ll\alpha \ll  \gamma$. Let $G$ be a $4$-graph on~$n$ vertices with~$\overline{\delta}^\alpha_{2}(G)\ge 1/2+\gamma$. Let $\overline{u_1u_2}u_3u_4, \overline{u_1u_2}y_3y_4\in E(G)$. Then there exists a tight $4$-walk $W=w_1w_2\dots w_{4\ell}$ such that, 
    \begin{align*}
        (w_1,w_2,w_3,w_4,w_{4\ell-3},w_{4\ell-2},w_{4\ell-1},w_{4\ell})=(u_1,u_2,y_3,y_4,u_1,u_2,u_3,u_4).
    \end{align*}
\end{proposition}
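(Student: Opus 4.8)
The plan is to build the walk $W$ in two halves, one connecting the "ordered edge'' $(u_1,u_2,u_3,u_4)$ to the vertex pair $\{u_1,u_2\}$ in a controlled way, and one doing the same for $(u_1,u_2,y_3,y_4)$, and then to splice them together so that the resulting walk has length divisible by $4$ and the prescribed initial and terminal four-tuples. The key structural fact I will use is that $C_{u_1u_2}$, the largest component of the link graph $L_G(u_1u_2)$, is connected (so any two of its edges are joined by a path) and, by Lemma~\ref{lem:sturc1}~\ref{Matching 3}, contains a triangle; moreover $\overline{\delta}^\alpha_2(G)\ge 1/2+\gamma$ forces $|V(C_{u_1u_2})|>(1/2+\gamma)n$ by Lemma~\ref{lem:sturc1}~\ref{Matching 1}. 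Both $u_3u_4$ and $y_3y_4$ are edges of $C_{u_1u_2}$ by the definition of $E_{u_1u_2}$ in Definition~\ref{defn:our graph}.

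First I would record the elementary observation that if $a_1a_2a_3a_4$ and $a_1a_2a_3a_4'$ are both edges of $G$ (equivalently $a_3a_4, a_3a_4'\in E(L_G(a_1a_2))$ sharing the vertex $a_3$), then $a_1a_2a_3a_4, a_1a_2a_3'a_4', \dots$ can be strung into a tight $4$-walk tracking a walk in the link graph: each step replacing the last vertex corresponds to $|e\cap e'|=3$. More precisely, a walk $v_0v_1v_2\dots v_m$ in $C_{u_1u_2}$ lifts to a tight walk in $G$ by reading off the $4$-edges $u_1u_2v_iv_{i+1}$ and inserting $\{u_1,u_2\}$ between consecutive pairs; carefully expanded this is a tight $4$-walk whose first four vertices can be arranged as $(u_1,u_2,v_0,v_1)$ and whose last four as $(u_1,u_2,v_{m-1},v_m)$, with total length a multiple of $4$ controlled by $m$. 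Since $u_3u_4$ and $y_3y_4$ lie in the same connected graph $C_{u_1u_2}$, choose a walk in $C_{u_1u_2}$ from the edge $u_3u_4$ to the edge $y_3y_4$; this gives a tight $4$-walk $W'$ from some tuple $(u_1,u_2,u_3,u_4)$ (possibly with $u_3,u_4$ swapped) to $(u_1,u_2,y_3,y_4)$ (possibly swapped).

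The remaining issue is the \emph{orientation}: the lifted walk may start with $(u_1,u_2,u_4,u_3)$ rather than $(u_1,u_2,u_3,u_4)$, or end with the roles of $y_3,y_4$ exchanged, and I need to fix parity/ordering of the endpoints exactly as stated. This is where the triangle from Lemma~\ref{lem:sturc1}~\ref{Matching 3} enters: a triangle $xyz$ in $C_{u_1u_2}$ through (or reachable to) a given edge lets me prepend or append a short closed-up detour that transposes the last two coordinates of the tuple while keeping the walk length a multiple of $4$ — going around a triangle corresponds to an odd cycle in the link graph, which flips the "parity'' of how the ordered pair sits at the end of the walk. Concretely, for a vertex $w$ that is common to $C_{u_1u_2\text{-edges}}$ near $u_3u_4$, I can detour $u_4\to w\to u_3\to u_4$-type moves; since $C_{u_1u_2}$ is non-bipartite I can always adjust the walk at each end to realise \emph{both} the desired starting tuple $(u_1,u_2,u_3,u_4)$ and the desired ending tuple $(u_1,u_2,u_3,u_4)$ — wait, the statement asks the walk to \emph{end} at $(u_1,u_2,u_3,u_4)$ and asks only that somewhere early it hits $(u_1,u_2,y_3,y_4)$; so in fact I would run the walk from $y_3y_4$ back to $u_3u_4$, use the triangle at each end to correct orientation, and pad with an extra triangle loop if needed to make the length divisible by $4$. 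The main obstacle is precisely this bookkeeping of orientations and of the length being a multiple of $4$; the existence of a non-bipartite large component makes it go through, but the case analysis on how the lifted walk's endpoints are ordered needs to be done with some care. Everything else is a routine lift of a graph walk to a tight hypergraph walk.
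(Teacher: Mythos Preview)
Your approach is essentially the paper's: lift a walk in the connected link-graph component $C_{u_1u_2}$ to a tight $4$-walk by interleaving $u_1,u_2$, and use the triangle from Lemma~\ref{lem:sturc1}~\ref{Matching 3} to fix parity. One simplification worth noting: the orientation worry is unnecessary, since you may simply choose the $2$-walk in $C_{u_1u_2}$ to begin $y_3,y_4,\dots$ and end $\dots,u_3,u_4$ (both are edges of $C_{u_1u_2}$, so this is just a walk from $y_4$ to $u_3$ with a prescribed first and last step), leaving as the only genuine issue that the $2$-walk have an even number of vertices --- exactly what the triangle detour provides.
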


\begin{proof}
   Note that there is a $2$-walk $W'=w'_1\dots w'_{2\ell}$ with~$\ell\le 2n^2$ and $(w'_1,w'_2,w'_{2\ell-1},w'_{2\ell})=(y_3,y_4,u_3,u_4)$ in~$C_{u_1u_2}$. Indeed, $C_{u_1u_2}$ is a connect component and there exists a triangle in~$C_{u_1u_2}$ by Lemma~\ref{lem:sturc1}~\ref{Matching 3}. 
    Using this triangle, we can adjust the length of walk between $u_3u_4$ and $y_3y_4$ to be a even number. 
    
    Then define $W$ to be $w_{4(i-1)+j}=u_j$ and $w_{4(i-1)+2+j}=w'_{2(i-1)+j}$ for~$i\in [\ell]$ and $j\in [2]$. Note that $W$ is the desired tight walk.
\end{proof}

Let $e=\overline{u_1u_2}u_3u_4\text{ and }f= \overline{u_1u_2}y_3y_4$.
Together with Lemma~\ref{prop:rotationtight} (with~$\rho_e =\rho_f= \mathbf{id}$), we immediately imply following corollary for edges in~$E_{u_1u_2}$.
\begin{coro}\label{coro:tight}
   Let $1/n\ll\alpha \ll  \gamma$. Let $G$ be a $4$-graph on~$n$ vertices with~$\overline{\delta}^\alpha_{2}(G)\ge 1/2+\gamma$. Suppose that $\overline{u_1u_2}u_3u_4\in E(G)$ is $[C, *, \sigma]$-rotatable. Then all edges $e=\overline{u_1u_2}u'u''\in E_{u_1u_2}$ are $[4C+16n^2,*,\sigma]$-rotatable.
\end{coro}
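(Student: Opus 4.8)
The plan is to derive this immediately from the two preceding propositions, with all the permutation bookkeeping arranged so that both transition permutations are the identity. First I would invoke Proposition~\ref{prop:rotationfact2} with the two edges in the reversed roles: in its statement take ``$\overline{u_1u_2}y_3y_4$'' to be our hypothesised rotatable edge $\overline{u_1u_2}u_3u_4$ and ``$\overline{u_1u_2}u_3u_4$'' to be our target edge $e=\overline{u_1u_2}u'u''$. This produces a tight $4$-walk $W=w_1w_2\dots w_{4\ell}$ with $(w_1,w_2,w_3,w_4)=(u_1,u_2,u_3,u_4)$ and $(w_{4\ell-3},w_{4\ell-2},w_{4\ell-1},w_{4\ell})=(u_1,u_2,u',u'')$; moreover, reading off the construction in the proof of Proposition~\ref{prop:rotationfact2} (where the underlying $2$-walk $W'$ in $C_{u_1u_2}$ has at most $2n^2$ terms), we get $\ell\le 2n^2$.

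Next I would apply Proposition~\ref{prop:rotationtight} to this walk, with ``$e$'' the rotatable edge ordered as $u_1u_2u_3u_4$ and ``$f$'' the target edge $e$ ordered as $u_1u_2u'u''$. By the first and last four terms of $W$ computed above, the hypothesis $w_i=u_{\rho_e(i)}$, $w_{4(\ell-1)+i}=(\text{$i$-th vertex of }u_1u_2u'u'')$ for $i\in[4]$ holds with $\rho_e=\rho_f=\mathbf{id}$, so $\rho=\rho_e\rho_f^{-1}=\mathbf{id}$. Since $\overline{u_1u_2}u_3u_4$ is $[C,*,\sigma]$-rotatable by assumption, the ``moreover'' clause of Proposition~\ref{prop:rotationtight} yields that $e$ is $[4C+8\ell,*,\rho^{-1}\sigma]$-rotatable, i.e.\ $[4C+8\ell,*,\sigma]$-rotatable.

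Finally, since $\ell\le 2n^2$ we have $4C+8\ell\le 4C+16n^2$, and being $[D,*,\sigma]$-rotatable trivially implies being $[D',*,\sigma]$-rotatable for every $D'\ge D$ (for any rooted tree $T$ and $\pi\in S_k$, the homomorphism witnessing the former has $\phi(v)=u_{\sigma\pi(s)}$ on $C_s^{\ge D}(T)\supseteq C_s^{\ge D'}(T)$, so it also witnesses the latter). Therefore every $e=\overline{u_1u_2}u'u''\in E_{u_1u_2}$ is $[4C+16n^2,*,\sigma]$-rotatable, which is the claim.

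There is no substantive obstacle: the only care required is in matching conventions, namely fixing a common vertex ordering $u_1u_2\,*\,*$ on the two edges of $E_{u_1u_2}$ so that the distinguished endpoints of $W$ produced by Proposition~\ref{prop:rotationfact2} list their vertices in that order, thereby forcing $\rho_e=\rho_f=\mathbf{id}$ when feeding $W$ into Proposition~\ref{prop:rotationtight}, and reading off the bound $\ell\le 2n^2$ from the proof of Proposition~\ref{prop:rotationfact2}.
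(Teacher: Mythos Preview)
Your proposal is correct and takes essentially the same approach as the paper: apply Proposition~\ref{prop:rotationfact2} (with the roles chosen so the walk runs from $\overline{u_1u_2}u_3u_4$ to $\overline{u_1u_2}u'u''$) and then feed the resulting tight walk into the ``moreover'' clause of Proposition~\ref{prop:rotationtight} with $\rho_e=\rho_f=\mathbf{id}$. The paper states this in one sentence before the corollary; you have correctly unpacked the role-swap, the bound $\ell\le 2n^2$ from the proof of Proposition~\ref{prop:rotationfact2}, and the monotonicity in the rotatability parameter.
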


Following lemma states that every edge in~$G^*$ is $[30n^2,*,(34)]$-rotatable.

\begin{lemma}\label{prop:triangle rotate}
    Let $1/n\ll\alpha \ll  \gamma$. Let $G$ be a $4$-graph on~$n$ vertices with~$\overline{\delta}^\alpha_{2}(G)\ge 1/2+\gamma$. Then each edge $\overline{u_1u_2}u_3u_4\in E(G^*)$ is $[30n^2,*,(34)]$-rotatable.
\end{lemma}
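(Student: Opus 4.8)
The plan is to show that an edge $e = \overline{u_1u_2}u_3u_4$ is $[C,*,(34)]$-rotatable for a suitable $C = O(n^2)$ by exploiting two facts already available: that $C_{u_1u_2}$ contains a triangle (Lemma~\ref{lem:sturc1}~\ref{Matching 3}), and that any two edges of $E_{u_1u_2}$ are joined by an even tight walk of length $O(n^2)$ (Proposition~\ref{prop:rotationfact2}), so by Corollary~\ref{coro:tight} it suffices to establish $[C_0,*,(34)]$-rotatability for one convenient representative of $E_{u_1u_2}$. The natural choice of representative is an edge built from the triangle in $C_{u_1u_2}$: if $x_3x_4, x_4x_5, x_3x_5 \in E(C_{u_1u_2})$ then $\overline{u_1u_2}x_3x_4$, $\overline{u_1u_2}x_4x_5$ and $\overline{u_1u_2}x_3x_5$ are all edges of $E_{u_1u_2}$, and the permutation $(34)$ merely swaps the ``third'' and ``fourth'' coordinate, which these triangle edges realize by a short rotation.

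\textbf{Key steps, in order.} First I would fix the triangle $x_3x_4x_5$ in $C_{u_1u_2}$ guaranteed by \ref{Matching 3}, giving three edges $f_{34} = \overline{u_1u_2}x_3x_4$, $f_{45}=\overline{u_1u_2}x_4x_5$, $f_{35}=\overline{u_1u_2}x_3x_5$ in $E_{u_1u_2}$. Second, I would directly construct, for the representative edge $f_{34}=u_1u_2x_3x_4$, a homomorphism witnessing $[1, i, (34)]$-rotatability for each $i \in [4]$: given a rooted tree $T$, the layering $L(T)$ has each edge an $L_sL_{s+1}L_{s+2}L_{s+3}$-edge, and I want to map $C_3(T)$ and $C_4(T)$ into $\{x_3,x_4\}$ with the roles swapped relative to the identity embedding, using the fact that $\{x_3,x_4\}\cup\{u_1,u_2\}$ spans an edge whichever order $x_3,x_4$ appear in — because $x_3x_4\in E(C_{u_1u_2})$ is symmetric. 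Concretely, each edge of $T$ colored $L_sL_{s+1}L_{s+2}L_{s+3}$ will be sent to $u_1u_2$ in its first two coordinates (mod $4$) and to $x_4x_3$ (rather than $x_3x_4$) in its last two, which is a valid edge since $x_3x_4$ and $x_4x_3$ both lie in $C_{u_1u_2}$; one checks the layering compatibility forces consistency and $\phi(r)=u_i$ is arranged by starting the pattern at the right offset. This gives $e=f_{34}$ is $[2,*,(34)]$-rotatable via Proposition~\ref{prop:rotationfact}~\ref{itm:rotationfact1} once the $[1,i,(34)]$ statements hold for at least three values of $i$ (in fact all four are immediate here). Third, I would invoke Corollary~\ref{coro:tight}: since $f_{34}\in E_{u_1u_2}$ is $[2,*,(34)]$-rotatable, every edge $\overline{u_1u_2}u_3u_4\in E_{u_1u_2}$ is $[4\cdot 2 + 16n^2, *, (34)]$-rotatable, and $8+16n^2 \le 30 n^2$ for large $n$.

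\textbf{Main obstacle.} The delicate point is the explicit construction in the second step: verifying that the ``swap'' homomorphism on $f_{34}$ is genuinely well-defined as one walks through the layers $L_1, L_2, \dots$ of $T$. Because each vertex of $L_i$ may lie in one $L_iL_{i+1}L_{i+2}L_{i+3}$-edge ``going down'' and at most one ``going up'' edge, I must check that assigning $C_3(T)\to x_4$ and $C_4(T)\to x_3$ (with $C_1(T)\to u_1$, $C_2(T)\to u_2$, say, for the identity-offset case) sends \emph{every} edge of $T$ to an edge of $G$ — and this is exactly where the symmetry $x_3x_4 \in E(C_{u_1u_2}) \iff x_4x_3 \in E(C_{u_1u_2})$ does the work, together with the fact that $E_{u_1u_2}$ is tight-connected so the ``reversed'' 4-tuples are still in $G$. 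The remaining cases $i\neq 1$ are handled by composing with the cyclic shift $\tau$ as in Proposition~\ref{prop:rotationfact}~\ref{itm:rotationfact3}, so the only real content is the single well-definedness check, after which the quantitative bookkeeping is routine.
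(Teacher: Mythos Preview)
Your second step contains a genuine gap. The constant map you describe --- sending $C_1(T)\to u_1$, $C_2(T)\to u_2$, $C_3(T)\to x_4$, $C_4(T)\to x_3$ --- is indeed a homomorphism, but it only witnesses $[1,i,(34)]$-rotatability when $i$ is a \emph{fixed point} of $(34)$, i.e.\ for $i\in\{1,2\}$. For $\pi(1)=3$ the map sends $r$ to position $(34)\pi(1)=4$, not to position $3$; if you override $\phi(r)$ to be $x_3$, then any edge $\{r,v_2,v_3,v_4\}$ through $r$ is sent to $\{x_3\}\cup\{u_{(34)\pi(s)}:s=2,3,4\}$, and since $(34)\{\pi(2),\pi(3),\pi(4)\}=(34)\{1,2,4\}=\{1,2,3\}$ this image has only three distinct vertices, hence is not an edge. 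The symmetric argument rules out $i=4$. So your claim that ``all four are immediate'' is false, and with only $i\in\{1,2\}$ you cannot invoke Proposition~\ref{prop:rotationfact}\ref{itm:rotationfact1}, which needs all but \emph{one} value of $i$. Attempting the cyclic-shift route \ref{itm:rotationfact3} instead is circular: reaching $i=3$ requires $i'=4$ and vice versa. Note also that you set up the triangle $x_3x_4x_5$ but never actually use $x_5$ in the construction.

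The paper's proof avoids this entirely: it applies Proposition~\ref{prop:rotationfact2} directly to the given edge with $(y_3,y_4)=(u_4,u_3)$, obtaining an even tight walk from the ordered tuple $(u_1,u_2,u_4,u_3)$ to $(u_1,u_2,u_3,u_4)$; then Proposition~\ref{prop:rotationtight} with $\sigma=\mathbf{id}$ and $\rho_e\rho_f^{-1}=(34)$ yields $[4+16n^2,*,(34)]$-rotatability for $\overline{u_1u_2}u_3u_4$ in one stroke --- no representative edge, no Corollary~\ref{coro:tight} detour, and no case analysis on~$i$. (The triangle in $C_{u_1u_2}$ is used, but only inside the proof of Proposition~\ref{prop:rotationfact2} to force even parity of the $2$-walk.) Your plan can be repaired by building a short tight walk through the triangle from $u_1u_2x_3x_4$ to $u_1u_2x_4x_3$ and invoking Proposition~\ref{prop:rotationtight} --- but at that point you are re-deriving the paper's argument on a particular edge, and then redundantly transferring it via Corollary~\ref{coro:tight}.
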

\begin{proof}
 By Proposition~\ref{prop:rotationfact2} with~$(\overline{u_1u_2}u_3u_4, \overline{u_1u_2}u_4u_3)$ playing the role of~$(\overline{u_1u_2}u_3u_4, \overline{u_1u_2}y_3y_4)$, there exists a tight walk 
 $W=w_1w_2\dots w_{4l}$ with~$\ell\le 2n^2$ such that 
 \begin{align*}     (w_1,w_2,w_3,w_4,w_{4\ell-3},w_{4\ell-2},w_{4\ell-1},w_{4\ell})=(u_1,u_2,u_3,u_4,u_1,u_2,u_4,u_3).
 \end{align*}
By Proposition~\ref{prop:rotationfact}~\ref{itm:rotationfact2}, $\overline{u_1u_2}u_3u_4$ is $[1,*,\mathbf{id}]$-rotatable. 
Apply Proposition~\ref{prop:rotationtight} with~$(\overline{u_1u_2}u_3u_4,\overline{u_1u_2}u_3u_4,\mathbf{id},(34))$ playing the role of~$(e,f,\rho_e,\rho_f)$. It implies that $\overline{u_1u_2}u_3u_4$ is $[30n^2,*,(34)]$-rotatable.
\end{proof}

Next, we show that under some circumstances, $\overline{u_1u_2}u_3u_4$ is $(23)$-rotatable.
\begin{lemma} \label{lem:13rotatable}
Let $1/n\ll\alpha \ll  \gamma$. Let $G$ be a $4$-graph on~$n$ vertices with~$\overline{\delta}^\alpha_{2}(G)\ge 1/2+\gamma$.
Let $\overline{u_1 u_2} u_3 u_4$, $\overline{u_1 u_2} y_3 y_4 $, $u_1 u_3 y_3 y_4\in E(G)$. 
Then $\overline{u_1 u_2} u_3 u_4$ is $[ 10n^2, *, (23) ]$-rotatable.
\end{lemma}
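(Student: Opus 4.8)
The plan is to realise the transposition $(23)$ by a tight walk and transport it, exactly as in the proof of Lemma~\ref{prop:triangle rotate}. Write $e=\overline{u_1u_2}u_3u_4$. By Proposition~\ref{prop:rotationfact}~\ref{itm:rotationfact2}, $e$ is $[1,*,\mathbf{id}]$-rotatable, so by the ``moreover'' part of Proposition~\ref{prop:rotationtight} it suffices to exhibit a tight $4$-walk $W=w_1\dots w_{4\ell}$ in $G$, with $\ell$ polynomial in $n$ and comfortably below $(10n^2-4)/8$, whose first block reads $(w_1,w_2,w_3,w_4)=(u_1,u_2,u_3,u_4)$ and whose last block reads $(w_{4\ell-3},w_{4\ell-2},w_{4\ell-1},w_{4\ell})=(u_1,u_3,u_2,u_4)$: then $\rho_e=\mathbf{id}$, $\rho_f=(23)$, so $\rho=\rho_e\rho_f^{-1}=(23)$ and $e$ is $[4+8\ell,*,\rho^{-1}\mathbf{id}]=[4+8\ell,*,(23)]$-rotatable. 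In short, we must walk from the ordered edge $e$ back to $e$ with the second and third coordinates interchanged.

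\textbf{Constructing the walk.} I would build $W$ in three stages. \emph{Stage 1 (enter the $u_1u_2$-link).} Since $u_3u_4,y_3y_4\in C_{u_1u_2}$ and $C_{u_1u_2}$ is connected with a triangle (Lemma~\ref{lem:sturc1}~\ref{Matching 3}), Proposition~\ref{prop:rotationfact2} (reversed) gives a short tight $4$-walk inside $E_{u_1u_2}$ — keeping the first two coordinates equal to $u_1,u_2$ — from $(u_1,u_2,u_3,u_4)$ to $(u_1,u_2,y_3,y_4)$, the triangle being used, if needed, to fix the parity and the relative order of $y_3,y_4$. \emph{Stage 2 (bridge through $g$).} Append the vertices $u_1,u_3$: the windows $\{u_1,u_2,y_3,y_4\}$ and $\{u_2,y_3,y_4,u_1\}$ both equal $f=\overline{u_1u_2}y_3y_4$, while the next window $\{y_3,y_4,u_1,u_3\}$ equals $g=u_1u_3y_3y_4\in E(G)$; thus we reach $g$, now read as $(y_3,y_4,u_1,u_3)$, having traded $u_2$ for $u_3$. \emph{Stage 3 (return to $e$).} It remains to walk from $g$ to $e$ arriving with the reading $(u_1,u_3,u_2,u_4)$. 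Since $u_1u_3\in E(\partial_2(G))$, Definition~\ref{defn:perturbed degree}~\ref{itm:defn:perturbed degree1} gives $|E(L_G(u_1u_3))|\ge(1/2+\gamma)\binom{n}{2}$, so by Lemma~\ref{lem:sturc1}~\ref{Matching 1} and~\ref{Matching 3} the largest component $C_{u_1u_3}$ has more than $(1/2+\gamma)n$ vertices and contains a triangle; and both $y_3y_4$ (from $g$) and $u_2u_4$ (from $e$) are edges of $L_G(u_1u_3)$. Once these two edges and a triangle are routed into a common component, a short tight $4$-walk inside it — keeping two coordinates equal to $u_1,u_3$, again using a triangle to correct orientations so the net effect on positions $3,4$ is trivial — carries $g$ to $e$ with the required reading, closing up $W$.

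\textbf{Main obstacle.} The delicate point is Stage~3: an excess-edge count controls $C_{u_1u_2}$, but a priori $y_3y_4$ or $u_2u_4$ could lie outside the largest component of $L_G(u_1u_3)$, or in distinct small components, leaving no tight walk between them inside $L_G(u_1u_3)$. Resolving this is where the hypotheses on $f,g$ and the full strength of Lemma~\ref{lem:sturc1} enter: applying part~\ref{Matching 4} to a suitable triple of link graphs (such as those of $u_1u_2$, $u_1u_3$ and $u_1u_4$) yields two of them whose largest components share an edge, furnishing a tight step that crosses between those two ``pair-worlds'' inside the large components; combined with the bridge of Stage~2 (and, if necessary, a second crossing), this lets the walk bypass any bad small components and reach $e$. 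The other bookkeeping burden is tracking the orientation of every block so that the two ``triangle corrections'' leave precisely $(23)$ rather than $(23)$ composed with a transposition in a fibre — this is handled exactly as in the proofs of Lemma~\ref{prop:triangle rotate} and Corollary~\ref{coro:tight}. Finally, every stage uses a walk of length $O(n)$, so $4+8\ell=O(n)\ll 10n^2$, giving the claimed constant.
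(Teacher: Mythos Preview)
Your overall shape—realise $(23)$ as $\rho_e\rho_f^{-1}$ for a closed tight walk and invoke Proposition~\ref{prop:rotationtight}—is natural, but Stage~3 does not go through, and your proposed \ref{Matching 4} rescue does not close the gap.

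The concrete problem is this. After Stage~2 you sit at the ordered edge $(y_3,y_4,u_1,u_3)$ (or a rotation thereof) and must reach $(u_1,u_3,u_2,u_4)$. You correctly observe that $y_3y_4$ and $u_2u_4$ are both edges of $L_G(u_1u_3)$, but nothing in the hypotheses forces either of them to lie in the \emph{largest} component $C_{u_1u_3}$, let alone in the same component. Your appeal to Lemma~\ref{lem:sturc1}~\ref{Matching 4} on a triple such as $(L_G(u_1u_2),L_G(u_1u_3),L_G(u_1u_4))$ only produces a shared edge between two \emph{largest} components; it says nothing about the particular edges $y_3y_4$ and $u_2u_4$, which may sit in tiny components of $L_G(u_1u_3)$ with no bridge anywhere. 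Bouncing between $f$ and $g$ (the only tight adjacency outside $E_{u_1u_2}$ that the hypotheses guarantee) never gets $u_2$ back in without eventually needing such an uncontrolled step. So the closed-walk strategy cannot be completed from the given data.

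The paper sidesteps this entirely. Since $\sigma=(23)$ fixes $1$ and $4$, Proposition~\ref{prop:rotationfact}~\ref{itm:rotationfact2} gives $[1,1,\sigma]$- and $[1,4,\sigma]$-rotatability for free; by Proposition~\ref{prop:rotationfact}~\ref{itm:rotationfact1} it then suffices to establish $[C,3,\sigma]$-rotatability for a single $\pi$ with $\pi(1)=3$. For this one does \emph{not} start at $e$: map $T$ trivially into the ordered edge $g=u_3u_1y_3y_4$ (so the root lands on $u_3$), then use the single tight step $g\to f$ (they share $\{u_1,y_3,y_4\}$) followed by the guaranteed even walk inside $E_{u_1u_2}$ from $(u_2,u_1,y_3,y_4)$ to $(u_2,u_1,u_3,u_4)$ via Proposition~\ref{prop:rotationfact2}. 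Proposition~\ref{prop:rotationtight} transports the trivial homomorphism along this walk and delivers $\phi(v)=u_{\sigma\pi(s)}$ for deep $v$. The point is that only connectivity in $C_{u_1u_2}$ is ever used—which is exactly what the hypotheses $\overline{u_1u_2}u_3u_4,\overline{u_1u_2}y_3y_4\in E(G^*)$ control—and no information about $L_G(u_1u_3)$ is needed.
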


\begin{proof}
Let $\sigma=(23)$ and $T$ be a rooted $k$-loose tree at~$r$. Since $\sigma(i)=i$ for~$i\in \{1,4\}$, by Proposition~\ref{prop:rotationfact}~\ref{itm:reachablefact1}, $\overline{u_1u_2}y_3y_4$ is $[1,i,\sigma]$-rotatable for  $i\in \{1,4\}$. By Proposition~\ref{prop:rotationfact}~\ref{itm:reachablefact3}, it suffices to show that $\overline{u_1u_2}u_3u_4$ is $[9n^2,3,\sigma]$-rotatable. Let $\pi\in S_4$ with~$\pi(1)=3$.

By Proposition~\ref{prop:rotationfact2} with~$\overline{u_2u_1}y_3y_4$ and $\overline{u_2u_1}u_3u_4$, there exists a tight walk 
 $W=w_1w_2\dots w_{4l}$ with~$\ell\le 2n^2$ such that $$(w_1,w_2,w_3,w_4,w_{4\ell-3},w_{4\ell-2},w_{4\ell-1},w_{4\ell})=(u_2,u_1,y_3,y_4,u_2,u_1,u_3,u_4).$$
Let $W'$ be the tight walk obtained by concatenating $u_3u_1y_3y_4$ and $W$. Rewrite $W'=w'_1\dots w'_{4(\ell+1)}$ such that $$(w'_1,w'_2,w'_3,w'_4,w'_{4\ell+1},w'_{4\ell+2},w'_{4\ell+3},w'_{4\ell+4})=(u_3,u_1,y_3,y_4,u_2,u_1,u_3,u_4).$$
Define the homomorphism~$\phi'$ from~$T$ to~$G$ such that
\begin{align}
		\phi'(v) = 	
		\begin{cases}
		u_1 & \text{if $v \in C_s(T)$ and $ \sigma\pi(s) =1$,}\\
		u_3 & \text{if $v \in C_s(T)$ and $ \sigma\pi(s)=2 $,}\\
		y_3 & \text{if $v \in C_s(T)$ and $ \sigma\pi(s)=3 $,}\\
		y_4 & \text{if $v \in C_s(T)$ and $ \sigma\pi(s)=4 $.}
		\end{cases}
\end{align}
In particular, $\phi'(r)=u_3$ as $\sigma\pi(1)=2$. Apply Proposition~\ref{prop:rotationtight} with~$(u_3u_1y_3y_4,u_2u_1u_3u_4,W',\mathbf{id},\mathbf{id})=(e,f,W,\sigma\pi,\rho_e,\rho_f).$ Then there exists a homomorphism~$\phi$ from~$T$ to~$G$ to be such that 
        \begin{align*}
        \phi(v)=
        \begin{cases}
            u_3 & \text{if $v=r$,}\\
		u_1 & \text{if $v \in C^{\ge 4(\ell+2)}_s(T)$ and $ \sigma\pi(s) =1$,}\\
		u_2 & \text{if $v \in C^{\ge4(\ell+2)}_s(T)$ and $ \sigma\pi(s)=2 $,}\\
		u_3 & \text{if $v \in C^{\ge4(\ell+2)}_s(T)$ and $ \sigma\pi(s)=3 $,}\\
		u_4 & \text{if $v \in C^{\ge4(\ell+2)}_s(T)$ and $ \sigma\pi(s)=4 $}
        \end{cases}
        =
            \begin{cases}
            u_3 &~\text{if $v=r$,}\\
            u_{\sigma\pi(s)}&~\text{if $v\in C^{\ge 4(\ell+2)}_s(T)$}.
        \end{cases}
    \end{align*}
    Note that $\ell\le 2n^2$. Hence, $\overline{u_1u_2}u_3u_4$ is $[9n^2,3,\sigma]$-rotatable as required.
\end{proof}
By the lemma, we have the following corollary.

\begin{coro} \label{cor:23-rotation}
Let $1/n\ll\alpha \ll  \gamma$. Let $G$ be a $4$-graph on~$n$ vertices with~$\overline{\delta}^\alpha_{2}(G)\ge 1/2+\gamma$.
Let $\overline{u_1 u_2} u_3 u_4 \in G^*$. 
Suppose that $E(C_{u_1 u_2}) \cap E(C_{u_1 u_3}) \ne \emptyset$.
Then $\overline{u_1 u_2} u_3 u_4$ is $[ 30n^2, *, (23) ]$-rotatable.
\end{coro}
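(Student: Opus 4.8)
The plan is to obtain this immediately from Lemma~\ref{lem:13rotatable}, the only work being to extract the required auxiliary edge from the hypothesis. First I would record that, since $\overline{u_1u_2}u_3u_4\in E(G^*)$ lies in $E_{u_1u_2}$ by the notational convention, we have $u_3u_4\in E(C_{u_1u_2})$ and $\{u_1,u_2,u_3,u_4\}\in E(G)$; in particular $u_1,u_2,u_3,u_4$ are four distinct vertices. Using the assumption $E(C_{u_1u_2})\cap E(C_{u_1u_3})\neq\emptyset$, I would then fix an edge $y_3y_4$ in this intersection.

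Next I would verify that $u_3u_4$ and $y_3y_4$ satisfy the three hypotheses of Lemma~\ref{lem:13rotatable} under the same vertex labelling. The edge $\overline{u_1u_2}u_3u_4\in E(G)$ is given. From $y_3y_4\in E(C_{u_1u_2})$ we get $\overline{u_1u_2}y_3y_4=\{u_1,u_2\}\cup\{y_3,y_4\}\in E_{u_1u_2}\subseteq E(G)$, and from $y_3y_4\in E(C_{u_1u_3})\subseteq E(L_G(\{u_1,u_3\}))$ we get $u_1u_3y_3y_4=\{u_1,u_3\}\cup\{y_3,y_4\}\in E(G)$. A small point worth stating explicitly is that these are genuine $4$-sets: the vertices of a link graph $L_G(S)$ avoid $S$, so $y_3,y_4\notin\{u_1,u_2\}$ and $y_3,y_4\notin\{u_1,u_3\}$, whence $\{u_1,u_3,y_3,y_4\}$ has four distinct elements. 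Lemma~\ref{lem:13rotatable} then gives that $\overline{u_1u_2}u_3u_4$ is $[10n^2,*,(23)]$-rotatable.

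To finish I would invoke monotonicity of rotatability in the constant: if $C'\ge C$ then $C_s^{\ge C'}(T)\subseteq C_s^{\ge C}(T)$ for every rooted loose tree $T$, so any homomorphism witnessing $[C,\pi,\sigma]$-rotatability also witnesses $[C',\pi,\sigma]$-rotatability; hence $[C,*,\sigma]$-rotatability implies $[C',*,\sigma]$-rotatability whenever $C'\ge C$. Applying this with $C=10n^2$ and $C'=30n^2$ yields the claimed $[30n^2,*,(23)]$-rotatability.

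I do not expect any genuine obstacle here: all the substance is contained in Lemma~\ref{lem:13rotatable} (and, through it, in Proposition~\ref{prop:rotationfact2} and the structural Lemma~\ref{lem:sturc1}), and the corollary is just the remark that the combinatorial condition $E(C_{u_1u_2})\cap E(C_{u_1u_3})\neq\emptyset$ is exactly what is needed to supply the witness pair $y_3,y_4$. The only care required is the distinctness bookkeeping above; the loose constant $30n^2$ is evidently chosen for uniformity with Lemma~\ref{prop:triangle rotate} rather than optimality.
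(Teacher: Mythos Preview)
Your proposal is correct and follows exactly the paper's approach: pick $y_3y_4\in E(C_{u_1u_2})\cap E(C_{u_1u_3})$ and apply Lemma~\ref{lem:13rotatable}. You are in fact more careful than the paper, which simply writes ``we are done'' after invoking the lemma; your explicit distinctness check and the monotonicity step from $10n^2$ to $30n^2$ fill in details the paper leaves implicit.
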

\begin{proof}
    Let $y_3y_4\in E(C_{u_1 u_2}) \cap E(C_{u_1 u_3})$. Then $\overline{u_1u_2}y_3y_4$ and $\overline{u_1u_3}y_3y_4\in E(G)$. By Lemma~\ref{lem:13rotatable}, we are done.
\end{proof}
Next, we use a special structure to imply a rotation transitivity lemma.

\begin{lemma}\label{prop:special structure}
        Let $1/n\ll\alpha \ll  \gamma$. Let $G$ be a $4$-graph on~$n$ vertices with~$\overline{\delta}^\alpha_{2}(G)\ge 1/2+\gamma$.
        Let $\sigma\in S_3$.
        Let $\overline{u_1u_2}u_3u_4$, $\overline{u_1u_2}y_3y_4\in E(G)$. 
        Suppose that $xu_2u_3y_4\in E(G)$ which is $[C,*,\sigma]$-rotatable. 
        Then $\overline{u_1u_2}u_3u_4$ is $[C+30n^2,*,\sigma]$-rotatable.
        Moreover, if $xu_2u_3y_4$ is $C$-rotatable, then $\overline{u_1u_2}u_3u_4$ is $4!(C+30n^2)$-rotatable.
\end{lemma}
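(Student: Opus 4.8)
The plan is to connect the target edge $e = \overline{u_1u_2}u_3u_4$ to the given rotatable edge $g = xu_2u_3y_4$ by a short tight walk, and then transfer rotatability along it using Proposition~\ref{prop:rotationtight}. The crucial observation is that $e$, the auxiliary edge $\overline{u_1u_2}y_3y_4$, and $g$ share enough vertices that one can walk from $g$ to some edge of $E_{u_1u_2}$ in a constant number (in fact $O(1)$, independent of $n$, modulo the $O(n^2)$ contributed by Proposition~\ref{prop:rotationfact2}) of tight steps. First I would note that $\overline{u_1u_2}y_3y_4 \in E(G)$ and $g = xu_2u_3y_4 \in E(G)$ share the pair $u_2y_4$, so $|g \cap \overline{u_1u_2}y_3y_4| \ge 2$; more usefully, I want a genuine tight connection, so I would look for an edge adjacent to both. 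Since $u_2u_3y_4 \subseteq g$ and $u_2u_3y_4$ together with a suitable fourth vertex could lie in $E_{u_1u_2}$ — indeed, I will build a tight walk $g = xu_2u_3y_4 \,,\, u_1u_2u_3y_4 \,,\, \dots$ provided $u_1u_2u_3y_4 \in E(G)$; if that edge is not available directly, I would instead route through $\overline{u_1u_2}y_3y_4$, noting $|xu_2u_3y_4 \cap u_1u_2y_3y_4|$ may only be $2$, so one intermediate edge is needed. In any case a tight walk from $g$ to a fixed edge $f_0 \in E_{u_1u_2}$ of constant length $\ell_0$ exists once we record the vertex labels along it.

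Next, with such a tight walk $W$ from $g$ to $f_0 = \overline{u_1u_2}\,u_3'u_4' \in E_{u_1u_2}$ in hand, I would apply Proposition~\ref{prop:rotationtight} with $(g, f_0)$ playing the role of $(e,f)$ and the appropriate label permutations $\rho_g, \rho_{f_0}$ read off from the endpoints of $W$. Since $g$ is $[C,*,\sigma]$-rotatable, the ``moreover'' clause of Proposition~\ref{prop:rotationtight} gives that $f_0$ is $[4C + 8\ell_0, *, \rho^{-1}\sigma]$-rotatable for the resulting $\rho = \rho_g\rho_{f_0}^{-1}$; because $\sigma \in S_3$ fixes the coordinate $4$ and the walk is chosen so that the last four vertices of $W$ are $u_1u_2u_3'u_4'$ in order, I can arrange $\rho = \textbf{id}$ (or a permutation conjugate to $\sigma$ that is still in $S_3$), so $f_0$ is $[4C + O(1), *, \sigma']$-rotatable for some $\sigma'$ conjugate to $\sigma$ within $S_3$. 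Then I invoke Corollary~\ref{coro:tight}: since $f_0 \in E_{u_1u_2}$ is $[C', *, \sigma']$-rotatable, every edge of $E_{u_1u_2}$, in particular $e = \overline{u_1u_2}u_3u_4$, is $[4C' + 16n^2, *, \sigma']$-rotatable. Finally, to get exactly $\sigma$ rather than a conjugate, I would compose with Lemma~\ref{prop:triangle rotate} (which gives $[30n^2,*,(34)]$-rotatability of every edge of $G^*$) and Proposition~\ref{prop:rotationfact}\ref{itm:rotationfact6}, using that the conjugates of a given element of $S_3 \le S_4$ differ by transpositions we already control; bookkeeping the constants gives the bound $C + 30n^2$ claimed (the stated constant is generous, so I need not optimise).

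For the ``moreover'' statement, if $g = xu_2u_3y_4$ is $C$-rotatable, then by Proposition~\ref{prop:rotationfact}\ref{itm:rotationfact4} it is $[C,*,\sigma]$-rotatable for every $\sigma \in S_4$, so the argument above shows $e$ is $[C',*,\sigma]$-rotatable for every $\sigma$ (tracking that the conjugation ambiguity ranges over all of $S_4$ as $g$ ranges over all target permutations), with $C' = O(C + n^2)$. Then Proposition~\ref{prop:rotationfact}\ref{itm:rotationfact5} (or \ref{itm:rotationfact7} with the generating set $\{(13),(23),(43)\}$) upgrades this to $4!\,(C+30n^2)$-rotatability of $e$, as required.

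The main obstacle I anticipate is the first step: producing the tight walk from $g = xu_2u_3y_4$ into $E_{u_1u_2}$ with \emph{controlled vertex labels at both ends}, since Proposition~\ref{prop:rotationtight} is sensitive to exactly which coordinate of the endpoint edges each walk-position lands on. The edge $\overline{u_1u_2}y_3y_4$ is the bridge — I must verify $xu_2u_3y_4$ is tight-adjacent (or adjacent via one intermediate edge) to an edge of the form $u_1u_2\,*\,* \in E_{u_1u_2}$, which amounts to checking that $u_2u_3y_4$ or $u_2y_3y_4$ extends into the largest component $C_{u_1u_2}$; this uses that $C_{u_1u_2}$ is large and contains a triangle (Lemma~\ref{lem:sturc1}\ref{Matching 1},\ref{Matching 3}), exactly as in the proof of Proposition~\ref{prop:rotationfact2}. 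Once the walk and its label permutations are pinned down, the rest is an assembly of the already-proven rotatability toolkit.
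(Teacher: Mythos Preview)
Your proposed approach has a genuine gap at its very first step: there is no reason a tight walk from $g=xu_2u_3y_4$ into $E_{u_1u_2}$ should exist. The hypotheses give you exactly three edges, $\overline{u_1u_2}u_3u_4$, $\overline{u_1u_2}y_3y_4$, and $g$, and nothing more. A tight step away from $g$ requires an edge sharing a $3$-set with $g$; to land directly in $E_{u_1u_2}$ you would need $u_3y_4\in E(C_{u_1u_2})$ (so that $u_1u_2u_3y_4$ is an edge), which is not assumed. The alternative you suggest, routing through $\overline{u_1u_2}y_3y_4$, faces the same problem: $|g\cap u_1u_2y_3y_4|=2$, and you have no way to conjure an intermediate edge sharing three vertices with $g$. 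Proposition~\ref{prop:rotationfact2} and Lemma~\ref{lem:sturc1} only give you walks \emph{within} $E_{u_1u_2}$; they say nothing about escaping from $g$, which may well live in a completely different tight component. So Proposition~\ref{prop:rotationtight} cannot be invoked as you planned.

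The paper's proof circumvents this entirely: it never tries to tight-walk from $g$ to $e$. Instead it uses Proposition~\ref{prop:rotationfact}\ref{itm:rotationfact1} to reduce to showing $[C+10n^2,i^*,\sigma]$-rotatability for $i^*\in\{2,3,4\}$ (the case $i^*=4$ being trivial since $\sigma(4)=4$). For $i^*\in\{2,3\}$ one starts the homomorphism using the rotatability of $g=(v_1,v_2,v_3,v_4)=(x,u_2,u_3,y_4)$, obtaining $\phi'$ with $\phi'(r)=u_{i^*}=v_{i^*}$ and far-away vertices landing in $\{v_1,v_2,v_3,v_4\}$. The key point is that $v_2=u_2$ and $v_3=u_3$ already lie in $e$, so for those subtrees no further work is needed; for a subtree rooted at a vertex mapped to $v_4=y_4$, the auxiliary edge $\overline{u_1u_2}y_3y_4$ (which contains $y_4$) together with Proposition~\ref{prop:rotationfact2} lets you slide into $E_{u_1u_2}$ at the cost of $O(n^2)$; and for a subtree rooted at a vertex mapped to $v_1=x$, you simply recurse one layer deeper, since its children are mapped into $\{v_2,v_3,v_4\}$. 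This vertex-by-vertex case analysis is what replaces the tight-walk you were hoping for, and it is exactly here that all three hypothesised edges are needed simultaneously.
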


\begin{proof}

By Proposition~\ref{prop:rotationfact}~\ref{itm:rotationfact1}, it suffices to show that $\overline{u_1u_2}u_3u_4$ is $[C+10n^2,i^*,\sigma]$-rotatable for~$i^*\in \{2,3,4\}$. 
Consider $i^*\in\{2,3,4\}$. Let $\pi\in S_4$ be with~$\pi(1)=i^*$.
Let $T$ be a rooted $4$-tree at~$r$.

If $i^*=4$, then $\sigma(4)=4$. By Proposition~\ref{prop:rotationfact}~\ref{itm:rotationfact2}, $\overline{u_1u_2}u_3u_4$ is $[1,4,\sigma]$-rotatable.

We may assume that $i^*=2$ or $3$. Let 
\begin{align*}
(v_1,v_2,v_3,v_4)=(x,u_2,u_3,y_4).    
\end{align*} 
Since $v_1v_2v_3v_4$ is $[C,*,\sigma]$-rotatable, there is a homomorphism~$\phi'$ from~$T$ to~$G$ such that
\begin{align*}
   \phi'(u)=
\begin{cases}
    v_{i^{*}}~&\text{ if }u=r,\\
    v_{\sigma \pi(j)}~&\text{ if }u\in C^{\ge C}_{j}(T).
\end{cases}
\end{align*}

 Next consider each rooted subtree $T(v)$ with~$\dist(r,v)=C$. 
\begin{claim}
    Let $v\in \bigcup_{i\in [4]}C_i^{C'}(T)$ with~$C'\ge C$ and $\phi'(v)=v_j$. Then there exists a homomorphism~$\phi_v$ from~$T(v)$ to~$G$ such that 
\begin{align*}
   \phi_v(u)=
\begin{cases}
    \phi'(v)=v_j~&\text{ if }u=v,\\
    u_{\sigma\pi (s)}~&\text{ if }u\in C^{\ge C'+10n^2}_{s}(T)\cap V(T(v)).
\end{cases}
\end{align*}
    
\end{claim}
\begin{proofclaim}
    If $j=2 \text{ or }3$, then $\phi'(v)=v_j=u_j$ and $v\in C_{j_0}(T)$ with~$\sigma\pi(j_0)=j$. Define $\phi_v$ be such that
$$\phi_v(w)=u_{\sigma\pi(s)}~\text{ if }w\in C_{s}(T)\cap V(T(v)).$$

If $j=4$, then $\phi'(v)=v_4=y_4$ and $v\in C_{j_0}(T)$ with~$\sigma\pi(j_0)=4$. Let $\tau=(1234)$. for~$s\in [k]$, let
$$    C_s(T(v))=C_{\tau^{j_0-1}(s)}(T)\cap V(T(v)).$$
Note that $C^{i} _s(T(v))=C^{(i+C')}_{\tau^{j_0-1}(s)}(T)\cap V(T(v))$. Let $u_1'u_2'u_3'u_4'=u_1u_2y_3y_4$. Since $\sigma(4)=4$, by Proposition~\ref{prop:rotationfact}~\ref{itm:rotationfact2}, $u_1u_2y_3y_4$ is $[1,4, \sigma]$-rotatable and so $(1,\pi\tau^{j_0-1},\sigma)$-rotatable. Then define a homomorphism~$\phi'_v$ from~$T(v)$ to~$G$ to be such that 
\begin{align*}
		\phi'_v(u) = 	
			\begin{cases}
				u_4' & \text{if $u = v$, }\\
				u'_{\sigma \pi \tau^{j_0-1} ( s)} & \text{if $u\in C_s^{\ge 1}(T(v))$}
			\end{cases}
   =
   \begin{cases}
		u_1'=u_1 & \text{if $u \in C_s(T(v))$ and $ \sigma\pi\tau^{j_0-1}(s) =1$,}\\
		u_2'=u_2 & \text{if $u \in C_s(T(v))$ and $ \sigma\pi\tau^{j_0-1}(s)=2 $,}\\
		u_3'=y_3 & \text{if $u \in C_s(T(v))$ and $ \sigma\pi\tau^{j_0-1}(s)=3 $,}\\
		u_4'=y_4 & \text{if $u \in C_s(T(v))$ and $ \sigma\pi\tau^{j_0-1}(s)=4 $.}
		\end{cases}
\end{align*}
 By Proposition~\ref{prop:rotationfact}, there exists a tight walk $W=w_1\dots w_{4\ell}$ with~$\ell\le 2n^2$ such that 
    \begin{align*}
        (w_1,w_2,w_3,w_4,w_{4\ell-3},w_{4\ell-2},w_{4\ell-1},w_{4\ell})=(u_1,u_2,y_3,y_4,u_1,u_2,u_3,u_4).
    \end{align*}
Apply Proposition~\ref{prop:rotationtight} and obtain a homomorphism~$\phi_v$ from~$T(v)$ to~$G$ such that 
\begin{align}
		\phi_v(u)
  = 
			\begin{cases}
				y_4=v_4 & \text{if $u = r$,}\\
				u_{\sigma \pi \tau^{j_0-1}(s)} & \text{if $u\in C_s^{\ge 4(\ell+1)}$}(T(v))=C_{\tau^{j_0-1}(s)}^{\ge 4(\ell+1)+C'}(T)\cap V(T(v)).
			\end{cases}
\end{align}
For vertex $u\in C_{s}^{\ge C'+9n^2}(T)\cap V(T(v))$ with~$s\in [4]$, we have $\phi_v(u)=u_{\sigma\pi(s)}$.

 Therefore we may assume $j=1$. Consider $w\in V(T(v))$ with~$\dist(v,w)=1$. Then $w\in \bigcup_{i\in [4]}C_i^{(C'+1)}(T)$ and $\phi'(w)\neq \phi'(v)=v_1$. Since $\phi'(w)\in \{v_2,v_3,v_4\}$, by our previous argument, there exists a homomorphism~$\phi_w$ from~$T(w)$ to~$G$ such that 
\begin{align*}
   \phi_w(u)=
\begin{cases}
    \phi'(w)~&\text{ if }u=w,\\
    u_{\sigma\pi (s)}~&\text{ if }u\in C^{\ge (C'+1)+9n^2}_{s}(T)\cap V(T(w)).
\end{cases}
\end{align*}
Define the homomorphism~$\phi_v$ from~$T(v)$ to~$G$ to be such that  
\[\phi_v(u)=
    \begin{cases}
        v_1 ~&\text{ if $u=v$},\\
        \phi_{w}(u) ~&\text{ if $ v \in V(T(w))$ for some $w$ with~$\dist(v,w)=1$}.
    \end{cases}
    \]
  in~$\phi_v$, we have that $\phi_v(u)=u_{\sigma\pi(s)}\text{ if }u\in C^{\ge C'+10n^2}_{s}(T)\cap V(T(v))$ for~$s\in [4]$. Therefore our claim holds.
\end{proofclaim}
Now we can give the final homomorphism~$\phi$ from~$T$ to~$G^*$. Define $\phi$ as follow
\[\phi(u)=
    \begin{cases}
        \phi'(u) ~~~&\text{ if } u\in \bigcup_{j\in [4]} C_{j}^{\le C}(T),\\
        \phi_{v}(u) ~~~&\text{ if }u\in V(T(v)) \text{ for some } v \in \bigcup_{j\in [4]}C^{C}_{j}(T).
    \end{cases}
    \]
    Therefore, $\overline{u_1u_2}u_3u_4$ is $[C+10n^2,i^*,\sigma]$-rotatable for~$i^*\in \{2,3\}$.

    Note that $S_3\cup \{(34)\}$ is a generator set of~$S_4$. The moreover statement follows from Lemma~\ref{prop:triangle rotate} and Proposition~\ref{prop:rotationfact}~\ref{itm:rotationfact7}. 
\end{proof}

\subsection{Proof of Lemma~\ref{4-rotatable lemma}}\label{subsetcion:4}
In this subsection, we will prove the final ingredient that $e$ is $[C,*,(13)]$ by digraph. First we show that there is a special vertex from~$V(G)$.
\begin{proposition}\label{Proposition:many mon}
     Let $1/n\ll\alpha \ll  \gamma$. Let $G$ be a $4$-graph on~$n$ vertices with~$\overline{\delta}^\alpha_{2}(G)\ge 1/2+\gamma$. Let $u$ be a non-isolated vertex in~$V(G)$. Then there exists a vertex $w$ such that the number of~$v\in V(G)$ with~$E(C_{uw})\cap E(C_{uv})\neq \emptyset$ is at least $(1/2-2\alpha)n$.
\end{proposition}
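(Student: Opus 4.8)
## Proof proposal

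The plan is to fix a non-isolated vertex $u$ and study the link graph $L_G(\{u\})$, which is a $3$-graph on $V(G)$ with $\alpha$-perturbed relative $1$-degree at least $1/2+\gamma$ by Proposition~\ref{perturb prop}~\ref{itm:perturb prop 1}. For each vertex $v$ that is non-isolated in $\partial_2(u)$, the set $E(C_{uv})$ is the edge set of the largest component of the $2$-graph $L_G(\{u,v\})$, and by Lemma~\ref{lem:sturc1}~\ref{Matching 1} applied to $L_G(\{u,v\})$ (which has relative edge density at least $1/2+\gamma$ by the perturbed degree condition) we get $|V(C_{uv})| > (1/2+\gamma)n$. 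The key point is that $V(C_{uv}) \subseteq \partial_1(\{u,v\})$, so these large vertex sets live inside the common neighbourhoods.

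First I would set up an auxiliary relation: say $w \sim_u v$ if $E(C_{uw}) \cap E(C_{uv}) \neq \emptyset$. The goal is to find a single $w$ whose $\sim_u$-class has size at least $(1/2-2\alpha)n$. The natural approach is to apply the structural dichotomy already developed in this paper — namely, to note that ``$E(C_{uw}) \cap E(C_{uv}) = \emptyset$'' forces the vertex sets $V(C_{uw})$ and $V(C_{uv})$ to be ``almost disjoint'' in a quantitative sense. Indeed, if $E(C_{uw})$ and $E(C_{uv})$ are disjoint, then by an inclusion–exclusion / Karamata-type argument (exactly as in the proof of Lemma~\ref{lem:sturc1}~\ref{Matching 4}, or by a direct count using that $C_{uw}$ and $C_{uv}$ are each single components with more than $(1/4+\gamma)\binom{n}{2}$ edges on more than $(1/2+\gamma)n$ vertices), the two components cannot both be large and edge-disjoint unless their vertex overlap is small. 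More precisely, I expect to show that at most a bounded number of pairwise $\sim_u$-incomparable classes can each have more than roughly $n/2$ edges, which is what forces one class to capture at least $(1/2-2\alpha)n$ vertices $v$.

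The cleanest route is probably this: pick $w$ so that $|V(C_{uw})|$ is maximal among all non-isolated $v$ in $\partial_2(u)$; then $|V(C_{uw})| > (1/2+\gamma)n$. For any $v$ with $|V(C_{uv})| > (1/2+\gamma)n$, the sets $V(C_{uw})$ and $V(C_{uv})$ intersect in more than $2\gamma n > 6\alpha n$ vertices, so by Proposition~\ref{perturb prop}~\ref{itm:perturb prop 2} (applied inside the link graph $L_G(\{u\})$, using that edges of $C_{uw}$ and $C_{uv}$ must meet the shadow appropriately) there is a common edge, i.e. $v \sim_u w$. Since all but at most $\alpha n$ vertices are non-isolated in $\partial_2(u)$, and the bad cases where $|V(C_{uv})| \le (1/2+\gamma)n$ can be discarded (these again number at most $O(\alpha n)$ by a component-counting argument, since $C_{uv}$ being the \emph{largest} component of a density-$(1/2+\gamma)$ graph is automatically large by Lemma~\ref{lem:sturc1}~\ref{Matching 1} — so in fact there are \emph{no} such bad vertices), we conclude that at least $(1/2-2\alpha)n$ vertices $v$ satisfy $E(C_{uw}) \cap E(C_{uv}) \neq \emptyset$.

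The main obstacle I anticipate is the bookkeeping in passing between the $3$-graph $L_G(\{u\})$ and its various $2$-shadows and link graphs, and in particular verifying that ``$V(C_{uw}) \cap V(C_{uv})$ large'' really does yield a common \emph{edge} of the two components rather than merely a common vertex; this is where Proposition~\ref{perturb prop}~\ref{itm:perturb prop 2} has to be invoked carefully, since a priori an edge could be present in $\partial_2(\cdot)$ but not lie in the largest component $C$. Handling that gap — perhaps by noting that $C_{uv}$ misses at most $\binom{\alpha n}{2}$ potential edges and using the density surplus of $C_{uv}$ inside $V(C_{uw}) \cap V(C_{uv})$ — is the technical heart of the argument; everything else is a direct consequence of Lemma~\ref{lem:sturc1} and the perturbed-degree machinery already in place.
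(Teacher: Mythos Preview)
Your ``cleanest route'' has a genuine gap that cannot be patched as you suggest. The central implication --- that $|V(C_{uw})\cap V(C_{uv})|>2\gamma n$ forces $E(C_{uw})\cap E(C_{uv})\neq\emptyset$ --- is simply false. Two connected $2$-graphs, each with more than $(1/4+\gamma)\binom{n}{2}$ edges on more than $(1/2+\gamma)n$ vertices, can overlap in $2\gamma n$ vertices yet share no edge: the overlap contains only $O(\gamma^2 n^2)$ pairs, and nothing prevents each $C_{u\cdot}$ from avoiding all of them. Your proposed repair via Proposition~\ref{perturb prop}~\ref{itm:perturb prop 2} does not apply: that proposition produces an edge in $\bigcap_i E(\partial_2(A_i))$ for sets $A_i$ of size at most $k-4=0$, not an edge in the intersection of two \emph{largest components}. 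And the claim that ``$C_{uv}$ misses at most $\binom{\alpha n}{2}$ potential edges'' is not correct either --- $C_{uv}$ can miss a constant fraction of all pairs. In fact your approach, if it worked, would give $(1-\alpha)n$ good vertices~$v$ rather than $(1/2-2\alpha)n$; this stronger statement is false precisely because two such components \emph{can} be edge-disjoint.

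The right tool is the one you gestured at but abandoned: Lemma~\ref{lem:sturc1}~\ref{Matching 4}, which says that among any \emph{three} dense $2$-graphs, some pair of largest components shares an edge. The paper's argument is: let $N_1(u)$ be the neighbours of $u$ in $\partial_2(G)$ (so $|N_1(u)|\ge(1-\alpha)n$), pick any $x\in N_1(u)$, and let $X=\{x'\in N_1(u):E(C_{ux})\cap E(C_{ux'})\neq\emptyset\}$. If $|X|\ge(1/2-2\alpha)n$, take $w=x$. Otherwise $|N_1(u)\setminus(X\cup\{u,x\})|>n/2$, and for any $z,z'$ in this set,~\ref{Matching 4} applied to $L_G(ux),L_G(uz),L_G(uz')$ forces $E(C_{uz})\cap E(C_{uz'})\neq\emptyset$, since the other two pairs are empty by the definition of~$X$. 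Hence any $w=z$ in this set works. Your remark that there are ``at most a bounded number of pairwise $\sim_u$-incomparable classes'' was on the right track --- the bound is two, exactly because of~\ref{Matching 4} --- but the maximal-$|V(C_{uw})|$ route does not get there.
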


\begin{proof}
     Let $N_1(u)$ be the neighbour set of~$u$ in~$\partial_2(G)$. Then $|N_1(u)|\ge (1-\alpha)n$ by Definition~\ref{defn:perturbed degree}. Let $x\in N_1(u)$ and $X=\{x'\in N_1(u):E(C_{ux})\cap E(C_{ux'})\neq \emptyset\}$. If $|X|\ge (1/2-2\alpha)n$, then we are done by setting $w=x$. We may assume that $|X|< (1/2-2\alpha)n$. By Lemma~\ref{lem:sturc1}~\ref{Matching 4}, for all $z,z'\in N_1(u)\backslash (X\cup \{u,x\})$, $E(C_{uz})\cap E(C_{uz'})\neq \emptyset$. Note that $$|N_1(u)\backslash (X\cup \{u,x\})|>(1-\alpha)n-({1}/{2}-2\alpha)n-2\ge {n}/2.$$ We are done by setting $w=z\in N_1(u)\backslash (X\cup \{u,x\})$.
\end{proof}

Now, we are ready to prove Lemma~\ref{4-rotatable lemma}.

\begin{proof}[Proof of Lemma~\ref{4-rotatable lemma}]
    Define a digraph $D$ on~$V(G)$ to be $\overrightarrow{u_1u_2}\in E(D)$ if each edge $\overline{u_1u_2}u'u''$ in~$E_{u_1u_2}$ is $[n^4,*,(23)]$-rotatable.
    
    \begin{claim}\label{outdegree}
        For any non-isolated vertex $u_1\in V(G)$, $d_D^+(u_1)\ge (1/2-2\alpha)n$.
    \end{claim}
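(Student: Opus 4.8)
\textbf{Proof proposal for Claim~\ref{outdegree}.} The target lower bound $(1/2-2\alpha)n$ is exactly the size of the set produced by Proposition~\ref{Proposition:many mon}, so the plan is to build on that proposition. First I would apply Proposition~\ref{Proposition:many mon} to the non-isolated vertex $u_1$ to obtain a vertex $w$ such that the set $S:=\{\,v\in V(G):E(C_{u_1w})\cap E(C_{u_1v})\neq\emptyset\,\}$ satisfies $|S|\ge(1/2-2\alpha)n$. Each $v\in S$ has $u_1v\in E(\partial_2(G))$, so by property~\ref{itm:defn:perturbed degree1} and Lemma~\ref{lem:sturc1}~\ref{Matching 1} applied to the link graph $L_G(u_1v)$ the component $C_{u_1v}$ is large and dense; in particular the triples of such link graphs satisfy the hypotheses of Lemma~\ref{lem:sturc1}. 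It then suffices to prove that $\overrightarrow{u_1u_2}\in E(D)$ for every $u_2\in S$, which gives $d_D^+(u_1)\ge|S|\ge(1/2-2\alpha)n$.

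So fix $u_2\in S$. By Corollary~\ref{coro:tight} it is enough to find a single edge of $E_{u_1u_2}$ that is $[C,*,(23)]$-rotatable for some $C\le 30n^2$, because Corollary~\ref{coro:tight} then upgrades this to every edge of $E_{u_1u_2}$ being $[4C+16n^2,*,(23)]$-rotatable, and $4C+16n^2\le n^4$ since $1/n\ll1$, which gives $\overrightarrow{u_1u_2}\in E(D)$. Choose an edge $ab\in E(C_{u_1w})\cap E(C_{u_1u_2})$, so $\{u_1,u_2,a,b\}\in E(G)$, hence $\overline{u_1u_2}ab\in E_{u_1u_2}$ and $u_1a,u_1b\in E(\partial_2(G))$. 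Now apply Lemma~\ref{lem:sturc1}~\ref{Matching 4} to the three link graphs $L_G(u_1u_2),L_G(u_1a),L_G(u_1b)$: two of $C_{u_1u_2},C_{u_1a},C_{u_1b}$ share an edge. If $E(C_{u_1u_2})\cap E(C_{u_1a})\neq\emptyset$, then Corollary~\ref{cor:23-rotation} (with $u_3=a$, $u_4=b$) shows $\overline{u_1u_2}ab$ is $[30n^2,*,(23)]$-rotatable and we are done; the case $E(C_{u_1u_2})\cap E(C_{u_1b})\neq\emptyset$ is symmetric, using the edge $\overline{u_1u_2}ba\in E_{u_1u_2}$.

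The main obstacle is the remaining case, where $E(C_{u_1a})\cap E(C_{u_1b})\neq\emptyset$ but $C_{u_1u_2}$ meets neither $C_{u_1a}$ nor $C_{u_1b}$ edge-wise. The key point to exploit is that Lemma~\ref{lem:13rotatable} only requires a second edge $y_3y_4\in E(C_{u_1u_2})$ with $\{u_1,u_3,y_3,y_4\}\in E(G)$, i.e.\ $y_3y_4\in E(\partial_2(u_1u_3))$ — not that $y_3y_4$ lie in the \emph{principal} component $C_{u_1u_3}$ — so it is enough to establish $E(C_{u_1u_2})\cap E(\partial_2(u_1u_3))\neq\emptyset$ for a suitable $u_3\in V(C_{u_1u_2})$. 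To produce this I would use the so-far-unexploited membership $ab\in E(C_{u_1w})$, running Lemma~\ref{lem:sturc1}~\ref{Matching 4} a second time on triples such as $\{L_G(u_1u_2),L_G(u_1a),L_G(u_1z)\}$ with $z$ ranging over $V(C_{u_1u_2})$ and combining with the defining property of $w$; alternatively, one argues the configuration cannot occur, since a family $\{C_{u_1u_3}:u_3\in V(C_{u_1u_2})\}$ of $\ge(1/2+\gamma)n$ pairwise edge-intersecting components, each of size $\ge(1/4+\gamma)\binom n2$ by Lemma~\ref{lem:sturc1}~\ref{Matching 1} and all avoiding $E(C_{u_1u_2})$, is incompatible with the available edge counts. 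In the worst case one may have to invoke Lemma~\ref{prop:special structure} instead of Lemma~\ref{lem:13rotatable} to certify $(23)$-rotatability of some edge of $E_{u_1u_2}$; once such an edge is found to be $[O(n^2),*,(23)]$-rotatable, Corollary~\ref{coro:tight} closes the case, and hence the claim.
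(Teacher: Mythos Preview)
Your setup is right — Proposition~\ref{Proposition:many mon} gives you $w$ and the set $S$ of size at least $(1/2-2\alpha)n$, and showing $S\subseteq N_D^+(u_1)$ is the goal — but Case~3 is a genuine gap, and none of the three fixes you sketch closes it. For Lemma~\ref{lem:13rotatable} you would need $E(C_{u_1u_2})\cap E(L_G(u_1u_3))\neq\emptyset$ for some $u_3\in V(C_{u_1u_2})$; the degree hypothesis is on $2$-sets only, so there is no lower bound available on $|\{v:u_1vy_3y_4\in E(G)\}|$ for a fixed $y_3y_4$, and the edge counts $e(C_{u_1u_2})\ge(1/4+\gamma)\binom n2$ and $e(L_G(u_1u_3))\ge(1/2+\gamma)\binom n2$ do not force an intersection. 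The ``pairwise edge-intersecting'' claim is unjustified: \ref{Matching 4} only says two out of any three $C_{u_1z}$ meet, which does not make the whole family pairwise intersecting. And Lemma~\ref{prop:special structure} needs an already $(23)$-rotatable auxiliary edge $xu_2u_3y_4$ as input, so invoking it here is circular.

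The paper sidesteps Case~3 by reversing the order of operations. It first anchors rotatability at the special vertex $w$: since $|V(C_{u_1w})|\ge(1/2+\gamma)n$ and $|S|\ge(1/2-2\alpha)n$, there is $u_3\in V(C_{u_1w})\cap S$; by the definition of $S$ this gives $E(C_{u_1w})\cap E(C_{u_1u_3})\neq\emptyset$, so Corollary~\ref{cor:23-rotation} makes a single edge $\overline{u_1w}u_3u_4$ $[30n^2,*,(23)]$-rotatable. Now for each $u_2\in S$ the common edge $ab\in E(C_{u_1w})\cap E(C_{u_1u_2})$ is used not as input to~\ref{Matching 4} but to build a tight walk: from any $\overline{u_1u_2}x_3x_4\in E_{u_1u_2}$ to $\overline{u_1u_2}ab$ (Proposition~\ref{prop:rotationfact2}), then a single tight step to $\overline{u_1w}ab$, then to $\overline{u_1w}u_3u_4$. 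Proposition~\ref{prop:rotationtight} with $\rho_e=\rho_f=\mathbf{id}$ transfers $(23)$-rotatability along this walk, giving every edge of $E_{u_1u_2}$ the bound $[n^4,*,(23)]$. The idea you missed is that the defining property of $S$ already manufactures a tight connection between $E_{u_1w}$ and every $E_{u_1u_2}$, so rotatability need only be certified once, at $w$.
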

\begin{proofclaim}
    By Proposition~\ref{Proposition:many mon}, there exists a vertex $u_2$ such that the number of~$v\in V(G^*)$ with~$E(C_{u_1u_2})\cap E(C_{u_1v})\neq \emptyset$ is at least $(1/2-2\alpha)n$. Let $A=\{v\in V(G)~|~E(C_{u_1u_2})\cap E(C_{u_1v})\neq \emptyset\}$. Then $|A|\ge (1/2-2\alpha)n$. By Lemma~\ref{lem:sturc1}~\ref{Matching 1}, we have that
    \begin{align*}
        |V(C_{u_1u_2})\cap A|\ge |V(C_{u_1u_2})|+|A|-n > 0.
    \end{align*}
    Let $u_3\in V(C_{u_1u_2})\cap A$ and $\overline{u_1u_2}u_3u_4\in E(G^*)$. Since $E(C_{u_1u_2})\cap E(C_{u_2u_3})\neq \emptyset$, by Corollary~\ref{cor:23-rotation}, $\overline{u_1u_2}u_3u_4$ is $[10n^2,*,(23)]$-rotatable.

For~$x_2\in A$ and $\overline{u_1x_2}x_3x_4\in E(G^*)$, let $y_3y_4\in E(C_{u_1u_2})\cap E(C_{u_1x_2})$. By Proposition~\ref{prop:rotationfact2}, there exist two tight walks $W=w_1\dots w_{4\ell}$ and $W'=w'_1\dots w'_{4\ell'}$ with~$\ell,\ell'\le 2n^2$ such that
\begin{align*}
    (w_1,w_2,w_3,w_4,w_{4\ell-3},w_{4\ell-2},w_{4\ell-1},w_{4\ell})=(u_1,x_2,x_3,x_4,u_1,x_2,y_3,y_4)
\end{align*}
and
\begin{align*}
(w'_1,w'_2,w'_3,w'_4,w'_{4\ell'-3},w'_{4\ell'-2},w'_{4\ell'-1},w'_{4\ell'})=(u_1,u_2,y_3,y_4,u_1,u_2,u_3,u_4).
\end{align*}
Note that $WW'=w_1\dots w_{4\ell}w_1'\dots u_{4\ell'}$ is a tight $4$-walk. Hence by Proposition~\ref{prop:rotationfact2} with~$\overline{u_1u_2}u_3u_4, \overline{u_1x_2}x_3x_4$, $\textbf{id}, \textbf{id})$ playing the role of~$(e,f,\rho_e,\rho_f)$,  we have $\overline{u_1x_2}x_3x_4$ is $[n^4,*,(23)]$-rotatable. Thus, each edge $e$ in~$E_{u_1v}$ with~$v\in A$ is $[n^4,*,(23)]$-rotatable. It implies that $A\subseteq N_D^+(u_1)$ and $d_D^+(u_1)\ge (1/2-2\alpha)n$.
\end{proofclaim}
   \begin{claim}\label{Claim final 1}
       Let $\overrightarrow{u_1u_2}\in E(D)$. Then every edge $e\in E_{u_1u_2}$ is $10^4n^4$-rotatable.
   \end{claim}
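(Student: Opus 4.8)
The plan is to deduce $10^4 n^4$-rotatability of every $e \in E_{u_1 u_2}$ from the assumption $\overrightarrow{u_1u_2} \in E(D)$, which only gives $[n^4, *, (23)]$-rotatability of each such edge, together with the structural lemmas of Section~\ref{subsetcion:3}. Recall that $\{(23), (34), (13)\}$ generates $S_4$ (indeed any two adjacent transpositions together with one more suffice; here $(23)$ and $(34)$ already generate the obvious copy of $S_3$, and adding $(13)$ — or using $(34)(23)(34) = (24)$ and so on — one checks it generates all of $S_4$). By Proposition~\ref{prop:rotationfact}~\ref{itm:rotationfact7}, once I show that each $e = \overline{u_1u_2}u_3u_4 \in E_{u_1u_2}$ is $[C, *, \sigma]$-rotatable for $C$ of order $n^4$ and for $\sigma$ ranging over a generator set, I get $4! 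C$-rotatability, which for a suitable constant absorbs into $10^4 n^4$.

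First I would record what is already available: $\overrightarrow{u_1u_2}\in E(D)$ gives $[n^4, *, (23)]$-rotatability of every edge in $E_{u_1u_2}$; Lemma~\ref{prop:triangle rotate} gives $[30n^2, *, (34)]$-rotatability of every edge in $E(G^*)$, in particular of every $e \in E_{u_1u_2}$. So it remains to produce $[C, *, (13)]$-rotatability (or any third transposition completing a generator set) for every $e \in E_{u_1u_2}$, with $C = O(n^4)$. Here I would invoke the digraph structure more carefully, or appeal to Lemma~\ref{prop:special structure}. The natural route: since $d_D^+(u_1) \ge (1/2 - 2\alpha)n$ by Claim~\ref{outdegree}, and symmetrically every non-isolated vertex has large out-degree, one can find (using the $C_A$-structure lemma, Lemma~\ref{lem:sturc1}, to force intersections of link components) a configuration $xu_2u_3y_4 \in E(G)$ of the type required in Lemma~\ref{prop:special structure} whose rotatability in the remaining symmetry is already known — e.g. it lies in some $E_{u_1' u_2'}$ with $\overrightarrow{u_1'u_2'} \in E(D)$, hence is $[n^4,*,(23)]$-rotatable, and is also $[30n^2,*,(34)]$-rotatable, so by Proposition~\ref{prop:rotationfact}~\ref{itm:rotationfact7} it is $4!\,n^4$-rotatable. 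Feeding this into Lemma~\ref{prop:special structure} then upgrades $\overline{u_1u_2}u_3u_4$ to being (fully) $O(n^4)$-rotatable, and then Corollary~\ref{coro:tight} (via tight walks within $E_{u_1u_2}$, which have length $O(n^2)$ by Proposition~\ref{prop:rotationfact2}) spreads full rotatability to every edge of $E_{u_1u_2}$, at the cost of a factor $4$ and an additive $O(n^2)$, still within $10^4 n^4$.

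Concretely, the key steps in order would be: (1) pick $u_3 \in V(C_{u_1u_2}) \cap N_D^+(u_1)$ with $\overline{u_1u_2}u_3u_4 \in E(G^*)$, which exists by the intersection count in Claim~\ref{outdegree}; (2) locate an edge $f = xu_2u_3y_4 \in E(G)$ with $\overline{u_1u_2}y_3y_4 \in E(G)$ for an appropriate $y_3$, again using Lemma~\ref{lem:sturc1} to ensure the relevant link graphs share a component so that such $x, y_3, y_4$ can be chosen; (3) verify $f$ lies in the support of $D$ or is otherwise $[n^4,*,(23)]$- and $[30n^2,*,(34)]$-rotatable, hence $4!\,n^4$-rotatable by Proposition~\ref{prop:rotationfact}~\ref{itm:rotationfact7}; (4) apply Lemma~\ref{prop:special structure} to conclude $\overline{u_1u_2}u_3u_4$ is $4!(4!\,n^4 + 30n^2)$-rotatable; (5) apply Corollary~\ref{coro:tight} to transfer this to all $e \in E_{u_1u_2}$, absorbing all constants into $10^4 n^4$. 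The main obstacle I anticipate is step (2)–(3): arranging the auxiliary edge $xu_2u_3y_4$ so that \emph{all} the hypotheses of Lemma~\ref{prop:special structure} hold simultaneously — in particular that the three edges $\overline{u_1u_2}u_3u_4$, $\overline{u_1u_2}y_3y_4$, $xu_2u_3y_4$ exist with the shared vertices in exactly the right positions, and that the third edge is genuinely rotatable for the missing permutation — will require a careful double (or triple) application of the component-intersection estimates of Lemma~\ref{lem:sturc1}, keeping track of which pairs of vertices index the $2$-graph link components involved. Everything else is bookkeeping of the reachability/rotatability constants, which stay polynomially bounded by $n^4$.
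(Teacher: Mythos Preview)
Your overall architecture is right --- get $(23)$ from $\overrightarrow{u_1u_2}\in E(D)$, get $(34)$ from Lemma~\ref{prop:triangle rotate}, manufacture $(13)$ via Lemma~\ref{prop:special structure}, then invoke Proposition~\ref{prop:rotationfact}~\ref{itm:rotationfact7} and spread to all of $E_{u_1u_2}$ by Corollary~\ref{coro:tight}. But step~(3) of your plan has a genuine gap. You write that once $f=xu_2u_3y_4$ is $[n^4,*,(23)]$- and $[30n^2,*,(34)]$-rotatable it is $4!\,n^4$-rotatable ``by Proposition~\ref{prop:rotationfact}~\ref{itm:rotationfact7}''. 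That is false: $(23)$ and $(34)$ only generate the copy of $S_3$ fixing the index~$1$, not $S_4$. You are trying to produce the missing generator $(13)$ for $\overline{u_1u_2}u_3u_4$ by first proving full rotatability of $f$, but your argument for $f$ tacitly presupposes that very missing generator --- this is circular.

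Relatedly, your choice $u_3\in N_D^+(u_1)$ in step~(1) buys you nothing for $f$: the edge $xu_2u_3y_4$ does not contain $u_1$, so arcs out of $u_1$ in $D$ say nothing about its rotatability. The correct choice is $u_3\in V(C_{u_1u_2})\cap N_D^+(u_2)$. Then $\overrightarrow{u_2u_3}\in E(D)$, so $\overline{u_2u_3}xy_4$ is $[n^4,*,(23)]$-rotatable, and by the conjugation rule Proposition~\ref{prop:rotationfact}~\ref{itm:rotationfact8} (with $\pi=(132)$) the \emph{same} edge written as $xu_2u_3y_4$ is $[n^4,*,(13)]$-rotatable. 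Now apply the first part of Lemma~\ref{prop:special structure} with $\sigma=(13)$ directly (no need for the ``moreover'' clause), giving $\overline{u_1u_2}u_3u_4$ $[2n^4,*,(13)]$-rotatable. Together with $(23)$ and $(34)$ this generates $S_4$, and your steps~(4)--(5) then go through within the stated bound. This is precisely how the paper proceeds.
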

   \begin{proofclaim}
       By Lemma~\ref{lem:sturc1}~\ref{Matching 1} and Claim~\ref{outdegree}, we have that 
       \begin{align*}
           |V(C_{u_1u_2})\cap N_D^+(u_2)|\ge |V(C_{u_1u_2})|+|N_D^+(u_2)|-n>0. 
       \end{align*}
       Let $u_3\in V(C_{u_1u_2})\cap N_D^+(u_2)$ and $\overline{u_1u_2}u_3u_4\in E(G)$. Meanwhile, by Lemma~\ref{lem:sturc1}~\ref{Matching 1}, we obtain that
              \begin{align*}
           |V(C_{u_1u_2})\cap V(C_{u_2u_3})|\ge |V(C_{u_1u_2})|+|V(C_{u_2u_3})|-n>0. 
       \end{align*}
       Let $y_4\in V(C_{u_1u_2})\cap V(C_{u_2u_3})$ and $\overline{u_1u_2}y_3y_4$, $\overline{u_2u_3}xy_4\in E(G^*)$. Note that $u_3\in N^+_D(u_2)$, so $\overline{u_2u_3}xy_4$ is $[n^4,*,(23)]$-rotatable. Hence $xu_2u_3y_4$ is $[n^4,*,(13)]$-rotatable by Proposition~\ref{itm:rotationfact8} with~$\pi=(132)$. By Lemma~\ref{prop:special structure}, it implies that $\overline{u_1u_2}u_3u_4$ is $[2n^4,*,(13)]$-rotatable.

        By Lemma~\ref{prop:triangle rotate}, $\overline{u_1u_2}u_3u_4$ is $[n^4,*,(34)]$-rotatable. Meanwhile, since $u_2\in N_D^+(u_1)$, $\overline{u_1u_2}u_3u_4$ is $[n^4,*,(23)]$-rotatable. Note that $\{(13),(34),(23)\}$ is a generator of~$S_4$, so $\overline{u_1u_2}u_3u_4$ is $10^3n^4$-rotatable by Proposition~\ref{prop:rotationfact}~\ref{itm:rotationfact7}. By Corollary~\ref{coro:tight}, each edge $e\in E_{u_1u_2}$ is $10^4n^4$-rotatable.
   \end{proofclaim}
Suppose that $e\in E_{u_1u_2}$ is not $10^7n^4$-rotatable for some distinct $u_1u_2\in E(\partial_2(G))$. By Claim~\ref{Claim final 1}, $\overrightarrow{u_1u_2}\notin E(D)$. Then by Lemma~\ref{lem:sturc1}~\ref{Matching 1}, we have 
       \begin{align*}
           |V(C_{u_1u_2})\cap N_D^+(u_2)|\ge |V(C_{u_1u_2})|+d^+(u_2)-n>0. 
       \end{align*}
       Let $u_3\in V(C_{u_1u_2})\cap N^+(u_2)$ and $\overline{u_1u_2}u_3u_4\in E(G)$. Meanwhile, by Lemma~\ref{lem:sturc1}~\ref{Matching 1}, we obtain that
              \begin{align*}
           |V(C_{u_1u_2})\cap V(C_{u_2u_3})|\ge |V(C_{u_1u_2})|+|V(C_{u_2u_3})|-n>0. 
       \end{align*}
       Let $y_4\in V(C_{u_1u_2})\cap V(C_{u_2u_3})$ and $\overline{u_1u_2}y_3y_4$, $\overline{u_2u_3}xy_4\in E(G^*)$. Note that $u_3\in N^+_D(u_2)$. $\overline{u_2u_3}xy_4$ is $10^4n^4$-rotatable. So $xu_2u_3y_4$ is $10^4n^4$-rotatable. By Lemma~\ref{prop:special structure}, it implies that $\overline{u_1u_2}u_3u_4$ is $10^6n^4$-rotatable. By Corollary~\ref{coro:tight}, each edge $e\in E_{u_1u_2}$ is $10^7n^4$-rotatable, a contradiction.
\end{proof}

\section{Concluding Remark}\label{remark}
In this paper, we showed that $\delta_{k,k-2}^T=1/2$ for~$k\ge 4$. However, our proof is unable to map the root vertex to an arbitrary vertex due to~\ref{robust2}. Then we may ask whether this is possible.
\begin{ques}
    Let $1/n \ll \gamma \ll 1/\Delta, 1/k \le 1/4$. Let $G$ be a $k$-graph on~$n$ vertices with~$\overline{\delta}_{k-2}(G)\ge 1/2+\gamma$ and $v\in V(G)$. Let $T$ be a rooted $n$-vertex $k$-loose tree at~$r$ with~$\Delta_1(T)\le \Delta$. Is there an embedding~$\psi$ from~$T$ to~$G$ such that $\psi(r)=v$?
\end{ques}

Another direction is to investigate how large $\Delta_1(T)$ in Theorem~\ref{generaltheorem} can be. 
For~$2$-graph case, result in~\cite{komlos2001spanning} shows that tree~$T$ with~$\Delta(T)\le cn/\log n$ can be embedded to a graph $G$ with~$\overline{\delta}_1(G) >1/2$ and this is tight up to a constant $c$.

Furthermore, for almost spanning loose trees, there are two directions to consider.

\subsection{Almost Spanning loose trees in general hypergraphs}
 R\"odl, Ruci\'nski and Szemer\'edi~\cite{Rodlalmost} showed that in~$k$-graph~$G$, if $\overline{\delta}_{k-1}(G) >1/k$, then there exists an almost perfect matching in~$k$-graphs. However, the perfect matching threshold $\delta_{k,k-1}^{PM} = 1/2$. Note that in Theorem~\ref{generaltheorem}, the~$1/2$ for embedding spanning tree is needed due to the absorption step. So we wonder whether the relative minimum degree threshold for existence of almost spanning loose trees is smaller than the threshold for spanning loose trees.
\begin{problem}
    Let $1/n\ll \varepsilon \ll \gamma \ll 1/\Delta, 1/k$ and $\ell \in [k-1]$.
    Determine infimum $\delta$ such that if $G$ is a $k$-graph on~$n$ vertices with~$\overline{\delta}_{\ell}(G)\ge \delta+\gamma$ and $T$ be a $k$-loose tree on~$(1-\varepsilon)n$ vertices with~$\Delta_1(T)\le \Delta$, then there is an embedding from~$T$ to~$G$. In particular, when $\ell = k-1$, is $\delta<\delta^{T}_{k,k-1}=1/2$? 
\end{problem}

 Recall that a binary $k$-loose tree of even depth containing a perfect matching. 
 On the other hand, there is no perfect matching in loose Hamiltonian path.
 Keevash, K\"uhn, Mycroft and Osthus~\cite{looseHamilton} showed that if a $k$-graph $G$ with sufficient large $n$ vertices and $\overline{\delta}_{k-1}(v)> \frac{1}{2(k-1)}$, then there exists a loose Hamiltonian cycle and so a loose Hamiltonian path in~$G$. 
 Hence, we ask whether the minimum degree threshold for the  existence of embedding for a loose tree~$T$ depends on the size of the largest matching in~$T$.
 
\begin{problem}
    Let $1/n\ll \gamma \ll 1/\Delta, 1/k$, $\ell \in [k-1]$ and $\theta\in [0,1]$. Determine the infimum $\delta$ such that if $G$ is a $k$-graph on the $n$ vertices with~$\overline{\delta}_{\ell}(G)\ge \delta+\gamma$ and $T$ is a $n$-vertex $k$-loose tree with the largest matching size~$\theta n$ and $\Delta_1(T)\le \Delta$, then there exists an embedding from~$T$ to~$G$. 
\end{problem}

\subsection{Almost Spanning loose trees in linear hypergraphs}
The other direction is the existence of loose trees embedding in linear hypergraphs. We say a hypergraph is \emph{linear} if any two edges share at most one vertex. Recently, Im and Lee~\cite{im2024dirac} considered the Dirac type problem on linear hypergraphs. They showed that in linear $k$-graph $G$, if $\delta_1(G)>1/k$, then there exists an almost perfect matching in~$G$ and this bound is asymptotically tight. Moreover, they showed that in~$G$, there exists an embedding for every spanning $k$-loose tree with bounded vertex degree and $o(|V(G)|)$ leaves. In~\cite{Im2024}, Im, Kim, Lee and Methuku showed that $\delta_1(G)>(1/2-o(1))n$ is sufficient for the existence of embedding.

We note that in general, finding perfect matchings in linear hypergraphs is not possible. Even if the graph is a Steiner triple system, it may not contain a perfect matching. 
Since a binary $k$-loose tree with even depth contains a perfect matching, we can not guarantee the existence for all spanning $k$-loose trees in linear hypergraphs. Hence, it would be interesting to consider the following generalization problem for almost spanning loose trees.
\begin{ques}
 Let $1/n\ll \varepsilon\ll \gamma \ll \Delta,1/k$. Determine the infimum $\delta$ such that if $G$ is a linear $k$-graph on~$n$ vertices, $\overline{\delta}_1(G)\ge \delta+\gamma$ and $T$ is a $k$-loose tree on~$(1-\varepsilon)n$ vertices and $\Delta_1(T)\le \Delta$, then there exists an embedding from~$T$ to~$G$.
\end{ques}
Note that Im, Kim, Lee and Methuku~\cite{Im2024} showed that $\delta \le 1/2$. 
\bibliographystyle{amsplain}
\bibliography{cite}
\end{document}